\newtheorem*{mainthm*}{Main Theorem}
\newtheorem{thm}{Theorem}[section]
\newtheorem{lem}[thm]{Lemma}
\newtheorem{lemma}[thm]{Lemma}
\newtheorem{prop}[thm]{Proposition}
\newtheorem{cor}[thm]{Corollary}
\newtheorem{partt}{Part}
\theoremstyle{definition}
\newtheorem{rem}[thm]{Remark}
\newtheorem{defin}[thm]{Definition}
\newtheorem{constr}[thm]{Construction}
\newtheorem{exa}[thm]{Example}
\newcommand{\Bir}{\mathrm{Bir}}
\newcommand{\SL}{\textbf{SL}}
\newcommand{\Tame}{\mathrm{Tame}}
\newcommand{\LL}{\mathcal{L}}
\newcommand{\N}{\mathbb{N}}
\newcommand{\Z}{\mathbb{Z}}
\newcommand{\R}{\mathbb{R}}
\newcommand{\C}{\mathbb{C}}
\renewcommand{\H}{\mathbb{H}}
\newcommand{\GL}{\textbf{GL}}
\newcommand{\K}{\mathbbm{k}}
\newcommand{\T}{\mathrm{Tame}(\K^3)}
\newcommand{\id}{\mathrm{id}}
\renewcommand{\int}{\mathrm{int}}
\newcommand{\Ap}{\mathbf E}
\newcommand{\Aut}{\mathrm{Aut}}
\newcommand{\X}{\mathbf{X}}
\newcommand{\St}{\mathrm{Stab}}
\newcommand{\Stab}{\mathrm{Stab}}
\newcommand{\eps}{\varepsilon}
\newcommand{\parent}[1]{\textup{(}#1\textup{)}}
\begin{document}

\title{Tits Alternative for the $3$-dimensional tame automorphism group}

\author[S.~Lamy]{St\'ephane Lamy$^{*}$}
\address{Institut de Mathématiques de Toulouse UMR 5219, Universit\'e de Toulouse, UPS
F-31062 Toulouse Cedex 9, France} \email{slamy@math.univ-toulouse.fr}

\author[P.~Przytycki]{Piotr Przytycki$^{\dag*}$}
\address{
Department of Mathematics and Statistics, McGill University, Burnside Hall, 805 Sherbrooke Street West, Montreal, QC, H3A 0B9, Canada}
\email{piotr.przytycki@mcgill.ca}
\thanks{$\dag$ Partially supported by NSERC, AMS, CNRS and National Science Centre, Poland UMO-2018/30/M/ST1/00668.}
\thanks{$*$ This work was partially supported by
the grant 346300 for IMPAN from the Simons Foundation and the matching 2015--2019 Polish MNiSW fund}

\begin{abstract}
We prove the strong Tits alternative for the tame automorphism group of $\K^3$, for $\K$ of characteristic zero.
\end{abstract}

\maketitle

\section{Introduction}

A group~$G$ satisfies the \emph{strong Tits alternative} (resp.\ \emph{weak Tits alternative})  if each subgroup of~$G$ (resp.\ each finitely generated subgroup of $G$) is virtually solvable or contains a non-abelian free group.
For each~$n\geq 1$ and each field $\K$ of characteristic zero (resp.\ of positive characteristic), Tits proved that the linear group~$\GL_n(\K)$ satisfies the strong (resp.\ weak) Tits alternative \cite{T}.
This result was then extended to many classes of groups.
We mention first some groups of polynomial or rational maps, also known as Cremona transformations.
The group~$\Aut(\C^2)$ of polynomial automorphisms of the complex plane satisfies the strong Tits alternative by \cite{L}.
For an arbitrary field $\K$, one can adapt the argument and obtain the (strong, or weak, according to the characteristic of $\K$) Tits alternative for~$\Aut(\K^2)$ \cite{LamyBook}.
For birational maps, the weak and then strong Tits alternative for the 2-dimensional Cremona group $\Bir(\C^2)$ was established in \cite{Cantat} and \cite{Urech}.
Again, the argument can be extended to any field of characteristic zero \cite{LamyBook}, while the case of positive characteristic remains unclear.
In higher dimension, very little is known.
However, the strong Tits alternative for $\Tame(\SL_2(\C))$, a particular subgroup of the Cremona group $\Bir(\C^3)$, was established in \cite{BFL}.

The aim of this article is to establish the Tits alternative for the following subgroup of the 3-dimensional Cremona group.
The \emph{tame automorphism group} $\T$ of the affine space $\K^3$ is the subgroup  of the polynomial automorphism group generated by the affine
automorphisms and by the automorphisms of the form
\[
(x_1, x_2, x_3) \mapsto (x_1 + P(x_2, x_3), x_2, x_3), \ P \in \K[x_2,x_3].
\]

\begin{mainthm*}
Let $\K$ be a field of characteristic zero.
Then $\T$ satisfies the strong Tits alternative.
\end{mainthm*}

A common feature of the proof of the Tits alternative for $\Aut(\C^2)$, $\Bir(\C^2)$ and $\Tame(\SL_2(\C))$ is the use of an action on a CAT(0)
metric space. Namely, $\Aut(\C^2)$ acts on the Bass--Serre tree associated to its structure of amalgamated product, $\Bir(\C^2)$ acts on an infinite
dimensional hyperbolic space~$\H^\infty$ and $\Tame(\SL_2(\C))$ acts on a CAT(0) square complex. None of these actions is proper.

In general, suppose that a group $G$ acts by isometries on a Gromov-hyperbolic geodesic metric space $X$. By \cite{G}*{\S3.1}, the group $G$ (or its
index $2$ subgroup) fixes a point of $X$ or the Gromov boundary of $X$, or contains a non-abelian free group. In particular, if $G$ is
word-hyperbolic, if we take $X$ to be the Cayley graph of~$G$, the stabilisers of points in the Gromov boundary of $X$ are virtually cyclic and
so~$G$ satisfies the strong Tits alternative. More generally, if $G$ acts acylindrically on a Gromov-hyperbolic geodesic metric space $X$, and the
subgroups of $G$ with bounded orbits in $X$ satisfy the weak (resp.\ strong) Tits alternative, then $G$ satisfies the weak (resp.\ strong) Tits
alternative \cite{DGO}. Taking for $X$ the coned-off Deligne complex, this gives the strong Tits alternative for $2$-dimensional Artin groups of
hyperbolic type \cite{MP2}.

For groups $G$ that are not word-hyperbolic, but possess some hyperbolic-like features, various strategies were employed. Strong Tits alternative was
proved for mapping class groups of hyperbolic surfaces \cites{Iv, McC} and outer automorphism groups of free groups \cites{BFH, BFH2}. This was
generalised to outer automorphism groups of right-angled Artin groups \cites{CV,Ho}.

It is conjectured that if $X$ is a CAT(0) metric space, and $G$ acts on $X$ properly and cocompactly, then it satisfies the strong Tits alternative.
This conjecture was verified for $2$-dimensional complexes $X$ in \cite{OP}. The methods can be extended to `recurrent' spaces \cite{OP0}, implying
that the Artin groups of large type satisfy the strong Tits alternative. For CAT(0) cube complexes $X$ the conjecture was established in \cite{SW}.
More generally, if $G$ acts on a finite-dimensional CAT(0) cube complex $X$ with the stabilisers of points in $X\cup \partial_\infty X$ satisfying the
weak (resp.\ strong) Tits alternative, then $G$ also satisfies the weak (resp.\ strong) Tits alternative \cite{CS}. Applying this, one verifies the
strong Tits alternative for Artin groups $G$ of type FC \cite{MP1}, by computing the stabilisers of points in $\partial_\infty X$, where $X$ is the (cubical) Deligne
complex.

The group $\T$ is the amalgamated product of three subgroups along their pairwise intersection \cites{Wright, LamyToulouse}. This gives a natural
action of $\T$ on a $2$-dimensional simplicial complex, which was proved in \cite{LP1} to be contractible and Gromov-hyperbolic in order to find normal subgroups in $\T$. However, this complex cannot be
endowed with an equivariant CAT(0) metric, and so we do not know how to classify the subgroups of $\T$ with bounded orbits in that complex.

In \cite{LP2}, a new $2$-dimensional CAT(0) complex $\X$ with an action of $\T$ was constructed, leading to the description of the finite subgroups of $\T$.
However, this action is not proper and $\X$ is not Gromov-hyperbolic. Therefore, in order to prove the Main Theorem,
we need to classify and
analyse all elements of $\T$ that do not fix a point of $\X$ and are not loxodromic of `rank~1'. In order to do this, we classify locally and
globally nearly flat subcomplexes of $\X$.

We apply an original mixture of metric and combinatorial techniques, where we introduce `relative disc diagrams', which are, roughly speaking, disc diagrams that are required to be combinatorial only away from the boundary. We often study the limit of relative disc diagrams $\phi_n\colon D\to \X$, whose domain $D$ might degenerate in the limit into a `frilling' $D'$. We believe that these concepts will be useful for studying the actions of other groups on non-proper CAT(0) complexes.

\smallskip

\noindent \textbf{Organisation.}
We present now a simplified outline of the proof of the Main Theorem; see Section~\ref{sec:proof} for the full proof.

The group $\T$ acts on the non-proper 2-dimensional $\mathrm{CAT}(0)$ complex $\X$, which we discuss in Section~\ref{sec:X}.
An important property of $\X$ is the existence of a proper quotient $\rho_+\colon \X\to \nabla^+$, with $\nabla^+$ isometric to a Weyl chamber in $\R^2$.

Let $G<\T$ and suppose that $G$ does not stabilise a point in $\X$, or a `principal' or an `antiprincipal' point $\zeta$ in $\partial_\infty \X$. (Stabilisers of such points $\zeta$ are conjugate to subgroups denoted by $C,B'<\T$, which satisfy the strong Tits alternative.) In particular, by \cite{NOP}*{Cor~2.5}, there is an element of $G$ that is not elliptic, i.e.\ has
no fixed point in $\X$. If all the elements of $G$ are elliptic or loxodromic of `rank~1', one can find a non-abelian free group in $G$ appealing to
the work of Bestvina and Fujiwara. However, one needs to carry over the classical results of Ballmann and Ruane to the non-proper setup, which we do
in Section~\ref{sec:rank1}.

It remains to classify the elements of $\T$ that are parabolic or loxodromic not of rank~1. We do so eventually in Sections~\ref{sec:rays}
and~\ref{sec:classification}, where we show that all such elements are conjugate into $C$ or $B'$. To do that, we first classify all cycles in the vertex links of $\X$ of length $<2\pi+\eps_0$, for a uniform $\eps_0$
(Sections~\ref{sec:triple} and~\ref{sec:links}). We then study nearly flat diagrams $\phi\colon D\to\X$, and we describe their `folding locus' in $D$
where the composition $\rho_+\circ \phi\colon D\to \nabla^+$ is not a local isometry (Section~\ref{sec:diagramsinX}). Note that the classification of
$g$ parabolic, which are all conjugate into $C$, is much easier and can be read without preparation.

In Section~\ref{sec:xieta}, we study diagrams for pairs $(\xi,\eta)$ of limit points of distinct isometries $g,g'\in G$ that are parabolic or
loxodromic not of rank~1. Since $\X$ is not proper, we have to study the limits of disc diagrams $\phi_n$ by postcomposing them with $\rho_+$. This
requires introducing a procedure of passing to the limit with non-combinatorial `relative disc diagrams' (see Section~\ref{sec:diagrams}, portions of which could be
ignored at a first reading by pretending that all the diagrams are combinatorial). Finally, in Section~\ref{sec:limitnotfar} we prove that since $G$
does not stabilise a principal or an antiprincipal point in $\partial_\infty \X$, there exist $g,g'\in G$ with $\xi$ and $\eta$ `far' enough so that
some $g'^ng^n$ has rank~1, giving rise to the non-abelian free group.

\smallskip

\noindent \textbf{Acknowledgements.} We thank Shaked Bader, Werner Ballmann, Pierre-Emmanuel Caprace, Sami Douba, Bruno Duchesne, Koji Fujiwara, Camille Horbez, and Marcin Sabok
for helpful explanations.

\section{Constructing rank~1 isometries}
\label{sec:rank1} In this section, we consider an arbitrary complete $\mathrm{CAT}(0)$ space $(X,d_X)$. We allow $X$ to be not proper. We generalise
the methods of Ballmann and Ruane of constructing rank~1 isometries to this setting.

\subsection{Boundary}

Two geodesic rays $r,r'\colon [0,\infty)\to X$ are \emph{asymptotic} if there is $d>0$ with $d_X\big(r(t),r'(t)\big)\leq d$ for all $t$. This defines an
equivalence relation, and we denote by $[r]$ the equivalence class of a geodesic ray $r$. The \emph{visual boundary} $\partial_\infty X$ is the set
of equivalence classes of geodesic rays. Note that for each geodesic ray~$r$ and each $y\in X$ there is a geodesic ray representing $[r]$ starting at $y$
\cite{BH}*{II.8.2}, which we call the \emph{geodesic ray from $y$ to $[r]$}. We extend the topology on $X$ to the \emph{cone topology} on
$X\cup\partial_\infty X$ described in \cite{BH}*{II.8.6}. Namely, given a geodesic ray~$r$, and $t, \epsilon>0$, a \emph{basic neighbourhood of
$[r]$, based at $r(0)$,} is the set of $x\in X\cup\partial_\infty X$ such that the geodesic (possibly a ray) $r'$ from $r(0)$ to $x$ has length $>t$
and satisfies $d_X\big(r'(t),r(t)\big)< \epsilon$. Note that since $X$ is not assumed to be proper, $\partial_\infty X$ might not be compact.

For $\xi, \eta\in \partial_\infty X$, the \emph{angle} $\angle (\xi, \eta)$ is the supremum
of the Alexandrov angle $\angle_x(r, r')$ (see
\cite{BH}*{I.1.12}) over $x\in X$ and $r$ representing $\xi$ and $r'$ representing~$\eta$ starting at~$x$. The angle equips~$\partial_\infty X$ with
a complete $\mathrm{CAT}(1)$ metric \cite{BH}*{II.9.13}. However, the induced topology on~$\partial_\infty X$ is stronger than the cone topology.

\subsection{Isometries}

Let $g$ be an isometry of $X$. The \emph{translation length} of $g$ is $|g|=\inf_{x\in X}d_X\big(x,g(x)\big)$. If the infimum is attained, we say
that $g$ is \emph{elliptic} if $|g|=0$ and \emph{loxodromic} otherwise. If the infimum is not attained, we say that $g$ is \emph{parabolic}. If $g$
is loxodromic, then by \cite{BH}*{II.6.8.(1)} it has (at least one) \emph{axis}, which is a $g$-invariant geodesic line in $X$.

Let $\xi\in \partial_\infty X$. We say that $\xi$ is the \emph{positive} (resp.\ \emph{negative}) \emph{limit point of $g$}, if for some (hence any) point $x\in X$, we have $g^n(x)\to \xi$ for $n\to \infty$ (resp.\ $n\to -\infty$). 
Note that a limit point is fixed by $g$. 
If $g$ is loxodromic, it has the positive limit point $\xi^+$ and the negative limit point $\xi^-$ that are the endpoints of any of the axes for $g$. In
particular $\angle(\xi^+,\xi^-)=\pi$. If $g$ is parabolic with positive translation length, then by \cite{D}*{lem~23.2} $g$~also has the positive and
negative limit points $\xi^+,\xi^-$ with $\angle(\xi^+,\xi^-)=\pi$. A parabolic isometry of translation length zero without a point $\xi\in
\partial_\infty X$ that is its positive and negative limit point is called \emph{vile}. (We do not know if vile isometries exist.)

\subsection{Rank~1}

Let $\xi,\eta\in \partial_\infty  X$. In what follows we generalise the condition $d_T(\xi,\eta)>\pi$ used by Ruane in the setting of proper $X$
(where $d_T$ is the \emph{Tits metric}, i.e.\ the path metric induced by the angle metric). In \cite{R}*{Prop~2.4} she proved that for proper $X$ this
condition is equivalent to $\xi,\eta$ being the endpoints of a geodesic line that does not bound an isometrically embedded Euclidean half-plane.

\begin{defin}
Let $p\in X$ and let $\eps,R>0$. A geodesic quadrilateral $xyy'x'$ in $X$ is a \emph{$(p,\eps,R)$-quadrilateral} if $p\in xy$, the concatenation
$yy'\cdot y'x' \cdot x'x$ is at distance $>R$ from $p$, and the sum of the four Alexandrov angles of $xyy'x'$ is $>2\pi-\eps$.
\end{defin}

\begin{lemma}
\label{lem:square} Let $\square=xyy'x'$ be a $(p,\eps,R)$-quadrilateral in $X$. Then there is an embedded $(p,\eps,R)$-quadrilateral $\overline \square =\bar x\bar
y\bar y'\bar x'$ in $X$ with $\bar x \bar y\subset xy$ and all the other sides of $\overline \square$ contained in some sides of $\square$.
\end{lemma}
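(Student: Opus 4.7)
The plan is to construct $\overline{\square}$ by successively simplifying $\square$ through two kinds of surgery, each preserving the $(p, \eps, R)$-property together with the requirements that $\bar x\bar y \subset xy$ and the remaining sides lie in sides of $\square$, until the quadrilateral is embedded.

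\emph{Surgery I (adjacent overlaps).} Suppose two adjacent sides of $\square$, say $xy$ and $yy'$, share more than their common vertex $y$. By uniqueness of geodesics in a CAT(0) space sharing an endpoint, their intersection is a closed subarc $[\bar y, y]$ ending at $y$. Since $p \in xy$ lies at distance $>R$ from $yy'$, this subarc does not contain $p$, so $p \in [x, \bar y]$. Replacing $y$ by $\bar y$ produces the modified quadrilateral $x\bar y y'x'$ with sides $[x, \bar y] \subset xy$, $[\bar y, y'] \subset yy'$, and the other two unchanged. The angle at $y$ in $\square$ was $0$ (both sides left $y$ in the same direction along the shared arc), the new angle at $\bar y$ is non-negative, and the other angles are unchanged; hence the angle sum is preserved, as is the $(p, \eps, R)$-property. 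Iterating at all four corners yields a quadrilateral whose pairs of adjacent sides meet only at their common vertex.

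\emph{Surgery II (opposite crossings).} After Surgery I, suppose the opposite sides $xy$ and $y'x'$ cross at a point $q$; since $q \in y'x'$ is at distance $>R$ from $p$, the point $q$ lies on $xy$ on one side of $p$, say between $x$ and $p$ (the other case, and the analogous case $yy' \cap x'x$, are symmetric). Choose any interior point $v$ of $yy'$ and form
\[
\overline{\square} := (\bar x, \bar y, \bar y', \bar x') = (y, q, y', v),
\]
with sides $yq \subset xy$ (containing $p$), $qy' \subset y'x'$, $y'v \subset yy'$, and $vy \subset yy'$. Surgery I forces adjacent sides to meet only at the corners, so $\overline{\square}$ is embedded, and the three sides other than $yq$ all lie in $yy' \cup y'x'$, hence at distance $>R$ from $p$.

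The main technical point is verifying that the angle sum of $\overline{\square}$ exceeds $2\pi - \eps$. The angle at $v$ equals $\pi$ (opposite subsegments of the geodesic $yy'$), and the angles at $y$ and $y'$ coincide with $\angle_y^\square$ and $\angle_{y'}^\square$, since the outgoing tangent directions along the relevant sides are unchanged. Let $\alpha$ and $\beta$ denote the Alexandrov angles at $q$ in the CAT(0) triangles $xqx'$ (with sides $xq \subset xy$, $qx' \subset y'x'$, $x'x$) and $qyy'$ respectively; the new angle at $q$ in $\overline{\square}$ equals $\beta$. The CAT(0) angle-sum inequality applied to $xqx'$ gives $\angle_x^\square + \alpha + \angle_{x'}^\square \leq \pi$, and combining with the hypothesis $\sum \angle^\square > 2\pi - \eps$ yields $\angle_y^\square + \angle_{y'}^\square > \pi + \alpha - \eps$. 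Therefore the angle sum of $\overline{\square}$ satisfies
\[
\beta + \angle_y^\square + \angle_{y'}^\square + \pi \;>\; \beta + \alpha + 2\pi - \eps \;\geq\; 2\pi - \eps.
\]
Since each application of Surgery II strictly shortens the side of $\overline{\square}$ contained in $xy$, finitely many iterations of Surgeries I and II produce the required embedded $(p, \eps, R)$-quadrilateral.
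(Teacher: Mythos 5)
Your proof is correct and follows essentially the same route as the paper's: first trimming corners where adjacent sides overlap (turning a zero angle into a nonnegative one), then cutting at a crossing of opposite sides, bounding the angle sum of the triangle away from $p$ by $\pi$ via the CAT(0) comparison, and reinstating a quadrilateral by inserting a flat vertex of angle $\pi$ on a side not containing $p$. The only difference is presentational — you carry out the angle bookkeeping explicitly rather than phrasing it as a decomposition into two triangles.
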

\begin{proof} If for some vertex $v$ of $\square$ the consecutive sides containing $v$ intersect along a geodesic $v\bar v$, then we replace $v$ by $\bar v$.
Note that all the Alexandrov angles remain the same except that the angle $0$ at $v$ is replaced by a nonnegative angle at $\bar v$.

If opposite sides of $\square$ intersect at a point $z$ of $X$, then $\square$ decomposes as a concatenation of two geodesic triangles with Alexandrov angle sum
$>2\pi-\eps$. Let $\triangle$ be the triangle containing $p$. By \cite{BH}*{II.1.7(4)}, the Alexandrov angle sum in the other triangle is $\leq \pi$, so
the sum of the Alexandrov angles of $\triangle$ is $>\pi-\eps$.

As in the first paragraph, we can assume that $\triangle$ is embedded. We introduce an extra vertex (of Alexandrov angle $\pi$) on a side of $\triangle$ not
containing $p$ to treat $\triangle$ as a $(p,\eps,R)$-quadrilateral.
\end{proof}

\begin{defin}
\label{def:Rsquare} Let $\xi,\eta\in\partial_\infty  X$. We say that $\xi,\eta$ are \emph{$(\eps,R)$-far} (or, shortly, \emph{far}), if there are
neighbourhoods $U,V$ of $\xi,\eta$ in $X\cup \partial_\infty  X$ and a closed metric ball $P\subset X$ such that for each $u\in U, v\in V,$ with at
least one of $u,v$ in $X$,
\begin{itemize}
\item
the geodesic (possibly a ray) $uv$ in $X$ intersects $P$, and, furthermore,
\item
there is $p\in uv\cap P$ that does not lie in any segment of $uv$ that is a side of a $(p,\eps,R)$-quadrilateral.
\end{itemize}
\end{defin}

\begin{lem}
\label{lem:twoinone} Let $x\in X$, let $\xi,\eta\in \partial_\infty X$, and let $u,v\in X$ distinct from $x$ lie on the geodesic rays from $x$ to
$\xi,\eta$, respectively. Then for any point $p$ on the geodesic $uv$, distinct from $u,v$, we have $\angle_u(x,v)+\angle_v(x,u)\geq \pi-\angle_{p}(\xi,\eta)$.
\end{lem}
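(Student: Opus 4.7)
The plan is to derive the inequality from a lower bound on $\angle_p(\xi,\eta)$ via a triangle inequality at $p$. Set $\alpha:=\angle_u(x,v)$ and $\beta:=\angle_v(x,u)$. Since $u$ is an interior point of the ray from $x$ to $\xi$, the Alexandrov angle $\angle_u(x,\xi)$ equals $\pi$, so the triangle inequality in the space of directions at $u$ gives $\angle_u(\xi,v)\geq\pi-\alpha$; symmetrically $\angle_v(\eta,u)\geq\pi-\beta$.

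The key step will be to propagate these estimates from $u,v$ to the interior point $p$ and show $\angle_p(\xi,u)\leq\alpha$ and $\angle_p(\eta,v)\leq\beta$. I would do this using the Busemann function $b_\xi$, normalized by $b_\xi(u)=0$. In a CAT(0) space $b_\xi$ is convex, with first-variation formula $\tfrac{d^+}{dt}b_\xi(\sigma(t))|_{t=0}=-\cos\angle_y(\xi,\sigma)$ along any geodesic $\sigma$ starting at a point $y$. Applied at $u$ along the segment $uv$, convexity yields
\[
b_\xi(p)\ \geq\ -d(u,p)\cos\angle_u(\xi,v)\ \geq\ d(u,p)\cos\alpha,
\]
using $\angle_u(\xi,v)\geq\pi-\alpha$ and the monotonicity of $\cos$ on $[0,\pi]$. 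Applied at $p$ along the segment $pu$ (with $b_\xi(u)=0$), convexity gives
\[
0\ \geq\ b_\xi(p)-d(p,u)\cos\angle_p(\xi,u),
\]
so $\cos\angle_p(\xi,u)\geq\cos\alpha$, that is, $\angle_p(\xi,u)\leq\alpha$. The same argument with $b_\eta$ normalized at $v$ gives $\angle_p(\eta,v)\leq\beta$.

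To finish, I would use $\angle_p(u,v)=\pi$ (since $p$ is an interior point of the segment $uv$, so the directions to $u$ and $v$ are antipodal in the CAT(1) space of directions at $p$) together with two applications of the triangle inequality for Alexandrov angles at $p$:
\[
\pi=\angle_p(u,v)\leq\angle_p(u,\xi)+\angle_p(\xi,\eta)+\angle_p(\eta,v)\leq\alpha+\angle_p(\xi,\eta)+\beta,
\]
which rearranges to the required $\alpha+\beta\geq\pi-\angle_p(\xi,\eta)$.

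The only non-routine ingredient is the first-variation formula for Busemann functions in a possibly non-proper CAT(0) space; this is standard and can be cited from \cite{BH}. An alternative would be to work in the finite triangles $upq_n$ with $q_n$ on the ray from $u$ to $\xi$ at distance $n$, where CAT(0) angle-sum comparison gives $\angle_p(u,q_n)\leq\pi-\angle_u(v,\xi)\leq\alpha$ immediately; upgrading this to $\angle_p(u,\xi)\leq\alpha$ as $n\to\infty$ is the delicate point, since Alexandrov angles are only upper semi-continuous, and this is why I prefer the Busemann route.
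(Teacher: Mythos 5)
Your proof is correct, and its skeleton is the same as the paper's: bound $\angle_p(\xi,u)\leq\angle_u(x,v)$ and $\angle_p(\eta,v)\leq\angle_v(x,u)$, then conclude with the triangle inequality for Alexandrov angles at the interior point $p$ of $uv$, where $\angle_p(u,v)=\pi$. The only divergence is in how the bound is propagated from $u$ to $p$: the paper simply cites \cite{BH}*{II.9.3(1)}, which gives $\angle_p(\xi,u)\leq \pi-\angle_u(\xi,p)=\pi-\angle_u(\xi,v)\leq\angle_u(x,v)$, whereas you re-derive this via convexity of $b_\xi$. Your Busemann computation is in effect a proof of that cited inequality, so nothing is lost; moreover, your closing worry about the first-variation formula in a non-proper space is unnecessary, since both of your applications only use the one-sided estimate $b_\xi(\sigma(t))\geq b_\xi(\sigma(0))-t\cos\angle_{\sigma(0)}(\xi,\sigma)$, which follows for all $t$ from the CAT(0) comparison $d\big(\sigma(t),r(s)\big)\geq d\big(\sigma(0),r(s)\big)-t\cos\angle_{\sigma(0)}\big(r(s),\sigma\big)$ applied to the ray $r$ issuing from $\sigma(0)$ towards $\xi$ (for which $\angle_{\sigma(0)}(r(s),\sigma)=\angle_{\sigma(0)}(\xi,\sigma)$), with no limit of angles or exact derivative required.
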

\begin{proof}
We have $\angle_u(x,v)\geq \pi-\angle_u(\xi,v)$, which is $\geq \angle_{p}(\xi,u)$ by \cite{BH}*{II.9.3(1)}. See Figure~\ref{fig:angle}. Analogously,
we have $\angle_v(x,u)\geq \angle_{p}(\eta,v)$ and the lemma follows.
\end{proof}

\begin{figure}
\includegraphics{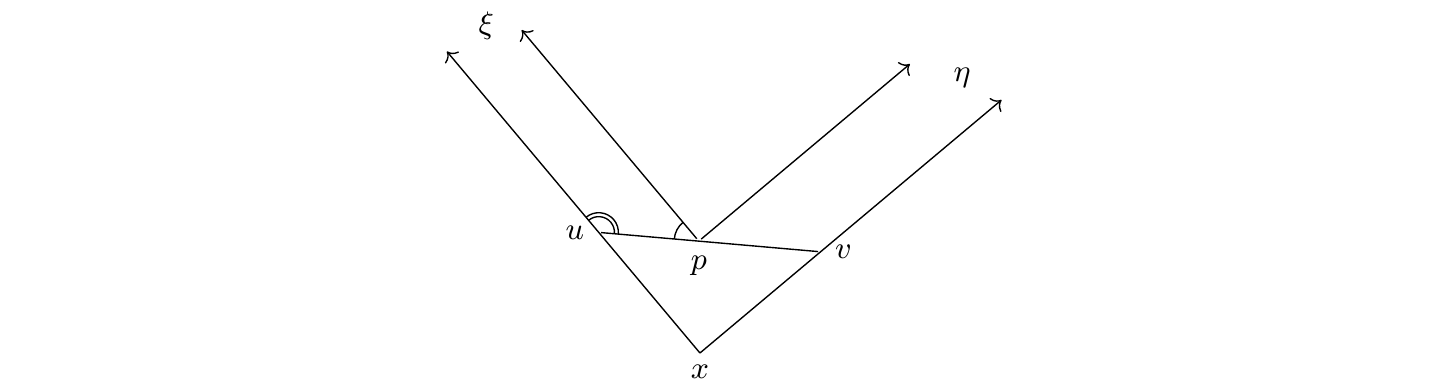}
\caption{}
\label{fig:angle}
\end{figure}

\begin{lem}
\label{lem:farpi}
Suppose that $\xi,\eta$ are far and $P$ is as in Definition~\ref{def:Rsquare}. Then $\angle (\xi,\eta)=\pi$ and for each $\delta>0$
there is $p\in P$ with $\angle_p(\xi,\eta)>\pi- \delta$.
\end{lem}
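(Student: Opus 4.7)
The plan is to first observe that the second conclusion implies the first, and then to establish the second conclusion by a direct triangle-inequality argument at a point $p \in uv \cap P$ produced by the farness hypothesis.

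For the first implication: by the sup-definition of the angle metric we have $\angle(\xi,\eta) \geq \angle_p(\xi,\eta)$ for every $p \in X$, and $\angle(\xi,\eta) \leq \pi$ since Alexandrov angles lie in $[0,\pi]$. Once the second conclusion is proved, letting $\delta\to 0^+$ yields $\angle(\xi,\eta)=\pi$.

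For the second conclusion, fix $\delta > 0$. Let $z$ be the center of $P$, and for a large parameter $T$ (to be chosen) let $u$ and $v$ be the points at distance $T$ along the geodesic rays from $z$ to $\xi$ and from $z$ to $\eta$, respectively. These points converge to $\xi$ and $\eta$ in the cone topology as $T\to\infty$, so $u\in U$ and $v\in V$ once $T$ is large enough. The first bullet of Definition~\ref{def:Rsquare} then provides $p \in uv\cap P$. Since $p \in uv$ we have $\angle_p(u,v)=\pi$, and the triangle inequality for the Alexandrov-angle pseudometric on the space of directions at $p$ yields
\[
\angle_p(\xi,\eta) \;\geq\; \pi - \angle_p(u,\xi) - \angle_p(v,\eta).
\]

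It remains to make the two angular deviations on the right sum to less than $\delta$, uniformly in $p \in P$, by choosing $T$ sufficiently large. For this, let $x_p$ be the point at distance $T$ along the ray from $p$ to $\xi$. The rays from $z$ and from $p$ to $\xi$ are asymptotic, so CAT(0) convexity of the distance forces the function assigning to $s$ the distance between their respective $s$-points to be nonincreasing, and in particular $d(u, x_p) \leq d(z,p) \leq \mathrm{diam}(P)$. Since $\angle_p(x_p,\xi)=0$, the triangle inequality reduces $\angle_p(u,\xi)$ to $\angle_p(u, x_p)$, and applying the standard CAT(0) bound that the Alexandrov angle is at most the Euclidean comparison angle to the thin tall triangle $p\,u\,x_p$ gives $\angle_p(u,\xi) = O(\mathrm{diam}(P)/T)$, uniformly in $p \in P$. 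The analogous bound holds for $v$ and $\eta$, and this completes the proof.

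The main obstacle I anticipate is precisely this uniform control of $\angle_p(u,\xi)$ over $p$ ranging in the non-compact closed ball $P$: in the proper setting one could simply invoke compactness of $P$, but here the nonproperness of $X$ forces the direct quantitative CAT(0) comparison bound, which rests crucially on the boundedness of the diameter of $P$.
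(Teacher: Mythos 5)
Your argument is correct, but it takes a genuinely different route from the paper's. The paper first proves $\angle(\xi,\eta)=\pi$ on its own, by contradiction: if $\angle(\xi,\eta)<\pi$, then by \cite{BH}*{II.9.8(4)} the geodesics $r(t)r'(t)$ between far-out points on the two rays from a basepoint escape every bounded set, contradicting the first bullet of Definition~\ref{def:Rsquare}. Only then does it choose a basepoint $x$ with $\angle_x(\xi,\eta)>\pi-\delta$ (which requires already knowing the sup is $\pi$), deduce that the angles of the triangle $x\,r(t)\,r'(t)$ at $r(t),r'(t)$ sum to less than $\delta$, and transfer this to $\angle_p(\xi,\eta)$ via Lemma~\ref{lem:twoinone}, whose proof rests on \cite{BH}*{II.9.3(1)}. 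You instead work directly at the point $p\in uv\cap P$ supplied by farness: the angle triangle inequality at $p$ reduces everything to bounding $\angle_p(u,\xi)$ and $\angle_p(v,\eta)$, which you control by the monotonicity of the distance between asymptotic rays together with the Euclidean comparison angle of a triangle having one side of length $\leq\mathrm{diam}(P)$ and two sides of length roughly $T$. This is uniform over the (non-proper) ball $P$, as you correctly stress, and it yields both conclusions simultaneously without invoking \cite{BH}*{II.9.8(4)} or Lemma~\ref{lem:twoinone}; the paper's proof is shorter because it quotes those facts, while yours is more self-contained and quantitative. Two small points worth adding: you need $p\neq u,v$ for $\angle_p(u,v)=\pi$ to make sense, which holds once $T$ exceeds the radius of $P$ (so that $u,v\notin P$); and the exact decay rate of the comparison angle is immaterial --- all that matters is that it tends to $0$ uniformly in $p\in P$ as $T\to\infty$, which your estimate does give.
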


\begin{proof} Let $x\in X$ and let $r,r'\colon [0,\infty)\to X$ be the geodesic rays from $x$ to $\xi,\eta$. Then for $t$ sufficiently large, we have $r(t)\in
U, r'(t)\in V$. If $\angle (\xi,\eta)<\pi$, then by \cite{BH}*{II.9.8(4)} the geodesic $r(t)r'(t)$ is arbitrarily far from $x$ for $t$ sufficiently
large. Consequently, $r(t)r'(t)$ is disjoint from $P$, which is a contradiction.

Thus $\angle (\xi,\eta)=\pi$, and
so we can assume $\angle_x(\xi,\eta)>\pi- \delta$. Then $\angle_{r(t)}\big(x,r'(t)\big)+\angle_{r'(t)}\big(r(t),x\big)<\delta$. Consequently, for $p\in P\cap r(t)r'(t)$, by Lemma~\ref{lem:twoinone}
we have $\angle_p(\xi,\eta)>\pi- \delta$.
\end{proof}

Following \cite{BF}*{Def~5.1}, an isometry $g$ of $X$ has \emph{rank~1} if for some (hence any) $x\in X$, there is $M>0$ such that, for each $n>0$,
\begin{itemize}
\item
the geodesic $\beta_n=g^{n}(x)g^{-n}(x)$ is at Hausdorff distance $\leq M$ from the set $\{g^{n}(x), g^{n-1}(x), \ldots, g^{-n}(x)\}$, and
\item
$\beta_n$
is \emph{$M$-contracting}, in the sense that the projection to $\beta_n$ of any closed metric ball $B$ disjoint from $\beta_n$ has length $\leq M$.
\end{itemize}

It is not hard to prove (but we will not need it) that if $X$ is proper, then $\xi,\eta$ are $(\eps,R)$-far (for some $\eps,R$) if and only if
$\xi,\eta$ are the endpoints of a geodesic line that does not bound an isometrically embedded Euclidean half-plane. Thus the following theorem
generalises \cite{BF}*{Thm~5.4}.

\begin{thm}
\label{thm:contracting} 
Let $g$ be an isometry of $X$ with
far limit
points. Then $g$ has rank~1.
\end{thm}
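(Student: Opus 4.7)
The plan is to verify both bullets of the rank~1 definition for a basepoint $x$ chosen in the ball $P$ provided by Definition~\ref{def:Rsquare} for the positive and negative limit points $\xi^+,\xi^-$ of $g$; let $U,V$ be the corresponding neighbourhoods.

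For the Hausdorff distance bullet, I would first observe that $\beta_n$ passes through $g^k(P)$ for most $k$. Indeed, fix $N_0$ large enough that $g^N(x)\in U$ and $g^{-N}(x)\in V$ whenever $N\geq N_0$. Then for $|k|\leq n-N_0$ the endpoints $g^{n-k}(x),g^{-n-k}(x)$ of the translated geodesic $g^{-k}(\beta_n)$ lie in $U,V$ respectively, so by Definition~\ref{def:Rsquare} the geodesic $g^{-k}(\beta_n)$ intersects $P$; equivalently $\beta_n\cap g^k(P)\neq\emptyset$. Picking any $q_k\in\beta_n\cap g^k(P)$, the distances $d(q_k,g^k(x))$ and $d(q_k,q_{k+1})$ are uniformly bounded in terms of $g,x,P$ only, and the tails of $\beta_n$ past $q_{\pm(n-N_0)}$ have uniformly bounded length. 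This furnishes the Hausdorff bound.

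For the contracting bullet, suppose instead that uniform $M$-contracting fails, so for arbitrarily large $M$ there is $n$ and a closed ball $B$ disjoint from $\beta_n$ with projection interval $[a,b]=\pi_{\beta_n}(B)$ of length $>M$. The uniform spacing of the $q_k$ lets me choose $k$ such that $g^k(P)\cap\beta_n$ sits in the interior of $[a,b]$ at distance $>L$ from each endpoint, with $L\to\infty$ as $M\to\infty$. Translating by $g^{-k}$---which conjugates $(p,\eps,R)$-quadrilaterals---I reduce to $k=0$. Applying Definition~\ref{def:Rsquare} to $\beta_n$ itself yields $p\in P\cap\beta_n$ not lying in any side of a $(p,\eps,R)$-quadrilateral; since $P$ has fixed diameter compared to $L$, we still have $d(p,a),d(p,b)>L-\mathrm{diam}(P)$. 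The goal is now to exhibit a $(p,\eps,R)$-quadrilateral with a side in $\beta_n$ through $p$, contradicting the choice of $p$.

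The natural candidate is $\tilde a\tilde b\tilde b'\tilde a'$, where $\tilde a,\tilde b\in[a,b]$ are placed symmetrically around $p$ at distance of order $L$, and $\tilde a',\tilde b'\in B$ project onto $\tilde a,\tilde b$. The orthogonal projection property gives $\angle_{\tilde a}(\tilde b,\tilde a'),\angle_{\tilde b}(\tilde a,\tilde b')\geq\pi/2$, and the first variation formula keeps the sides $\tilde a\tilde a',\tilde b\tilde b'$ at distance $\geq d(\tilde a,p)>R$ from $p$. The main obstacle will be (i) ensuring that the fourth side $\tilde a'\tilde b'\subset B$ stays at distance $>R$ from $p$, which I expect to handle via the CAT(0) Pythagorean inequality $d(p,z)^2\geq d(p,\pi(z))^2+d(\pi(z),z)^2$ applied to a judicious adjustment of $p$ relative to the projection of the centre of $B$; and, more seriously, (ii) ensuring that the Alexandrov angle sum exceeds $2\pi-\eps$. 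Since CAT(0) only supplies the upper bound $\sum\leq 2\pi$, this requires extra input: the cleanest route I see is to combine Lemma~\ref{lem:farpi}, which allows refining $p$ so that $\angle_p(\xi^+,\xi^-)>\pi-\delta$, with a rescaling/limit argument as $L\to\infty$ to produce a near-flat sub-quadrilateral near $p$; this near-flat configuration is exactly the $(p,\eps,R)$-quadrilateral that contradicts the choice of $p$ and hence forces the contracting property.
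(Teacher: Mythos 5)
Your treatment of the Hausdorff-distance bullet is correct and matches the paper's: the translates $g^k(P)$ meet $\beta_n$ at uniformly spaced points because Definition~\ref{def:Rsquare} applies to each translated geodesic $g^{-k}(\beta_n)=g^{n-k}(x)g^{-n-k}(x)$. Your concern (i) is also easily fixed (the paper simply moves the two endpoints of the chord of $B$ a distance $R$ towards the centre, so the chord stays $>R$ from $\beta_n$).

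The genuine gap is exactly the obstacle you flag as (ii), and your proposed resolution does not close it. You build a \emph{single} quadrilateral centred at the \emph{single} point $p\in P\cap\beta_n$ supplied by Definition~\ref{def:Rsquare}, and then hope to force its Alexandrov angle sum above $2\pi-\eps$ by a rescaling/limit argument. This fails for two reasons. First, $X$ is not assumed proper — the whole point of this section — so there is no compactness with which to extract a near-flat limit configuration; Lemma~\ref{lem:farpi} only controls the angle at some point of $P$ between the two directions along $\beta_n$, which says nothing about the corner angles of your quadrilateral at $\tilde a',\tilde b'$. Second, and independently of properness, the specific point $p$ may well sit in a genuinely negatively curved region where every quadrilateral through it has large angle deficit; nothing forces the near-flat part of the long projection interval to be located at $p$. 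The paper's proof avoids both problems by a pigeonhole argument (Lemma~\ref{lem:new}): it cuts the long projection segment $xy$ into $N=\lceil 2\pi/\eps\rceil$ pieces of length $>D+2R$, each containing one of the points $g^k(p_k)$, and forms the $N$ sub-quadrilaterals $x_{i-1}x_ix_i'x_{i-1}'$. If each had angle sum $\leq 2\pi-\eps$, then summing and using that the two angles at each interior subdivision point $x_i$ (and $x_i'$) add to $\geq\pi$ would give the outer quadrilateral angle sum $\leq 2\pi-N\eps\leq 0$, impossible since its angles at the two projection points are each $\geq\pi/2$. Hence \emph{some} sub-quadrilateral is a $(g^k(p_k),\eps,R)$-quadrilateral, which after translating by $g^{-k}$ contradicts the defining property of $p_k$. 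You need this averaging over many candidate points; no construction anchored at one prescribed point can work.
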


Note that if $X$ has finite telescopic dimension, then such $g$ is loxodromic, since by \cite{CL}*{Thm 1.1} (see also \cite{D}*{thm~6.1}), for
parabolic $g$ all of its fixed points in~$\partial_\infty X$ are at Tits distance $\leq \pi$.

Before giving the proof, we record the following immediate consequence of Theorem~\ref{thm:contracting} and  \cite{BF}*{Prop~5.9}.
\begin{cor}
\label{cor:BF}
Let $g$ (resp.\ $g'$) be isometries of $X$ with far limit points $\xi,\eta$ (resp.\ $\xi',\eta'$). If $\{\xi,\eta\},\{\xi',\eta'\}$ are disjoint, then some powers of $g,g'$ are a basis of a free subgroup in the isometry group of $X$.
\end{cor}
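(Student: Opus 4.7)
The plan is to reduce the statement to a direct application of \cite{BF}*{Prop~5.9}, which produces a free subgroup by a ping-pong argument from two rank~1 isometries whose limit sets are disjoint.

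First, I would apply Theorem~\ref{thm:contracting} to each of $g$ and $g'$: since $\xi,\eta$ are far, $g$ is rank~1, and since $\xi',\eta'$ are far, so is $g'$. In particular, both are non-elliptic (loxodromic or parabolic) with well-defined pairs of limit points $\{\xi,\eta\}$ and $\{\xi',\eta'\}$, and for each of them the orbits are contained in a bounded neighbourhood of a contracting quasi-geodesic (an axis, in the loxodromic case). Next, I would observe that the hypothesis $\{\xi,\eta\}\cap\{\xi',\eta'\}=\emptyset$ gives exactly the disjoint-limit-sets condition required by \cite{BF}*{Prop~5.9}. Applying that proposition yields integers $N,N'$ such that $g^N$ and $g'^{N'}$ generate a free subgroup of the isometry group of $X$, which is what we want.

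The only mild subtlety I would want to check is that \cite{BF}*{Prop~5.9}, originally formulated for proper CAT(0) spaces, applies verbatim in the non-proper setting of this section. The proposition only uses the contraction property of the axes (or pseudo-axes) of the two rank~1 isometries together with the fact that the four limit points have pairwise disjoint neighbourhoods. Both ingredients are available here: the contraction property comes from Theorem~\ref{thm:contracting}, and the separation of the four limit points into pairwise disjoint cone-topology neighbourhoods follows from Lemma~\ref{lem:farpi} together with the Hausdorffness of the cone topology on $X\cup\partial_\infty X$. The standard ping-pong can then be run in these neighbourhoods: for large enough $n$, the isometry $g^n$ maps the complement of a neighbourhood of $\xi^-=\xi$ into a neighbourhood of $\xi^+=\eta$ (and symmetrically for negative powers), and similarly for $g'^n$, so that any reduced word in $g^{\pm n}$ and $g'^{\pm n}$ moves a chosen basepoint to a prescribed neighbourhood and is therefore non-trivial.

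I do not anticipate a genuine obstacle here: the work has already been done in proving Theorem~\ref{thm:contracting}. The main point of the corollary is simply to package that theorem with the Bestvina--Fujiwara ping-pong into the free-group statement needed for the applications in the rest of the paper.
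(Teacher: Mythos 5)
Your proposal is exactly the paper's argument: the corollary is recorded there as an immediate consequence of Theorem~\ref{thm:contracting} together with \cite{BF}*{Prop~5.9}, with no further proof given. Your additional remark that the Bestvina--Fujiwara ping-pong only uses the contraction property from Definition~5.1 of \cite{BF} (and hence does not need properness of $X$) is a correct and worthwhile elaboration of what the paper leaves implicit.
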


To prove Theorem ~\ref{thm:contracting}, we first establish the following.

\begin{lemma}
\label{lem:new} Let $\eps,R>0$ and let $\beta\subset X$ be a geodesic with a family of points $\mathcal P$ dividing it into segments of length $\leq
D$. Then $\beta$ is $2R+\big\lceil\frac{2\pi}{\eps}\big\rceil(D+2R)$-contracting or there is $p\in \mathcal P$ and a $(p,\eps,R)$-quadrilateral with a side
on $\beta$.
\end{lemma}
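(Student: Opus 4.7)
The plan is to argue the contrapositive: assume $\beta$ is not $M$-contracting with $M=2R+N(D+2R)$, $N=\lceil 2\pi/\eps\rceil$, and produce a $(p,\eps,R)$-quadrilateral with a side on $\beta$. By hypothesis there is a closed ball $\bar B(c,r_0)$ disjoint from $\beta$ whose projection to $\beta$ has length $>M$; the $1$-Lipschitz property of the projection then forces $r_0>M/2>R$ and $d(c,\beta)>r_0$, and the image of $B$ is a connected subinterval of $\beta$.

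First I would pick $u_0,\ldots,u_N$ inside the projection of $B$ with consecutive distances $>D+2R$ and an $R$-buffer from the endpoints, possible because the projection has length $>2R+N(D+2R)$. Since $\mathcal P$ divides $\beta$ into segments of length $\leq D$, the subsegment $[u_{i-1}+R,u_i-R]$ of length $>D$ contains some $p_i\in\mathcal P$ at distance $>R$ from both $u_{i-1}$ and $u_i$. Pick $x_0,x_N\in B$ projecting to $u_0,u_N$; convexity places the geodesic $\gamma$ from $x_0$ to $x_N$ inside $B$. Because the projection $\pi\colon X\to\beta$ has convex fibres, the composition $\pi\circ\gamma$ is $1$-Lipschitz with interval level sets, hence monotone. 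So I can choose $x_i\in\gamma$ with $\pi(x_i)=u_i$ in monotone order along $\gamma$, making $x_{j-1},x_j,x_{j+1}$ colinear with $x_j$ in the middle. Form the geodesic quadrilaterals $Q_i=u_{i-1}u_ix_ix_{i-1}$ with Alexandrov angle sums $\gamma_i$.

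The heart of the argument is a pigeonhole on $\sum_i\gamma_i$. The $\mathrm{CAT}(0)$ projection property gives Alexandrov angle $\geq\pi/2$ at each $u_j$ between $\beta$ and the segment $u_jx_j$, and the Alexandrov triangle inequality at interior $u_j$ (using $\angle_{u_j}(u_{j-1},u_{j+1})=\pi$) adds the two projection angles there to $\geq\pi$; in total the $u$-vertex contributions across all $Q_i$ sum to $\geq N\pi$. At each interior $x_j$, colinearity gives $\angle_{x_j}(x_{j-1},x_{j+1})=\pi$, and the same triangle inequality makes the two angles at $x_j$ in $Q_j,Q_{j+1}$ sum to $\geq\pi$; the total $x$-vertex contribution is $\geq(N-1)\pi$. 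Combined with the $\mathrm{CAT}(0)$ bound $\gamma_i\leq 2\pi$, the total deficit $\sum(2\pi-\gamma_i)\leq\pi$, so by pigeonhole some $Q_i$ satisfies $\gamma_i>2\pi-\pi/N\geq 2\pi-\eps/2>2\pi-\eps$.

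It remains to check that this $Q_i$ is a $(p_i,\eps,R)$-quadrilateral. The angle-sum condition is established; moreover, since $\angle_{u_{i-1}}(p_i,x_{i-1})\geq\pi/2$ by the projection property, $u_{i-1}$ is the foot of perpendicular from $p_i$ onto the side $u_{i-1}x_{i-1}$, so the distance to that side equals $d(p_i,u_{i-1})>R$, and similarly for $u_ix_i$. The main obstacle is bounding the distance from $p_i$ to the top side $x_{i-1}x_i\subset B$ below by $R$: a priori $B$ may lie very close to $\beta$. I expect to resolve this by a preliminary reduction — replacing $B$ by a smaller concentric ball $\bar B(c,d(c,\beta)-R-\delta)$, which satisfies $d(\cdot,\beta)>R$ throughout — and then verifying that its projection still has length $>N(D+2R)$. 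In Euclidean space the projection of a ball of radius $r$ centered at distance $>r$ from $\beta$ has length exactly $2r$, which gives the desired bound; in a general $\mathrm{CAT}(0)$ space, however, only the upper bound on projection length is automatic from $1$-Lipschitzness, so rescuing this step — perhaps by choosing the $x_i$'s to maximise $d(\cdot,\beta)$ within their projection fibres, or by exploiting the near-flatness of $Q_i$ to argue that its top edge sits essentially at the height of its endpoints — is the technical crux I expect to need most care.
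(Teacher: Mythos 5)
Your angle-sum/pigeonhole argument and the choice of the $p_i$ match the paper's proof, but there is a genuine gap, and it is exactly the one you flag at the end: you never establish that the top side $x_{i-1}x_i\subset B$ is at distance $>R$ from $p_i$, and your proposed repair (replacing $B$ by the concentric ball $\bar B\bigl(c,d(c,\beta)-R-\delta\bigr)$) does not work for the reason you yourself give — in a general $\mathrm{CAT}(0)$ space there is no lower bound on the length of the projection of the smaller ball, so you lose control of the quantity $>N(D+2R)$ that the whole pigeonhole rests on. Your fallback suggestions (maximising $d(\cdot,\beta)$ in fibres, or near-flatness of $Q_i$) are not developed and would not obviously close the gap either, since the near-flatness of $Q_i$ is only available \emph{after} you know $Q_i$ is a legitimate $(p_i,\eps,R)$-quadrilateral.

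The paper's resolution is different and is where the leading $2R$ in the contracting constant is actually spent. One does not shrink the ball; instead, starting from the two witness points $x'',y''\in B=\bar B(z,R')$ whose projections are $>2R+N(D+2R)$ apart, one slides each of them distance $\min\{R,d(x'',z)\}$ along the geodesic towards the centre $z$, obtaining $x',y'$. These lie in $\bar B(z,R'-R)$, so by convexity the whole geodesic $x'y'$ lies in $\bar B(z,R'-R)$; since the closed ball $\bar B(z,R')$ is disjoint from $\beta$ we have $d(z,\beta)>R'$, hence every point of $x'y'$ is at distance $>R$ from $\beta$, in particular from each $p_i$. Meanwhile, since the projection is $1$-Lipschitz and $d(x',x''),d(y',y'')\leq R$, the projections $x,y$ of $x',y'$ move by at most $R$ each, so $d(x,y)>N(D+2R)$ still holds and the rest of your argument goes through verbatim. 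In short: spend the $2R$ on pulling the two endpoints radially inward (which controls the distance from the top side to $\beta$ at the cost of at most $2R$ of projection length), not on an endpoint buffer inside the projection interval.
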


\begin{proof}
Suppose that $\beta$ is not $2R+\big\lceil\frac{2\pi}{\eps}\big\rceil(D+2R)$-contracting. Then there is a closed ball~$B\subset X$ of radius $R'>R$ and centre
$z$ disjoint from $\beta$, and points $x'',y''\in B$  with projections to $\beta$ at distance
$>2R+\big\lceil\frac{2\pi}{\eps}\big\rceil(D+2R)$. Let $x'$ be the point on the geodesic $x''z$ at distance $\min \{R,d_X(x'',z)\}$ from $x''$.
Define $y'$ analogously. Then the geodesic $x'y'$ is contained in the closed ball of radius $R'-R$ and centre $z$ and hence it is at distance $>R$
from $\beta$. Let $x,y$ be the projections of $x',y'$ to $\beta$. Note that $x$ and $y$ are at distance $\leq R$ from the projections of $x'',y''$.
Thus the geodesic~$xy$ has length $>\big\lceil\frac{2\pi}{\eps}\big\rceil(D+2R)$. Consequently, $xy$ can be divided into
$N=\big\lceil\frac{2\pi}{\eps}\big\rceil$ segments $\gamma_1=xx_1,\gamma_2=x_1x_2,\ldots, \gamma_N=x_{N-1}y$ of length $>D+2R$.

\begin{figure}
\includegraphics[scale=1]{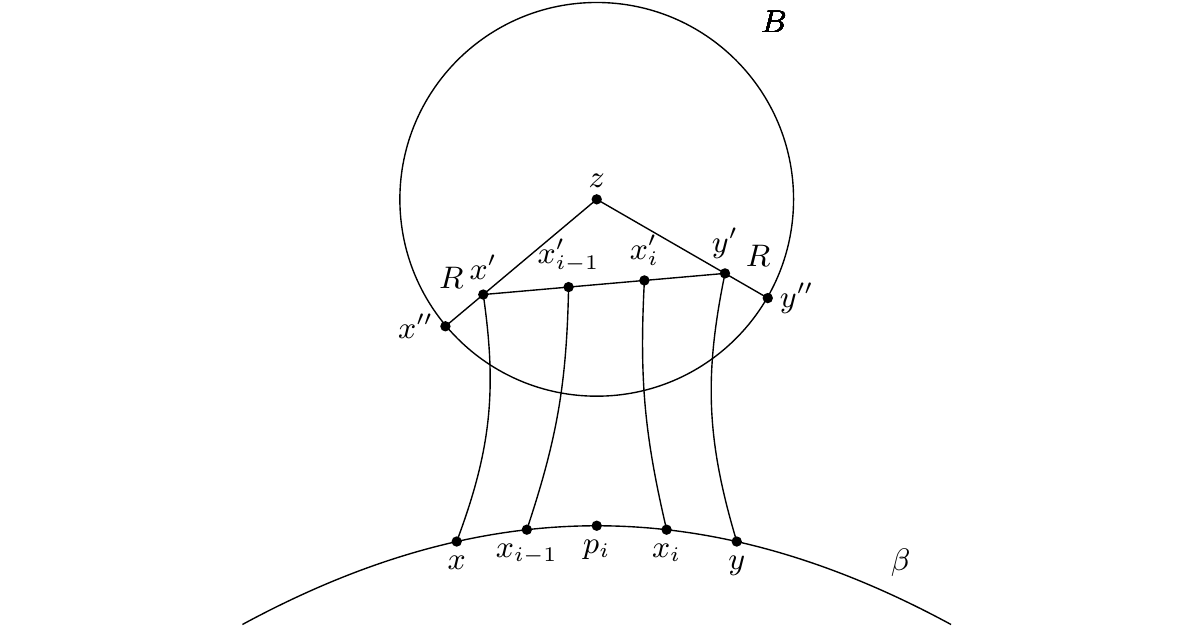}
\caption{}
\label{fig:contracting}
\end{figure}

Each $\gamma_i$ contains some $p_i\in \mathcal P$ at distance $>R$ from the endpoints of $\gamma_i$. Choose $x_1',\ldots, x_{N-1}'$ on $x'y'$ in that
order so that the projection of each $x'_i$ to $\beta$ is~$x_i$. See Figure~\ref{fig:contracting}. Denoting $x_0'=x', x_N'=y', x_0=x, x_N=y$, for $i=1,\ldots, N$
the geodesics $x'_{i-1}x_{i-1}, x'_ix_i$ are at distance $>R$ from~$p_i$. Thus each $x_{i-1}x_ix_i'x_{i-1}'$ is either a
$(p_i,\eps,R)$-quadrilateral, or its sum of the Alexandrov angles is $\leq 2\pi-\eps$. Since
$\angle_{x_i}(x'_{i},x_{i-1})+\angle_{x_i}(x'_{i},x_{i+1})\geq \pi$ and $\angle_{x'_i}(x_{i},x'_{i-1})+\angle_{x'_i}(x_{i},x'_{i+1})\geq \pi$, the
latter possibly would imply that the sum of the Alexandrov angles of $xyy'x'$ is $\leq 2\pi-N\eps\leq 0$, which is a contradiction.
\end{proof}

\begin{proof}[Proof of Theorem \ref{thm:contracting}]
Suppose that the limit points of $g$ are $(\eps,R)$-far. Let $P, U, V$ be provided by Definition~\ref{def:Rsquare}. Let $p\in P$. By possibly passing
to a power of $g$, we can assume $g^n(p)\in U, g^{-n}(p)\in V$ for all $n\geq 1$. Consequently, given $n\geq 1$, for $-(n-1)\leq k\leq n-1$, we have
$g^{n-k}(p)\in U,g^{-n-k}(p)\in V$, and so there is a point $p_k\in P$ on the geodesic $g^{n-k}(p)g^{-n-k}(p)$ that does not lie in a segment of
$g^{n-k}(p)g^{-n-k}(p)$ that is a side of a $(p_k,\eps,R)$-quadrilateral. Denote $D=\mathrm{diam}(P\cup g(P))$. The points
$g^{n-1}(p_{n-1}),g^{n-2}(p_{n-2}),\ldots, g^{-(n-1)}(p_{-(n-1)})$ divide the geodesic $\beta_n=g^n(p)g^{-n}(p)$ into segments of length $\leq D$. In
particular, $\beta_n$ is at Hausdorff distance $\leq D$ from $\{g^{n}(p), g^{n-1}(p),\ldots, g^{-n}(p)\}$.

Let $\beta=\beta_n$ and $\mathcal P=\{g^k(p_k)\}$. By Lemma~\ref{lem:new}, $\beta_n$ is $2R+\big\lceil\frac{2\pi}{\eps}\big\rceil(D+2R)$-contracting
or there is $-(n-1)\leq k\leq n-1$ and a $(g^k(p_k),\eps,R)$-quadrilateral with a side on $\beta_n$. However, the latter possibility would imply that there
is a $(p_k,\eps,R)$-quadrilateral with a side on $g^{-k}(\beta_n)=g^{n-k}(p)g^{-n-k}(p)$, which would be a contradiction.

Thus each $\beta_n$ is $2R+\big\lceil\frac{2\pi}{\eps}\big\rceil(D+2R)$-contracting, as desired.
\end{proof}

\subsection{Construction}

The main result of this section is the following construction of rank~1 isometries, which is a generalisation of \cite{R}*{Main~Thm}.

\begin{prop}
\label{prop:Ruane} Let $f,g$ be isometries of $X$ that have positive limit points $\xi_f^+,\xi_g^+$ that are far and negative limit points
$\xi_f^-,\xi_g^-$ that are far. Then for any neighbourhoods $U_0,V_0$ of $\xi_f^+,\xi_g^+$, for sufficiently large~$n$, the isometry $f^ng^{-n}$ is
vile or has positive translation length and far limit points in $U_0,V_0$.
\end{prop}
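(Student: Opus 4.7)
The plan is to adapt Ruane's ping-pong strategy to the non-proper setting: I will construct an $h_n$-invariant piecewise-geodesic quasi-axis for $h_n = f^n g^{-n}$, read off its two ends in $\partial_\infty X$ as the positive and negative limit points of $h_n$, and then verify that these inherit the far-ness of the pairs $(\xi_f^+, \xi_g^+)$ and $(\xi_f^-, \xi_g^-)$. For the setup, I invoke Definition~\ref{def:Rsquare} twice together with Lemma~\ref{lem:farpi} to choose pairwise disjoint open neighbourhoods $U \subset U_0$, $V \subset V_0$, $U'$, $V'$ of $\xi_f^+, \xi_g^+, \xi_f^-, \xi_g^-$, closed balls $P, P'$, parameters $\eps, R, \eps', R'$ witnessing the two far-ness hypotheses, and a basepoint $p \in P$ with $\angle_p(\xi_f^+, \xi_g^+)$ arbitrarily close to $\pi$ (as permitted by Lemma~\ref{lem:farpi}). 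For $n$ sufficiently large, the orbit points $f^{\pm n}(p)$ and $g^{\pm n}(p)$ lie in the corresponding neighbourhoods by the very definition of limit points.

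The quasi-axis $\alpha_n$ is the bi-infinite piecewise-geodesic path through the $h_n$-orbit $(h_n^k(p))_{k \in \Z}$, with consecutive vertices joined by geodesics in $X$. By equivariance the turning angle at every vertex equals $\angle_p(g^n f^{-n}(p), f^n g^{-n}(p))$, so the core geometric task is to show that both $f^n g^{-n}(p)$ and $g^n f^{-n}(p)$ lie near $\xi_f^+$ and $\xi_g^+$ respectively for $n$ large, so that the turning angle approaches $\angle_p(\xi_f^+, \xi_g^+) > \pi - \delta$. With the turning angles so controlled, Lemma~\ref{lem:new} rules out a $(q, \eps, R)$-quadrilateral with side on $\alpha_n$, so $\alpha_n$ is a quasi-geodesic whose two ends converge in the cone topology to points $\xi_n^+ \in U \subset U_0$ and $\xi_n^- \in V \subset V_0$; these are exactly the positive and negative limit points of $h_n$. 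Since $\xi_n^\pm$ already lie in $U$, $V$, the same data $(P, \eps, R)$ continues to witness far-ness of $(\xi_n^+, \xi_n^-)$ by the uniformity in Definition~\ref{def:Rsquare} (one shrinks $U, V$ to sub-neighbourhoods of $\xi_n^\pm$). For the final trichotomy: $h_n$ has distinct limit points, hence is not elliptic; if $|h_n| > 0$ the first option of the conclusion holds, and if $|h_n| = 0$ then $h_n$ is parabolic of translation length zero with distinct positive and negative limit points, hence vile.

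The main obstacle is making rigorous the claim that $f^n g^{-n}(p) \to \xi_f^+$ and $g^n f^{-n}(p) \to \xi_g^+$ as $n \to \infty$ in the cone topology. These are genuine diagonal limits: while $f^k(p) \to \xi_f^+$ for fixed $k$, the sequence $f^n(g^{-n}(p))$ applies $f^n$ to the \emph{moving} point $g^{-n}(p) \to \xi_g^-$. In a proper CAT(0) space this would be immediate from the North-South dynamics of a rank~1 isometry, but neither is $f$ known to be rank~1 nor $X$ assumed proper. The resolution must leverage the far-ness of $(\xi_f^-, \xi_g^-)$, which quantifies how $g^{-n}(p)$ stays away from $\xi_f^-$, together with the quadrilateral/angle machinery of Lemma~\ref{lem:new} and Lemma~\ref{lem:twoinone}, to force the image $f^n(g^{-n}(p))$ into $U$ for $n$ large.
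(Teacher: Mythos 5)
Your reduction of the proposition to the two convergence claims $f^ng^{-n}(p)\to \xi_f^+$ and $g^nf^{-n}(p)\to\xi_g^+$ matches the paper's strategy, but the proposal leaves precisely the step you yourself flag as "the main obstacle" unproved, and that step is the substance of the proposition. The paper resolves it with Lemma~\ref{lem:Ruane}: since $\xi_f^-$ and $\xi_g^-$ are far, Definition~\ref{def:Rsquare} supplies a ball $P$ and neighbourhoods $U^-,V$ of $\xi_f^-,\xi_g^-$ such that every geodesic from $f^{-n}(x)\in U^-$ to any $v\in V$ meets $P$; applying $f^n$, every geodesic $xf^n(v)$ passes within a uniform distance of $f^n(P)$, and since $f^n(P)$ converges to $\xi_f^+$ this forces $f^n(V)$ into any prescribed basic neighbourhood of $\xi_f^+$. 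This is a short argument, but it is the one piece of the proof that actually uses the far-ness of the \emph{negative} limit points, and saying "the resolution must leverage the far-ness of $(\xi_f^-,\xi_g^-)$" is a statement of the problem, not a proof.

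The second half of your argument also diverges from the paper's Lemma~\ref{lem:convergence} in ways that create further gaps. First, the claim that the turning angle of your quasi-axis at $p$ "approaches $\angle_p(\xi_f^+,\xi_g^+)$" needs a \emph{lower} bound on an Alexandrov angle from cone-topology convergence of the endpoints, and Alexandrov angles are not lower semicontinuous in their endpoints; the paper instead bounds the relevant angles via the isometric triangles $x'xp$ and $x'py$ (with $x=h_n(p)$, $y=h_n^{-1}(p)$) and Lemma~\ref{lem:twoinone}. Second, asserting that the two ends of $\alpha_n$ "are exactly the positive and negative limit points of $h_n$" presupposes that the orbit $h_n^k(p)$ converges in the cone topology as $k\to\pm\infty$; in a non-proper space this is not automatic from $\alpha_n$ being a contracting quasi-geodesic, and it is exactly what can fail for a parabolic of zero translation length (the vile case). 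The paper avoids both issues by arguing contrapositively: it shows $h_n$ has no fixed point in $X\cup\partial_\infty X$ outside $U\cup V$ (by exhibiting a forbidden $(p',\eps,R)$-quadrilateral from a hypothetical fixed point) and none in $V$ that could be the positive limit point (via the Busemann function), so the only possibilities left are vile or positive translation length with limit points in $U,V$, where existence of limit points in the positive-translation-length parabolic case is supplied by the cited result of Duchesne rather than by orbit convergence along your quasi-axis.
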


To prove Proposition~\ref{prop:Ruane}, we first need the following variant of \cite{Ball}*{Lem~III.3.2}.

\begin{lem}
\label{lem:convergence} Suppose that $\xi,\eta\in \partial_\infty  X$ are far. Let $h_n$ be a sequence of isometries of $X$ such that for some (hence
every) $p\in X$ we have $h_n(p)\to \xi, h^{-1}_n(p)\to \eta$. Then for any neighbourhoods $U_0,V_0$ of $\xi,\eta$, for sufficiently large $n$, the
isometry $h_n$ is vile or has positive translation length and far limit points in $U_0,V_0$.
\end{lem}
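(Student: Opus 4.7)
The plan is to adapt the proof of Theorem~\ref{thm:contracting} to a single isometry $h_n$ at a time, showing that for $n$ sufficiently large, $h_n$ is rank~$1$ with limit points in $U_0,V_0$. The conclusion will then follow, since a rank~$1$ isometry that is not vile must be loxodromic or parabolic of positive translation length (with limit points $\xi_n^\pm$ satisfying $\angle(\xi_n^+,\xi_n^-)=\pi$, as in the discussion preceding Corollary~\ref{cor:BF}).

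To set up, I invoke Lemma~\ref{lem:farpi} to choose $p\in P$ with $\angle_p(\xi,\eta)>\pi-\delta$ for a small $\delta>0$, and then shrink the neighbourhoods $U,V$ of $\xi,\eta$ furnished by Definition~\ref{def:Rsquare} so that $U\subset U_0$, $V\subset V_0$, and $\angle_p(u,\xi)<\delta$ for $u\in U$, $\angle_p(v,\eta)<\delta$ for $v\in V$; this last shrinking is possible because cone-topology convergence to a boundary point implies convergence of the direction at $p$, via the CAT(0) comparison applied to geodesics based at $p$. For $n$ large, $h_n(p)\in U$ and $h_n^{-1}(p)\in V$, so the Alexandrov-angle triangle inequality at $p$ gives $\angle_p(h_n(p),h_n^{-1}(p))>\pi-3\delta$, and by isometry the analogous estimate $\angle_{h_n^k(p)}(h_n^{k-1}(p),h_n^{k+1}(p))>\pi-3\delta$ holds at every iterate. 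The replacement for the ``pass to a power of $g$'' trick of Theorem~\ref{thm:contracting} is an inductive argument showing that for $n$ large, $h_n^k(p)\in U$ for every $k\geq 1$ and $h_n^k(p)\in V$ for every $k\leq-1$, using the wide-angle estimates to keep the orbit in the ``channel'' between $\eta$ and $\xi$.

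Once the orbit stays inside $U$ and $V$, the rest mirrors the proof of Theorem~\ref{thm:contracting}: for $-(N-1)\leq k\leq N-1$ the geodesic $h_n^{N-k}(p)h_n^{-N-k}(p)$ contains a point $p_k\in P$ with the no-quadrilateral property from the far condition, and $h_n^k(p_k)$ lies on $\beta_N^{(n)}=h_n^{-N}(p)h_n^N(p)$ with spacing $\leq D=\mathrm{diam}(P\cup h_n(P))$. Lemma~\ref{lem:new} then yields a constant $M=M(n)$, independent of $N$, such that $\beta_N^{(n)}$ is $M$-contracting, and the Hausdorff estimate required by the definition of rank~$1$ follows from the same marked points $h_n^k(p_k)$. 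Thus $h_n$ is rank~$1$; if not vile, its limit points $\xi_n^\pm$ exist, lie in $U_0,V_0$ (for $U$ chosen small enough that $\bar U\subseteq U_0$, and similarly for $V$, and using the orbit condition), and a limiting version of the no-quadrilateral argument shows $\xi_n^\pm$ are far. I expect the inductive claim $h_n^k(p)\in U$ for all $k\geq 1$ to be the main obstacle: since $d_X(p,h_n(p))$ can be arbitrarily large in $n$, a naïve CAT(0) angle-to-distance estimate along the broken orbit would accumulate drift growing with $k$, so the argument must exploit some dynamical stability of $U$ under $h_n$ for $n$ large---presumably a consequence of the far condition forcing $h_n$ to behave like a ``translation from $\eta$ to $\xi$'' once $h_n(p)$ is deep in $U$ and $h_n^{-1}(p)$ deep in $V$.
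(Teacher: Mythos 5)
Your proposal takes a different route from the paper, and the route has a genuine gap at exactly the point you flag yourself: the inductive claim that $h_n^k(p)\in U$ for all $k\geq 1$ and $h_n^k(p)\in V$ for all $k\leq -1$ is never proved, and it is the crux of your argument. The hypothesis of the lemma only controls the \emph{first} iterate $h_n^{\pm 1}(p)$ (for varying $n$), and in a non-proper $\mathrm{CAT}(0)$ space a wide Alexandrov angle at each orbit point does not by itself prevent the orbit from drifting out of $U$, returning, or even being bounded -- a priori $h_n$ could be elliptic with a fixed point far from $p$. Your appeal to ``some dynamical stability of $U$ under $h_n$'' is precisely the content one would need to establish, and it is comparable in difficulty to the lemma itself (it is essentially what Lemma~\ref{lem:Ruane} provides for an isometry already known to have far limit points). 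A second, smaller gap: the definition of rank~1 used in the paper does not exclude elliptic isometries or parabolic isometries of translation length zero (for an elliptic isometry the segments $\beta_n$ are uniformly bounded and trivially $M$-contracting), so ``rank~1 and not vile'' does not yield ``positive translation length.''

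The paper avoids orbit control entirely. It argues by fixed points: assuming $U,V$ are basic neighbourhoods based at a point $p\in P$ with $\angle_p(u,v)>\pi-\frac{\eps}{2}$ for all $u\in U$, $v\in V$, and that $h_n(B_R(P))\subset U$, $h_n^{-1}(B_R(P))\subset V$, it shows that $h_n$ has no fixed point $x_n\in X\cup\partial_\infty X$ outside $U\cup V$. The key trick is that the triangles $x_n\,h_n(p)\,p$ and $x_n\,p\,h_n^{-1}(p)$ are isometric (apply $h_n^{-1}$ to the first), which converts the angle bound at $p$ into an angle-sum bound producing a forbidden $(p',\eps,R)$-quadrilateral with a side on $h_n(p)h_n^{-1}(p)$. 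Fixed points of the wrong type inside $U$ or $V$ are then excluded by elementary distance and Busemann-function monotonicity, leaving only the possibilities in the conclusion; the limit points are far simply because $P$ together with sub-neighbourhoods $U_n\subset U$, $V_n\subset V$ of them still satisfies Definition~\ref{def:Rsquare}. If you want to salvage your approach, you would essentially have to reprove this fixed-point dichotomy first, so I recommend adopting the paper's argument.
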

\begin{proof}
Suppose that $\xi,\eta$ are $(\eps,R)$-far, and let $P, U, V$ be provided by Definition~\ref{def:Rsquare}. We can assume $U\subset U_0, V\subset V_0$.
By Lemma~\ref{lem:farpi}, we can assume
that for some $p\in P$ and any $u\in U,v\in V$ we have $\angle_p(u,v)>\pi-\frac{\eps}{2}$. Furthermore, we can assume that $U,V$ are basic
neighbourhoods based at $p$ and that for all $n>0$ we have $h_n(B_R(P))\subset U, h^{-1}_n(B_R(P))\subset V$, where $B_R$ denotes the closed
$R$-neighbourhood.

We first justify that for sufficiently large $n$ the isometry $h_n$ has no fixed point $x_n\in X\cup \partial_\infty  X$ outside $U\cup V$. Indeed,
otherwise let $p'\in P$ be a point on the geodesic $h_n(p)h_n^{-1}(p)$ as in Definition~\ref{def:Rsquare}. Since $V$ is based at $p$, and
$h^{-1}_n(B_R(P))\subset V$ whereas $h_n^{-1}(x_n)=x_n\notin V$, we have that the geodesic $ph_n^{-1}(x_n)$ (which is possibly a ray) is disjoint
from $h^{-1}_n(B_R(p'))$. Consequently, the geodesic $h_n(p)x_n$ is disjoint from $B_R(p')$. Analogously, the geodesic $h^{-1}_n(p)x_n$ is disjoint
from $B_R(p')$.

\begin{figure}
\includegraphics[scale=1]{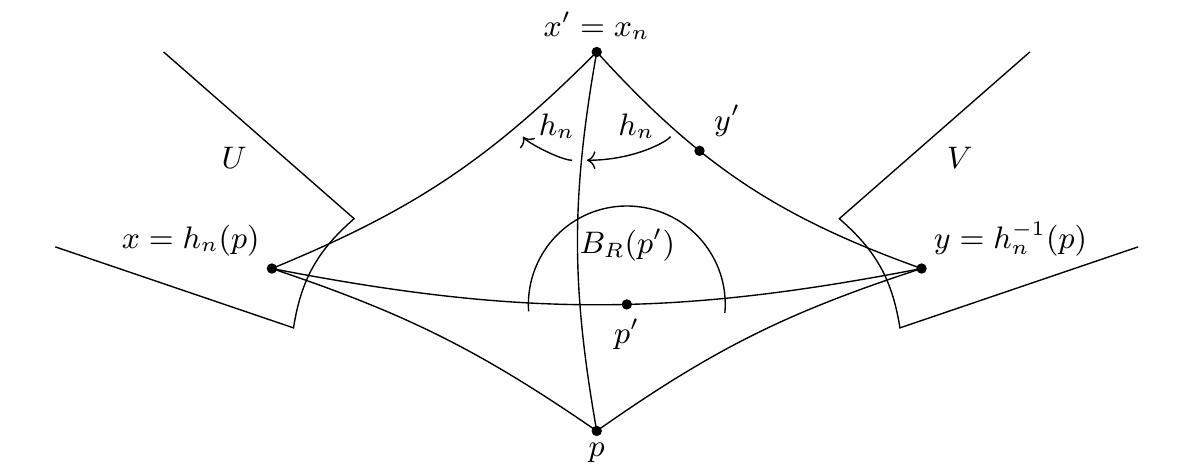}
\caption{}
\label{fig:triangles}
\end{figure}

If $x_n\in X$, then let $x=h_n(p),y=h^{-1}_n(p),x'=x_n$, and let $y'$ be any point on the geodesic $x'y$ distinct from $x',y$. See
Figure~\ref{fig:triangles}. We now compute the Alexandrov angles of the geodesic quadrilateral $xyy'x'$. The triangles $x'xp$ and $x'py$ are
isometric and so $\angle_y(p,x')+\angle_x(x',p)=\angle_p(x,x')+\angle_p(x',y)>\pi-\frac{\eps}{2}$. Consequently, $\angle_y(x,x')+\angle_x(x',y)>
\pi-\frac{\eps}{2} - \angle_y(p,x)-\angle_x(y,p)\geq \pi-\frac{\eps}{2} - \frac{\eps}{2}$. Thus the sum of the Alexandrov angles of $xyy'x'$ at $x$
and $y$ exceeds $\pi-\eps$, whereas at $y'$ it equals~$\pi$. Consequently, $xyy'x'$ is a $(p',\eps,R)$-quadrilateral, which is a contradiction.

If $x_n\in
\partial_\infty  X$, then we take $x'$ sufficiently far on the geodesic ray $h_n(p)x_n$, and we set $y'=h_n^{-2}(x')$. Then, since horoballs are limits
of balls, the Alexandrov angles of $xyy'x'$ at $x'$ and $y'$ are $\geq \frac{\pi}{2}$. Thus an analogous calculation to the one in the case of
$x_n\in X$ leads to a contradiction.

Suppose now that $h_n$ has a fixed point $x_n\in X\cup\partial_\infty  X$ in $V$. Then observe that the geodesic $h_n(p)x_n$ passes through $P$,
which contains $p$. If $x_n\in X$, then for large $n$ this contradicts $d_X\big(h_n(p),x_n\big)=d_X(p,x_n)$. If $x_n\in \partial_\infty  X$ and $x_n$
is the positive fixed point of $h_n$, then for large $n$ this observation contradicts the fact that the Busemann function \cite{BH}*{II.8.17}
satisfies $b_{x_n}(h_n(p))\leq b_{x_n}(p)$. The only remaining possibility is that for large $n$ the isometry $h_n$ is vile or has positive
translation length, with a positive limit point in $U$ and a negative limit point in $V$. Choose neighbourhoods $U_n,V_n\subset X\cup \partial_\infty
X$ of these limits points inside $U,V$. Then $P,U_n,V_n$ satisfy Definition~\ref{def:Rsquare} for the limit points of $h_n$.
\end{proof}

The following is essentially \cite{R}*{Lem~6.3}.

\begin{lem}
\label{lem:Ruane} Let $f$ be an isometry of $X$ with limit points $\xi^+,\xi^-\in \partial_\infty  X$. Let $\eta\in \partial_\infty  X$ with
$\xi^-,\eta$ far. Then there is a neighbourhood $V\subset \partial_\infty  X\cup X$ of $\eta$ such that for every neighbourhood $U^+\subset
\partial_\infty X\cup X$ of $\xi^+$, for sufficiently large $n$ we have $f^n(V)\subset U^+$.
\end{lem}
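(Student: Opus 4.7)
The plan is to let $V$ be the neighbourhood of $\eta$ furnished by the hypothesis that $\xi^-,\eta$ are far: apply Definition~\ref{def:Rsquare} to the pair $(\xi^-,\eta)$ to obtain neighbourhoods $U^-,V$ of $\xi^-,\eta$ and a closed metric ball $P\subset X$ with the stated properties, and propose this $V$ for the lemma. The logical shape of the argument is then: the ``far'' condition forces every geodesic from a point near $\xi^-$ to a point of $V$ to pass through the bounded set $P$, and translating everything by $f^n$ then drags that obstruction towards $\xi^+$.

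Fix a basepoint $x\in X$. Since $\xi^-$ is the negative limit point of $f$, we have $f^{-n}(x)\to\xi^-$, so $f^{-n}(x)\in U^-$ for all $n$ beyond some threshold \emph{independent of $v$}. Then for every $v\in V$ the geodesic (possibly a ray) from $f^{-n}(x)$ to $v$ must intersect $P$ at some point $p_n(v)$, by the ``far'' hypothesis; the ``at least one in $X$'' clause is satisfied by $f^{-n}(x)\in X$. Applying $f^n$, the geodesic from $x$ to $f^n(v)$ passes through $f^n(p_n(v))$.

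Now $P$ is bounded, so $\{f^n(p):p\in P\}$ has diameter at most $\mathrm{diam}(P)$ for every $n$. Fix any $p_0\in P$; since $\xi^+$ is the positive limit point of $f$, we have $f^n(p_0)\to\xi^+$ in the cone topology. A routine CAT(0) argument shows that a sequence at bounded distance from one converging in the cone topology still converges to the same boundary point, with uniformity over the bounded perturbation. Hence $f^n(p_n(v))\to\xi^+$ uniformly in $v\in V$. Given any basic neighbourhood $U^+$ of $\xi^+$ based at $x$ with parameters $t,\epsilon$, for $n$ large enough (independent of $v$) we have $f^n(p_n(v))\in U^+$; since $f^n(p_n(v))$ lies on the geodesic from $x$ to $f^n(v)$, the length and initial-tracking conditions defining $U^+$ are inherited by $f^n(v)$, so $f^n(v)\in U^+$. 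Every neighbourhood of $\xi^+$ contains such a basic $U^+$, completing the argument.

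I expect the only real content beyond bookkeeping to be the uniformity in $v$, which rests on $P$ being \emph{bounded} (not merely closed) so that $f^n(P)$ stays in a bounded neighbourhood of the orbit $\{f^n(p_0)\}$, together with the elementary cone-topology fact just cited. Everything else is forced by the ``far'' definition and by the observation that a basic neighbourhood of $\xi^+$ based at $x$ is automatically absorbed by any geodesic segment out of $x$ that passes through a point sufficiently close to $\xi^+$.
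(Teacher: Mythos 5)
Your proposal is correct and follows essentially the same route as the paper: take $V$ from the far-ness of $(\xi^-,\eta)$, use $f^{-n}(x)\in U^-$ to force each geodesic $f^{-n}(x)v$ through $P$, and translate by $f^n$ to put a point of $f^n(P)$ (which converges to $\xi^+$ uniformly over the bounded set $P$) on each geodesic $xf^n(v)$. Your write-up just makes explicit the uniform cone-topology absorption step that the paper leaves implicit.
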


\begin{proof} Let $P, U^-,V$ be provided by Definition~\ref{def:Rsquare} for $\xi^-,\eta$. Choose a basepoint $x\in X$ such that $f^{-n}(x)$
lie in $U^-$ for all $n\geq 0$. Then for each $v\in V$ and all $n\geq 0$ there is a point of the ball $P$ on the geodesic $f^{-n}(x)v$ (which is
possibly a ray). Consequently, each geodesic $xf^n(v)$ contains a point at a uniform distance from~$f^n(P)$. See Figure~\ref{fig:Ruane}.
This proves that for every neighbourhood $U^+\subset
\partial_\infty  X\cup X$ of $\xi^+$ we have $f^n(V)\subset U^+$ for $n$ sufficiently large.
\end{proof}

\begin{figure}
\includegraphics{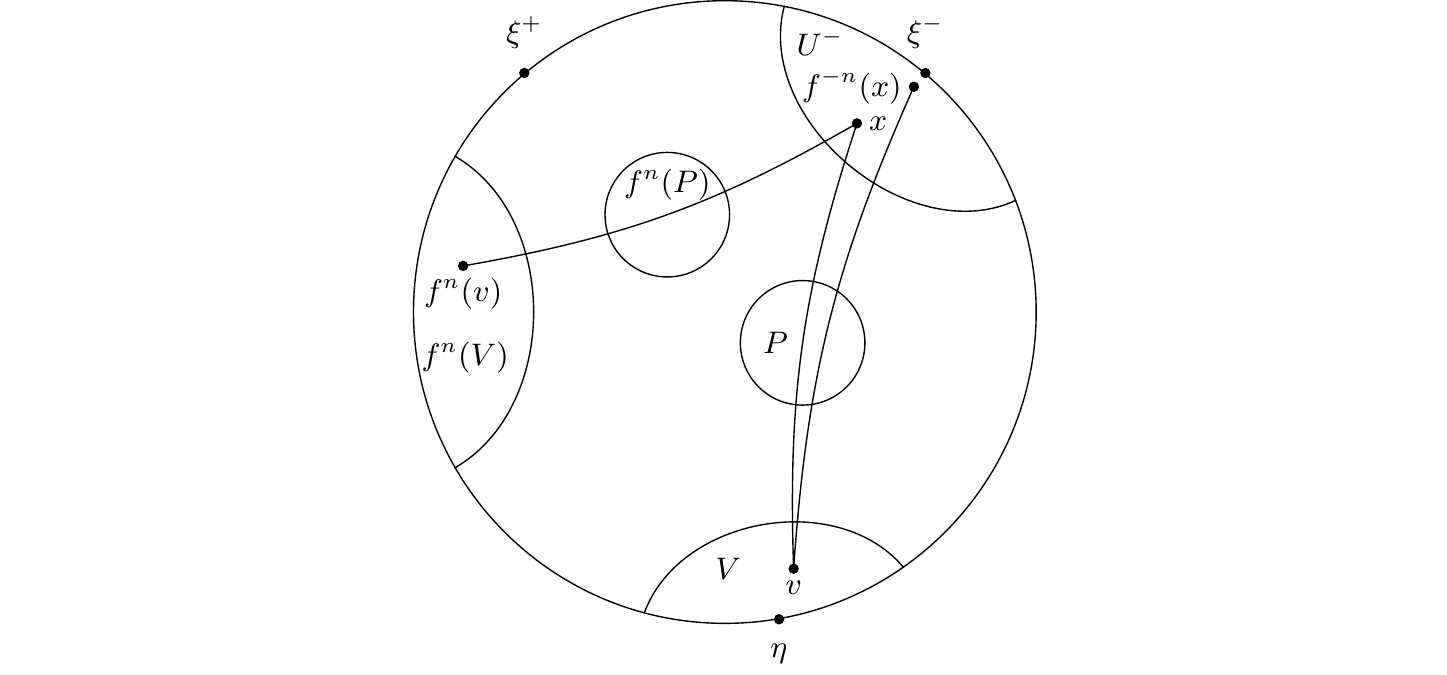}
\caption{}
\label{fig:Ruane}
\end{figure}

\begin{proof}[{Proof of Proposition~\ref{prop:Ruane}}]
Let $U^+$ be a neighbourhood of $\xi_f^+$. Let $V$ be a neighbourhood of $\xi_g^-$ from Lemma~\ref{lem:Ruane}. Choose $x\in X$. Then for $n$
sufficiently large we have $g^{-n}(x)\in V$ and hence by Lemma~\ref{lem:Ruane} for $n$ sufficiently large we have $f^ng^{-n}(x)\in U^+$. Consequently
for $h_n=f^ng^{-n}$ we have $h_n(x)\to \xi_f^+$. Analogously $h^{-1}_n(x)\to \xi_g^+$. By Lemma~\ref{lem:convergence}, for sufficiently large $n$ the
isometry $h_n$ is vile or has positive translation length and far limit points in arbitrary neighbourhoods of $\xi_f^+,\xi_g^+$.
\end{proof}

\section{Building-like space \texorpdfstring{$\X$}{X}}
\label{sec:X}

We recall the construction from \cite{LP2} of a complete $\mathrm{CAT}(0)$ space $\X$ with a natural action of $\T$.  

\subsection{Metric}
\label{sub:X}
Let $\R_{>0}$ be the positive reals and let $\Pi =\R_{>0}^3$. The \emph{weight space} $\nabla$ is the projectivisation of $\Pi$. Let $\Pi^+\subset
\Pi$ consist of $(\alpha_1, \alpha_2, \alpha_3)$ satisfying $\alpha_1\geq \alpha_2\geq \alpha_3$, and let $\nabla^+\subset \nabla$ be the
projectivisation of $\Pi^+$. We equip $\nabla$ and $\nabla^+$ with the topology induced from the projective plane. In particular, $\nabla^+$ is a
topological surface with boundary $\partial \nabla^+$ where $\alpha_1=\alpha_2$ or $\alpha_2=\alpha_3$. Then $\mathrm{int}\, \nabla^+=\nabla^+\setminus \partial \nabla^+$ consists of $[\alpha]$
represented by $\alpha$ with $\alpha_1> \alpha_2>\alpha_3$.

For $\alpha\in \Pi$, we define the following map $\nu_{\id,\alpha}\colon \K[x_1,x_2,x_3]\to \R$. Let $I$ stand for $(m_1,m_2,m_3)\in \N^3$. Namely,
for $P = \sum_{I} c_{I} x_1^{m_1} x_2^{m_2} x_3^{m_3} \in \K[x_1,x_2,x_3]$, we set
$$ \nu_{\id,\alpha}\left(P\right)=\min_{c_I\neq 0}\left( - \sum_{k=1}^3 \alpha_k m_k\right).$$
For $f\in \T$, we define the \emph{monomial valuation} $\nu_{f,\alpha}\colon \K[x_1,x_2,x_3]\to \R$ by $\nu_{f,\alpha}(P)=\nu_{\id,\alpha}(P\circ f)$.
The group $\T$ acts on monomial valuations by $g \cdot \nu (P)=\nu (P\circ g)$. The projective class of $\nu_{f,\alpha}$ depends only on $[\alpha]$
and is denoted by~$\nu_{f,[\alpha]}$. Instead of $[(\alpha_1,\alpha_2,\alpha_3)]$ we shortly write $[\alpha_1,\alpha_2,\alpha_3]$.

We equip the space $\X$ of projective classes of all monomial valuations with the following metric. First, consider the map $\nabla\to \R^3$ sending
each $[\alpha]\in \nabla$ represented by $\alpha$ with $\Pi_{k=1}^3 \alpha_k=1$ to $(\log \alpha_1,\log \alpha_2,\log \alpha_3)$, with image in the
Euclidean plane $\R^2\subset \R^3$ of equation $\log \alpha_1+\log \alpha_2+\log \alpha_3=0$. We equip $\nabla$ (and consequently~$\nabla^+$) with
the Euclidean metric $|\cdot,\cdot|$ pulled back from $\R^2$. 

We view~$\X$ as the quotient (by an equivalence relation $\sim$) of the disjoint union
of \emph{chambers} $\Ap^+_f=\{\nu_{f,[\alpha]} \colon [\alpha]\in \nabla^+\}$. We also consider \emph{apartments} $\Ap_f=\{\nu_{f,[\alpha]} \colon
[\alpha]\in \nabla\}$.
The following proves in particular that the projective classes of monomial valuations $\nu_{\id,[\alpha]}$ and $\nu_{\id,[\alpha']}$ are distinct for $[\alpha]\neq[\alpha']$.

\begin{cor}[\cite{LP2}*{cor 2.5(i)}]
\label{cor:rho} The map $\bigsqcup \Ap^+_f\to \nabla^+$ sending each $\nu_{f,[\alpha]}$ to $[\alpha]$ is well-defined and descends to a map
$\rho_+\colon \X\to \nabla^+$.
\end{cor}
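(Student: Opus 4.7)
The plan is to verify two independent facts: that the assignment $\nu_{f,[\alpha]}\mapsto [\alpha]$ is well-defined on each chamber $\Ap^+_f$, and that it is constant on the equivalence classes identifying $\X$.

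For the first, suppose that for a fixed $f$ the projective classes $\nu_{f,[\alpha]}$ and $\nu_{f,[\alpha']}$ coincide, i.e.\ $\nu_{f,\alpha} = c\,\nu_{f,\alpha'}$ for some $c > 0$. Since precomposition with $f$ is a $\K$-algebra automorphism of $\K[x_1,x_2,x_3]$, substituting $P = Q\circ f^{-1}$ in the identity gives $\nu_{\id,\alpha}(Q) = c\,\nu_{\id,\alpha'}(Q)$ for every polynomial $Q$. Evaluating on $Q = x_i$ yields $-\alpha_i = -c\,\alpha'_i$ for $i \in \{1,2,3\}$, whence $\alpha = c\alpha'$ and $[\alpha] = [\alpha']$.

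For the second, I would exploit the $\T$-action on monomial valuations. Directly from $\nu_{f,\alpha}(P) = \nu_{\id,\alpha}(P\circ f)$ one checks $h\cdot \nu_{f,\alpha} = \nu_{hf,\alpha}$, so applying $f^{-1}$ to an identity $\nu_{f,[\alpha]} = \nu_{g,[\beta]}$ reduces the task to: \emph{if $\nu_{\id,\alpha} = c\,\nu_{h,\beta}$ for some $h\in \T$, $c > 0$ and $[\alpha],[\beta]\in\nabla^+$, then $[\alpha] = [\beta]$}. Evaluating on $x_i$ and using $\nu_{h,\beta}(x_i) = \nu_{\id,\beta}(h_i)$, I obtain $\nu_{\id,\beta}(h_i) = -\alpha_i/c$, which says that the maximal $\beta$-weight attained by a monomial in the support of the component $h_i$ equals $\alpha_i/c$. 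Applying the symmetric argument after substituting $P = Q\circ h^{-1}$ gives $\nu_{\id,\alpha}(g_i) = -c\beta_i$ for the components $g_i$ of $h^{-1}$.

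The main obstacle is concluding $\alpha = c^{-1}\beta$ from these constraints on leading exponents: three linear equations such as $\beta\cdot J_i^* = \alpha_i/c$ do not by themselves force $\alpha\propto\beta$, since the extremal exponents $J_i^*\in\mathrm{supp}(h_i)$ need not be the standard basis vectors. This is where the tameness of $h$ and the chamber confinement become essential. I would approach it by induction on the length of a decomposition of $h$ into affine and elementary generators of $\T$, tracking the $\beta$-initial form of $h$ at each step: for an affine generator one applies direct linear algebra to the supports, whereas for an elementary generator of the form $(x_1 + P(x_2,x_3),x_2,x_3)$ one analyses which monomials of $P$ can compete in $\beta$-weight with $x_1$ subject to the inequalities $\beta_1\geq \beta_2\geq \beta_3$. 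The confinement $\alpha_1\geq\alpha_2\geq\alpha_3$ and $\beta_1\geq\beta_2\geq\beta_3$ rules out permutation-type ambiguities, since $\nabla^+$ is a fundamental domain for the symmetric group acting on $\nabla$, and should ultimately force $\alpha$ and $\beta$ to be proportional. This combinatorial/algebraic analysis of tame automorphisms through monomial gradings is the substantive content and presumably constitutes the bulk of the argument referenced in~\cite{LP2}.
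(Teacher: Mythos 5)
The paper does not actually prove this corollary---it is imported verbatim from \cite{LP2}*{cor~2.5(i)}---so there is no internal proof to compare against, and I am judging your argument on its own. Your first half (injectivity of $[\alpha]\mapsto\nu_{f,[\alpha]}$ for fixed $f$, via evaluating on the coordinates after precomposing with $f^{-1}$) is correct and complete. The second half, however, has a genuine gap: the reduction to the claim ``$\nu_{\id,\alpha}=c\,\nu_{h,\beta}$ with $[\alpha],[\beta]\in\nabla^+$ forces $[\alpha]=[\beta]$'' is fine, but that claim, which is the entire content of the statement, is left as a sketch. Moreover the route you propose---induction on the length of a word in affine and elementary generators, tracking the $\beta$-initial form at each step---is unlikely to succeed as stated: in dimension $3$ the weighted degrees of the components of a tame automorphism are not monotone along an arbitrary decomposition into generators (this is precisely the Shestakov--Umirbaev phenomenon), so one cannot control initial forms generator by generator without invoking the full reduction theory. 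As written, the substantive assertion is assumed rather than proved.

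The gap can in fact be closed by a soft argument that uses no tameness at all, only that $h$ is a polynomial automorphism. Writing $\deg_\gamma(P)=-\nu_{\id,\gamma}(P)$, the hypothesis says $\deg_\alpha(P)=c\deg_\beta(P\circ h)$ for all nonzero $P$, so $\Phi\colon P\mapsto P\circ h$ is an algebra automorphism of $\K[x_1,x_2,x_3]$ carrying the subspace $\{\deg_\alpha\leq d\}$ onto $\{\deg_\beta\leq d/c\}$ for every $d$. The associated graded algebra of $\K[x_1,x_2,x_3]$ with respect to the filtration by $\deg_\gamma$ is the polynomial algebra graded by $\deg x_i=\gamma_i$ (the piece of degree $d$ has the monomials of $\gamma$-degree exactly $d$ as a basis), so $\Phi$ induces an isomorphism of graded algebras rescaling degrees by $c^{-1}$. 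The graded vector space $\mathfrak m/\mathfrak m^2$, for $\mathfrak m$ the augmentation ideal, has basis $x_1,x_2,x_3$ in degrees $\gamma_1,\gamma_2,\gamma_3$ and is an invariant of the graded algebra; hence $\{\alpha_1,\alpha_2,\alpha_3\}=\{c\beta_1,c\beta_2,c\beta_3\}$ as multisets. Your final observation---that confinement to $\nabla^+$ kills the residual permutation ambiguity---is exactly what is needed to conclude $\alpha=c\beta$ from the multiset equality, since both triples are weakly decreasing. The deep degree estimates for tame automorphisms that you gesture at are needed elsewhere in \cite{LP2} (e.g.\ for the stabiliser computation of Proposition~\ref{prop:stab}), but not for the well-definedness of $\rho_+$.
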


We equip each chamber with the Euclidean metric $|\cdot,\cdot|$ coming from $\nabla^+$. A \emph{chain} is a sequence $(x'_0,x_1\sim x_1',x_2\sim
x_2',\ldots,x_{k-1}\sim x'_{k-1},x_k)$ in $\bigsqcup \Ap^+_f$ such that there are $f_0,\ldots, f_{k-1}\in\T$ with $x_i',x_{i+1}\in \Ap_{f_i}^+$. For
$\overline x,\overline y\in \X$ we define the \emph{quotient pseudo-metric} $d_\X(\overline x,\overline y)$ to be the infimum of the \emph{lengths}
$\sum_{i=0}^{k-1}|x'_i,x_{i+1}|$ of chains with $x_0'\in\overline x$ and $x_k\in\overline y$.

\begin{thm}[\cite{LP2}*{Thm~A}]
\label{thm:cat(0)} $(\X,d_\X)$ is complete and $\mathrm{CAT}(0)$.
\end{thm}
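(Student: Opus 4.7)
The plan is to show that $\X$ has the structure of a two-dimensional Euclidean building-like space modelled on the sector $\nabla^+$: concretely, I would establish the two axioms (i) each apartment $\Ap_f$ embeds isometrically into $(\X,d_\X)$, and (ii) any two points of $\X$ lie in a common apartment. Once these two structural properties are in hand, both the CAT(0) inequality and completeness follow by standard building-style arguments.

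For (i), the quotient definition gives immediately that the map $\Ap_f \to \X$ is $1$-Lipschitz. To see it is in fact an isometric embedding, I would construct, for every $f\in \T$, a \emph{retraction} $\rho_f \colon \X \to \Ap_f$ defined chamber-by-chamber using the identifications of $\sim$, and verify that $\rho_f$ is $1$-Lipschitz on chains. Concretely, given a chain $(x_0', x_1 \sim x_1', \dots, x_k)$ realizing the infimum in the definition of $d_\X$, one applies $\rho_f$ termwise; each segment $x_i'x_{i+1}\subset \Ap_{f_i}$ maps to a path in $\Ap_f$ of no greater length, so the image is a broken path in $\Ap_f \cong \R^2$ whose length dominates the Euclidean distance between the endpoints. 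This forces the pseudo-metric to coincide with the intrinsic Euclidean metric on every apartment; in particular $d_\X$ separates points, so it is a genuine metric.

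For (ii), I would use the amalgamated product decomposition of $\T$ from Wright/Lamy (the presentation underlying Section~\ref{sec:X}): writing $g^{-1}f$ in normal form, induct on syllable length to find $h\in \T$ with $\nu_{f,[\alpha]}, \nu_{g,[\beta]}\in \Ap_h$. The inductive step reduces to the observation that two apartments $\Ap_f, \Ap_{fs}$ with $s$ a generator of one of the three subgroups of the amalgam must share a sub-chamber on which the two parametrisations agree; pushing one endpoint into that sub-chamber replaces the pair by one of strictly smaller syllable distance.

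With (i) and (ii) established, every geodesic triangle lies in some apartment isometric to $\R^2$, so the CAT(0) inequality holds triangle by triangle, giving that $(\X,d_\X)$ is CAT(0). For completeness, a Cauchy sequence $(\overline x_n)$ can be transported via a retraction $\rho_f$ into the Euclidean apartment $\Ap_f\cong \R^2$ where it converges, and one recovers the limit in $\X$ by lifting back. As an alternative to the building argument for CAT(0), one could verify the Gromov link condition at each vertex (reducing to girth $\geq 2\pi$ for the $1$-dimensional link graph) and invoke Cartan--Hadamard, using the amalgamated product structure of $\T$ to supply simple connectivity. The main obstacle is property (ii): the combinatorics of matching two arbitrary projective classes of monomial valuations into a common apartment is delicate, and requires an explicit understanding of how the three amalgamated factors of $\T$ transform monomial valuations. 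A secondary obstacle is the careful bookkeeping needed to make $\rho_f$ well-defined and continuous across chamber boundaries under the equivalence relation.
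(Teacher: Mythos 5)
First, note that the paper does not prove this theorem at all: it is imported verbatim from \cite{LP2}*{Thm~A}, so the comparison is with the proof given there. Your main route has a genuine gap: axiom (ii), that any two points of $\X$ lie in a common apartment, is \emph{false}. This is precisely why $\X$ is only ``building-like'' (the title of \cite{LP2} is ``presqu'un immeuble''), and why most of the present paper is needed at all. Concretely, vertex links contain locally geodesic cycles of length $2\pi$ (and slightly more) assembled from three or more distinct apartment links and not equal to any $\Gamma_f$ --- see the third bullet of Proposition~\ref{prop:linkpp1}, Example~\ref{exa:1}, and Example~\ref{exa:cycle} --- and Construction~\ref{rem:B'rank2} produces convex flat half-planes and quasicusps glued from infinitely many chambers. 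If every pair of points lay in a common apartment, there would be no parabolic isometries and no loxodromics not of rank~1 to classify. Your syllable-length induction also breaks at the first step: by Corollary~\ref{cor:fixed_set}, adjacent chambers $\Ap^+_g$ and $\Ap^+_{gs}$ may intersect only along a single ray (or less), so one cannot ``push an endpoint into a shared sub-chamber'' without moving it. Even granting (ii), your CAT(0) conclusion needs a geodesic \emph{triangle} (three points) to lie in one apartment, which does not follow from a statement about pairs; the standard building argument instead uses $1$-Lipschitz retractions onto apartments, whose existence again depends on the building axioms you cannot verify here. What does exist, and what \cite{LP2} actually uses, is the projection $\rho_+\colon\X\to\nabla^+$ onto the model \emph{chamber} (Corollary~\ref{cor:rho}), which yields the isometric embedding of each $\Ap^+_f$ (Lemma~\ref{lem:propertiesX}(ii)) --- so your property (i) is fine, but it carries almost none of the weight.

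Your fallback route is the correct one and is essentially the proof in \cite{LP2}: realise $\X$ as the universal development of a complex of groups over $\nabla^+$ (hence simply connected), show the quotient pseudo-metric is a complete geodesic metric, verify that every vertex link has girth $\geq 2\pi$, and apply the Cartan--Hadamard theorem for $2$-polyhedra in the form of \cite{BaBu}*{Thm~7.1}. Be aware that the link condition is not a formal consequence of the amalgam structure; it requires the explicit polynomial-identity estimates of the kind developed in Sections~\ref{sec:triple} and~\ref{sec:links}, which is where the real work lies.
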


Here are some other basic properties of $(\X,d_\X)$ that we will be using. By $B_\X(\overline{x},\epsilon)$ we denote the radius $\epsilon$ open ball
in $\X$ around $\overline x\in \X$.

\begin{lem}\phantomsection
\label{lem:propertiesX}
\begin{enumerate}[$(i)$]
\item \label{lem:propertiesX:i}
For each $\overline{x}\in X$ there is $\epsilon=\epsilon(\overline{x})>0$ such that $B_\X(\overline{x},\epsilon)$ coincides as a set with
    the union of radius $\epsilon$ open balls around $x$ in $\bigsqcup \Ap^+_f$ for $x\in \overline x$.
\item Each $\Ap^+_f$ is isometrically embedded in $\X$.
\item For each pair $\overline x,\overline y\in \X$, the infimum in the definition of the distance $d_\X(\overline x,\overline y)$ is realised.
\end{enumerate}
\end{lem}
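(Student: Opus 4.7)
My plan is to handle the three parts in the order (ii), (i), (iii), since the reverse inequality in (i) relies on (ii), and (iii) uses (i) to cut a CAT(0) geodesic into finitely many chamber-segments.

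For \emph{part (ii)}, the key tool is the retraction $\rho_+\colon\X\to\nabla^+$ from Corollary~\ref{cor:rho}. By construction $\rho_+$ restricts on each chamber $\Ap^+_f$ to the identification $\nu_{f,[\alpha]}\mapsto[\alpha]$, which is a Euclidean isometry onto $\nabla^+$; in particular every chain in $\bigsqcup\Ap^+_g$ is mapped to a piecewise-linear path in $\nabla^+$ of the same total length. Given a chain from $\overline x$ to $\overline y$ with both endpoints in a single chamber $\Ap^+_f$, this image is a path in the convex Euclidean region $\nabla^+$ from $\rho_+(\overline x)$ to $\rho_+(\overline y)$, whose length is bounded below by the Euclidean distance $|\rho_+(\overline x),\rho_+(\overline y)|=|\overline x,\overline y|_{\Ap^+_f}$. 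Combined with the reverse inequality coming from the single-step chain, this yields the isometric embedding.

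For \emph{part (i)}, the inclusion $\supseteq$ is immediate from (ii). For the reverse, I would rely on the local structure of $\X$ at $\overline x$ developed in~\cite{LP2}: specifically a positive lower bound $\epsilon(\overline x)>0$ on how far a CAT(0) geodesic emanating from $\overline x$ can travel before leaving the union of chambers containing a representative of $\overline x$, together with the piecewise-linear character of a CAT(0) geodesic across such chambers. Given $\overline y$ with $d_\X(\overline x,\overline y)<\epsilon(\overline x)$, the unique CAT(0) geodesic from $\overline x$ to $\overline y$ then stays inside the image of a single chamber $\Ap^+_f$ containing a representative $x$ of $\overline x$, and so $\overline y$ lifts to $y\in\Ap^+_f$ with $|x,y|$ equal to the geodesic length, which is $<\epsilon$.

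For \emph{part (iii)}, completeness of the CAT(0) space $\X$ (Theorem~\ref{thm:cat(0)}) provides a unique geodesic $\gamma\colon[0,L]\to\X$ from $\overline x$ to $\overline y$ of length $L=d_\X(\overline x,\overline y)$. By part (i), each $\gamma(t)$ has a neighborhood expressible as a union of chamber-balls of radius $\epsilon(\gamma(t))$. Compactness of $[0,L]$ extracts a finite subcover, which partitions $\gamma$ into finitely many arcs each contained in a single chamber. Lifting these arcs chamber by chamber produces a finite chain from $\overline x$ to $\overline y$ of total length exactly $L$, which realises the infimum. The main obstacle is the reverse inclusion in (i): showing that within some ball around $\overline x$ the unique CAT(0) geodesic from $\overline x$ is confined to a single adjacent chamber. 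Without such a local finiteness statement drawn from~\cite{LP2}, the compactness step in (iii) also collapses.
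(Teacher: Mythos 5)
Your proposal is correct, and for the only part the paper actually proves it follows the same route: item (iii) is established there by exactly your compactness argument, covering the geodesic $\gamma$ by balls $B_\X\big(\overline x_i,\epsilon(\overline x_i)/2\big)$, using (i) to place consecutive $\overline x_i$ in a common chamber, and (ii) to see that the resulting chain has length equal to that of $\gamma$. For (i) and (ii) the paper simply cites \cite{LP2}*{lem~5.5(a)} and \cite{LP2}*{lem~5.6(i)}, so the ``main obstacle'' you flag for (i) is resolved by citation: the statement you would need from \cite{LP2} is item (i) itself, and your sketched re-derivation of it from a geodesic-confinement property is both unnecessary and mildly circular, since the completeness and $\mathrm{CAT}(0)$ property of $\X$ (Theorem~\ref{thm:cat(0)}) are themselves imported from \cite{LP2} and rest on that local description of balls. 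Your self-contained proof of (ii) via $\rho_+$ is correct and is the one genuinely different (and more informative) ingredient: the length of any chain equals the sum of the Euclidean distances between the consecutive image points in $\nabla^+\subset\R^2$, which the triangle inequality in the ambient plane bounds below by $\big|\rho_+(\overline x),\rho_+(\overline y)\big|=|x,y|_{\Ap^+_f}$; note that convexity of $\nabla^+$ plays no role here. A minor imprecision in your (iii): the subcover does not make each arc of $\gamma$ literally ``contained in a single chamber''; what you get, and all you need, is that consecutive division points admit representatives in a common chamber.
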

\begin{proof} Item~(i) is \cite{LP2}*{lem~5.5(a)}, and item~(ii) is \cite{LP2}*{lem~5.6(i)}. For item~(iii),
let $\gamma$ be a geodesic from $\overline{x}$ to $\overline{y}$ guaranteed by Theorem \ref{thm:cat(0)}. By the compactness of $\gamma$, we can find
points $\overline{x}_0=\overline{x}, \overline{x}_1,\ldots, \overline{x}_k=\overline{y}$ such that the consecutive balls
$B_\X\Big(\overline{x}_i,\frac{\epsilon(\overline{x}_i)}{2}\Big)$ intersect. Consequently, for each $0\leq i<k$, we have $\overline{x}_i\in
B_\X\big(\overline{x}_{i+1},\epsilon(\overline{x}_{i+1})\big)$ or $\overline{x}_{i+1}\in B_\X\big(\overline{x}_{i},\epsilon(\overline{x}_{i})\big)$.
By item~(i), this shows that $\overline{x}_i,\overline{x}_{i+1}$ have representatives in a common $\Ap^+_{f_i}$. By item~(ii), the length of $\gamma$
coincides with the length of the chain determined by the $\overline{x}_i$.
\end{proof}

\subsection{Point stabilisers}
\label{sec:point_stabilizers}
Let $A< \T$ be the subgroup consisting of the affine transformations of $\K^3$. We will also consider the following subgroups of $\T$:

\begin{multline*}
B=\{(ax_1 +P(x_2,x_3), bx_2+cx_3+d, b'x_2+c'x_3+d'); \\
P\in \K[x_2,x_3], a\neq 0, bc'-b'c\neq 0\},
\end{multline*}
\begin{multline*}
H= \{(ax_1 + P_1(x_2,x_3), bx_2 + P_2(x_3), cx_3 + d);\\
P_1\in \K[x_2,x_3], P_2\in \K[x_3]\ a,b,c\neq 0\},
\end{multline*}
\begin{multline*}
K=\{(ax_1 +bx_2+P_1(x_3), a'x_1+b'x_2+P_2(x_3), cx_3+d);\\P_1,P_2\in \K[x_3], ab'-a'b\neq 0,c\neq 0\}.
\end{multline*}

Let $C=\langle K,H\rangle,$ where $C=K\ast_{K\cap H}H$ by \cite{LP1}*{Prop~3.5}. Finally, let $B'=\langle B,H\rangle$, where $B'=B\ast_{B\cap H}H$,
which corresponds to the splitting of $\mathrm{Aut}(\K^2)$ in \cite{vdk}.

Let $\alpha \in \Pi^+$. We consider the following subgroups $M_\alpha < H, L_\alpha< \GL_3(\K)< \T$.

\begin{enumerate}
\item
If $\alpha_1=\alpha_2=\alpha_3$, then $L_\alpha = \GL_3(\K)$ and $M_\alpha$ is the group of translations, so that $M_\alpha \rtimes L_\alpha=A$.

\item
If $\alpha_1 > \alpha_2> \alpha_3$, then $L_\alpha$ is the group of the diagonal matrices and
$M_\alpha$ is the group of the automorphisms
\[
(x_1, x_2, x_3) \to (x_1 + P_1(x_2,x_3), x_2+P_2(x_3), x_3+d)
\]
where $P_i$ satisfy $\alpha_i \geq -\nu_{\id,\alpha} (P_i)$. In particular, $M_\alpha \rtimes L_\alpha< H$.

\item
If $\alpha_1> \alpha_2=\alpha_3$, then
\begin{align*}
L_\alpha &= \{(a x_1, bx_2 + cx_3, b'x_2 + c'x_3) \}, \\
M_\alpha &= \{(x_1 + P(x_2,x_3), x_2 + d, x_3 + d'); \, \alpha_1 \geq \alpha_2\deg P \}.
\end{align*}
In particular, $M_\alpha \rtimes L_\alpha< B$.

\item
If $\alpha_1=\alpha_2>\alpha_3$, then
\begin{align*}
L_\alpha &= \{(a x_1 + bx_2, a'x_1 + b'x_2, cx_3) \}, \\
M_\alpha &= \{(x_1 + P_1(x_3), x_2 + P_2(x_3), x_3 + d); \, \alpha_1 \geq \alpha_3\deg
P_1,\alpha_3\deg P_2 \}.
\end{align*}
In particular, $M_\alpha \rtimes L_\alpha< K$.
\end{enumerate}

Note that $L_\alpha, M_\alpha$ depend only on the class $[\alpha]$.

\begin{prop}[\cite{LP2}*{prop~3.2}]
\label{prop:stab}
Let $\alpha\in \Pi^+$. We have
\[
\St\big(\nu_{\id,[\alpha]}\big) = M_\alpha \rtimes L_\alpha.
\]
\end{prop}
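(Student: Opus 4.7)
The plan is to prove the two inclusions separately.

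\emph{Forward inclusion.} To establish $M_\alpha \rtimes L_\alpha \subseteq \St(\nu_{\id,[\alpha]})$, I would verify by direct computation that every element preserves $\nu_{\id,\alpha}$ exactly (not merely projectively). For $l \in L_\alpha$, each component $x_i \circ l$ is a linear combination of monomials of $\alpha$-weight exactly $\alpha_i$, so by multiplicativity of valuations, $\nu_{\id,\alpha}(P \circ l) = \nu_{\id,\alpha}(P)$ for every $P$. For $m \in M_\alpha$, the defining conditions in each of the four cases are equivalent to the statement that $x_i \circ m - x_i$ has $\alpha$-weight at most $\alpha_i$; thus the $\alpha$-leading term of each monomial $x^I \circ m$ equals $x^I$, giving $\nu_{\id,\alpha}(P \circ m) = \nu_{\id,\alpha}(P)$.

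\emph{Reverse inclusion, step 1 (the scalar is $1$).} Let $g \in \T$ stabilise the projective class, so that $\nu_{\id,\alpha}(P \circ g) = \lambda\, \nu_{\id,\alpha}(P)$ for some $\lambda > 0$. Applying this with $P = x_i$ gives $\nu_{\id,\alpha}(g_i) = -\lambda \alpha_i$, and because differentiation raises the $\alpha$-weight of a polynomial by at most $\alpha_j$ when differentiating with respect to $x_j$, we get $\nu_{\id,\alpha}(\partial_j g_i) \geq -\lambda \alpha_i + \alpha_j$. Expanding $\det \mathrm{Jac}(g) = \sum_\sigma \mathrm{sgn}(\sigma) \prod_i \partial_{\sigma(i)} g_i$, each summand has $\alpha$-valuation at least $(1 - \lambda)(\alpha_1 + \alpha_2 + \alpha_3)$. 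Since $g$ is a polynomial automorphism, $\det \mathrm{Jac}(g) \in \K^*$ is a nonzero constant of $\alpha$-valuation $0$, forcing $\lambda \geq 1$. Applying the argument to $g^{-1}$ (which stabilises with scalar $\lambda^{-1}$) gives $\lambda \leq 1$, so $\lambda = 1$.

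\emph{Reverse inclusion, step 2 (form of $g$).} With $\lambda = 1$, every monomial appearing in $g_i$ has $\alpha$-weight at most $\alpha_i$. Case-by-case enumeration of such monomials in $\nabla^+$ pins down the form of $g_i$: in the interior case $\alpha_1 > \alpha_2 > \alpha_3$, any monomial with $x_1$-exponent $\geq 1$ has $\alpha$-weight $\geq \alpha_1$ with equality only for $x_1$ itself, so $g_1 = a_1 x_1 + P_1(x_2, x_3)$ with $-\nu_{\id,\alpha}(P_1) \leq \alpha_1$; analogously $g_2 = a_2 x_2 + P_2(x_3)$ and $g_3 = a_3 x_3 + d$, matching $H$. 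The three boundary cases yield the forms prescribed respectively by $B$, $K$, and $A$. In each case, invertibility of $g$ forces the scalar $a_i$ (or, on the equal-weight strata, the corresponding $2 \times 2$ or $3 \times 3$ minor of the linear part) to be nonzero: otherwise $g^*(\K[x_1, x_2, x_3])$ would be contained in a proper subring of $\K[x_1, x_2, x_3]$. This places $g$ in $M_\alpha \rtimes L_\alpha$.

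The main obstacle is the case-by-case enumeration in the third paragraph, in particular verifying that invertibility precisely produces the block-diagonal structure of $L_\alpha$ on the equal-weight variables; the Jacobian argument for $\lambda = 1$ is conceptually clean but relies crucially on the $\alpha$-graded interpretation of the Leibniz rule.
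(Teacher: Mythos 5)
The paper does not actually prove this proposition — it is imported verbatim from \cite{LP2}*{prop~3.2} — so there is no internal argument to compare against; your proposal has to stand on its own, and it does. The route you take is the natural one and, as far as I can tell, matches the source: the inclusion $M_\alpha \rtimes L_\alpha \subseteq \St\big(\nu_{\id,[\alpha]}\big)$ by direct computation; the normalisation $\lambda=1$ via the Jacobian (each summand of $\det \mathrm{Jac}(g)$ has $\nu_{\id,\alpha}$-value at least $(1-\lambda)(\alpha_1+\alpha_2+\alpha_3)$ while the determinant is a nonzero constant, and running the same estimate for $g^{-1}$ gives the reverse bound); and then the enumeration of monomials of weight $\leq \alpha_i$ in each of the four strata of $\nabla^+$, with invertibility forcing the relevant linear minors to be nonzero via a transcendence-degree argument. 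One spot in the forward inclusion is stated too loosely: when $m\in M_\alpha$ has a component $x_i\circ m - x_i$ of weight \emph{exactly} $\alpha_i$ (which the definition permits), the $\alpha$-leading form of $x^I\circ m$ is not $x^I$ but $x^I$ composed with the associated graded map of $m$, so you must still rule out cancellation of the leading form of $P\circ m$; likewise for $l\in L_\alpha$ the leading forms of distinct $x^I\circ l$ of equal weight could a priori cancel. This is repaired in one line: either observe that the graded maps are themselves weighted-homogeneous automorphisms (hence injective on each graded piece), or, more cheaply, note that $M_\alpha$ and $L_\alpha$ are groups, so the easy inequality $\nu_{\id,\alpha}(P\circ g)\geq \nu_{\id,\alpha}(P)$ applied to both $g$ and $g^{-1}$ forces equality. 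With that word added, your argument is complete.
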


In particular, each element of the groups $H,B,$ and $K$, has a fixed point in $\Ap^+_\id$.

The following result was proved in \cite{L} for $\K=\C$.
In fact, the proof extends to any field of characteristic zero, see \cite{LamyBook}.

\begin{thm}
\label{thm:k2} Let $\K$ be a field of characteristic zero. Then the polynomial automorphism group $\Aut(\K^2)$ satisfies the strong Tits alternative.
\end{thm}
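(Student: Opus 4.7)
The plan is to exploit the classical Jung--van der Kulk decomposition $\Aut(\K^2) = A \ast_{A\cap E} E$, where $A$ is the affine subgroup and $E$ the Jonqui\`eres subgroup of triangular maps $(x,y)\mapsto (ax + P(y), by + c)$. This amalgamated product yields a simplicial action of $\Aut(\K^2)$ on the associated Bass--Serre tree $T$, with vertex stabilisers conjugate to $A$ or $E$ and edge stabilisers conjugate to $A\cap E$. For an arbitrary subgroup $G<\Aut(\K^2)$ I would then apply the standard classification of group actions on a tree: either (i) $G$ fixes a vertex of $T$, (ii) $G$ fixes a unique end of $T$, (iii) $G$ stabilises a bi-infinite geodesic of $T$ without fixing any vertex, or (iv) $G$ contains two loxodromic isometries with disjoint axes. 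This trichotomy requires no finite generation hypothesis, which is essential since we aim for the \emph{strong} alternative.

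In case (i), $G$ lies in a conjugate of $A$ or $E$. The group $E$ is a split extension of the affine group of the line by the additive group of polynomials in $y$, hence solvable of bounded derived length, so every subgroup of $E$ is solvable. The affine group $A$ is linear over $\K$, so Tits' theorem in characteristic zero gives the strong alternative for $A$. In case (iv), the classical tree ping-pong yields a non-abelian free subgroup of $G$ by taking sufficiently high powers of the two loxodromic elements.

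Cases (ii) and (iii) are the core of the argument. In case (ii), the horofunction cocycle at the fixed end induces a homomorphism $G \to \Z$; its kernel $N$ consists of elliptic isometries fixing the end, each of which fixes a subray of a chosen reference ray $v_0, v_1, \ldots$ to $\omega$. Thus $N$ is an ascending union $\bigcup_n F_n$, where $F_n$ is the pointwise fixator of the subray from $v_n$. Each $F_n$ sits inside the edge stabiliser of $v_nv_{n+1}$, a conjugate of $A\cap E$, which is solvable of a specific finite derived length $\ell$. Since the derived length bound is uniform in $n$, one gets $N^{(\ell)} = \bigcup_n F_n^{(\ell)} = 1$, so $N$ is solvable and hence so is $G$. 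Case (iii) is analogous: $G$ admits an index-$2$ subgroup mapping to translations of the stabilised line, with kernel contained in an edge fixator, so $G$ is virtually solvable.

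The main obstacle I would anticipate is precisely the uniform derived-length bound invoked in cases (ii) and (iii): this is not formally implied by edge stabilisers being solvable, but rather rests on the explicit structure of $A\cap E$, which consists of maps of the form $(ax+by+c, dy+e)$ (a diagonal/affine part plus a one-parameter family of shears). Once this explicit description is in hand, closing the remaining cases is a routine combination of Bass--Serre theory with Tits' theorem applied to the affine vertex stabiliser.
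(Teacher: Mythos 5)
Your proposal is correct and takes essentially the same route as the proof the paper relies on: the statement is only cited there (to \cite{L} and \cite{LamyBook}), and those references argue exactly as you do, via the Jung--van der Kulk amalgam, the action on the associated Bass--Serre tree, the classification of tree actions without a finite-generation hypothesis, Tits' theorem for the affine vertex stabiliser, and the uniform bound on the derived length of the edge group to handle the fixed-end and invariant-line cases. No gap.
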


Below, the \emph{step} of a solvable group is its derived length, that is, the length of its derived series. In particular, non-trivial abelian
groups have step~1.

\begin{rem}\phantomsection
\label{rem:BKTits}
\begin{enumerate}[(i)]
\item The groups $M_\alpha$ are solvable and the groups $L_\alpha$ are linear. Thus, since the strong Tits alternative is preserved under
    semidirect products, by Proposition~\ref{prop:stab} all $\St\big(\nu_{\id,[\alpha]}\big)$ satisfy the strong Tits alternative.
\item Since $M_\alpha$ are solvable of step $\leq 3$, and all virtually solvable subgroups of all $L_\alpha$ contain a triangularisable subgroup
    of uniformly bounded index (see \cite{W} and \cite{KM}*{Thm~21.1.5}), which is solvable of step $\leq 3$, we have that all virtually solvable
    subgroups of all $\St\big(\nu_{\id,[\alpha]}\big)$ contain a solvable subgroup of uniformly bounded index and step $\leq 6$.
\item The group $B'$ has a decomposition into a semidirect product of $\Aut(\K^2)$ (with variables $x_2,x_3$) and the solvable normal subgroup
    consisting of automorphisms of the form $(ax_1 +P(x_2,x_3), x_2,x_3)$. Thus by Theorem \ref{thm:k2}, the group $B'$ also satisfies the strong Tits alternative.
    Similarly, the group $C$ is a semidirect product of the solvable subgroup consisting of $(x_1, x_2,cx_3+d)$ and a normal subgroup that can be
    embedded in the automorphism group $\Aut\big(\K(x_3)^2\big)$ with variables $x_1,x_2$. Consequently, $C$ also satisfies the strong Tits
    alternative.
\end{enumerate}
\end{rem}

Proposition~\ref{prop:stab} can be rephrased as follows.

\begin{cor}[\cite{LP2}*{prop~4.4}] 
\label{cor:fixed_set} Let $f\in \T$ and let $\alpha\in \Pi^+$. Then $\nu_{\id,[\alpha]}$ is fixed by $f$ if and only if $\alpha$ satisfies all the
inequalities of the form
\[
\alpha_{i} \geq \sum_{k} m_{k} \alpha_{k}, 
\]
where $i = 1,2,3$ and $\prod_{k}x_{k}^{m_{k}}$ are the monomials appearing in $f_i$. \end{cor}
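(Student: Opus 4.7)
The plan is to derive the corollary as a direct unpacking of Proposition~\ref{prop:stab}, which says that $\nu_{\id,[\alpha]}$ is fixed by $f$ iff $f\in M_\alpha\rtimes L_\alpha$. The heart of the argument is the identity
\[
-\nu_{\id,\alpha}(P)=\max\Bigl\{\textstyle\sum_{k} m_k\alpha_k\,:\, \prod_k x_k^{m_k}\text{ is a monomial of }P\Bigr\},
\]
which shows that the weight conditions $\alpha_i\geq -\nu_{\id,\alpha}(P_i)$ appearing in the definition of $M_\alpha$ in Section~\ref{sec:point_stabilizers} are equivalent to the monomial inequalities of the corollary applied to each $P_i$. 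Moreover, the linear and constant terms produced by $L_\alpha$ automatically satisfy these inequalities, thanks to the equalities among the $\alpha_k$ that define each of the four cases.

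For the forward direction, if $f\in M_\alpha\rtimes L_\alpha$, I would read off the explicit form of each component $f_i$ in the four cases: its monomials split into those coming from $L_\alpha$ (trivially satisfying the inequality via the equalities among the $\alpha_k$) and those coming from a polynomial $P_i$ (satisfying it via the identity above).

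For the reverse direction, suppose the monomial inequalities hold. I would argue that they already force $f$ to have the shape prescribed by $M_\alpha\rtimes L_\alpha$. For example in Case~(2), where $\alpha_1>\alpha_2>\alpha_3$, the inequality $\alpha_3\geq\sum_k m_k\alpha_k$ rules out $m_1,m_2\geq 1$ and $m_3\geq 2$ in $f_3$, forcing $f_3=cx_3+d$; parallel arguments give $f_2=bx_2+P_2(x_3)$ and $f_1=ax_1+P_1(x_2,x_3)$, and the remaining inequalities on $P_1,P_2$ translate back into the $M_\alpha$ conditions via the identity above. The diagonal coefficients $a,b,c$ are nonzero because the Jacobian determinant of $f$ is a nonzero constant equal (up to sign) to $abc$. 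Cases~(1),~(3),~(4) follow analogously.

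The main subtlety is in Cases~(3) and~(4), where $L_\alpha$ allows a nondiagonal $2\times 2$ linear block and the monomial inequalities do not by themselves rule out off-diagonal terms within that block. Here one must observe that such terms are consistent with, rather than contradicting, the block structure of $M_\alpha\rtimes L_\alpha$, and use invertibility of $f$ to conclude that the corresponding block determinant is nonzero.
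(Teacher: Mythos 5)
Your proposal is correct and matches the paper's intended argument: the paper offers no independent proof, introducing the statement with ``Proposition~\ref{prop:stab} can be rephrased as follows'' and citing \cite{LP2}*{prop~4.4}, and your unpacking of $M_\alpha\rtimes L_\alpha$ via the identity $-\nu_{\id,\alpha}(P)=\max\sum_k m_k\alpha_k$ (together with the Jacobian/invertibility argument for the nonvanishing of the diagonal or block coefficients) is exactly the content of that rephrasing.
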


\begin{figure}
\includegraphics[scale=1]{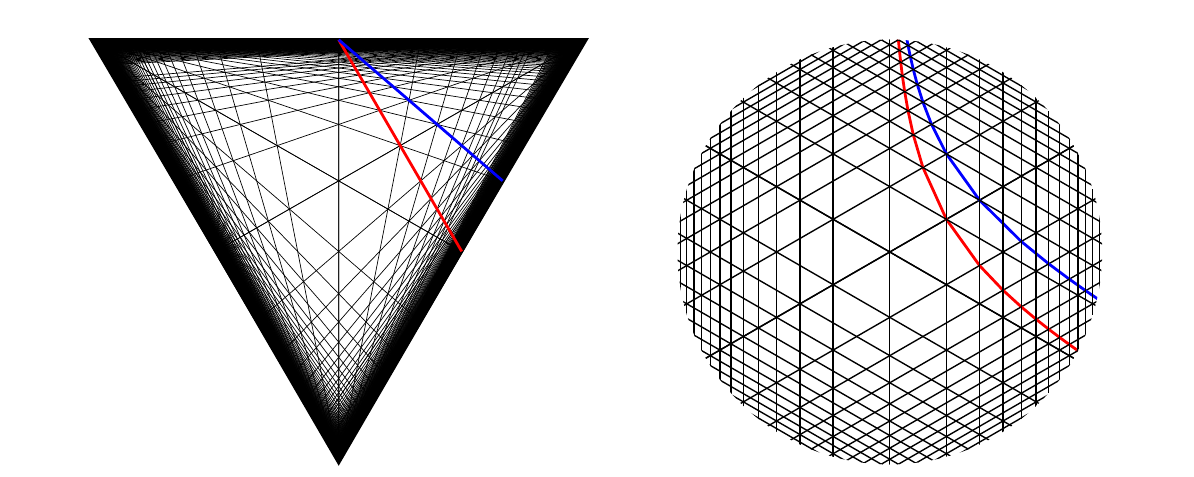}
\caption{}
\label{fig:figure5.1fromoldpaper}
\end{figure}

The subsets of $\nabla^+$ and $\nabla$ given by the equations of the form $\alpha_{1} = m_{2} \alpha_{2}+m_3\alpha_3$, up to
interchanging $\alpha_i$, are \emph{admissible lines}. If additionally $m_2=0$ (or $m_3=0$), then such a line is \emph{principal}.
See
Figure~\ref{fig:figure5.1fromoldpaper} for the set of the admissible lines in $\nabla$ with the projective structure (on the left), and with the
Euclidean metric~$|\cdot,\cdot|$ (on the right). Note that the only admissible lines intersecting the interior of $\nabla^+$ have the precise form $\alpha_1=m_{2} \alpha_{2}+m_3\alpha_3$, where $m_2+m_3\geq 2$, or $\alpha_2=m_3\alpha_3$, where $m_3\geq 2$.

The following Figure~\ref{fig:old_figure4.3} illustrates some possible intersections between $\Ap_{\id}$ and~$\Ap_f$ (and $\Ap^+_{\id}$ and~$\Ap^+_f$
in bold), where $f$ fixes the point $\nu_{\id,[3,2,1]}$.

\begin{rem}
\label{rem:commonrays} Suppose in Corollary~\ref{cor:fixed_set} that $\nu_{\id,[\alpha]}$ is fixed by $f$. If $\alpha_2>\alpha_3$, then for each
$\alpha'=(\alpha_1,\alpha_2,\alpha_3')$ with $\alpha_3'<\alpha_3$, the point $\nu_{\id,[\alpha']}$ is fixed by $f$. If $\alpha_1>\alpha_2$, then for
each $\alpha'=(\alpha'_1,\alpha_2,\alpha_3)$ with $\alpha_1'>\alpha_1$, the point $\nu_{\id,[\alpha']}$ is fixed by $f$.
\end{rem}

\begin{figure}
\includegraphics[scale=1]{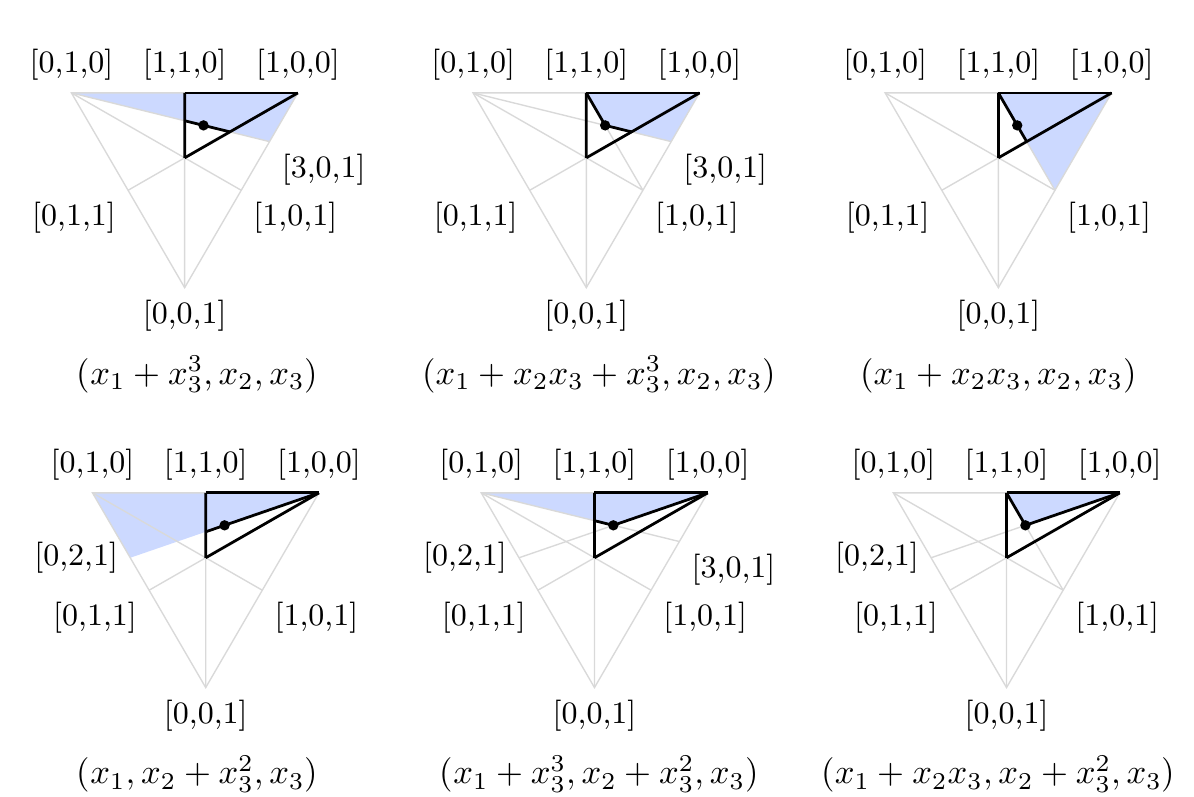}
\caption{Intersections of apartments $\Ap_{\id},\Ap_f$ and chambers $\Ap^+_{\id},\Ap^+_f$ for particular $f$.}
\label{fig:old_figure4.3}
\end{figure}

\subsection{Combinatorial structure on \texorpdfstring{$\X$}{X}}
\label{subs:comb}

A map from a CW-complex $X$ to a CW-complex $Y$ is \emph{combinatorial}, if its restriction to any open cell of $X$ is a homeomorphism onto an open
cell of $Y$. A CW-complex $X$ is \emph{combinatorial}, if the attaching map of each open cell of $X$ is combinatorial for some subdivision of the
sphere. The \emph{star} of a cell~$\sigma$ in a CW-complex $X$ is the union of all the open cells of $X$ containing~$\sigma$ in their closure.

The space $\X$ is equipped with the following structure of a combinatorial CW-complex (which induces a weaker topology than the metric $d_\X$ due to
the lack of local finiteness). Namely, we first equip $\nabla^+$ with the following structure of a combinatorial CW-complex. Its vertices are the
intersection points of distinct admissible lines. Its open edges are the connected components of the complement of the vertices in the admissible lines.
Finally, its open $2$-cells are the connected components of the complement of the admissible lines in~$\nabla^+$. In (the proof of)
\cite{LP2}*{prop~6.3}, we identify~$\X$ with the universal development of a certain complex of groups with underlying complex $\nabla^+$ equipped
with that CW-structure. This equips $\X$ with a CW-complex structure. Equivalently, each open cell of $\X$ is contained in a chamber~$\Ap_f^+$ and it
is the preimage in~$\Ap_f^+$ under~$\rho_+$ of a cell of $\nabla^+$.

\section{Triples of polynomials in two variables}
\label{sec:triple}
In this section we collect several estimates on particular exponents appearing in identities involving triples of polynomials in two variables. This will enable us to study the links of the vertices of $\X$ in Section~\ref{sec:links}.

\begin{lem}
\label{l:trivial}
Let $Q \in \K[y,z]$  be a nonzero homogeneous polynomial of degree $d \ge 0$, let $c,c' \in \K^*$, and let $q \ge 1$ be an integer.
Then the partial derivatives $\partial_y [(cy + c'z)^q Q]$ and $\partial_z [(cy + c'z)^q Q]$
are both nonzero homogeneous polynomials of degree $d + q - 1$ that are multiples of $(cy + c'z)^{q-1}$.
\end{lem}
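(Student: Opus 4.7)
The plan is to read off the homogeneity and divisibility claims directly from the Leibniz rule, and then to prove nonvanishing via a linear change of coordinates that turns $\partial_y$ (respectively $\partial_z$) into a scalar multiple of a single partial derivative, reducing the statement to a transparent one-variable fact.

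Applying Leibniz to the product $(cy+c'z)^q Q$ gives
\[
\partial_y\bigl[(cy+c'z)^q Q\bigr] = (cy+c'z)^{q-1}\Bigl[qc\,Q + (cy+c'z)\,\partial_y Q\Bigr],
\]
and an identical formula with $c$ replaced by $c'$ holds for $\partial_z$. Each summand in the bracket is homogeneous of degree $d$, so the full expression is homogeneous of degree $d+q-1$ and manifestly a multiple of $(cy+c'z)^{q-1}$. Only the nonvanishing of the bracket remains to be shown.

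For $\partial_y$, I would introduce coordinates $u=cy+c'z$ and $v=z$, which is a valid change of variables since $c\neq 0$. Then $\partial_y u = c$ and $\partial_y v = 0$, so the chain rule gives $\partial_y = c\,\partial_u$. Writing $\tilde Q(u,v)$ for $Q(y,z)$ in these coordinates and expanding $\tilde Q = \sum_{j\geq 0} b_j(v)\,u^j$ with $b_j\in\K[v]$, we compute
\[
\partial_y\bigl[u^q \tilde Q\bigr] = c\,u^{q-1}\bigl(q\tilde Q + u\,\partial_u \tilde Q\bigr) = c\,u^{q-1}\sum_{j\geq 0}(q+j)\,b_j(v)\,u^j.
\]
Since $\K$ has characteristic zero and $q\geq 1$, every coefficient $q+j$ is nonzero, so the sum vanishes only if every $b_j$ vanishes, i.e., only if $Q=0$, contradicting the hypothesis. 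The argument for $\partial_z$ is identical after swapping the roles of the two variables, using coordinates $u=cy+c'z$ and $v'=y$ so that $\partial_z = c'\,\partial_u$. The only delicate ingredient is the characteristic-zero assumption on $\K$, which ensures $q+j\neq 0$ in $\K$; without it the argument genuinely fails, but otherwise no obstacle arises since everything else is a formal manipulation of the Leibniz rule and a linear coordinate change.
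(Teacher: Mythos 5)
Your proof is correct and follows essentially the same route as the paper: the Leibniz identity $\partial_y[(cy+c'z)^qQ]=(cy+c'z)^{q-1}[cqQ+(cy+c'z)\partial_yQ]$ gives the divisibility and the degree, and nonvanishing rests on characteristic zero. The only cosmetic difference is that you verify nonvanishing by the linear change of coordinates $u=cy+c'z$ and the explicit coefficients $q+j\neq 0$, where the paper simply invokes the standard fact that $\deg\partial_yP=\deg P-1$ whenever $\deg_yP\geq 1$; both arguments are sound and equivalent.
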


\begin{proof}
Given a homogeneous polynomial $P \in \K[y,z]$, if $\deg_y P \ge 1$, then we have $\deg \partial_y P = \deg(P) -1$.
This gives the assertion about the degree. The assertion that $(cy + c'z)^{q-1}$ divides the partial derivatives is a straightforward computation. For instance, for the first one:
\[
\partial_y [(cy + c'z)^q Q] = (cy + c'z)^{q-1} [cqQ + (cy + c'z)\partial_y Q].
\qedhere
\]
\end{proof}

\begin{lem}
\label{l:w=1}
Let $R, T, U \in \K[x_2, x_3]$ be nonzero homogeneous polynomials, and let $\ell_1, \ell_2, \ell_3 \in \K[x_2, x_3]$ be pairwise independent linear forms such that we have a relation
\[\ell_1^r R + \ell_2^t T + \ell_3^u U = 0,\]
for some $r,t,u \in \N$.
In particular, the three terms of the sum have the same degree $d = r + \deg R = t + \deg T = u + \deg U$.
Then $\min \{r, t, u \} \le \left\lfloor \frac{2d+1}3 \right\rfloor$.
\end{lem}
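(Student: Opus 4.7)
The plan is to apply the polynomial Mason--Stothers (ABC) theorem to $f = \ell_1^r R$, $g = \ell_2^t T$, $h = \ell_3^u U$, sharpened by the observation that these three polynomials all have the same degree $d$. First, after a linear change of coordinates I may assume $\ell_i = y - \alpha_i z$ for pairwise distinct $\alpha_i$, and then dehomogenizing in a chart $z=1$ that avoids the finitely many zeros of $R, T, U$, I obtain three polynomials over the algebraic closure of $\K$, each of exact degree $d$, summing to zero, of the form $f = (y - \alpha_1)^r R(y)$ and analogously for $g, h$.

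Before the main argument I would dispose of the degenerate cases. If $f, g, h$ are pairwise proportional, then $(y-\alpha_2)^t \mid f$ forces $(y-\alpha_2)^t \mid R$ since $\alpha_1 \neq \alpha_2$, and similarly $(y-\alpha_3)^u \mid R$; hence $t+u \leq \deg R = d-r$, giving $r+t+u \leq d$ and $\min\{r,t,u\} \leq d/3$. Note that if some but not all pairs are proportional, the relation $f+g+h=0$ easily forces one of the polynomials to vanish, contradicting the hypothesis; so outside the degenerate case, all three pairs are non-proportional. Next I would divide out the common factor $G = \gcd(f, g, h)$, of degree $e$. After absorbing $\gcd(R, T, U)$ into the setup (we may assume it is $1$), $G$ is a product $(y-\alpha_1)^{a_1}(y-\alpha_2)^{a_2}(y-\alpha_3)^{a_3}$ with $a_i$ at most $r, t, u$ respectively, and the reduced triple $\tilde f = f/G$ etc.\ has the same shape, with exponents $r' = r - a_1$, etc., total degree $d' = d - e$, coprime $\gcd$, and still pairwise non-proportional.

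The heart of the proof is the Wronskian $W = \tilde f \tilde g' - \tilde f' \tilde g$, which is nonzero since $\tilde f, \tilde g$ are non-proportional. Crucially, because $\deg \tilde f = \deg \tilde g = d'$, the top-degree coefficients of $\tilde f \tilde g'$ and $\tilde f' \tilde g$ both equal $d' \cdot (\mathrm{lead}\,\tilde f)(\mathrm{lead}\,\tilde g)$ and cancel, so $\deg W \leq 2d'-2$ rather than the generic bound $2d'-1$. On the other hand, because $\gcd(\tilde f, \tilde g, \tilde h) = 1$ and $\tilde f + \tilde g + \tilde h = 0$, any common zero of two of them would be a common zero of all three, which is excluded; so the zero sets of $\tilde f, \tilde g, \tilde h$ are pairwise disjoint. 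At each zero $\alpha$ of $\tilde f$ of multiplicity $m$, writing $\tilde f = (y-\alpha)^m F$ with $F(\alpha) \neq 0$ and using that $\tilde g(\alpha) \neq 0$ gives $v_\alpha(W) \geq m-1$; analogous estimates at zeros of $\tilde g, \tilde h$ follow from the identities $W = W(\tilde g, \tilde h) = W(\tilde h, \tilde f)$ (up to sign). Summing over all zeros of $\tilde f \tilde g \tilde h$ gives $\sum_\alpha v_\alpha(W) \geq 3d' - N$, where $N$ denotes the number of distinct zeros of $\tilde f \tilde g \tilde h$; combined with $\deg W \leq 2d'-2$ this yields $N \geq d'+2$.

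Finally, each $\tilde f = (y-\alpha_1)^{r'} \tilde R$ has at most $1 + \deg\tilde R = 1 + (d'-r')$ distinct zeros, and similarly for $\tilde g, \tilde h$, so $N \leq 3 + 3d' - (r'+t'+u')$. Combining with $N \geq d'+2$ gives $r'+t'+u' \leq 2d'+1$, and adding back $e = a_1+a_2+a_3$ yields $r+t+u \leq 2d - e + 1 \leq 2d+1$. Hence $3\min\{r,t,u\} \leq 2d+1$, and since the minimum is an integer, $\min\{r,t,u\} \leq \lfloor(2d+1)/3\rfloor$. The main obstacle is the sharpening of the classical Mason--Stothers bound by leading-term cancellation: without it, one would only obtain $\min \leq \lfloor(2d+2)/3\rfloor$, which is off by one in the residue class $d \equiv 2 \pmod 3$.
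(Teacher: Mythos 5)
Your proof is correct, but it takes a genuinely different route from the paper. The paper argues by contradiction with a differentiation trick: assuming $r,t,u\ge\frac23(d+1)$, it picks $n$ with $\frac13(d-2)<n\le\frac13(d+1)$, notes that $\partial_{x_3}^n$ kills $\ell_1^rR$ and $\partial_{x_2}^n$ kills $\ell_3^uU$ (since $\deg R,\deg U<n$), and shows via Lemma~\ref{l:trivial} that $\partial_{x_2}^n\partial_{x_3}^n$ applied $2n\le t$ times to the middle term is nonzero --- a short, self-contained argument tailored to the shape $\ell_i^{e_i}\cdot(\cdot)$. You instead run the Mason--Stothers/Wronskian machinery, and you correctly identify and supply the one nonstandard ingredient: since all three terms have the same degree $d$, the leading terms of $\tilde f\tilde g'$ and $\tilde f'\tilde g$ cancel, improving $\deg W$ to $2d'-2$ and hence the radical bound to $N\ge d'+2$; without this the conclusion would be off by one when $d\equiv 2\pmod 3$, which would in fact break the application in Lemma~\ref{lem:weighted} (the constant $\frac34$ at $\alpha=2$). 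Your reductions (simultaneous normalisation $\ell_i=y-\alpha_iz$ and choice of chart with $z\nmid RTU$, the pairwise-proportional case, dividing out $\gcd(R,T,U)$ and then $\gcd(f,g,h)$) all check out, including the slightly delicate point that $\gcd(R,T,U)=1$ forces $a_i\le r,t,u$. Both proofs need characteristic zero (the paper via the nonvanishing $cq$ in Lemma~\ref{l:trivial}, you via $W\neq 0$ for non-proportional polynomials). What your approach buys is the stronger additive conclusion $r+t+u\le 2d+1$ (sharp, e.g.\ for $\ell_1^2-\ell_2^2=(\ell_1-\ell_2)(\ell_1+\ell_2)$), of which the stated bound on the minimum is an immediate corollary; what the paper's buys is brevity and the avoidance of any gcd/degeneracy bookkeeping.
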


\begin{proof}
By a linear change of coordinates we can assume $\ell_1 = x_2$, $\ell_2 = cx_2 + c'x_3$ and $\ell_3 = x_3$, for some $c,c' \in \K^*$,  so that the relation becomes
\[
x_2^r R + (cx_2 + c'x_3)^t T + x_3^u U = 0.
\]
Observe that for an integer $a$, the condition $a \le \left\lfloor \frac{2d+1}3 \right\rfloor$ is equivalent to $a < \frac23 (d+1)$.
For contradiction, assume $r, t, u \ge \frac23 (d + 1)$.
Let $n$ be the unique integer such that  $\tfrac13 (d-2) < n \le \tfrac13 (d+1)$. Then $2n \le \frac23 (d + 1) \le t$.
We have
\[
\deg R = d - r \le d - \tfrac23 (d + 1) = \tfrac13 (d-2) < n,
\]
so $\partial_{x_3}^n [x_2^r R ] = x_2^r \partial_{x_3}^n R = 0$.
Similarly, $\deg U < n$ and $\partial_{x_2}^n [x_3^u U] = 0$.
Taking the partial derivatives $\partial_{x_2}^n\partial_{x_3}^n$ of the expression $E = x_2^r R + (cx_2 + c'x_3)^t T + x_3^u U$,
we obtain
$\partial_{x_2}^n \partial_{x_3}^n E = \partial_{x_2}^n \partial_{x_3}^n [(cx_2 + c'x_3)^t T]$.
Since $2n \le t$, we can apply $2n$ times Lemma \ref{l:trivial} to obtain
$\partial_{x_2}^n \partial_{x_3}^n [(cx_2 + c'x_3)^t T] \neq 0$, hence $E \neq 0$, contradiction.
\end{proof}

We will need the following
weighted versions of Lemma~\ref{l:w=1}.

\begin{lemma}
\label{lem:weighted}
Let $p \ge 1$ be an integer, and let $R, T, U \in \K[x_2, x_3]$ be nonzero weighted homogeneous polynomials with $x_2, x_3$ of respective weights $p, 1$.
Assume that there exist $r,t,u,m\in \N$ such that
\[
\deg R + rp = \deg T + tp = \deg U + up = m.
\]
Suppose that for some distinct $c, c' \in \K^*$ we have
\[
x_2^r R + (x_2 + cx_3^p)^t T + (x_2 + c'x_3^p)^u U = 0.
\]
Let $m = \alpha p + \beta$ be the Euclidean division of $m$ by $p$, and assume $\alpha \ge 2$.
Then $\min \{rp, tp, up \} \le \frac34m$.
\end{lemma}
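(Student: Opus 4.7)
The plan is to reduce the weighted identity to a univariate one by the substitution $s = x_2/x_3^p$ (which converts each nonlinear form $x_2 + cx_3^p$ into the linear expression $s + c$), then rehomogenise and apply Lemma~\ref{l:w=1}. The role of the hypothesis $\alpha \ge 2$ is purely numerical, to force the resulting integer bound below $3m/4$.

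Since $R$ is weighted homogeneous of weighted degree $m - rp$, every one of its monomials has the form $x_2^i x_3^{(m-rp) - ip}$ with $0 \le ip \le m - rp$; under $x_2 \mapsto sx_3^p$ this monomial becomes $x_3^{m-rp} s^i$. Hence $R = x_3^{m-rp}\tilde R(s)$ for a nonzero $\tilde R \in \K[s]$ of some degree $r^* \le (m - rp)/p$, and analogously $T = x_3^{m-tp}\tilde T(s)$, $U = x_3^{m-up}\tilde U(s)$. Dividing the hypothesis by $x_3^m$ yields
\[
s^r \tilde R(s) + (s + c)^t \tilde T(s) + (s + c')^u \tilde U(s) = 0 \quad \text{in } \K[s].
\]
Setting $N := \max\{r + r^*,\, t + t^*,\, u + u^*\}$ and multiplying through by $z^N$ while substituting $s = y/z$ produces
\[
y^r \bar R(y, z) + (y + cz)^t \bar T(y, z) + (y + c'z)^u \bar U(y, z) = 0 \quad \text{in } \K[y, z],
\]
where $\bar R, \bar T, \bar U$ are nonzero homogeneous with $r + \deg \bar R = t + \deg \bar T = u + \deg \bar U = N$. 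Since $c, c' \in \K^*$ are distinct, the three linear forms $y$, $y + cz$, $y + c'z$ are pairwise independent, so Lemma~\ref{l:w=1} applies and gives
\[
\min\{r, t, u\} \le \left\lfloor \frac{2N + 1}{3} \right\rfloor.
\]

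To conclude, $N$ is a nonnegative integer with $N \le \lfloor m/p \rfloor = \alpha$, so $\min\{rp, tp, up\} \le p\lfloor (2\alpha + 1)/3 \rfloor$, and it remains to verify $p\lfloor (2\alpha + 1)/3\rfloor \le 3m/4$ for every $\alpha \ge 2$. For $\alpha \ge 4$ this is the straightforward estimate $(2\alpha + 1)/3 \le 3\alpha/4$; the two remaining cases are direct checks, namely $p \cdot 1 \le 3p/2 \le 3m/4$ for $\alpha = 2$ and $p \cdot 2 \le 9p/4 \le 3m/4$ for $\alpha = 3$. I expect the only subtle step to be the homogenisation bookkeeping — keeping $\bar R, \bar T, \bar U$ nonzero and of the correct degrees — but this is immediate from the nonvanishing of $\tilde R, \tilde T, \tilde U$ and the definition of $N$. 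The bound is tight at $m = 4p$, which explains why the hypothesis $\alpha \ge 2$ is just barely sufficient.
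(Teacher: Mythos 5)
Your proof is correct and follows essentially the same route as the paper: both arguments eliminate the weights by the substitution that turns $x_2+cx_3^p$ into a genuine linear form, reduce to an ordinary homogeneous identity in degree at most $\alpha$, apply Lemma~\ref{l:w=1}, and finish with the numerical bound $\lfloor(2\alpha+1)/3\rfloor p\le\frac34 m$ for $\alpha\ge 2$. The only cosmetic difference is that you pass through the univariate ring $\K[s]$ and rehomogenise to degree $N\le\alpha$, whereas the paper factors out $x_3^\beta$ directly and lands in degree exactly $\alpha$.
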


\begin{rem}
\label{rem:alpha1}
In the case $\alpha = r = t = u = 1$, up to scaling the only possibility for $R, T, U$ satisfying the identity in Lemma~\ref{lem:weighted} is
\begin{align*}
R = (c - c')x_3^\beta, &&
T = c' x_3^\beta, &&
U = -c x_3^\beta.
\end{align*}
\end{rem}

\begin{proof}[Proof of Lemma~\ref{lem:weighted}]
There exist (ordinary) homogeneous polynomials $\widetilde R, \widetilde T, \widetilde U$ satisfying
\begin{align*}
R(x_2,x_3) &= x_3^\beta \widetilde R(x_2,x_3^p), \\
T(x_2,x_3) &= x_3^\beta \widetilde T(x_2,x_3^p), \\
U(x_2,x_3) &= x_3^\beta \widetilde U(x_2,x_3^p).
\end{align*}
By assumption, we have
\[
x_2^r x_3^\beta \widetilde R(x_2,x_3^p) + (x_2 + cx_3^p)^t x_3^\beta \widetilde T(x_2,x_3^p) + (x_2 + c'x_3^p)^u x_3^\beta \widetilde U(x_2,x_3^p) = 0.
\]
Dividing by $x_3^\beta$ and changing variables we get
\[
x_2^r \widetilde R(x_2,x_3) + (x_2 + cx_3)^t \widetilde T(x_2,x_3) + (x_2 + c'x_3)^u  \widetilde U(x_2,x_3) = 0,\]
where each term of the sum is homogeneous of degree $\frac{m-\beta}p = \alpha$.
By Lemma \ref{l:w=1}, we have $\min\{r, t, u\} \le   \left\lfloor \frac{2\alpha+1}3 \right\rfloor$.
Observe that
\begin{equation}
\label{spadesuit}
\max_{\alpha \ge 2} \frac{\left\lfloor \frac{2\alpha+1}3 \right\rfloor }{\alpha} = \frac34 , \text{ realised for } \alpha = 4. \tag{$\spadesuit$}
\end{equation}
We conclude that
\[
\min \{rp, tp, up\} \le  \left\lfloor \frac{2\alpha+1}3 \right\rfloor p \le \frac{\left\lfloor \frac{2\alpha+1}3 \right\rfloor }{\alpha}\alpha p \le \frac34 m.
\qedhere
\]
\end{proof}

\begin{lemma}
\label{lem:weightedplus}
Let $p \ge 1$ be an integer, and let $R, T, U \in \K[x_2, x_3]$ be nonzero weighted homogeneous polynomials with $x_2, x_3$ of respective weights $p, 1$.
Assume that there exist $r,t,u,m\in \N$ such that
\[
\deg R + rp = \deg T + tp = \deg U + u = m.
\]
Suppose that for some $c \in \K^*$ we have
\[
x_2^r R + (x_2 + c x_3^p)^t T + x_3^u U = 0.
\]
Let $m = \alpha p + \beta$ be the Euclidean division of $m$ by $p$, and
assume $\alpha \ge 2$. Then $\min \{rp, tp, u \} < \frac45m$.
\end{lemma}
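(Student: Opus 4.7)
The plan is to adapt the proof of Lemma~\ref{lem:weighted}: rewrite the weighted-homogeneous identity as an ordinary homogeneous identity in two variables $y, z$ and then invoke Lemma~\ref{l:w=1}. The new complication is that $x_3^u$ is not in general a power of $x_3^p$, so when we substitute $z = x_3^p$ the third term picks up a ``remainder'' factor which will cost us the extra $1/20$ in the constant compared with Lemma~\ref{lem:weighted}.

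First I would use the weighted homogeneity to write
\[
R(x_2,x_3) = x_3^{\beta} \widetilde R(x_2,x_3^p), \qquad T(x_2,x_3) = x_3^{\beta}\widetilde T(x_2,x_3^p),
\]
and $U(x_2,x_3) = x_3^{\beta'}\widetilde U(x_2,x_3^p)$, where $\beta' \in [0,p)$ is the remainder of $m - u$ modulo $p$, with $\widetilde R, \widetilde T, \widetilde U$ nonzero ordinary homogeneous polynomials. Since $u + \beta' \equiv m \equiv \beta \pmod{p}$, there is a unique integer $k\ge 0$ with $u + \beta' = kp + \beta$, and in particular $u \le kp + \beta$ (as $\beta' \ge 0$). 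Dividing the given identity by $x_3^{\beta}$ and setting $y=x_2,\, z=x_3^p$ yields
\[
y^r \widetilde R(y,z) + (y+cz)^t \widetilde T(y,z) + z^k \widetilde U(y,z) = 0,
\]
where each term is ordinary homogeneous of degree $\alpha$.

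Next I would apply Lemma~\ref{l:w=1} to the pairwise independent linear forms $y,\, y+cz,\, z$ to obtain $\min\{r,t,k\} \le \lfloor(2\alpha+1)/3\rfloor$. If this minimum is attained by $r$ or $t$, then the estimate \eqref{spadesuit} used in Lemma~\ref{lem:weighted} gives
\[
\min\{rp, tp, u\} \le \lfloor(2\alpha+1)/3\rfloor\, p \le \tfrac{3}{4}\alpha p \le \tfrac{3}{4}m < \tfrac{4}{5}m.
\]
If instead the minimum is attained by $k$, then $u \le kp+\beta \le \lfloor(2\alpha+1)/3\rfloor\, p + \beta$, and the desired inequality reduces, after clearing denominators, to $\beta < p\bigl(4\alpha - 5\lfloor(2\alpha+1)/3\rfloor\bigr)$.

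The main obstacle is this last arithmetic inequality, which pins down why $4/5$ is the correct constant. Writing $2\alpha + 1 = 3q + s$ with $s\in\{0,1,2\}$, a short computation gives $4\alpha - 5q = q + 2s - 2$, whose minimum over $\alpha \ge 2$ is $1$, attained at $\alpha = 4$ (where $q=3, s=0$). Since $\beta < p$, the inequality $\beta < p(4\alpha - 5q)$ then holds in every case, completing the proof. The extra additive $\beta$ in the bound $u \le kp + \beta$ (coming from the non-$p$-multiple part of~$u$) is exactly what prevents one from obtaining the cleaner $3/4$ bound of Lemma~\ref{lem:weighted} and forces the constant up to $4/5$.
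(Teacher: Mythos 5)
Your proof is correct and follows essentially the same route as the paper: the same decomposition $R=x_3^\beta\widetilde R(x_2,x_3^p)$, $T=x_3^\beta\widetilde T(x_2,x_3^p)$, $U=x_3^{\beta'}\widetilde U(x_2,x_3^p)$, the same reduction to Lemma~\ref{l:w=1} with the integer $k$ (the paper's $u'$), and the same bound $u\le kp+\beta$. Your final arithmetic, checking $4\alpha-5\lfloor\frac{2\alpha+1}{3}\rfloor\ge 1$ for $\alpha\ge 2$, is exactly equivalent to the paper's computation of $\max_{\alpha\ge 2}\frac{\lfloor(2\alpha+1)/3\rfloor+1}{\alpha+1}=\frac45$.
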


\begin{rem}
\label{rem:alpha1plus}
In the case $\alpha = r = t =1$, and $u=m$, up to scaling the only possibility for $R, T, U$ satisfying the identity in Lemma~\ref{lem:weightedplus} is
\begin{align*}
R = -x_3^\beta, &&
T = x_3^\beta, &&
U = -c.
\end{align*}
\end{rem}

\begin{proof}[Proof of Lemma~\ref{lem:weightedplus}]
We can assume that $x_3$ does not divide $U$. Write the Euclidean division $m - u = \alpha' p + \beta'$.
Then there exist (ordinary) homogeneous polynomials $\widetilde R, \widetilde T, \widetilde U$ satisfying
\begin{align*}
R(x_2,x_3) &= x_3^\beta \widetilde R(x_2,x_3^p), \\
T(x_2,x_3) &= x_3^\beta \widetilde T(x_2,x_3^p), \\
U(x_2,x_3) &= x_3^{\beta'} \widetilde U(x_2,x_3^p).
\end{align*}
By assumption, we have
\[
x_2^r  x_3^\beta \widetilde R(x_2,x_3^p) + (x_2 + c x_3^p)^t  x_3^\beta \widetilde T(x_2,x_3^p)  + x_3^u x_3^{\beta'} \widetilde U(x_2,x_3^p) = 0.
\]
We see that $u + \beta' - \beta$ must be a multiple of $p$, and writing $u + \beta' - \beta = pu'$ with $u' \ge 0$, we get
\[
x_2^r \widetilde R(x_2,x_3) + (x_2 + c x_3)^t \widetilde T(x_2,x_3) + x_3^{u'} \widetilde U(x_2,x_3) = 0.
\]
By Lemma \ref{l:w=1}, we have $\min\{r, t, u'\} \le  \left\lfloor \frac{2\alpha+1}3 \right\rfloor$.
By definition, we have $u\le pu' + \beta,$ and so multiplying by $p$ we obtain
\[
\min \{rp, tp, u\} \le \left\lfloor \frac{2\alpha+1}3 \right\rfloor p + \beta.
\]
Observe that for any real numbers $0 < c_1 < c_2$, we have $c_1 p + \beta < \frac{c_1+1}{c_2+1} (c_2p + \beta)$.
Applying this observation with $c_1 = \left\lfloor \frac{2\alpha+1}3 \right\rfloor$ and $c_2 = \alpha$, we obtain
\[
\min \{rp, tp, u\} \le \left\lfloor \frac{2\alpha+1}3 \right\rfloor p + \beta < C (\alpha p + \beta) = Cm
\]
for
\[C = \max_{\alpha \ge 2} \frac{\left\lfloor \frac{2\alpha+1}3 \right\rfloor + 1}{\alpha + 1} = \frac45.\]
(The maximum is realised for $\alpha = 4.$)
\end{proof}

\section{Links}
\label{sec:links}

In this section we will study the link of each vertex $\nu$ in $\X$, which is the following metric graph \cite{BB}*{page~176}. Namely, the vertices of
the link correspond to the edges of $\X$ containing $\nu$ and the edges of the link correspond to the corners of the $2$-cells of $\X$ containing
$\nu$. The length of each edge of the link is the angle in $\X$ at the corresponding corner. Note that the link is not finite but has finitely many
possible edge lengths.

Since $\T$ maps any vertex into $\Ap^+_\id$, we will assume $\nu\in \Ap^+_\id$. For example, by \cite{LP2}*{lem~ 3.4} the link of $\nu_{\id,[1,1,1]}$
in $\X$ can be identified with the Bruhat--Tits building of $\GL_3(\K)$. Consequently, each cycle in the link of length $<2\pi +\frac{2\pi}{3}$ has
length exactly $2\pi$ and is the link of $\nu$ in $\Ap_f$ for $f\in A$.

We will now perform a similar classification for other weights. In \cite{LP2}*{prop~7.9, 7.10 and~7.12} we proved that the link of $\nu$ is
$\mathrm{CAT}(1)$, i.e.\ all of its cycles have length $\geq 2\pi$. Our aim is to classify all of its cycles of length $<2\pi+\eps_0$, for some small
$\eps_0>0$ independent of $\nu$.

\subsection{Links of vertices with weight \texorpdfstring{$[m,m,1]$}{[m,m,1]}}

Let $\nu=\nu_{\id,[m,m,1]}$ for an integer $m\geq 2$. Let $\Gamma$ be the link of $\nu$ in $\X$. Let $I$ be the link of $[m,m,1]$ in $\nabla^+$,
which is a simplicial path consisting of $3$ edges of length $\frac{\pi}{3}$ \cite{LP2}*{rem~5.2}, see Figure~\ref{fig:mm1}. We keep the notation
$\rho_+$ for the induced map from~$\Gamma$ to $I$. Let $s,q\in I$ be the directions towards $[1,1,1]$ and towards~$[1,1,0]$, respectively. For $f\in
\St(\nu)$, let $\Gamma_f,\Gamma^+_f\subset \Gamma$ be the cycle and the path, respectively, that are the links of $\nu$ in $\Ap_f,\Ap_f^+$. Let $s_f,q_f\in
\Gamma^+_f$ be the preimages under $\rho_+$ of $s,q,$ respectively.

\begin{figure}
\includegraphics[scale=1]{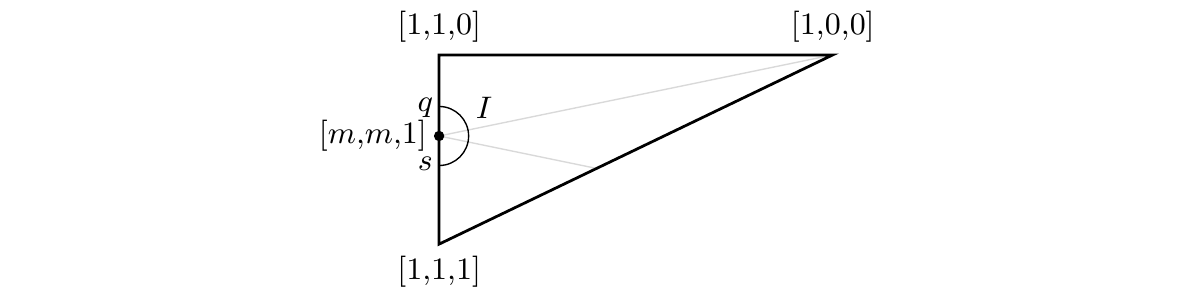}
\caption{}
\label{fig:mm1}
\end{figure}

\begin{prop}
\label{prop:linkpp1}
Each cycle in $\Gamma$ of length $<2\pi +\frac{2\pi}{3}$ has length $2\pi$ and
\begin{itemize}
\item equals $\Gamma_f$, or
\item is the closure of $(\Gamma_f\cup\Gamma_g)\setminus (\Gamma_f\cap \Gamma_g)$, or
\item intersects $\Gamma_f,\Gamma_g, \Gamma_h$ along length $\frac{2\pi}{3}$ paths with midpoints $s_f,s_g,s_h$.
\end{itemize}
\end{prop}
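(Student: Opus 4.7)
The plan is to exploit the projection $\rho_+\colon\Gamma\to I$ together with the CAT(1) structure of $\Gamma$ from \cite{LP2}. Since the cells of $\X$ are pulled back from those of $\nabla^+$, the map $\rho_+$ is an isometry on each edge of $\Gamma$, so for any cycle $c$ of length $\ell$ in $\Gamma$ the composition $\rho_+\circ c$ is a closed loop in the tree $I$ of the same length $\ell$. Letting $n_1,n_2,n_3$ denote the number of times each of the three edges of $I$ is traversed, we have $\ell=(n_1+n_2+n_3)\tfrac{\pi}{3}$ with each $n_i$ even (as $I$ is a tree). The CAT(1) bound $\ell\geq 2\pi$ combined with the hypothesis $\ell<\tfrac{8\pi}{3}$ forces $n_1+n_2+n_3=6$ and $\ell=2\pi$, so $c$ is a closed geodesic and at each $\Gamma$-vertex $v$ on $c$ the two $c$-edges are antipodal in the link of $v$.

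Next I would perform a case analysis on the pattern $(n_1,n_2,n_3)$, ordering the edges of $I$ as $s$-adjacent, middle, $q$-adjacent. The even-sum-six possibilities are $(2,2,2)$, $(6,0,0)$, $(0,0,6)$, $(0,6,0)$, $(4,2,0)$, $(2,4,0)$, $(0,2,4)$, and $(0,4,2)$. A \emph{fold} of $c$ at a $\Gamma$-vertex $v$ is a vertex where the two $c$-edges project to the same $I$-edge on the same side of $\rho_+(v)$; the antipodality condition then forces two distinct edges of $\Gamma$ at $v$ to lie in the same $I$-direction. Via Proposition~\ref{prop:stab} and Corollary~\ref{cor:fixed_set}, a fold at $v$ is encoded by a nontrivial cancellation identity between the leading monomials of three elements of $\T$ whose chambers meet at $v$.

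This is where Section~\ref{sec:triple} enters. A fold at the interior $I$-vertex adjacent to $s$ yields an identity to which Lemma~\ref{lem:weighted} applies, giving $\min\{rp,tp,up\}\leq\tfrac{3}{4}m$; a fold at $q$ or at the interior $I$-vertex adjacent to $q$ yields an identity to which Lemma~\ref{lem:weightedplus} applies, giving $\min\{rp,tp,u\}<\tfrac{4}{5}m$; and a fold at $s$ falls under the unweighted Lemma~\ref{l:w=1}. Using these exponent bounds against the specific exponents forced by the admissible-line structure at $[m,m,1]$ and the angle-$\pi$ condition, I would rule out every fold at an interior or $q$-end vertex of $I$. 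The borderline small-exponent cases are handled by Remarks~\ref{rem:alpha1} and~\ref{rem:alpha1plus}. This eliminates all patterns except $(2,2,2)$ and $(6,0,0)$.

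For $(2,2,2)$ there is no fold, so $c$ splits into two paths of length $\pi$ each projecting bijectively onto $I$, each lying in a half of some $\Gamma_f$; either both halves come from the same apartment, giving the first listed case, or they come from two distinct apartments $\Gamma_f,\Gamma_g$ that share their common half-circle of length $\pi$, giving the second listed case. For $(6,0,0)$ all folds occur at $\Gamma$-vertices projecting alternately to $s$ and to the $I$-vertex adjacent to $s$, and the antipodality condition at each $s$-fold forces the two incident $c$-edges to lie in a common $\Gamma_f^+$; this decomposes $c$ into three length-$\tfrac{2\pi}{3}$ arcs carried by three apartments and centered at $s_f,s_g,s_h$, giving the third listed case. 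The main obstacle is the algebraic step of the third paragraph: translating each hypothetical fold at an interior or $q$-end vertex of $I$ into the exact monomial identity of Section~\ref{sec:triple} and checking that the resulting $\tfrac{3}{4}$ and $\tfrac{4}{5}$ bounds are sharp enough to contradict the exponents demanded by the angle-$\pi$ condition together with the admissible-line data at $[m,m,1]$.
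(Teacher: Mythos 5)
Your opening reduction (bipartiteness/parity plus the $\mathrm{CAT}(1)$ bound forces exactly six edges and length $2\pi$) matches the paper, but the core of your third paragraph is wrong. The pattern $(4,2,0)$ --- four edges over the $s$-adjacent edge of $I$ and two over the middle edge --- cannot be eliminated: it is exactly the configuration in which the cycle decomposes into four maximal paths oriented towards $s$ (two single-edge paths with sources over the interior vertex adjacent to $s$, and two two-edge paths with sources over the other interior vertex), and this is precisely the case producing the second bullet point of the proposition. Your plan is also internally inconsistent: every source vertex of a $(6,0,0)$-cycle is a fold at the interior $I$-vertex adjacent to $s$, so ``ruling out every fold at an interior vertex of $I$'' would kill the third bullet point, which you then go on to keep. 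The underlying misdiagnosis is the appeal to Section~\ref{sec:triple}: the polynomial estimates (Lemmas~\ref{l:w=1}, \ref{lem:weighted}, \ref{lem:weightedplus}) play no role for the weight $[m,m,1]$ and are used only for Propositions~\ref{prop:linkp11} and~\ref{prop:linkmp1}. Over the $s$-end of $I$ the link $\Gamma$ branches freely (building-like behaviour), which is exactly why folds there, and hence the $j=4$ and $j=6$ configurations, survive with no algebraic obstruction.

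The mechanism your outline is missing is purely combinatorial. Lemma~\ref{lem:branch} says that each vertex of $\Gamma$ not over $q$ or $s$ lies on exactly one edge whose image separates it from $q$, and that this edge stays in the same chamber link $\Gamma_f^+$ as the vertex. Orienting each edge of the cycle from $q$ towards $s$, this forces every sink to lie over $s$ and decomposes the cycle into $j=2,4$ or $6$ maximal oriented paths, each contained in some $\Gamma_f^+$ and ending at $s_f$. Lemma~\ref{lem:bruhat} (if $s_f=s_{f'}$ then $\Gamma_f^+$ and $\Gamma_{f'}^+$ lie in a common apartment link $\Gamma_{f''}$) then merges the paths pairwise at their shared sinks, and the cases $j=2,4,6$ yield the three bullet points. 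Note that your final paragraph implicitly needs exactly this lemma: nothing a priori forces the two cycle-edges meeting at a vertex over $s$ to lie in a common $\Gamma_f^+$, and ``antipodality'' gives no information here, since the relevant incidence data at a vertex of the metric graph $\Gamma$ is not metric but the chamber structure recorded in Lemmas~\ref{lem:branch} and~\ref{lem:bruhat}.
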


In the proof we will need the following.

\begin{lemma}[\cite{LP2}*{lem~7.7(iii)}]
\label{lem:branch} Each vertex $u\in \Gamma \setminus \rho_+^{-1}\{q,s\}$ is contained in exactly one edge~$e$ such that $\rho_+(e)$ separates
$\rho_+(u)$
    from $q$ in $I$. Moreover, if $u\in \Gamma^+_f$ for some $f\in \Stab(\nu)$, then $e\subset \Gamma^+_f$.
\end{lemma}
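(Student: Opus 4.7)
First I would pin down the edge count. Since every edge of $\Gamma$ has length $\pi/3$, and $2\pi \leq |\gamma| <2\pi+\tfrac{2\pi}{3}$, the cycle $\gamma$ has $6$ or $7$ edges. The projection $\rho_+\circ\gamma$ is a closed cellular loop in the tree $I=s_0s_1s_2s_3$ (with $s_0=s,\,s_3=q$), so each edge of $I$ is traversed an even number of times; hence $\gamma$ has an even number of edges, so exactly $6$, and $|\gamma|=2\pi$.

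Next I would classify the shape of $\rho_+\circ\gamma$. Because $\gamma$ has no immediate backtrack, Lemma~\ref{lem:branch} (applied at vertices over $s_1$ or $s_2$, where the toward-$q$ edge is unique) rules out the projected local moves $s_2\to s_1\to s_2$ and $s_3\to s_2\to s_3$; all other local moves at interior vertices remain, namely pass-throughs and the turns $s_0\to s_1\to s_0$ and $s_1\to s_2\to s_1$. Organising the walk as a sequence of round trips from $s$ to a turning point in $\{s_1,s_2,q\}$ and back, of respective lengths $\tfrac{2\pi}{3}, \tfrac{4\pi}{3}, 2\pi$, a direct enumeration shows that $\rho_+\circ\gamma$, up to cyclic shift, falls into one of three shapes: (A) one long trip $s\to s_1\to s_2\to q\to s_2\to s_1\to s$; (B) one long and one short trip $s\to s_1\to s\to s_1\to s_2\to s_1\to s$; or (C) three short trips $s\to s_1\to s\to s_1\to s\to s_1\to s$.

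I would then lift each shape to a description of $\gamma$ using the ``moreover'' part of Lemma~\ref{lem:branch}: at any vertex $u$ of $\gamma$ over $s_i\in\{s_1,s_2\}$, if an edge of $\gamma$ at $u$ on the $s$-side lies in some $\Gamma_f^+$, then the forced toward-$q$ edge at $u$ also lies in $\Gamma_f^+$. In shape~(A), propagating this along each of the two $3$-edge arcs of $\gamma$ identifies each arc as a chamber $\Gamma_f^+,\Gamma_g^+$ sharing both endpoints: if these are the two halves of a single apartment then $\gamma$ equals that apartment (case~(i)); otherwise $\gamma$ is the symmetric difference of two apartments whose intersection is a common chamber (case~(ii)). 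In shape~(B), the same propagation exhibits two sub-arcs of $\gamma$ as sub-arcs of apartments $\Gamma_f,\Gamma_g$ whose intersection is a $3$-edge path projecting to $s_1\to s_2\to q\to s_2$, again case~(ii). In shape~(C), $\gamma$ is a concatenation of three $2$-edge paths, each centered at an $s$-vertex $v$, to be identified with $\gamma\cap\Gamma_h$ for an apartment $\Gamma_h$ with $s_h=v$, giving case~(iii).

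The main obstacle is the apartment reconstruction, most visibly in shape~(C): for each $s$-vertex $v$, I must produce an apartment $\Gamma_h$ with $s_h=v$ that contains both $\gamma$-edges at $v$. Since $[m,m,1]$ lies on the reflection wall $\alpha_1=\alpha_2$, apartments through $v$ correspond to pairs of chambers at $v$ interchanged by the induced reflection; I would verify the required pairing by appealing to $\mathrm{Stab}(\nu)=M_\alpha\rtimes L_\alpha$ from Proposition~\ref{prop:stab}, with $L_\alpha$ containing the involution swapping $x_1$ and $x_2$, together with the chamber gluings of $\X$ from Section~\ref{subs:comb}. A similar apartment-matching, both easier and more routine, closes up the arguments in shapes~(A) and~(B).
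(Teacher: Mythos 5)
Your proposal does not prove the statement at issue. Lemma~\ref{lem:branch} is a local statement about the link graph $\Gamma$ of $\nu=\nu_{\id,[m,m,1]}$: every vertex $u\in\Gamma\setminus\rho_+^{-1}\{q,s\}$ lies on \emph{exactly one} edge $e$ whose image $\rho_+(e)$ separates $\rho_+(u)$ from $q$ in the path $I$, and this $e$ lies in each chamber-link $\Gamma^+_f$ containing $u$. What you have written is instead a sketch of the \emph{classification of short cycles} in $\Gamma$, i.e.\ of Proposition~\ref{prop:linkpp1} --- and your argument invokes Lemma~\ref{lem:branch} twice as a tool (once to rule out the projected backtracks $s_2\to s_1\to s_2$ and $s_3\to s_2\to s_3$, and once via its ``moreover'' clause to propagate membership in a $\Gamma^+_f$ along arcs). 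Relative to the statement you were asked to prove, the argument is therefore circular: nothing in your text establishes the existence of the edge $e$, its uniqueness, or the ``moreover'' part.

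A proof of the lemma cannot be extracted from an analysis of cycles; it requires the algebraic data around $\nu$: the description of $\St(\nu)=M_\alpha\rtimes L_\alpha$ from Proposition~\ref{prop:stab}, the criterion of Corollary~\ref{cor:fixed_set} for when two chambers $\Ap^+_f$, $\Ap^+_g$ share a given cell at $\nu$, and from these an identification of which edges of $\bigsqcup\Gamma^+_f$ over a fixed edge of $I$ get glued in $\X$. That is precisely the content of \cite{LP2}*{lem~7.7}, which the present paper imports without reproof. (Separately, your cycle analysis is broadly in the spirit of the paper's proof of Proposition~\ref{prop:linkpp1}, which also decomposes $\gamma$ into oriented paths ending at points $s_f$ and then applies Lemma~\ref{lem:bruhat}; but that proposition is not the statement in question.)
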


\begin{lemma}[\cite{LP2}*{lem~7.8}]
\label{lem:bruhat}
Suppose that $s_f=s_{f'}$ for some $f,f' \in \Stab(\nu)$. Then there exists $f'' \in \Stab(\nu)$ with $\Gamma^+_f,\Gamma^+_{f'}\subset
\Gamma_{f''}$.
\end{lemma}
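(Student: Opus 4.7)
By replacing $(f,f')$ with $(\id, f^{-1}f')$, we may assume $f=\id$ and write $g := f'$; we seek $f'' \in \Stab(\nu)$ with $\Gamma^+_\id \cup \Gamma^+_g \subset \Gamma_{f''}$, where $\Gamma_{f''}$ is a cycle of length $2\pi$ in $\Gamma$.

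\emph{Extracting the form of $g$.} The assumption $s_\id = s_g$ means the edge $e$ of $\X$ at $\nu$ in direction $s$ lies in both $\Ap^+_\id$ and $\Ap^+_g$. For any interior point $\bar x \in e$, we have $\bar x = \nu_{\id,[\alpha]} = \nu_{g,[\alpha']}$ for some $[\alpha], [\alpha']$; by the well-definedness of $\rho_+$ (Corollary~\ref{cor:rho}), $[\alpha] = [\alpha']$, so $g$ fixes $\nu_{\id,[\alpha]}$. The weights $[\alpha]$ on $e$ are of the form $[\lambda,\lambda,1]$ with $\lambda$ ranging in a neighbourhood of $m$ (since $s$ points towards $[1,1,1]$ along the wall $\alpha_1=\alpha_2$). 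Intersecting the stabilisers from Proposition~\ref{prop:stab} case~(4) over all such $\lambda$, $g$ is forced to have the shape
\[
g = \bigl(ax_1 + bx_2 + P_1(x_3),\ a'x_1 + b'x_2 + P_2(x_3),\ cx_3 + d\bigr),
\]
with $ab'-a'b\neq 0$, $c\neq 0$, and $\deg P_i \le m$.

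\emph{Constructing $f''$.} The plan is to choose $f''$ so that the wall of $\Ap_{f''}$ through $\nu$ in direction $s$ separates two Weyl chambers of $\Ap_{f''}$ that coincide locally at $\nu$ with $\Ap^+_\id$ and $\Ap^+_g$. Using Gaussian elimination on the $2 \times 2$ matrix $\bigl(\begin{smallmatrix}a & b \\ a' & b'\end{smallmatrix}\bigr) \in \GL_2(\K)$, decompose it as a product $U_1 \cdot \tau \cdot U_2$, where $\tau \in S_2$ is either $\id$ (when $a \neq 0$) or the transposition swapping $x_1, x_2$ (when $a=0$), and $U_1, U_2$ are upper-triangular. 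Correspondingly, write $g = h_1 \cdot \tau \cdot h_2$ inside $\Stab(\nu)$, where $h_1,h_2$ absorb the triangular factors together with the polynomial corrections $P_1, P_2$ (possible thanks to the degree bound $\deg P_i \le m$, which matches exactly the polynomial degrees allowed in case~(4) of Proposition~\ref{prop:stab}). Setting $f'' := h_1\tau$, the apartment $\Ap_{f''} = h_1 \tau \cdot \Ap_\id$ contains $\Ap^+_\id$ (since $h_1\tau$ maps one Weyl chamber of $\Ap_\id$ onto $\Ap^+_\id$) and $\Ap^+_g = h_1\tau h_2 \cdot \Ap^+_\id$ (since $h_2$ maps $\Ap^+_\id$ onto another Weyl chamber of $\Ap_\id$, so composing places $\Ap^+_g$ as another Weyl chamber of $\Ap_{f''}$).

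\emph{Main obstacle.} The delicate point is verifying that the decomposition $g = h_1 \tau h_2$ can be carried out inside $\Stab(\nu)$ so that both $h_1, h_2$ stabilise $\Ap^+_\id$ as a \emph{Weyl chamber} of some apartment through $\nu$, and so that the resulting $f'' = h_1 \tau$ indeed makes $\Ap_{f''}$ contain both chambers near $\nu$. Concretely, one must show that the polynomial corrections $P_i(x_3)$ of degree up to $m$ do not create a ``new'' wall inside $\Ap_{f''}$ obstructing the desired containment; this uses the structure of admissible lines through $[m,m,1]$ (Section~\ref{sub:X}) together with the combinatorial CW-structure on $\X$ (Section~\ref{subs:comb}). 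Once this is verified, $\Gamma_{f''}$ is a cycle of length $2\pi$ split by two antipodal wall-vertices into the arcs $\Gamma^+_\id$ and $\Gamma^+_g$, as required.
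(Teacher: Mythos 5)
Your overall strategy — a Bruhat-type decomposition $g=h_1\tau h_2$ of $g=f^{-1}f'$ with respect to the ``upper triangular'' part of $\Stab(\nu)$, followed by taking $f''$ to be the first factor — is the right skeleton, but the argument has a genuine gap at the first step, and it propagates exactly into the ``main obstacle'' that you flag and leave unresolved. The hypothesis $s_{\id}=s_g$ says that $g$ fixes the points $\nu_{\id,[\lambda,\lambda,1]}$ for $\lambda$ in an interval $(m-\epsilon,m)$ (the direction $s$ points towards $[1,1,1]$, so $\lambda$ decreases from $m$; it is a one-sided neighbourhood). By Corollary~\ref{cor:fixed_set}, fixing $\nu_{\id,[\lambda,\lambda,1]}$ forces $\deg P_i\le \lambda$, so intersecting over $\lambda<m$ yields $\deg P_i\le m-1$, \emph{not} $\deg P_i\le m$ as you state. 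With the bound $\deg P_i\le m$ you have extracted nothing beyond $g\in\Stab(\nu)$, so your proof never actually uses the hypothesis — and the lemma is false without it. Concretely, $g=(x_1+x_3^m,x_2,x_3)$ lies in $\Stab(\nu)$ and is already ``upper triangular'', so your recipe accepts it; but by Corollary~\ref{cor:fixed_set} its fixed set near $\nu$ is cut out by $\alpha_1\ge m\alpha_3$, so $\Gamma^+_{\id}\cap\Gamma^+_g$ is a nondegenerate proper subpath of $I$, and no $f''$ can work: two distinct arcs of the $2\pi$-cycle $\Gamma_{f''}$, each mapping homeomorphically onto $I$ under $\rho_+$, must intersect in exactly the two points of $\rho_+^{-1}\{q,s\}\cap\Gamma_{f''}$.

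The sharpened bound is precisely what resolves the obstacle you defer. An element $h=(ax_1+bx_2+P_1(x_3),\,b'x_2+P_2(x_3),\,cx_3+d)$ with $\deg P_i\le m-1$ fixes, by Corollary~\ref{cor:fixed_set}, the set $\{\alpha_1\ge\alpha_2,\ \alpha_1\ge(\deg P_1)\alpha_3,\ \alpha_2\ge(\deg P_2)\alpha_3\}$, which is a full neighbourhood of $[m,m,1]$ in $\nabla^+$ because the admissible lines $\alpha_i=j\alpha_3$ with $j\le m-1$ miss $[m,m,1]$; hence $\Gamma^+_h=\Gamma^+_{\id}$, and such $h$ form a subgroup $U$ closed under composition and inversion. (If $x_3^m$ were allowed, the line $\alpha_i=m\alpha_3$ through $[m,m,1]$ would be exactly the ``new wall'' you worry about.) Two further corrections: the Bruhat dichotomy is governed by whether $x_1$ appears in the \emph{second} component of $g$ (i.e.\ $a'=0$ or $a'\neq 0$), not by whether $a=0$; and one should record that $\Gamma_{f''}=\Gamma^+_{f''}\cup\Gamma^+_{f''\tau}$ for $\tau=(x_2,x_1,x_3)$, since $[m,m,1]$ lies in exactly the two Weyl chambers $\nabla^+$ and $\tau\nabla^+$ of $\nabla$. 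With these repairs the argument closes: either $g\in U$ (take $f''=\id$), or $g=u_1\tau u_2$ with $u_1,u_2\in U$, and $f''=u_1$ gives $\Gamma^+_{f''}=\Gamma^+_{\id}$ and $\Gamma^+_{f''\tau}=\Gamma^+_{gu_2^{-1}}=\Gamma^+_g$.
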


\begin{proof} [Proof of Proposition~\ref{prop:linkpp1}]
Since $\Gamma$ is bipartite and all of its edges have length $\frac{\pi}{3}$, we only need to classify cycles $\gamma$ of length $2\pi$, each of
which consists of six edges. We orient each edge of $\gamma$, so that its image in $I$ under $\rho_+$ is
oriented from $q$ to $s$. By Lemma~\ref{lem:branch}, $\gamma$ decomposes into $j=2,4,$ or $6$ oriented paths each of which lies in an $\Gamma^+_f$
and ends with $s_f$. By Lemma~\ref{lem:bruhat}, the cases $j=2,4,6,$ correspond to the respective bullet points of the proposition.
\end{proof}

\subsection{Links of vertices with weight \texorpdfstring{$[m,1,1]$}{[m,1,1]}}

Let $m\geq 2$ be an integer.

\begin{rem}
\label{rem:numberline} The weight $[\alpha]=[m,1,1]$ lies on exactly $m+2$ admissible lines, namely $\alpha_2 = \alpha_3$ and $\alpha_1 = a\alpha_2 +
(m-a)\alpha_3$, where $a = 0, \dots, m$. The latter lines contain points $[a,1,0]$, and are principal exactly for $a=0$ and $a=m$. See
Figure~\ref{fig:m11}.
\end{rem}

\begin{figure}
\includegraphics{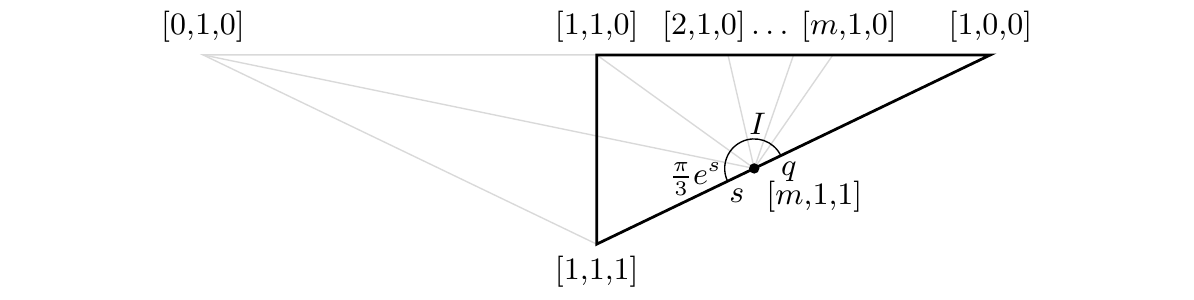}
\caption{}
\label{fig:m11}
\end{figure}

Let $\nu=\nu_{\id,[\alpha]}$ with $[\alpha]=[m,1,1]$. Let $\Gamma_0$ be the link of $\nu$ in $\X$.
We keep the notation $\rho_+$ for the induced map from $\Gamma_0$ to the link $I$ of $[\alpha]$ in $\nabla^+$, which is a simplicial path. Let
$s,q\in I$ be the directions towards $[1,1,1]$ and towards $[1,0,0]$, respectively. Let $e^s\subset I$ be the edge containing $s$. For $f\in
\St(\nu)$, let $\Gamma_f,\Gamma^+_f\subset \Gamma_0$ be the cycle and the path, respectively, that are the links of $\nu$ in $\Ap_f,\Ap_f^+$. Let
$s_f,q_f,e^s_f\subset \Gamma^+_f$ be the preimages under $\rho_+$ of $s,q,e^s,$ respectively. Lemmas~\ref{lem:branch} and~\ref{lem:bruhat} remain
true in this setting, with $\Gamma$ replaced by $\Gamma_0$ (see the first paragraph of the proof of \cite{LP2}*{prop~7.10}).

\begin{prop}
\label{prop:linkp11} There is $\eps_0>0$ independent of $m$ such that each cycle in $\Gamma_0$ of length $<2\pi +\eps_0$
\begin{itemize}
\item equals $\Gamma_f$, or
\item is the closure of $(\Gamma_f\cup\Gamma_g)\setminus (\Gamma_f\cap \Gamma_g)$.
\end{itemize}
\end{prop}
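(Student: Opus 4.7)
I will follow the strategy of Proposition~\ref{prop:linkpp1} verbatim, using that Lemmas~\ref{lem:branch} and~\ref{lem:bruhat} remain available for $\Gamma_0$ in place of $\Gamma$. Given a cycle $\gamma\subset\Gamma_0$ of length $< 2\pi + \eps_0$, I orient each of its edges so that its image under $\rho_+$ runs from $q$ toward $s$ in $I$. This decomposes $\gamma$ into $2k$ maximal oriented subpaths: $k$ ascending ones going from valleys (local $\rho_+$-minima) to peaks (local $\rho_+$-maxima), and $k$ descending ones. By Lemma~\ref{lem:branch}, each ascending subpath is forced edge by edge from its lower endpoint; moreover, once the subpath passes through a vertex of some $\Gamma_f^+$, it is contained in $\Gamma_f^+$ up to the peak.

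Since $\rho_+$ restricts isometrically to each edge, identifying $I$ with $[0,\pi]$ (with $q=0$ and $s=\pi$) and writing $a_1,\dots,a_k$ for the peak positions and $b_1,\dots,b_k$ for the valley positions yields $L(\gamma) = 2\bigl(\sum a_i - \sum b_i\bigr)$. A short computation from Remark~\ref{rem:numberline} and the Euclidean structure on $\nabla$ shows that the two edges of $I$ adjacent to $q$ and $s$ (those corresponding to the principal lines $\alpha_1 = m\alpha_2$ and $\alpha_1 = m\alpha_3$) both have length exactly $\pi/3$, independently of $m$; hence every interior vertex of $I$ lies in the band $[\pi/3, 2\pi/3]$. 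In the case $k=1$, the CAT(1) bound $L(\gamma)\geq 2\pi$ forces both the ascending and descending subpath to have length~$\pi$, so each is a full $\Gamma_f^+$; applying Lemma~\ref{lem:bruhat} at the shared peak over $s$ yields an apartment link $\Gamma_{f''}$ containing both, and comparison of lengths gives $\gamma = \Gamma_{f''}$, which is the first bullet. In the case $k=2$, the constraint $a_1 + a_2 - b_1 - b_2 < \pi + \eps_0/2$ combined with the band restriction forces either both peaks to be at~$s$ with both valleys projecting to a common interior vertex of~$I$, or symmetrically both valleys at~$q$ with both peaks projecting to a common interior vertex; applying Lemma~\ref{lem:bruhat} at the two coinciding $s$- or $q$-vertices produces apartments $\Gamma_f,\Gamma_g$ whose shared arc contains $s$ or $q$, and $\gamma$ is exactly the symmetric difference $(\Gamma_f\cup\Gamma_g)\setminus(\Gamma_f\cap\Gamma_g)$, giving the second bullet.

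The main obstacle is excluding the case $k\geq 3$ with a uniform $\eps_0 > 0$ independent of $m$. The danger configuration is the direct analogue of the third bullet of Proposition~\ref{prop:linkpp1}: three peaks at $s$ and three valleys at the interior vertex of $I$ at distance $\pi/3$ from $s$, which would give $L(\gamma) = 2\pi$ exactly. To rule this out I expect to show that no three pairwise-distinct arcs of apartment links in $\Gamma_0$ can share a common edge of $\X$ above $\nu$ corresponding to such an interior vertex. The relevant algebraic incompatibility should follow from the triple-polynomial identities of Section~\ref{sec:triple}: three distinct $\Gamma_{f_i}^+$ meeting as required would force an identity of the shape forbidden by Lemma~\ref{l:w=1}, Lemma~\ref{lem:weighted}, or Lemma~\ref{lem:weightedplus}, depending on the type of the interior admissible line involved, and the deficit produced by this combinatorial obstruction yields the uniform~$\eps_0$.
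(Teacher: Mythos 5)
Your handling of $k=1$ and $k=2$ (the paper's $j=2,4$) follows the paper's route, modulo small inaccuracies (all peaks are forced over $s$ by Lemma~\ref{lem:branch}, so the "symmetric" alternative with peaks over an interior vertex cannot occur, and the two valleys in the $k=2$ case need not project to a common interior vertex of $I$ — they are simply the endpoints of $\Gamma_f\cap\Gamma_g$). The genuine gap is in $k=3$, which you yourself flag as the main obstacle and then leave as a plan. Two things are wrong with the plan. First, the six-path configuration is \emph{not} impossible and cannot be "ruled out": Example~\ref{exa:cycle} exhibits, for $m=2$, a cycle with three peaks over $s$ and three valleys over interior vertices, of length $2\pi+\frac{\pi}{3}$. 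The correct goal is a uniform lower bound on the excess length. Second, the obstruction is not the local statement you propose ("no three arcs share a common edge above an interior vertex"): the three valleys are three distinct vertices of $\Gamma_0$ lying over interior vertices $[a,1,0],[a',1,0],[a'',1,0]$ of $I$ that need not coincide, and nothing is "shared". The actual mechanism is global: one normalises the six transition elements $f_i^{-1}f_{i+1}$ modulo the normal subgroup $N_\alpha<\Stab'(\nu)$ (Lemma~\ref{lem:normal} together with a classification of the intersections $\Gamma_f^+\cap\Gamma_g^+$, Lemma~\ref{lem:sector}), and the relation $h_0\cdots h_5\sim_\nu\id$ around the cycle yields an identity $P_1(\cdot,\ell_1)+P_3(\cdot,\ell_2)+P_5(x_2,x_3)=0$ with $P_1,P_3,P_5$ divisible by $x_3^{m-a},x_3^{m-a'},x_3^{m-a''}$; only then does Lemma~\ref{l:w=1} apply, giving $\max\{a,a',a''\}\geq \frac{m}{4}$.

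Moreover, the quantitative form $\max\{a,a',a''\}\geq\delta m$ is indispensable and your argument does not engage with why. The angle at $[m,1,1]$ between the directions towards $[a,1,0]$ and $[0,1,0]$ tends to $0$ as $m\to\infty$ for fixed $a$, so merely excluding $a=a'=a''=0$ (or any bound independent of $m$ on the $a_i$) gives cycles of length arbitrarily close to $2\pi$ and no uniform $\eps_0$. One needs $a\geq\delta m$ with $\delta$ independent of $m$, plus Lemma~\ref{lem:isometry} to convert this into an angle bounded below independently of $m$. Your "band $[\pi/3,2\pi/3]$" picture, which treats the positions of the interior vertices of $I$ as controlled uniformly in $m$, conceals exactly this difficulty.
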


Note that unlike in Proposition~\ref{prop:linkpp1}, here in the second bullet point we can have cycles of length $>2\pi$ arbitrary close to $2\pi$. 

In Example~\ref{exa:cycle} we will see that setting $\eps_0=\frac{2\pi}{3}$ does not suffice.

In the proof, we will need the following. For $f,g\in \Stab(\nu)$, we write $f \sim_{\nu} g$ if $\Gamma^+_f=\Gamma^+_g$. Let $\Stab'(\nu)< \Stab(\nu)$
be the subgroup of elements of form $$(x_1 + P(x_2, x_3), bx_2 + cx_3+d, b'x_2 + c'x_3+d'),$$ which differs from $\Stab(\nu)$ only by requiring the
coefficient at $x_1$ to be equal $1$. Let $N_\alpha< \Stab'(\nu)$ be the subgroup of elements of form $$(x_1 + P(x_2, x_3), x_2+d, x_3+d')$$ with
$\deg P < m$.

\begin{lemma}\phantomsection
\label{lem:normal}
\begin{enumerate}[(i)]
\item For each $f\in \Stab(\nu)$, there is $g\in \Stab'(\nu)$ with $f\sim_\nu g$.
\item Let $h\in N_{\alpha}$. Then $h\sim_\nu \id$.
\item $N_\alpha$ is normal in $\Stab'(\nu)$.
\end{enumerate}
\end{lemma}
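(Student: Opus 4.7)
My plan is to handle the three parts in order. Part (i) will reduce to finding a suitable element of $L_\alpha$ that acts trivially on the apartment $\Ap_\id$; part (ii) will follow from an openness argument based on Corollary~\ref{cor:fixed_set}; and part (iii) will be a direct (if routine) computation of the conjugate.

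For part (i), given $f = (ax_1 + P(x_2,x_3), bx_2+cx_3+d, b'x_2+c'x_3+d') \in \Stab(\nu)$, I will set $h = (a^{-1}x_1, x_2, x_3)$. Since each component of $h$ contains only a single monomial of the form $x_k$, the inequalities of Corollary~\ref{cor:fixed_set} hold trivially at every $[\beta] \in \nabla^+$, so $h$ fixes $\Ap_\id$ pointwise. Hence $\Ap^+_{fh} = fh(\Ap^+_\id) = f(\Ap^+_\id) = \Ap^+_f$, giving $\Gamma^+_{fh} = \Gamma^+_f$. A direct substitution then gives $fh = (x_1 + P(x_2,x_3), bx_2+cx_3+d, b'x_2+c'x_3+d') \in \Stab'(\nu)$, so $f \sim_\nu fh$.

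For part (ii), it suffices to show that $h$ fixes $\nu_{\id,[\beta]}$ for all $[\beta]$ in a neighbourhood of $[\alpha] = [m,1,1]$ in $\nabla^+$, since then $\Ap^+_h$ and $\Ap^+_\id$ share a neighbourhood of $\nu$ and therefore $\Gamma^+_h = \Gamma^+_\id$. By Corollary~\ref{cor:fixed_set}, for $h = (x_1 + P(x_2,x_3), x_2+d, x_3+d')$ the only non-trivial inequalities to check are $\beta_1 \geq m_2\beta_2 + m_3\beta_3$ for each monomial $x_2^{m_2}x_3^{m_3}$ of $P$. At $[\alpha]$ these read $m \geq m_2+m_3$, which hold strictly by the constraint $\deg P < m$ defining $N_\alpha$. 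By continuity of these linear inequalities in $[\beta]$, they persist on a neighbourhood of $[\alpha]$ in $\nabla^+$.

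For part (iii), let $g = (x_1 + Q(x_2,x_3), \sigma_2, \sigma_3) \in \Stab'(\nu)$ where $\sigma = (\sigma_2, \sigma_3) = (bx_2+cx_3+d, b'x_2+c'x_3+d')$ and $\deg Q \leq m$, and let $h = (x_1 + P(x_2,x_3), x_2+e, x_3+e') \in N_\alpha$. A direct computation shows that the last two components of $ghg^{-1}$ are pure translations $x_2+(be+ce')$, $x_3+(b'e+c'e')$ because the linear part of $\sigma$ cancels with that of $\sigma^{-1}$. The first component has the form $x_1 + P^*(x_2,x_3)$ with
\[
P^*(x_2,x_3) \;=\; P\bigl(\sigma^{-1}(x_2,x_3)\bigr) \;+\; \bigl[Q\bigl(\sigma^{-1}(x_2,x_3) + (e,e')\bigr) - Q\bigl(\sigma^{-1}(x_2,x_3)\bigr)\bigr].
\]
The first term on the right has degree equal to $\deg P < m$ because $\sigma^{-1}$ is an affine automorphism of $\K^2$, and the bracketed difference has degree at most $\deg Q - 1 \leq m-1$ since the top-degree homogeneous parts of the two terms cancel. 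Hence $\deg P^* < m$ and $ghg^{-1} \in N_\alpha$. The only mild obstacle is the bookkeeping in this last calculation; parts (i) and (ii) are essentially immediate from Corollary~\ref{cor:fixed_set}.
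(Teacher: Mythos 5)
Your proof is correct and takes essentially the same route as the paper, which only sketches the argument: part (i) via an element $(a^{-1}x_1,x_2,x_3)$ that fixes $\Ap^+_\id$ pointwise, part (ii) via Corollary~\ref{cor:fixed_set}, and part (iii) by the direct conjugation computation. The details you supply — the strict inequality $m>\deg P$ yielding an open neighbourhood of $[\alpha]$, and the degree bookkeeping for $P^*$ — are exactly what the paper leaves to the reader.
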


\begin{proof}
Part (i) follows from the description of $\Stab(\nu)$ in Proposition~\ref{prop:stab} and the observation \cite{LP2}*{cor~3.3} that for
$l=(ax_1,x_2,x_3)$ we have $l\sim_\nu \id$. Part (ii) follows from Corollary~\ref{cor:fixed_set}. Part (iii) is an easy computation.
\end{proof}

\begin{lemma}
\label{lem:sector} Let $f,g\in \Stab'(\nu)$. If $\Gamma^+_f\neq \Gamma^+_g$ and $\Gamma^+_f\cap \Gamma^+_g\neq \{q_f\}$, then either
\begin{enumerate}[(i)]
\item $\Gamma^+_f\cap \Gamma^+_g=\{s_f,q_f\}$ and $f^{-1}g \in hN_\alpha$ for some $h=(x_1, bx_2 + cx_3, b'x_2 + c'x_3)$ with $b'\neq 0$, or

\item $\Gamma^+_f\cap \Gamma^+_g$ is the path from $q_f$ to the direction mapping under $\rho_+$ to the direction towards $[a,1,0]$, where $0\leq
    a\leq m$ and $f^{-1}g \in hN_\alpha$ for some $$h = (x_1 + \sum_{i = 0}^{a} c_i  x_2^i x_3^{m - i}, bx_2 + cx_3, c'x_3)$$ with $c_a\neq 0$.
\end{enumerate}
\end{lemma}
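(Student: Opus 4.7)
The plan is first to translate by $f^{-1}$, using $f\cdot \Ap^+_h = \Ap^+_{fh}$, to reduce to the case $f = \id$. Then I must analyze $\Gamma^+_\id \cap \Gamma^+_g$ for $g = (x_1 + P,\, bx_2 + cx_3 + d,\, b'x_2 + c'x_3 + d') \in \Stab'(\nu)$. The key observation is that a point or open edge of $\Gamma^+_\id$ lies in $\Gamma^+_g$ exactly when the corresponding germ of ray or sector in $\Ap^+_\id$ at $\nu$ is pointwise fixed by $g$; by Corollary~\ref{cor:fixed_set}, the fixed locus is cut out by explicit polyhedral inequalities on the nearby weights $[\alpha']$, which I parameterize as $[m + u,\, 1 + \eta,\, 1 - \eta]$ with $\eta \geq 0$ small.

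The argument splits on whether $b' \neq 0$. If $b' \neq 0$, then the $x_2$-monomial in $g_3$ contributes $\alpha_3' \geq \alpha_2'$, which combined with the defining inequality $\alpha_2' \geq \alpha_3'$ of $\nabla^+$ forces $\eta = 0$; the fixed locus lies in the boundary ray $\alpha_2' = \alpha_3'$ of $\nabla^+$. The $q$-direction is automatically fixed, while the $s$-ray (weights $[m', 1, 1]$ with $m' < m$) is fixed iff the degree-$m$ homogeneous part $P_m$ of $P$ vanishes. Since the hypothesis $\Gamma^+_\id \cap \Gamma^+_g \neq \{q_\id\}$ excludes the case where only $q_\id$ is fixed, we conclude $P_m = 0$; a direct computation then yields $h^{-1}g = (x_1 + P,\, x_2 + e,\, x_3 + e') \in N_\alpha$ for the $h$ in case~(i).

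If instead $b' = 0$ (so $b, c' \neq 0$), the only nontrivial fixedness inequalities come from the monomials $x_2^i x_3^{m - i}$ of $P_m = \sum c_i x_2^i x_3^{m-i}$ and read $u \geq (2i - m)\eta$. If $P_m = 0$ then a whole neighborhood of $\nu$ in $\Ap^+_\id$ is fixed, contradicting $\Gamma^+_\id \neq \Gamma^+_g$; hence $a := \max\{i : c_i \neq 0\}$ is well defined with $c_a \neq 0$, and the fixed region is the closed wedge $\eta \geq 0,\; u \geq (2a - m)\eta$ in the tangent plane to $\nabla^+$ at $[\alpha]$. Since the vertices of $I$ are linearly ordered from $q$ to $s$ as $q, d_m, d_{m-1}, \ldots, d_0, s$ (with $d_{a'}$ corresponding to the ray $u = (2a' - m)\eta$), this wedge projects onto the closed subpath from $q_\id$ to $d_{a, \id}$, as in case~(ii). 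A direct computation, using that $P_m(x_2 + \mathrm{const},\, x_3 + \mathrm{const}) - P_m(x_2, x_3)$ has degree $< m$, shows $h^{-1}g \in N_\alpha$ for the prescribed $h$.

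The main subtlety will be matching the polyhedral fixed region in the tangent plane with the cell-by-cell intersection in $\Gamma^+_\id$: I must verify that every intermediate vertex $d_{a', \id}$ with $a' \geq a$ is captured and each incident sector is fixed as a whole, while the sector between $d_a$ and $d_{a-1}$ is strictly excluded (which is where $c_a \neq 0$ enters), and that lower-degree terms of $P$ do not contribute spurious fixed directions (ensured by $\deg(P - P_m) < m$).
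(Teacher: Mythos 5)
Your proposal is correct and follows essentially the same route as the paper's proof: reduce to $f=\id$, invoke Corollary~\ref{cor:fixed_set} to identify $\Gamma^+_{\id}\cap\Gamma^+_g$ with the polyhedral fixed locus of $g$ near $\nu$, and split on $b'\neq 0$ versus $b'=0$ and on whether the degree-$m$ part $P_m$ vanishes. The only cosmetic difference is that the paper first normalises $g$ modulo $N_\alpha$ (via Lemma~\ref{lem:normal}(ii,iii)) to make $P$ homogeneous of degree $m$ and $d=d'=0$ before the case analysis, whereas you carry general $g$ through and verify $f^{-1}g\in hN_\alpha$ at the end; your explicit parameterisation $[m+u,1+\eta,1-\eta]$ and the resulting inequalities $u\geq(2i-m)\eta$ correctly reproduce the orderings and wedges that the paper leaves implicit.
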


\begin{proof}
After composing $f$ and $g$ with $f^{-1}$, we can assume $f=\id$. Let $g=(x_1 + P(x_2, x_3), bx_2 + cx_3+d, b'x_2 + c'x_3+d')$. By
Lemma~\ref{lem:normal}(ii,iii), after possibly composing with an element of $N_\alpha$, we can assume that $P$ is homogeneous of degree~$m$, and
$d=d'=0$. If $P$ is trivial, then by Corollary~\ref{cor:fixed_set} we have (i) for $b'\neq 0$ or $\Gamma^+_f=\Gamma^+_g$ for $b'=0$. Analogously, if
$P$ is nontrivial, then by Corollary~\ref{cor:fixed_set} we have~(ii) for $b'=0$ or $\Gamma^+_f\cap \Gamma^+_g=\{q_f\}$ for $b'\neq 0$.
\end{proof}

\begin{lem}
\label{lem:isometry} Let $0\leq \delta \leq 1$. Let $[\alpha],[\alpha']\in \nabla$ and let $c,c'$ be the projective line segments joining
$[\alpha],[\alpha']$ to $[\delta\frac{\alpha_1}{\alpha_2},1,0],[\delta\frac{\alpha'_1}{\alpha'_2},1,0]$, respectively. Then the translation of
$(\nabla,|\cdot|)$ mapping $[\alpha]$ to $[\alpha']$ maps $c$ to $c'$.
\end{lem}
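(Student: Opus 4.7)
The plan is to unwind the definitions in the logarithmic coordinates on $\nabla$ introduced in Section~\ref{sub:X}, in which $[\alpha]$ corresponds to $(\log\alpha_1,\log\alpha_2,\log\alpha_3)$ viewed modulo $\R(1,1,1)$, and the translation $T$ sending $[\alpha]$ to $[\alpha']$ acts by adding the constant vector $\bigl(\log\tfrac{\alpha'_1}{\alpha_1},\log\tfrac{\alpha'_2}{\alpha_2},\log\tfrac{\alpha'_3}{\alpha_3}\bigr)$. Thus the equality $T(c)=c'$ amounts to exhibiting matching parametrisations of $c$ and $c'$ whose log coordinates differ by exactly this vector.

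First I would parametrise the projective segment $c$. Every point on $c$ has the form $[s\alpha+\beta]$ with $\beta=(\delta\alpha_1/\alpha_2,1,0)$ and $s\in[0,\infty]$. Clearing the denominator and introducing the rescaled parameter $u=s\alpha_2$, this becomes
\[
c(u)=[\alpha_1(u+\delta),\,\alpha_2(u+1),\,u\alpha_3],\qquad u\in[0,\infty],
\]
with $u=0$ giving the endpoint $[\delta\alpha_1/\alpha_2,1,0]$ and $u=\infty$ giving $[\alpha]$. Performing exactly the same rescaling for $c'$ yields $c'(u)=[\alpha'_1(u+\delta),\alpha'_2(u+1),u\alpha'_3]$.

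The point of this choice of parameter is that for $u\in(0,\infty)$ the log coordinates of $c(u)$ decompose as
\[
(\log\alpha_1,\log\alpha_2,\log\alpha_3)+\bigl(\log(u+\delta),\,\log(u+1),\,\log u\bigr),
\]
where the second summand depends only on $u$ and $\delta$, not on $[\alpha]$. The analogous decomposition holds for $c'(u)$ with $[\alpha']$ replacing $[\alpha]$. Consequently $T(c(u))=c'(u)$ for every $u\in(0,\infty)$, and by continuity of $T$ the equality extends to the two endpoints. I do not anticipate any real obstacle; the only mildly subtle point is to pick the rescaling $u=s\alpha_2$ that decouples the shape of the log-parametrisation from the starting weight, so that both $c$ and $c'$ receive parametrisations whose $u$-dependent parts coincide.
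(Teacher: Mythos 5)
Your proof is correct, but it proceeds differently from the paper's. The paper's argument is projective: it invokes the fact (from the proof of \cite{LP2}*{lem~5.9}) that reflections in lines parallel to the principal lines extend to projective maps of $P\R^3$, so the translation $\tau$ extends projectively; it then tracks where $\tau$ sends the projective line through $[\alpha]$ and $[0,0,1]$ to locate $[\frac{\alpha_1}{\alpha_2},1,0]$, and finally uses invariance of the cross-ratio on the line through $[0,1,0]$ and $[1,0,0]$ to pin down the image of $[\delta\frac{\alpha_1}{\alpha_2},1,0]$. You instead verify the statement by a direct computation: your rescaled parametrisation $c(u)=[\alpha_1(u+\delta),\alpha_2(u+1),u\alpha_3]$ is correct (the substitution $u=s\alpha_2$ does exactly the decoupling you describe), and in log coordinates modulo $\R(1,1,1)$ the $u$-dependent summand $(\log(u+\delta),\log(u+1),\log u)$ is independent of the base weight, which immediately gives $T(c(u))=c'(u)$ on the interior of the segment. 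Your approach is more elementary and self-contained, since it does not rely on the external fact that translations of $(\nabla,|\cdot|)$ extend to projective maps; the paper's approach is shorter given that fact and handles the endpoint $[\delta\frac{\alpha_1}{\alpha_2},1,0]$ (which lies on $\partial\nabla$, outside the domain of the Euclidean translation) via the projective extension, whereas in your write-up the "continuity" step at $u=0$ should really be phrased as continuity of the projective extension of $T$ up to the boundary of $\nabla$, since the Euclidean translation itself is not defined there. This is a cosmetic point only; the substance of the argument is sound.
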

\begin{proof}
In the proof of \cite{LP2}*{lem~5.9} we showed that the reflections in the lines parallel to the principal lines in $(\nabla,|\cdot|)$ extend to
projective maps of $P\R^3$ (preserving~$\nabla$). Thus the translation $\tau$ mapping $[\alpha]$ to $[\alpha']$ maps the projective line through
$[\alpha]$ and $[0,0,1]$ to the projective line through $[\alpha']$ and $[0,0,1]$ and hence it maps $[\frac{\alpha_1}{\alpha_2},1,0]$ to
$[\frac{\alpha'_1}{\alpha'_2},1,0]$. Since $\tau$ preserves $[0,1,0]$ and $[1,0,0]$, as well as the cross-ratio on the projective line through
$[0,1,0]$ and $[1,0,0]$, we conclude that $\tau$ maps $[\delta\frac{\alpha_1}{\alpha_2},1,0]$ to $[\delta\frac{\alpha'_1}{\alpha'_2},1,0]$.
\end{proof}

\begin{proof}[Proof of Proposition~\ref{prop:linkp11}]

Let $\gamma\subset \Gamma_0$ be a cycle of length $<2\pi+\frac{2\pi}{3}$. We orient each edge of $\gamma$, so that its image in $I$ under $\rho_+$ is
oriented from $q$ to $s$. By Lemma~\ref{lem:branch}, $\gamma$ decomposes into an even number $j$ of oriented paths each of which lies in an
$\Gamma^+_f$ and ends with $s_f$. By Remark~\ref{rem:numberline}, each such path contributes at least $\frac{\pi}{3}$ to the length of $\gamma$.
Since $\gamma$ has length $<2\pi+\frac{2\pi}{3}$, we have $j<8$ and so $j=2,4,$ or $6$. The cases $j=2,4$ imply the first two bullet points of the
proposition by
Lemma~\ref{lem:bruhat}.

Suppose now $j=6$, with $\gamma=\gamma_0\cdot \gamma_1^{-1}\cdot \gamma_2\cdots \gamma_5^{-1}$, where each $\gamma_i$ lies in
$\Gamma^+_{f_i}$, for some $f_i\in \St(\nu)$. Note that these paths contribute already $2\pi$ to the length of~$\gamma$, and so for a contradiction
we need to find $\eps_0$ bounding the excess contribution from below. By Remark~\ref{rem:numberline}, points $\rho_+(\gamma_1\cap \gamma_2),
\rho_+(\gamma_3\cap \gamma_4), \rho_+(\gamma_5\cap \gamma_0)$ correspond to directions towards $[a,1,0],[a',1,0],[a'',1,0]$ for some $0\leq
a,a',a''\leq m$. We need to find $\eps_0$ independent of $m$ bounding below at least one of the angles at $[m,1,1]$ between the admissible lines in
these directions and the direction towards $[0,1,0]$. To do that, we will find $\delta>0$ independent of $m$ such that $\max \{a,a',a''\}\geq \delta
m$. We can then assign $\eps_0$ to be the angle at $[m,1,1]$ between the projective lines in the directions towards $[\delta m, 1,0]$ and $[0,1,0]$,
which is independent of $m$ by Lemma~\ref{lem:isometry}.

By Lemma~\ref{lem:normal}(i), we can suppose that all $f_i$ belong to $\Stab'(\nu)$. By
Lemma~\ref{lem:sector}, we have $f_0^{-1}f_1\in h_0N_\alpha, \ldots, f_5^{-1}f_0\in h_5N_\alpha$, where the second and third components of
$h_i=(x_1+P_i,\cdot,\cdot)$ form a linear automorphism in $x_2,x_3$. Furthermore,
\begin{itemize}
\item
for $i=1,3,5,$ the polynomial $P_i$ is homogeneous in $x_2,x_3$ of
degree $m$, and the third component of $h_i$ does not depend on $x_2$, while
\item
for $i=0,2,4,$ we have $P_i=0$ and the third component of $h_i$ depends on $x_2$.
\end{itemize}
By Lemma~\ref{lem:normal}(iii), the group
$N_\alpha$ is normal in $\Stab'(\nu)$ and so by Lemma~\ref{lem:normal}(ii) we have $h_0\cdots h_5\sim_\nu \id$. Thus the first component of
$h_0\cdots h_5$ equals $x_1+P$ for $P$ a polynomial in $x_2,x_3$ of degree $<m$. Since $P_i$ are homogeneous, we have $P=0$. By
Lemma~\ref{lem:sector}, we have that $P_1,P_3,P_5$ are divisible by $x_3^{m-a},x_3^{m-a'},x_3^{m-a''}$, respectively.
Since the first component of $h_0\cdots h_5$
equals~$x_1$, we obtain
$$P_1(\cdot, \ell_1)+P_3(\cdot, \ell_2)+P_5(x_2,x_3)=0,$$
where $\ell_1,\ell_2,x_3$ are pairwise independent linear forms in $\K[x_2,x_3]$.
Now we apply Lemma~\ref{l:w=1} with $$r=m-a,\, \ell_1^rR=P_1, \, t=m-a',\,
\ell_2^tT=P_3,\, u=m-a'',\, x_3^uQ=P_5.$$ 
Thus one of $a,a', a''$ is $\geq m-\big\lfloor\frac{2m+1}{3}\big\rfloor$, and so by \eqref{spadesuit} in Section~\ref{sec:triple} it is $\geq
\frac{m}{4}$. We can then set $\delta=\frac{1}{4}$.
\end{proof}

\begin{exa}
\label{exa:cycle} In the notation of the proof of Proposition~\ref{prop:linkp11}, let $m=2$, let
$$h_4=(x_1,x_3,x_2),\, h_2=(x_1,x_3,x_2+x_3),\, h_0=(x_1,x_2,
x_3-x_2),$$
$$P_5=-x_3^2,\, P_3=x_3^2,\, P_1=x_3(x_3-2x_2).$$ Then the identity $x_2^2-x_3^2+(x_2+x_3)(x_3-x_2)$ implies $h_0\cdots h_5=\id$. The paths
$\gamma_i$ for $i=1,2,3,4$ are of length $\frac{\pi}{3}$, while the paths $\gamma_0,\gamma_5$ are of length $\frac{\pi}{2}$ \cite{LP2}*{lem~5.9}.
Thus $\gamma$ is a cycle in $\Gamma$ of length $2\pi+\frac{\pi}{3}$.
\end{exa}

\begin{rem}
\label{rem:unique_admissible} Let $L_0,L_1$ be any non-principal admissible lines in $\nabla$. Then there is an isometry $(\nabla,|\cdot|)$ mapping
$L_0$ to $L_1$. Indeed, we can assume without loss of generality that the projective intervals $L_0,L_1$ have endpoints $x_0^+,x_1^+$ in the open
projective interval from $[1,0,0]$ to $[0,1,0]$ and  $x_0^-,x_1^-$ in the open projective interval from $[1,0,0]$ to $[0,0,1]$. Let $x_i$ be the
intersection point of the projective line through $[0,0,1]$ and $x_i^+$ and the projective line through $[0,1,0]$ and $x_i^-$. Then as in the proof
of Lemma~\ref{lem:isometry}, the translation of $(\nabla,|\cdot|)$ mapping $x_0$ to $x_1$ maps $x^\pm_0$ to $x^\pm_1$, and hence maps $L_0$ to $L_1$.
\end{rem}

\subsection{Links of vertices with weight in \texorpdfstring{$\mathrm{int}\,\nabla^+$}{int nabla+}}

\begin{figure}
\includegraphics[scale=1]{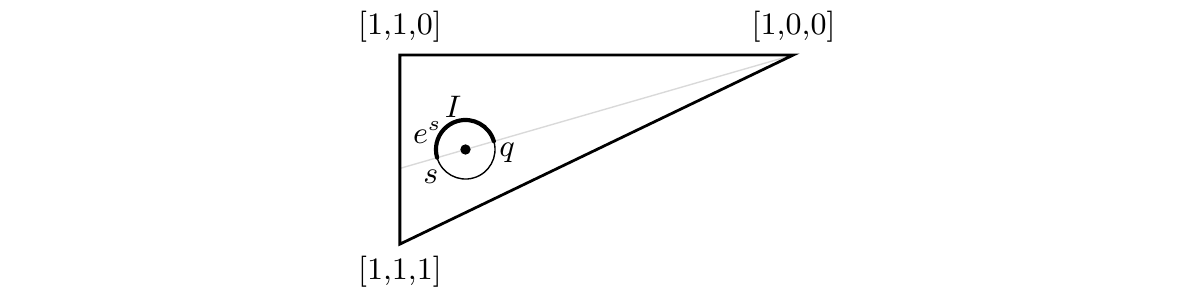}
\caption{}
\label{fig:mp1}
\end{figure}

Let $\nu=\nu_{[\alpha]}$ with $[\alpha]=[\alpha_1,\alpha_2,\alpha_3]\in \mathrm{int}\,\nabla^+$. Let $\Gamma_0$ be the link of $\nu$ in $\X$. We keep
the notation $\rho_+$ for the induced map from $\Gamma_0$ to the cycle $S$ that is the link of $[\alpha]$ in $\nabla^+$. Let $s,q\in S$ be the
directions towards $[0,\alpha_2,\alpha_3]$ or towards $[1,0,0]$, respectively. Let $I\subset S$ be the path defined by
$\frac{\alpha'_2}{\alpha'_3}\geq \frac{\alpha_2}{\alpha_3}$. If $s$ is a vertex of $S$, then let $e^s\subset S$ be the edge containing $s$ lying
inside $I$. See Figure~\ref{fig:mp1}. For $f\in \St(\nu)$, let $\Gamma_f\subset \Gamma_0$ be the cycle that is the link of~$\nu$ in~$\Ap_f$, and let
$s_f,q_f,e^s_f\subset \Gamma_f$ be the preimages under~$\rho_+$ of $s,q,e^s,$ respectively.

\begin{exa}\phantomsection
\label{exa:1}
\begin{enumerate}[(a)]
\item Let
$$h_0=(x_1,x_2-cx_3^p,x_3),\ h_2=(x_1,x_2+cx_3^p,x_3),$$
$$h_1=(x_1+x_2^{a'}P'(x_2,x_3),x_2,x_3),\ h_3=(x_1+x_2^{a}P(x_2,x_3),x_2,x_3),$$
where $c,P,P'\neq 0$. Suppose that we have $h_0\cdots h_3=\id$, and for $i=0,\ldots, 3$, let $f_i=h_0\cdots h_i$, where $f_3=\id$. By
Corollary~\ref{cor:fixed_set}, we obtain a cycle in the link of $\nu$ decomposing into paths in $\Gamma_{f_i}$, examples of which are illustrated
in Figure~\ref{fig:loops}(a), left and right. We proved in \cite{LP2}*{page 40} that we have $p(a+a')\leq m$ and consequently the cycle has
length $\geq 2\pi$ \cite{LP2}*{lem~5.9}. Equality cases include, for example for $m=10p$, the configurations where $c=1$, and $a=10, P=1, a'=0$
(illustrated on the right), or $a=3, P=(x_2+x_3^p)^7, a'=7,$ (illustrated on the left).
\item
Let $m=p+\beta$ with $0<\beta<p$, and let $\alpha=(m,p,1)$. Let
$$h_0=(x_1,x_2-c'x_3^p,x_3),\ h_2=(x_1,x_2+(c'-c)x_3^p,x_3),\  h_4=(x_1,x_2+cx_3^p,x_3),$$
$$h_1=(x_1-bcx_2x_3^\beta,x_2,x_3),\ h_3=(x_1+bc'x_2x_3^\beta,x_2,x_3),\ h_5=(x_1+b(c-c')x_2x_3^\beta,x_2,x_3),$$
where $c,c',c'-c,b\neq 0$. Note that we have $h_0\cdots h_5=\id$. For $i=0,\ldots, 5$, let $f_i=h_0\cdots h_i$, where $f_5=\id$. By
Corollary~\ref{cor:fixed_set}, we obtain a cycle in the link of~$\nu$ decomposing into paths in $\Gamma_{f_i}$ illustrated in
Figure~\ref{fig:loops}(b).
\item As in (b), let $m=p+\beta$ with $0<\beta<p$, and let $\alpha=(m,p,1)$. Let
$$h_0=(x_1-bcx_3^m,x_2-cx_3^p,x_3),\ h_2=(x_1,x_2+cx_3^p,x_3),$$
$$h_1=(x_1+bx_2x_3^\beta,x_2,x_3),\ h_3=(x_1-bx_2x_3^\beta,x_2,x_3),$$
where $c,b\neq 0$. Note that we have $h_0\cdots h_3=\id$. For $i=0,\ldots, 3$, let $f_i=h_0\cdots h_i$, where $f_3=\id$. By
Corollary~\ref{cor:fixed_set}, we obtain a cycle in the link of $\nu$ decomposing into paths in $\Gamma_{f_i}$ illustrated in
Figure~\ref{fig:loops}(c).
\end{enumerate}
\end{exa}

\begin{prop}
\label{prop:linkmp1} There is $\eps_0>0$ independent of $\alpha$ such that each cycle in $\Gamma_0$ of length $<2\pi +\eps_0$
\begin{itemize}
\item equals $\Gamma_f$, or
\item is the closure of $(\Gamma_f\cup\Gamma_g)\setminus (\Gamma_f\cap \Gamma_g)$, or
\item is a cycle in Example~\ref{exa:1}(a), or
\item is the cycle in Example~\ref{exa:1}(b), or
\item is the cycle in Example~\ref{exa:1}(c),
\end{itemize}
up to a translation by an element in $\St(\nu)$ in the last three bullet points.
\end{prop}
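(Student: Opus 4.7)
The plan is to follow the pattern of Propositions \ref{prop:linkpp1} and \ref{prop:linkp11}, now working over the cycle $S$ (rather than a path) that is the link of an interior point $[\alpha]$. Given a cycle $\gamma \subset \Gamma_0$ of length $< 2\pi + \eps_0$, I would first show via a $\mathrm{CAT}(1)$ length argument that $\rho_+(\gamma)$ wraps once around $S$, and orient the edges of $\gamma$ so that their images respect a fixed orientation of $S$. Using a branching lemma in the spirit of Lemma \ref{lem:branch} (branching occurs at directions mapping under $\rho_+$ to admissible lines at $[\alpha]$), decompose $\gamma$ into an even number $j$ of oriented sub-paths $\gamma_0, \gamma_1^{-1}, \ldots, \gamma_{j-1}^{-1}$, each lying in some apartment $\Gamma_{f_i}$. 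The length of $\gamma$ being $< 2\pi + \eps_0$, together with uniform lower bounds on angles between consecutive admissible directions, forces $j \leq 6$ for $\eps_0$ small enough.

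For $j=2$, a Bruhat-type statement analogous to Lemma \ref{lem:bruhat} yields the first bullet; the ``planar'' sub-case of $j=4$ gives the second bullet. For the remaining sub-cases of $j=4$ and for $j=6$, I would write each transition as $f_{i+1} = f_i h_i$ using a canonical transversal to a normal subgroup of $\St(\nu)$ analogous to $N_\alpha$ in Lemma \ref{lem:normal}. The cocycle condition $h_0 \cdots h_{j-1} \in N_\alpha$ then translates, by inspection of the first component of the product, into a polynomial identity in $\K[x_2,x_3]$ between weighted homogeneous terms, where the exponents and linear forms are determined by the $\rho_+$-images of the transition points. This identity is exactly of the form handled by Lemmas \ref{l:w=1}, \ref{lem:weighted}, or \ref{lem:weightedplus} from Section \ref{sec:triple}, the appropriate lemma being chosen according to whether the admissible lines involved are principal or of weighted types $\alpha_1 = \alpha_2 + m_3\alpha_3$ (or the analogous ones after permuting coordinates).

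The resulting quantitative bounds $\min\{rp,tp,up\} \leq \tfrac{3}{4}m$ or $< \tfrac{4}{5}m$, combined with Lemma \ref{lem:isometry} and the metric computations of \cite{LP2}*{lem~5.9}, produce a uniform positive lower bound $\eps_0$ on the angular excess of $\gamma$ above $2\pi$, \emph{except} precisely when one of the extremal equality cases attached to Remarks \ref{rem:alpha1}, \ref{rem:alpha1plus}, or to the maximum \eqref{spadesuit} at $\alpha=4$, is attained. A direct inspection then identifies these extremal configurations, up to translation by an element of $\St(\nu)$, with the cycles of Example \ref{exa:1}(a) (from \eqref{spadesuit} at $\alpha=4$ in Lemma \ref{l:w=1} applied after a principal-line reduction), Example \ref{exa:1}(b) (from the $\alpha=1$ case of Lemma \ref{lem:weighted}, using Remark \ref{rem:alpha1}), and Example \ref{exa:1}(c) (from the $\alpha=1$ case of Lemma \ref{lem:weightedplus}, using Remark \ref{rem:alpha1plus}).

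The main obstacle will be the combinatorial bookkeeping: for each numerical type of $[\alpha] \in \mathrm{int}\,\nabla^+$ (which determines both the admissible-line pattern at $[\alpha]$ and thus the possible transition directions) and each value of $j \in \{4,6\}$, one must pair the correct polynomial identity with the correct lemma from Section \ref{sec:triple}, then verify that the equality case of that lemma matches one of the examples under the $\St(\nu)$-action. In particular, the weighted cases, where $[\alpha]$ lies on admissible lines with $p \geq 2$, must be disentangled from the unweighted principal-line cases, since the former can produce $j=6$ configurations (as in Example \ref{exa:1}(b)) that the latter cannot.
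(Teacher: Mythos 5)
Your overall strategy---decompose $\gamma$ into an even number $j$ of oriented sub-paths, bound $j\le 6$, translate the cocycle condition $h_0\cdots h_{j-1}\sim_\nu\id$ into a polynomial identity, apply the lemmas of Section~\ref{sec:triple}, and match their equality cases with Example~\ref{exa:1}---is exactly the paper's. But your opening step is wrong: $\rho_+(\gamma)$ does \emph{not} in general wrap once around $S$, and one cannot orient the edges of $\gamma$ so that their images respect a fixed orientation of $S$. For the cycle of the second bullet point the map $\rho_+|_\gamma\colon\gamma\to S$ has degree $0$, and for $j\ge 4$ the image $\rho_+(\gamma)$ misses $q$ altogether, so it is not even surjective onto $S$. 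The correct orientation (as in Propositions~\ref{prop:linkpp1} and~\ref{prop:linkp11}) is defined relative to the two poles $q,s$ of $S$: each edge is oriented so that its image points from $q$ towards $s$, and the resulting \emph{folding} of $\rho_+|_\gamma$ at the $s_f$'s and at the local minima is precisely what produces the decomposition into $j$ paths; a degree-one locally injective picture would force $j=2$. Relatedly, you omit the parity observation that, since $\rho_+(\gamma)$ avoids $q$ when $j\ge4$, it crosses the edge $e^s$ an even number of times; this is what splits the $j=6$ case into the two subcases (zero or exactly two of the paths containing an $e^s_f$) that feed into Lemma~\ref{lem:weighted} versus Lemma~\ref{lem:weightedplus}, i.e.\ into Example~\ref{exa:1}(b) versus~(c).

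There is also a misattribution: Example~\ref{exa:1}(a) does not come from the equality case of \eqref{spadesuit} at $\alpha=4$ in Lemma~\ref{l:w=1}. It arises in the non-planar $j=4$ subcase, where the relevant constraint is the two-term inequality $p(a+a')\le m$ established in \cite{LP2}*{page 40}, and the length-$2\pi$ cycles are exactly its equality cases. The three-term lemmas (and the maximum at $\alpha=4$) enter only for $j=6$, where they yield the uniform $\eps_0$ via $\delta=\frac34$ resp.\ $\frac45$ together with Lemma~\ref{lem:isometry}, the exceptional configurations $\alpha=r=t=u=1$ (Remark~\ref{rem:alpha1}) and $\alpha=r=t=1$, $u=m$ (Remark~\ref{rem:alpha1plus}) giving Examples~\ref{exa:1}(b) and~(c), as you say.
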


\begin{figure}
\includegraphics{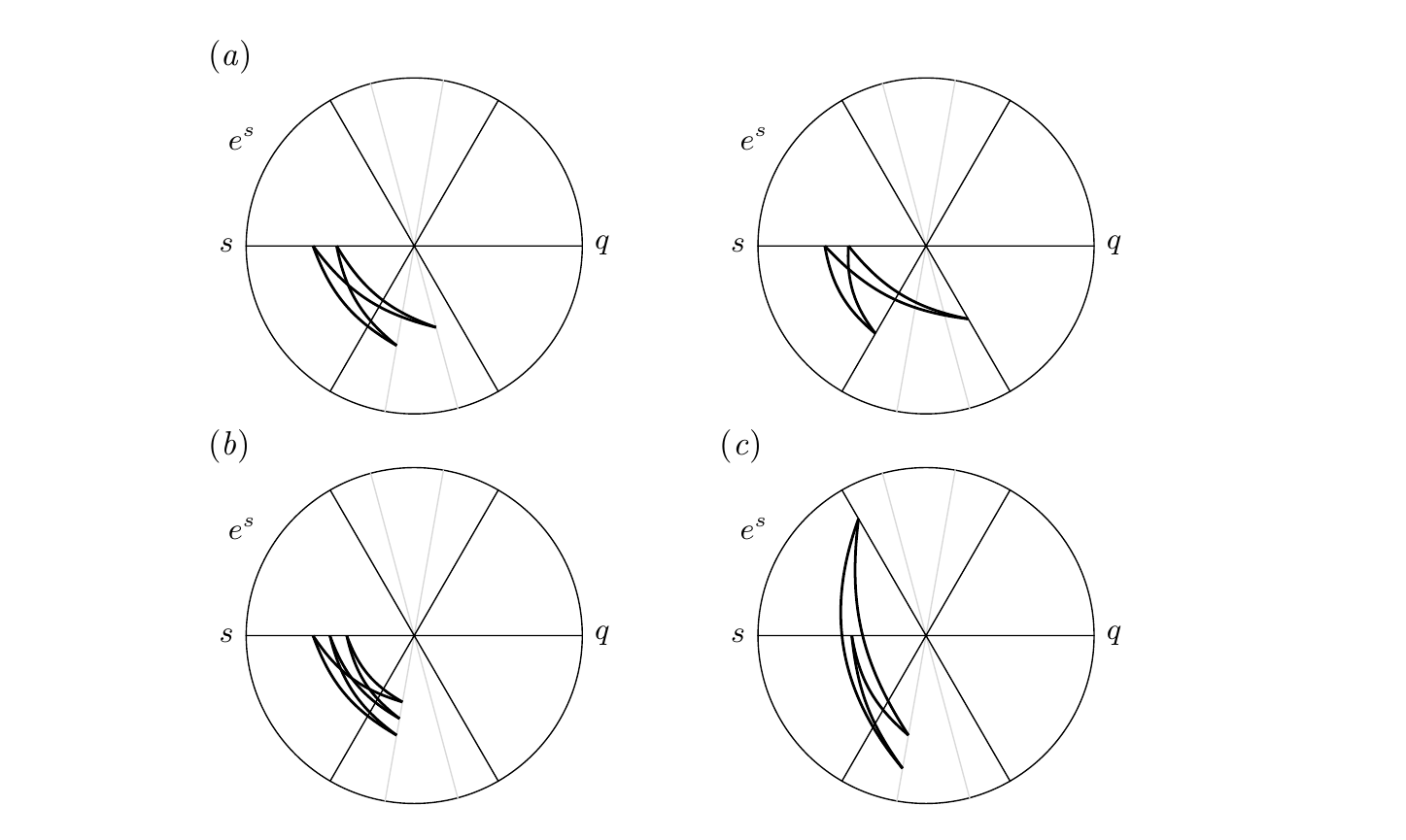}
\caption{}
\label{fig:loops}
\end{figure}

\begin{proof}
Let $\gamma\subset \Gamma_0$ be a cycle of length $<2\pi+\frac{2\pi}{3}$. We orient each edge of $\gamma$ with interior not containing an $s_f$ or a
$q_f$, so that its image in $S$ under $\rho_+$ is oriented from~$q$ to $s$. We divide each edge of $\gamma$ containing an $s_f$ (respectively, $q_f$)
into two edges oriented towards $s_f$ (respectively, away from $q_f$). As in the previous subsections, $\gamma$ decomposes into an even number $j$ of
oriented paths each of which ends with an $s_f$. Each such path is contained in an $\Gamma_g$ and contributes at least $\frac{\pi}{3}$ to the length
of $\gamma$. Since $\gamma$ has length $<2\pi+\frac{2}{3}\pi$, we have $j<8$ and so $j=2,4,$ or $6$. The case $j=2$ implies one of the first two
bullet points of the proposition. In the remaining cases we have that $\rho_+(\gamma)$ is disjoint from $q$.

Consider now the case $j=4$, with the paths of $\gamma$ oriented towards $s_f$ and $s_g$. In the second paragraph of the proof of
\cite{LP2}*{prop~7.12} we observed that if $s$ is a vertex, then $e^s_f$ is the only edge of $\Gamma_0$ containing $s_f$ whose image under $\rho_+$
lies in~$I$. Thus if $\gamma$ contains $e^s_f$, then the restriction of $\rho_+$ to $\gamma$ is locally injective at $s_f$. Since $\rho_+(\gamma)$ is
disjoint from $q$, it follows that $\gamma$ contains also $e_g^s$. This implies the second bullet point of the proposition. The same conclusion
applies when $s$ is not a vertex. If $\gamma$ contains neither $e^s_f$ nor $e_g^s$, then it is obtained by concatenating paths in four distinct
$\Gamma_f$, each of which contains $s_f$ as an endpoint. By \cite{LP2}*{page~39}, this brings us to Example~\ref{exa:1}(a).

Finally, we suppose $j=6$, with the paths of $\gamma$ oriented towards $s_f,s_g,$ and $s_h$. Note that the six paths contribute already $2\pi$ to the
length of $\gamma$, and so for a contradiction we need to find $\eps_0$ bounding the excess contribution from below. Suppose first that $\gamma$
contains none of $e^s_f,e^s_g,e^s_h$, as in Figure~\ref{fig:loops}(b). Since $s$ is a vertex, by \cite{LP2}*{rem~4.9} we have $[\alpha]=[m,p,1]$ for
some integers $m>p>1$. We have a decomposition $\gamma=\gamma_0\cdot \gamma_1^{-1} \cdots \gamma_5^{-1}$, where each $\gamma_i$ lies in
$\Gamma_{f_i}$, for some $f_i\in \St(\nu)$. By \cite{LP2}*{rem~4.10}, points $\rho_+(\gamma_1\cap \gamma_2), \rho_+(\gamma_3\cap \gamma_4),
\rho_+(\gamma_5\cap \gamma_0)$ correspond to directions towards $[m-pa,0,1],[m-pa',0,1],[m-pa'',0,1]$ for some $0\leq a,a',a''\leq \big\lfloor
\frac{m}{p} \big\rfloor.$

For $\frac{m}{p}\geq 2$, we will find
$\eps_0$ bounding below at least one of the angles between the admissible lines in these directions and the direction towards $[0,0,1]$. These angles are
equal to the opposite ones: the angles between the admissible lines in the directions towards $[a,1,0],[a',1,0],[a'',1,0]$ and the direction towards
$[m,p,0]$. Thus by Lemma~\ref{lem:isometry}, we will find appropriate $\eps_0$ as long as there exists $\delta<1$ independent of $m$ such that $\min
\{a,a',a''\}\leq \delta \frac{m}{p}$.

By \cite{LP2}*{lem~4.11(i)}, we can suppose that all $f_i$ lie in
$M_\alpha$. By \cite{LP2}*{lem~4.12}, we have $f_0^{-1}f_1\sim_\nu h_0,\ldots, f_5^{-1}f_0\sim_\nu h_5$, where $f\sim_\nu g$ means
$\Gamma_f=\Gamma_g$, and $h_i$ have the following form:
$$h_0=(x_1,x_2-c'x_3^p,x_3),\ h_2=(x_1,x_2+c''x_3^p,x_3),\  h_4=(x_1,x_2+cx_3^p,x_3),$$
$$h_1=(x_1+P_1,x_2,x_3),\ h_3=(x_1+P_3,x_2,x_3),\ h_5=(x_1+P_5,x_2,x_3).$$
Here $c,c',c''\neq 0$ and $P_i(x_2,x_3)$ are homogeneous polynomials of weighted degree~$m$ with $x_2,x_3$ of weights $p,1$. By
\cite{LP2}*{lem~4.11(ii,iii)}, the elements $\sim_\nu \id$ of $M_\alpha$ are exactly the elements of $M_\alpha$ of form
$(x_1+P(x_2,x_3),x_2+Q(x_3),x_3)$, where $-\nu(P)<m, \deg Q=-\nu(Q)<p$, and they form a normal subgroup $N_\alpha<M_\alpha$. Consequently, we have
$h_0\cdots h_5\sim_\nu \id$ and so $h_0\cdots h_5$ has the above form. Since $P_i$ are homogeneous of degree $m$, $P$ is also homogeneous of degree
$m$ and so we have $c''=c'-c,P=0$ and
$$P_1(x_2+c'x_3^p,x_3)+P_3(x_2+cx_3^p,x_3)+P_5(x_2,x_3)=0.$$
By \cite{LP2}*{lem~4.12(i)}, we have that $P_1,P_3,P_5$ are divisible by $x_2^a,x_2^{a'},x_2^{a''}$, respectively. Thus we can apply
Lemma~\ref{lem:weighted} with $r=a'',t=a',u=a$, and set $\delta=\frac{3}{4}$.

For $\frac{m}{p}<2,$ if one of $a,a',a''$ equals $0$, then $\delta=\frac{3}{4}$ works as well. Otherwise, we have $a=a'=a''=1$, which by Remark~\ref{rem:alpha1} brings us exactly to Example~\ref{exa:1}(b).

Since $\rho_+(\gamma)$ is disjoint from $q$, it passes through $e^s$ an even number of times. Thus it remains to consider the case where $\gamma$
contains $e^s_f,e^s_g$, but not $e^s_h$, as in Figure~\ref{fig:loops}(c). Then we can assume $\gamma_0\cdot \gamma_1^{-1}\subset \Gamma_{f_0},
\gamma_2\cdot \gamma_3^{-1}\subset \Gamma_{f_1}, \gamma_4\subset \Gamma_{f_2},\gamma_5\subset \Gamma_{f_3}$. By \cite{LP2}*{rem~4.10}, points
$\rho_+(\gamma_1\cap \gamma_2), \rho_+(\gamma_3\cap \gamma_4), \rho_+(\gamma_5\cap \gamma_0)$ correspond to the directions towards
$[a,1,0],[m-pa',0,1],[m-pa'',0,1]$ for some $0\leq a,a',a''\leq \big\lfloor \frac{m}{p} \big\rfloor$. By Lemma~\ref{lem:isometry}, for
$\frac{m}{p}\geq 2$ we will find appropriate $\eps_0$ as long as there exists $\delta<1$ independent of $m$ such that $\min
\{\frac{m}{p}-a,a',a''\}\leq \delta \frac{m}{p}$.

By \cite{LP2}*{lem~4.12}, we have $f_0^{-1}f_1\sim_\nu h_0,\ldots, f_3^{-1}f_0\sim_\nu h_3$, where
$$h_0=(x_1+P_0,x_2+c'x_3^p,x_3),\ h_2=(x_1,x_2+cx_3^p,x_3),$$
$$h_1=(x_1+P_1,x_2,x_3),\ h_3=(x_1+P_3,x_2,x_3),$$
where $c,c'\neq 0$ and $P_i(x_2,x_3)$ are homogeneous polynomials of weighted degree $m$ with $x_2,x_3$ of weights $p,1$. The identity $h_0\cdots
h_3\sim_\nu \id$ yields $c'=-c$ and
$$P_0(x_2+cx_3^p,x_3)+P_1(x_2+cx_3^p,x_3)+P_3(x_2,x_3)=0.$$
By \cite{LP2}*{lem~4.12(i,iii)} we have that $P_0,P_1,P_3$ are divisible by $x_3^{m-pa},x_2^{a'},x_2^{a''}$, respectively. Thus we can apply
Lemma~\ref{lem:weightedplus} with $r=a'', t=a', u=m-pa$, and set $\delta=\frac{4}{5}$.

For $\frac{m}{p}<2,$ if one of $a',a''$ equals $0$ or $a=1$, then $\delta=\frac{4}{5}$ works as well. Otherwise, we have $a'=a''=1$, and $a=0$ which
by Remark~\ref{rem:alpha1plus} brings us exactly to Example~\ref{exa:1}(c).
\end{proof}

\section{Diagrams}
\label{sec:diagrams}

In the entire section, \textbf{let $X$ be a simply connected combinatorial $2$-complex} (see Section~\ref{subs:comb} for the definition). A
\emph{disc} (resp.\ \emph{half-plane}) \emph{diagram} $D$ is a combinatorial complex homeomorphic to a $2$-disc (resp.\ a half-plane). A
combinatorial map $\phi\colon D\to X$ from a disc (resp.\ half-plane) diagram to $X$ is a \emph{disc} (resp.\ \emph{half-plane}) \emph{diagram in
$X$}. We say that $\phi$ is \emph{reduced} if it is a local embedding at open edges. The restriction of $\phi$ to $\partial D$ is the \emph{boundary}
of~$\phi$.

\begin{rem}
\label{rem:vk}
Given an embedded combinatorial closed path $\gamma$ in $X$, there exists a reduced disc diagram
$\phi\colon D\to X$ with boundary~$\gamma$. 
This is essentially van Kampen's lemma \cite{vK}, see \cite{JS}*{Lem~1.6} for a complete proof for the case of $X$ a simplicial complex, which implies the case of arbitrary $X$.
\end{rem}

\subsection{Relative disc diagrams}

Often, we will consider the following diagrams that are in general not combinatorial.

\begin{defin}
\label{def:frilled} Let $D$ be a disc diagram with a subcomplex $I\subseteq \partial D$. 
A map $\phi\colon
D\to X$ is a \emph{disc diagram in $X$ relative to~$I$} (or, simply,  a \emph{relative disc diagram in~$X$}), if:
\begin{itemize}
\item the restriction of $\phi$ to any open cell~$\sigma_D$ of $D$ is an embedding into an open cell
$\sigma_X$ of $X$,
\item this restriction is a homeomorphism onto~$\sigma_X$ if the closure $\overline \sigma_D$ is disjoint
from~$I$,
\item
if $\sigma_D$ is an edge not contained in~$I$, then $\sigma_X$ is an edge.
\end{itemize}
Analogously we define a \emph{half-plane diagram in~$X$ relative to $I=\partial D$}, or, simply, a \emph{relative half-plane diagram in~$X$}. Again,
a relative disc or half-plane diagram $\phi$ is \emph{reduced} if it is is a local embedding at open edges.
\end{defin}

\begin{lem}
\label{lem:discexists} Let $\gamma$ be an embedded closed path in $X$, intersecting $X^1$, and that can be subdivided into finitely many segments,
the interior of each of which lies in a single open cell of~$X$. Then there exists a reduced relative (to $\partial D$) disc diagram $\phi\colon D\to
X$ with boundary~$\gamma$.
\end{lem}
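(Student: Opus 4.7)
The plan is to reduce the lemma to the combinatorial van Kampen Lemma (Remark~\ref{rem:vk}) by first replacing the non-combinatorial arcs of $\gamma$ with combinatorial ones in $X^1$, and then gluing them back in as bigons. Write $\gamma=\gamma_1\cdots\gamma_n$ as in the hypothesis. For each $\gamma_i$ whose interior lies in an open $2$-cell $\sigma_i$, I lift $\gamma_i$ through the characteristic map of $\sigma_i$ to an embedded arc in a closed disc (this lift exists because $X$ is combinatorial, so each attaching map restricts to an embedding of the open disc), and I pick one of the two sub-arcs of the bounding circle that join the endpoints of the lift. Let $\gamma_i'$ be the image of the chosen sub-arc in $\partial\sigma_i$. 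Substituting each such $\gamma_i$ by $\gamma_i'$ produces a closed path $\gamma'$ in $X^1$. Some endpoints of the $\gamma_i$ may lie in the interior of edges of $X$, so I temporarily refine the CW structure on $X$ by declaring these points extra $0$-cells and splitting the corresponding edges. The resulting complex $X_{\mathrm{fine}}$ has the same underlying space as $X$ and is still a simply connected combinatorial $2$-complex, and $\gamma'$ is a combinatorial closed path in $X_{\mathrm{fine}}^1$. The argument behind Remark~\ref{rem:vk} extends to such possibly non-embedded combinatorial loops and yields a reduced combinatorial disc diagram $\phi_0\colon D_0\to X_{\mathrm{fine}}$ with boundary~$\gamma'$.

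Next, for each $\gamma_i$ in an open $2$-cell, the subpath $\gamma_i'$ appears as a sub-arc of $\partial D_0$. I attach a topological $2$-cell $B_i$ to $D_0$ along this sub-arc, with CW structure a single open $2$-face whose boundary circle is divided into two parts: one identified with the copy of $\gamma_i'$ in $\partial D_0$ (subdivided as there), and a single new edge that becomes part of $\partial D$. I extend $\phi_0$ across $B_i$ by the embedding onto the open region in $\sigma_i$ between $\gamma_i$ and $\gamma_i'$ (the image under the characteristic map of the chosen disc region), sending the new edge onto $\gamma_i$. The embeddedness of $\gamma$ guarantees that the sub-arcs of $\partial D_0$ corresponding to different $\gamma_i'$ are pairwise disjoint, so the result $D$ is a topological disc and $\phi\colon D\to X_{\mathrm{fine}}$ has boundary~$\gamma$.

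It remains to view $\phi$ as a map to $X$ (not $X_{\mathrm{fine}}$) and to modify the CW structure on $D$ so that it satisfies Definition~\ref{def:frilled} with $I:=\partial D$. Each refinement vertex $v\in X_{\mathrm{fine}}$ lies in the interior of an edge $e$ of $X$, and its link in $X_{\mathrm{fine}}$ consists of the two sub-edges of $e$ together with straight-angle corners of the $2$-cells of $X$ adjacent to $e$. Therefore an interior vertex $w\in D$ mapped to $v$ admits a link that pairs up into such straight-angle corners; I combine each such pair of edges at $w$ into a single edge mapping homeomorphically onto $e$, and then delete $w$. After all such collapses, every interior cell of $D$ maps homeomorphically onto an open cell of $X$, every edge of $D$ not in $I$ maps to an edge of $X$, and cells whose closure meets $I$ embed into open cells of $X$, so $\phi\colon D\to X$ is a relative disc diagram with boundary~$\gamma$. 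Reducedness at open edges passes from $\phi_0$ to $\phi$ because the only open edges in $D$ not inherited from $D_0$ lie on $\partial D$.

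I expect the main technical point to be this last step: verifying that at every interior vertex $w$ of $D$ mapped to a refinement vertex, the edges at $w$ can indeed be paired as described, so that the collapse is well defined and produces a CW structure adapted to the original $X$. This is a local combinatorial statement whose proof uses that $\phi_0$ is a combinatorial map and that the link of a refinement vertex of $X_{\mathrm{fine}}$ already has the pairing structure forced by the $2$-cells of $X$ incident to the split edge.
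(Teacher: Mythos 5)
Your strategy (push the arcs of $\gamma$ into the $1$-skeleton, fill the resulting combinatorial loop, then glue the displaced arcs back as bigons) is genuinely different from the paper's, which subdivides $X$ \emph{along} $\gamma$ itself --- so that $\gamma$ is already an embedded combinatorial loop in the subdivided complex --- applies Remark~\ref{rem:vk} to a filling with the minimal number of $2$-cells, and then deletes the superfluous cells of the subdivision. As written, your version has gaps at three places. (i) The loop $\gamma'$ need not be embedded: two arcs pushed into the boundary of the same $2$-cell can overlap, $\gamma_i'$ can overlap a segment of $\gamma$ that already runs in $\partial\sigma_i$, and the attaching map of $\sigma_i$ need not be injective on the chosen boundary arc. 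Remark~\ref{rem:vk} is stated for embedded loops; for a non-embedded, possibly backtracking loop the van Kampen filling may be a singular (non-disc) diagram, and then the arcs $\gamma_i'$ need not sit in the boundary of a disc to which the $B_i$ can be attached. (ii) The edges of $\gamma_i'$ are boundary edges of $D_0$ but become \emph{interior} edges of $D$ once $B_i$ is attached; the $2$-cell of $D_0$ on the other side of such an edge may map into $\sigma_i$ on the same side as $B_i$, producing a fold. Reducedness at these edges is a new condition that does not follow from reducedness of $\phi_0$, and your closing sentence only addresses the new edges lying on $\partial D$.

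(iii) The step you flagged as the main technical point does in fact fail as described. Deleting a refinement vertex $w$ from the combinatorial structure requires $w$ to have degree exactly $2$ in $D$. Reducedness of $\phi_0$ does not prevent $w$ from having degree $2k$ with $k>1$ --- the link of $w$ is a cycle mapping to the theta-graph link of $v$ and can wrap around it several times --- and pairing the $2k$ edges into $k$ combined edges still leaves $k$ arcs crossing at the interior point $w$, which is not a CW structure. This is precisely why the paper's proof first arranges a minimal number of $2$-cells and then cuts and reglues along distinct edges with the same image (minimality guaranteeing that the reglued map is still reduced) before deleting such vertices. Any repair of your argument will need to import both that minimality hypothesis and the cut-and-reglue step, at which point you are essentially running the paper's proof with the extra bigon bookkeeping on top.
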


In the proof we will need:

\begin{rem}
\label{rem:cells} Let $\sigma$ be an open $2$-disc, and let $g\colon \sigma\to \sigma$ be a local embedding (hence a local homeomorphism) that is
proper, i.e.\ that extends to a continuous map on the one-point compactification ${S}^2$ of $\sigma$ mapping the point ${S}^2\setminus \sigma$ to
itself. Then $g$ is a covering map, and hence a homeomorphism. This implies the following.

Let $\overline{\sigma}=\sigma\cup \partial \sigma$ be a closed $2$-disc, with $\partial \sigma$ a concatenation of paths $\beta_1\cdots \beta_{k}$.
Let $g'\colon \sigma\to \sigma$ be a local embedding that extends to a continuous map $\overline g'$ from $\overline \sigma$ to ${S}^2$ mapping only the endpoints
of $\beta_j$ to ${S}^2\setminus \sigma$. Suppose that $\overline g'$ is a local embedding at each interior point of each $\beta_j$ and that the restriction of $\overline g'$ to the interior of each~$\beta_j$ is an embedding. Then the map $g'$ is an embedding. Indeed, we can extend $g'$ to~$g$ above by attaching to $\sigma$ additional discs
along $\beta_{j}$ and mapping them under $g$ to the discs in $S^2$ bounded by $\overline g'_{|\beta_{j}}$.
\end{rem}

\begin{proof}[Proof of Lemma~\ref{lem:discexists}]
We subdivide $X$ along $\gamma$. The vertices of the new combinatorial structure on $X$ are contained in the original $1$-skeleton. By Remark~\ref{rem:vk}, there is a reduced disc diagram $\phi\colon D\to X$ with boundary $\gamma$, with respect to the new combinatorial structure. Assume that $D$ has the
minimal number of $2$-cells among all such disc diagrams.

We will now discuss the original combinatorial structure on $X$. First, we remove from the combinatorial structure of $D$ all the edges $e$ of $D$
outside $\partial D$ with $\phi(e)\not\subset X^1$.

Second, let $v$ be an interior vertex of~$D$ with $\phi(v)\notin X^0$, hence $\phi(v)\in X^1\setminus X^0$.
After possibly cutting and reglueing
along distinct edges starting at $v$ with the same image under $\phi$ (after which $\phi$ is still reduced by the minimality assumption) we can
assume that $\phi$ a local embedding at $v$. Consequently, there are exactly two edges of~$D$ containing $v$, and they combine to an edge embedding
under $\phi$ in an edge of $X$. We then remove $v$ from the combinatorial structure of $D$. We repeat this procedure for all such $v$.

The map $\phi$ is a local embedding on the resulting cells of $D$. Note that each cell of~$D$ is simply connected since otherwise it would have
$\partial D$ as a boundary component, which would contradict the assumption that $\gamma$ intersected $X^1$. By Remark~\ref{rem:cells}, we have that
$\phi$ is a disc diagram relative to $\partial D$.
\end{proof}

Note that in this article we will only be applying Lemma~\ref{lem:discexists} to $\gamma$ satisfying Theorem~\ref{thm:disc_embeds},
where the cutting and reglueing procedure above is not necessary.

\begin{defin}
\label{def:Euclideanmetric} Suppose that $X$ has a \emph{piecewise smooth
Euclidean metric}. This means that each $2$-cell of $X$ is equipped with the Riemannian metric of a subset of the Euclidean plane bounded by a
piecewise smooth closed path, with nonzero interior angles, and that these Riemannian metrics agree on commmon edges. We also assume that for each
vertex~$v$ of~$X$, all the cells of $X$ containing $v$ have only finitely many possible isometry types, and so in particular $X$ is complete (see
e.g.\ \cite{NOP}*{\S1}).

We define the links of the vertices in $X$ as in Section~\ref{sec:links}. Occasionally, we will discuss the links of points that are not vertices. The link of a point $x$ in the interior of an edge $e$ has two vertices corresponding to the two components  of $e\setminus x$,  and edges of length $\pi$ corresponding to the $2$-cells containing~$e$. The link of a point in the interior of a $2$-cell is a circle of length $2\pi$.

Let $\phi\colon D\to X$ be a reduced disc or half-plane diagram relative to $I$. If the restriction of $\phi$ to each edge $\sigma_D\subset I$ with $\sigma_X$ a $2$-cell is a geodesic in~$\sigma_X$, then we can pull back the
piecewise smooth Euclidean metric from~$X$ to a \emph{degenerate} piecewise smooth Euclidean metric on $D$. This means that we allow the angles
in~$D$ to be zero.

Furthermore, if $X$ is $\mathrm{CAT}(0)$, then by \cite{NOP}*{\S2}, we have that $D$ satisfies the local conditions of~\cite{BaBu}*{Thm~7.1} implying
$\mathrm{CAT}(0)$ in the non-degenerate case. Since the only angles in $D$ that are possibly equal zero are formed by pairs of edges one of which
lies in $\partial D$ and has zero geodesic curvature, it is easy to find an embedding of $D$ as a convex subspace of $D$ with a (non-degenerate)
piecewise smooth Euclidean metric also satisfying the conditions of~\cite{BaBu}*{Thm~7.1}. Consequently, $D$ is $\mathrm{CAT}(0)$ with both metrics.

We define the link of a vertex of such $D$ as in the non-degenerate case, but we do not include the edges corresponding to the zero angles and we do not include their vertices corresponding  to the edges in $\partial D$.

We say that a disc or half-plane diagram $D$ is \emph{locally Euclidean} if each interior point of $D$ has a neighbourhood isometric to a Euclidean
disc.
\end{defin}

\begin{rem}
\label{rem:isometric} Suppose that $X$ as in Definition~\ref{def:Euclideanmetric} is $\mathrm{CAT}(0)$.
\begin{enumerate}[(i)]
\item If $\phi \colon D\to X$ is a reduced relative disc diagram that is locally Euclidean, then $\phi$ is a local isometric embedding at each
    interior point of $D$ (see e.g.\ the second to last paragraph of the proof of \cite{OP}*{Lem~4.1}). In particular, if $D$ is isometric to a
    convex subset of $\R^2$, then by \cite{BH}*{II.4.14} $\phi$ is an isometric embedding, and so $\phi(D)$ is convex.
\item Let $D^\pm\subset X$ be embedded relative disc diagrams, and let $R$ be a connected component of $D^+\cap D^-$. Suppose that $D^\pm$ are
    isometric to convex subsets of~$\R^2$. Then by~(i) we have that $R$ is convex in each $D^\pm$.
\item Generalising~(i), let $R_1,R_2\subset X$ be embedded relative disc diagrams isometric to convex subsets of $\R^2$. Suppose that for each point $x$ of $R_1\cup R_2$, the set~$\mathcal U$ of the directions at $x$ of the geodesics in $R_1\cup R_2$ has diameter $\leq \pi$ in the path metric induced on $\mathcal U$ from the link of $x$. (We refrain from calling $\mathcal U$ the link of $R_1\cup R_2$, since the piecewise Euclidean metric on $R_1\cup R_2$ might be degenerate and $R_1\cup R_2$ is not a disc diagram.)
We claim that then $R_1\cup R_2$ is locally convex (hence convex) in $X$.

 Indeed, this follows from the fact that
    for $x_i\in R_i\setminus R_{3-i}$, the geodesic between~$x_1$ and $x_2$ in $R_1\cup R_2$, which passes through $x\in R_1\cap R_2$, must be a
    geodesic in $X$, since otherwise the geodesic triangle $x_1xx_2$ would have Alexandrov angle $<\pi$ at $x$. By the diameter hypothesis and \cite{NOP}*{Lem~2.1}, the
    shortest path between the directions corresponding to $xx_1$ and $xx_2$ in the link of $x$ in $X$ would have to be contained in $\mathcal U$, contradicting the assumption that $x_1xx_2$ is a geodesic in $R_1\cup R_2$.
\end{enumerate}
\end{rem}

\subsection{Flat lemma}

A \emph{flat in $X$} is an isometrically embedded Euclidean plane in $X$.

\begin{lem}
\label{lem:circle} Suppose that $X$ as in Definition~\ref{def:Euclideanmetric} is $\mathrm{CAT}(0)$. Then for each circle~$\omega$ of length $2\pi$ isometrically embedded in $\partial_\infty X$ with the angle metric, there is a flat $F\subset X$ with
$\partial_\infty F=\omega$.
\end{lem}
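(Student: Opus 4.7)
My plan is to proceed in two main steps: first, produce a geodesic line in $X$ joining each pair of antipodal points of $\omega$; second, assemble two such lines into a flat $F$ with $\partial_\infty F=\omega$.

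For the first step, fix antipodal $\xi,\eta\in\omega$, so $\angle(\xi,\eta)=\pi$. Since the angle is $\sup_{x\in X}\angle_x(\xi,\eta)$, pick $x_n\in X$ with $\angle_{x_n}(\xi,\eta)\to\pi$ and rays $r_n^\pm\colon[0,\infty)\to X$ from $x_n$ to $\xi,\eta$. For suitable $t_n\to\infty$, consider midpoints $m_n$ of the geodesic segments $r_n^+(t_n)r_n^-(t_n)$. I would exploit the piecewise smooth Euclidean structure on $X$, and in particular the local finiteness of cell-isometry types at each vertex, to extract (via a relative disc diagram argument of the type introduced in Section~\ref{sec:diagrams}) a limit point $x\in X$ at which $\angle_x(\xi,\eta)=\pi$. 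The concatenation of the rays from $x$ to $\xi$ and to $\eta$ is then a geodesic line $\ell_{\xi,\eta}$.

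For the second step, apply the first step to two antipodal pairs $(\xi_1,\eta_1),(\xi_2,\eta_2)$ on $\omega$ whose endpoints are cyclically at Tits angle $\pi/2$ from one another, obtaining lines $\ell_1,\ell_2$. Since the four endpoints are pairwise at positive Tits distance, the lines cannot be asymptotic, so they intersect at a unique point $p\in X$. The four Alexandrov angles at $p$ are each bounded above by the corresponding Tits angle $\pi/2$, yet they sum to $2\pi$, so all four equal $\pi/2$. By CAT(0) rigidity (equality of the Alexandrov angle at $p$ with the Tits angle of the endpoints forces the two rays to span an isometric Euclidean sector), each of the four quadrants at $p$ is a Euclidean quarter-plane, and their union is a flat $F\subset X$ through $p$. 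Finally, $\partial_\infty F$ is a circle of length $2\pi$ in the CAT(1) space $\partial_\infty X$ sharing four points $\xi_1,\xi_2,\eta_1,\eta_2$ with $\omega$ at pairwise Tits distance $\pi/2$; since geodesic segments of length $<\pi$ in a CAT(1) space are unique, $\partial_\infty F=\omega$.

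The main obstacle is clearly the first step. In the proper CAT(0) case, a geodesic line between antipodal boundary points follows by compactness of closed balls, but here no such compactness is available, so the finite local complexity of $X$ together with a limiting relative disc diagram argument will be required to extract the limit point $x$.
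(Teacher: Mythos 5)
There are two genuine gaps. The first is in your Step 2: from the fact that $\ell_1$ and $\ell_2$ are not asymptotic you conclude that they intersect, but non-asymptotic geodesic lines in a CAT(0) space need not meet. For instance, in $X=\R\times T$ with $T$ a simplicial tree containing a line $\ell$, the circle $\omega=\partial_\infty(\R\times \ell)$ has length $2\pi$ and carries exactly the configuration you describe: a line joining the two suspension points of $\partial_\infty X$ is $\R\times\{t\}$ for an arbitrary $t\in T$, a line joining the two ends of $\ell$ is $\{s\}\times\ell$, and these are disjoint whenever $t\notin\ell$. Forcing such diagonals to cross is essentially equivalent to already knowing the flat exists, so this step begs the question. (Even granting an intersection point $p$ with all four angles equal to $\frac{\pi}{2}$ and the flat-sector rigidity you invoke, you would still need an argument, as in Remark~\ref{rem:isometric}(iii), that the union of the four sectors is locally convex along the four bounding rays before you may call it an isometrically embedded plane.)

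The second, more basic gap is that Step 1 is never carried out, and it cannot be waved away: in a complete non-proper CAT(0) space two boundary points at angle $\pi$ need not be joined by any geodesic line, so some specific feature of $X$ must be exploited, and "finite local complexity plus a limiting relative disc diagram argument" names the difficulty without resolving it. The paper's proof takes a different route that sidesteps both issues at once: it spans geodesics $\gamma_i^n$ between points $r_i(n)$ on seven rays towards equally spaced points of $\omega$, uses $|\gamma_i^n|/n\to 2\sin\frac{\pi}{7}<1$ to show these geodesics leave every bounded set, so that the finitely many cells needed to contain and contract the resulting closed paths assemble into a \emph{proper} locally CAT(0) complex $K$; it then applies Leeb's proper-case theorem in the universal cover $\widetilde K$ and pushes the resulting flat into $X$ via Remark~\ref{rem:isometric}(i). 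To salvage your outline you would need a substitute for properness in both the existence and the intersection of the lines; the paper's reduction to a proper subcomplex is precisely the mechanism that supplies it.
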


Note that for $X$ proper, Lemma~\ref{lem:circle}
is \cite{Leeb}*{Prop~2.1}.

\begin{proof}
Choose a basepoint $x_0\in X$ and seven geodesic rays $r_0, r_1,\ldots, r_6\colon [0,\infty)\to X$ starting at $x_0$ representing consecutive points
on $\omega$ at distance $\frac{2\pi}{7}$ in $\partial_\infty X$. Furthermore, for each integer $n\geq 0$, and $i=0,\ldots, 6$, let $\gamma^n_i$ be the geodesic in $X$ between
$r_i(n)$ and $r_{i+1}(n)$, where $r_7=r_0$.

Consider a closed path $\gamma$ of the form $r_{i+1}[n+1,n]\cdot (\gamma_i^n)^{-1}\cdot r_i[n,n+1]\cdot \gamma_i^{n+1}$, for some $i=0,\ldots, 6$,
and $n\geq 0$. Let $B_i^n$ be the closed ball of radius $1+|\gamma^i_n|/2$ 
centred at the midpoint of $\gamma^i_n$. Since $X$ is $\mathrm{CAT}(0)$, we have that $B_i^n$ is convex. Since $B_i^n$
contains $r_{i+1}[n+1,n]\cdot (\gamma_i^n)^{-1}\cdot r_i[n,n+1]$, it contains the entire~$\gamma$. By \cite{OP}*{Rem~2.4}, we have that~$\gamma$ is
contained in the union of finitely many cells of~$X$. Thus there is a union $K_i^n$ of finitely many cells of~$X$ intersecting~$B_i^n$, such that
$\gamma$ is contained in $K_i^n$ and contractible in $K_i^n$.

By \cite{BH}*{II.9.8(4)}, for each $i$ we have $\frac{|\gamma^n_i|}{n}\to 2\sin \frac{\pi}{7}<1$. Consequently, the distance from $B_i^n$ to
$x_0$ becomes arbitrary large as $n\to \infty$. In particular, each vertex of $X$ belongs to only finitely many $K^n_i$ and so the union $K$ of all
$K_i^n$ is proper. Since $X$ is $2$-dimensional, $K$ is locally $\mathrm{CAT}(0)$ by \cite{NOP}*{\S2} and \cite{BaBu}*{Thm~7.1}.

Let $\widetilde{K}\to K$ be the universal cover of $K$, which is a complete proper $\mathrm{CAT}(0)$ space. Let $\widetilde x_0\in \widetilde{K}$ be
a lift of $x_0$, and for $i=0,\ldots, 6,$ let $\widetilde r_i$ be the lift of $r_i$ to $\widetilde{K}$ starting at $\widetilde x_0$. Since we
included $K_i^n$ in $K$, the geodesics $\gamma_i^n$ lift as well to geodesics $ \widetilde \gamma_i^n$ connecting $\widetilde r_i(n)$ and $\widetilde
r_{i+1}(n)$. Thus by \cite{BH}*{II.9.8(4)} we obtain that consecutive $\widetilde r_i$ represent points $[\widetilde r_i]\in \partial_\infty
\widetilde{K}$ at distance $\frac{2\pi}{7}$. Note that since $\widetilde{K}\to K\to X$ does not increase distances, we have that the distance between
each $[\widetilde r_i]$ and $[\widetilde r_{i+2}]$ is $\frac{4\pi}{7}$. Consequently, the geodesics joining consecutive~$[\widetilde r_i]$
in~$\partial_\infty \widetilde{K}$ form a locally (hence globally, since $\partial_\infty  \widetilde{K}$ is $\mathrm{CAT}(1)$) isometrically
embedded circle $\omega_K\subset \partial_\infty \widetilde{K}$ of length $2\pi$. By \cite{Leeb}*{Prop~2.1}, there is a flat $F\subset \widetilde{K}$
with boundary~$\omega_K$. By Remark~\ref{rem:isometric}(i), $F\to\widetilde{K}\to K\to X$ is an isometric embedding. The boundary of the image of $F$
in~$X$ contains $[r_i]$, and thus coincides with~$\omega$.
\end{proof}

\subsection{Overlaps and unions}
\label{subs:unions} Let $\phi^\pm \colon D^\pm\to X$ be two embedded (and hence reduced) relative disc diagrams. Suppose that we have a connected
subset $\beta\subset \phi^+(D^+)\cap \phi^-(D^-)$. The \emph{overlap of $\phi^\pm$ at~$\beta$} is the connected component $R$ of $\phi^+(D^+)\cap \phi^-(D^-)$
containing $\beta$. Denote $R^\pm=(\phi^\pm)^{-1}(R)\subseteq D^\pm$.

Let $\widehat D$ be the space obtained from the disjoint union of $\overline{D^+\setminus R^+}$ and $\overline{D^-\setminus R^-}$ by identifying the
subsets $P^+\subseteq \mathrm{fr}\, R^+$ (the topological boundary of $R^+$ in $D^+$) and $P^-\subseteq \mathrm{fr}\, R^-$ consisting of points with
the same image under $\phi^+$ and~$\phi^-$. In other words, we have $P^\pm=\mathrm{fr}\, R^\pm\cap (\phi^\pm)^{-1}\phi^\mp(\mathrm{fr}\, R^\mp)$.

The \emph{union of $\phi^\pm$ at $\beta$} is the map $\widehat \phi\colon \widehat D\to X$ whose restriction to
$\overline{D^\pm\setminus R^\pm}$ equals~$\phi^\pm$. 
If each pair $(\overline{D^\pm\setminus R^\pm},P^\pm)$ is \emph{standard}, i.e.\ homeomorphic to the pair consisting of the closed unit disc and the
closed unit half-circle, then $\widehat D$ is homeomorphic to a closed disc and $\widehat \phi$ is a local embedding but possibly not an embedding.
See Figures~\ref{fig:cutR1} and~\ref{fig:cutRnew}.

\begin{figure}
\includegraphics{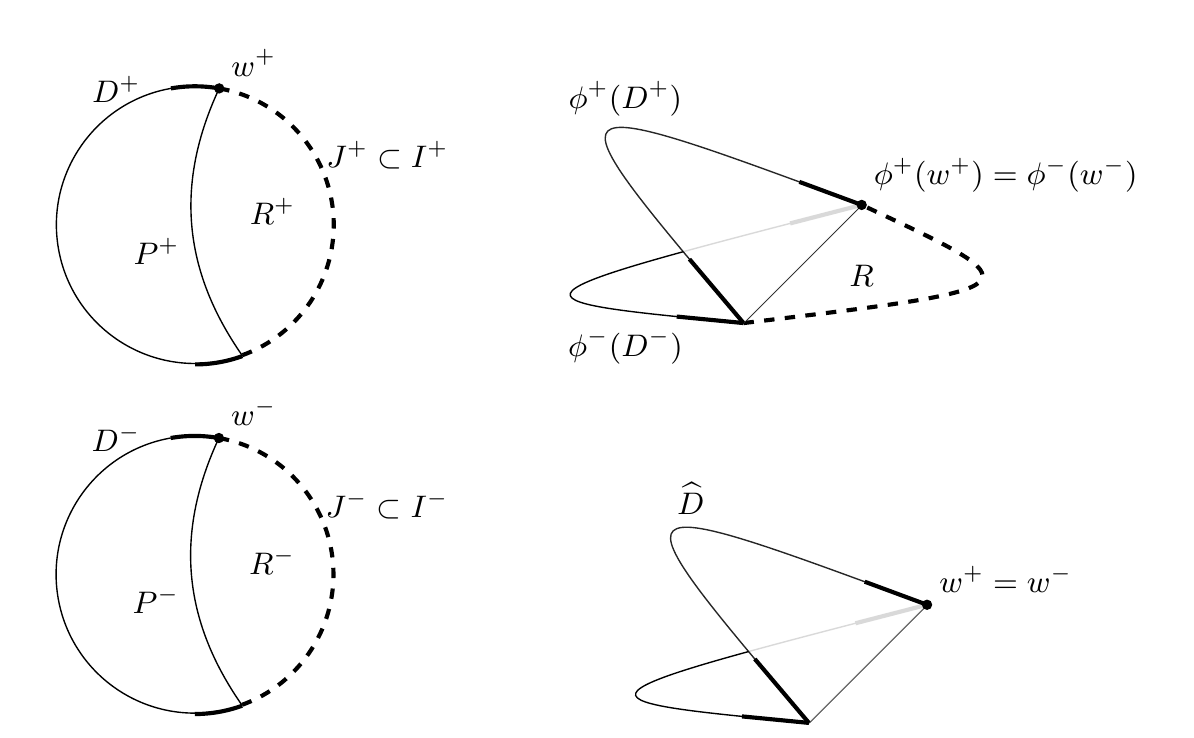}
\caption{A first possibility for $\phi^+(D^+),\phi^-(D^-)$, and $\widehat D$, where $P^\pm=\mathrm{fr}\, R^\pm$. Examples of the sets $I^\pm$ discussed in Lemma~\ref{lem:union} are shown in bold.}
\label{fig:cutR1}
\end{figure}

Note that the following lemma is trivial for $\phi^\pm$ combinatorial.

\begin{lemma}
\label{lem:union} Let $\phi^\pm \colon D^\pm\to X$ be embedded disc diagrams relative to $I^\pm\subset \partial D^\pm$. Let $\beta$ be a connected component of
$\phi^+(\partial D^+)\cap \phi^-(\partial D^-)$ with $J^\pm=(\phi^\pm)^{-1}(\beta)\subseteq I^\pm$. Let $\widehat \phi\colon \widehat D\to X$ be the union of~$\phi^\pm$ at $\beta$. Suppose that each pair $(\overline{D^\pm\setminus R^\pm},P^\pm)$ is
standard, that $\widehat \phi(\partial \widehat D)$ intersects $X^1$, and that the following condition $(*)$ holds for $w^\pm$ the first or the last
points of $P^\pm$:

We have $\phi^+(w^+)=\phi^-(w^-)\in X^1$, or, for the maximal subpaths $f^\pm$ in $\partial (D^\pm\setminus R^\pm)\setminus P^\pm$ incident
to~$w^\pm$ mapped under $\phi^\pm$ to the same open $2$-cell of $X$ as $w^\pm$, the path $\phi^+(f^+)\cdot \phi^-(f^-)$ is embedded in $X$.

Then $\widehat D$ can be given a combinatorial structure such that $\widehat \phi$ is a disc diagram relative to the closure $\widehat I$ of the
union of $I^\pm\setminus J^\pm$.
\end{lemma}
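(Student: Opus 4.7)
The plan is to put a cell structure on $\widehat D$ by patching together the cell structures that $\overline{D^+\setminus R^+}$ and $\overline{D^-\setminus R^-}$ inherit from $D^\pm$, then extending across the identified arc $P = P^+ = P^-$ by pulling back cells of $X$ via $\widehat\phi$. The open cells of $D^\pm$ not meeting $R^\pm$ transfer directly to cells of $\widehat D$ satisfying the three conditions of Definition~\ref{def:frilled} relative to $\widehat I$. The nontrivial work is to supply a cell structure along $P$, which lies in the interior of $\widehat D$ except at its two endpoints $w^\pm$.

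I would decompose $P$ using $\widehat\phi$: since $\widehat\phi(P)$ is a finite concatenation of sub-arcs $\alpha$ each with interior in a single open cell of $X$, the preimages under $\widehat\phi$ of vertices of $X$ along $P$ become new vertices of $\widehat D$. A sub-arc $\alpha$ whose image lies in an open edge of $X$ becomes a new edge of $\widehat D$ mapping to that edge, which takes care of the third bullet of Definition~\ref{def:frilled}. A sub-arc $\alpha$ whose image lies in the interior of an open $2$-cell $\sigma_X$ of $X$ must instead be absorbed into the adjacent $2$-cells of $\overline{D^\pm\setminus R^\pm}$: we glue the two neighbouring $2$-cells along $\alpha$ into a single open $2$-cell of $\widehat D$, with $\alpha$ placed in its interior. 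I would then check that this combined structure makes $\widehat D$ into a combinatorial $2$-complex homeomorphic to a disc.

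The main obstacle is to guarantee that these merged $2$-cells embed into their target $\sigma_X$ under $\widehat\phi$, rather than wrapping or self-overlapping, so that the resulting complex is a bona fide relative disc diagram. At each endpoint $w$ of a sub-arc to be merged, condition $(*)$ is exactly what is needed: either $\widehat\phi(w)\in X^1$, giving $\widehat D$ a vertex at $w$ and terminating the merger along an edge coming from the existing cell structure on either side; or the maximal continuations $f^\pm$ of the boundary past $w^\pm$ inside the open $2$-cell containing $\widehat\phi(w)$ concatenate to an embedded arc in $X$, becoming part of the boundary of the merged cell while keeping its attaching map injective. The hypothesis that $\widehat\phi(\partial\widehat D)$ meets $X^1$ rules out the degenerate case where $\widehat D$ has no boundary vertices and sits entirely inside a single open $2$-cell of $X$, which would otherwise obstruct terminating the construction. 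Once embeddedness on boundaries is granted, the three conditions of Definition~\ref{def:frilled} for $\widehat\phi$ follow routinely from the hypotheses on $\phi^\pm$ combined with the explicit construction along $P$.
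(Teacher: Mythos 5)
Your construction is essentially the paper's: keep the cell structures on $\overline{D^\pm\setminus R^\pm}$, subdivide the identified arc $P$ at preimages of $X^0$, retain as edges the sub-arcs mapping into open edges of $X$, and merge across the sub-arcs whose image lies in an open $2$-cell, using condition $(*)$ to control the behaviour at the endpoints $w^\pm$. The paper phrases the merging dually (it first glues $D^\pm$ along $J^\pm$, cancels the $2$-cells of $R^\pm$, and then deletes from $\widehat J$ the edges and interior vertices not mapped into $X^1$), but the resulting combinatorial structure is the same.

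The one place where your argument is thinner than it should be is the final claim that ``once embeddedness on boundaries is granted,'' the conditions of Definition~\ref{def:frilled} ``follow routinely.'' That the restriction of $\widehat\phi$ to a merged open $2$-cell is an embedding (rather than merely a local embedding with embedded boundary) is not automatic: a local embedding of an open disc into an open disc can fail to be injective. The paper closes this gap with Remark~\ref{rem:cells}, which upgrades a proper local embedding of an open disc to a covering map and hence a homeomorphism, and in the relative version uses the controlled boundary arcs $\beta_j$ (here supplied by the embeddings $\phi^\pm$ together with condition $(*)$ for the arc $f^+\cdot f^-$) to extend the map to one to which the covering argument applies. Your proof needs this step, or an equivalent topological argument, made explicit; with it, the proposal is correct.
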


\begin{figure}
\includegraphics{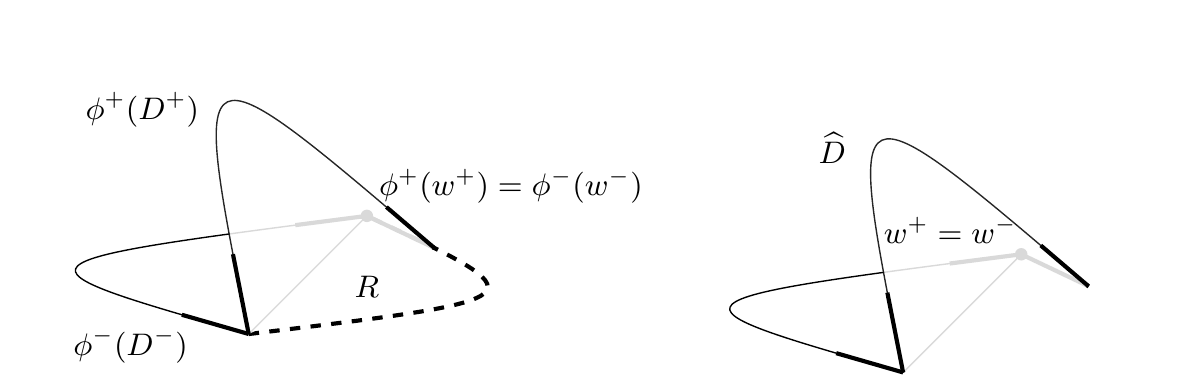}
\caption{A second possibility for $\phi^+(D^+),\phi^-(D^-)$, and $\hat D$, where $P^+\subsetneq\mathrm{fr}\, R^+$. Here a subset of $I^-$ becomes contained in $\overline{D^+\setminus R^+}\subset \widehat D$.}
\label{fig:cutRnew}
\end{figure}

\begin{proof}
We start with the combinatorial structure on each $D^\pm$ subdivided along $\mathrm{fr}\, R^\pm\subset D^\pm$, and at the endpoints of $P^\pm$. Let
$\widehat D_0$ be the disc diagram obtained from glueing $D^\pm$ along $J^\pm$, and let $\widehat \phi_0\colon \widehat D_0\to X$ be the map with
restrictions $\phi^\pm$ to~$D^\pm$. Note that $\partial \widehat D_0$ is the closure of the union of $\partial D^\pm\setminus J^\pm$ and contains~$\widehat I$.
Furthermore, $\widehat D$ and $\widehat\phi$ are obtained from $\widehat D_0$ and $\widehat\phi_0$ by cancelling the $2$-cells of~$R^\pm$, which
gives a natural embedding $\widehat I\subseteq \partial\widehat D$. Let $\widehat J$ be the subset of $J^\pm$ remaining in~$\widehat D$.

Note that the restrictions of $\phi^\pm$ to $\overline{D^\pm\setminus R^\pm}$, which are also restrictions of $\widehat \phi$, are disc diagrams in
$X$ relative to the intersection of their boundary with $\widehat J\cup \widehat I$. 
This follows from the fact that the remaining part of $\partial \overline{D^\pm\setminus R^\pm}$ is mapped under~$\phi^\pm$ into~$X^1$ by the
definition of $R^\pm$. However, $\widehat \phi$ might no longer be a relative disc diagram since the cells of $\widehat J$ become interior in
$\widehat D$. To correct this, we remove from the combinatorial structure of $\widehat D$ the edges and the interior vertices contained in $\widehat J$ that are not mapped
under $\widehat \phi$ into $X^1$.

After this correction, the restriction $g'$ of the map $\widehat \phi$ to each open $2$-cell $\sigma_{\widehat D}$ of~$\widehat D$ has still image in
a single open cell of~$X$. Note that $\sigma_{\widehat D}$ is simply connected since otherwise it would have $\partial \widehat D$ as a boundary component,
which would contradict the assumption that $\widehat \phi(\partial \widehat D)$ intersected $X^1$. Since $\phi^\pm$ were relative disc diagrams, by
the definition of $R^\pm$ we have that $g'$ is a local embedding. Since $\phi^\pm$ were embeddings, we can apply Remark~\ref{rem:cells} to $g'$,
possibly using condition $(*)$ if one of $\beta_{j}=f^+\cdot f^-$. 
This implies that $g'$ is an embedding, and even a
homeomorphism for $\overline \sigma_{\widehat D}$ disjoint from $\widehat I$.

Finally, we remove from the combinatorial structure of $\widehat D$ the interior vertices that are not mapped under $\widehat \phi$ into
$X^0$, and possibly the endpoints of $P^\pm$ if they do not belong to $\widehat I$ and are not mapped under $\widehat \phi$ into $X^0$. Then $\widehat \phi$ becomes a disc diagram
relative to $\widehat I$.
\end{proof}

\subsection{Curvature}

Let $D$ be a disc diagram with a degenerate piecewise smooth Euclidean metric. For a vertex $v\in D$, the \emph{angle} at $v$ is the sum of the
Riemannian angles at $v$ of all $2$-cells of $D$ containing $v$. We analogously define \emph{angles} at $v$ between a pair of edges containing $v$.
If $v\in \int \, D$, then its \emph{curvature} $\kappa(v)$ is $2\pi$ minus its angle. If $v\in
\partial D$, then its \emph{boundary curvature} $\kappa_\partial(v)$ is $\pi$ minus its angle. If $e$ is an edge of $D$, then its \emph{curvature}
$\kappa(e)$ is the integral of its geodesic curvature in the (one or two) $2$-cells of $D$ containing it. In particular, if $e$ is a Riemannian
geodesic in the $2$-cells containing it, then $\kappa(e)=0$. From adding up the classical Gauss--Bonnet formula for the $2$-cells of $D$, we obtain
the following (see e.g.\ \cite{BaBu}*{\S2.3}).

\begin{thm}
\label{thm:GB} Let $D$ be a disc diagram. Then
$$\sum_{v\in \int D}\kappa(v)+ \sum_{v\in \partial D}\kappa_\partial(v)+\sum_{e\subset D}\kappa(e)=2\pi.$$
\end{thm}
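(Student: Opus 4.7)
The plan is to apply the classical Gauss--Bonnet formula to each $2$-cell of $D$ individually, which is flat (being isometric to a planar region), and then add up the identities, paying careful attention to how the contributions from interior edges, interior vertices, and boundary vertices combine. More precisely, for a fixed $2$-cell $F$ of $D$ with interior angles $\theta_{F,v}$ at its corners $v$, the classical Gauss--Bonnet theorem for a flat disc bounded by a piecewise smooth path gives
\[
\sum_{v \text{ corner of } F} (\pi - \theta_{F,v}) + \int_{\partial F} \kappa_g \, ds = 2\pi.
\]
Summing over all $2$-cells $F$ of $D$ yields $2\pi$ times the number of $2$-cells on the right-hand side.

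For the edge terms, I would observe that an interior edge $e\subset D$ between two $2$-cells of $D$ carries geodesic curvatures from the two sides that differ by sign; their sum therefore vanishes, so only edges of $\partial D$ contribute effectively, and they contribute exactly $\sum_{e\subset \partial D}\kappa(e)=\sum_{e\subset D}\kappa(e)$.

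For the corner terms, I would group the contributions by vertex rather than by cell. At an interior vertex $v$ of degree $d_v$ (in the $1$-skeleton), the link is a cycle, so there are exactly $d_v$ corners at $v$, and the sum of the angles at $v$ equals the total angle, giving a contribution of $\pi d_v - (2\pi - \kappa(v)) = \pi d_v - 2\pi + \kappa(v)$. At a boundary vertex the link is an arc with $d_v-1$ corners, yielding $\pi(d_v-1)-(\pi-\kappa_\partial(v))=\pi d_v - 2\pi + \kappa_\partial(v)$. Summing over all vertices produces $\pi\sum_v d_v - 2\pi V+\sum_v\kappa_*(v)=2\pi E-2\pi V+\sum_v\kappa_*(v)$ by the handshake identity.

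Putting everything together, the totaled left-hand side becomes
\[
2\pi E - 2\pi V + \sum_{v\in D}\kappa_*(v) + \sum_{e\subset D}\kappa(e),
\]
while the totaled right-hand side is $2\pi$ times the number of $2$-cells. Rearranging and invoking the Euler formula $V-E+F=\chi(D)=1$ for a topological disc yields exactly the claimed identity. The only potentially subtle points are verifying that the degenerate piecewise smooth Euclidean structure (where some boundary angles may vanish) still permits applying the classical Gauss--Bonnet formula $2$-cell by $2$-cell, and confirming the link-counting at boundary vertices in the presence of degenerate corners; both are essentially local and follow from the definitions, so I expect no genuine obstacle beyond careful bookkeeping.
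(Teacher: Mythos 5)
Your overall strategy -- apply classical Gauss--Bonnet to each flat $2$-cell, sum, and regroup the corner terms by vertex using the link structure and Euler's formula -- is exactly what the paper does (it gives only the one-line remark ``adding up the classical Gauss--Bonnet formula for the $2$-cells'' with a citation to Ballmann--Buyalo), and your vertex bookkeeping $\pi d_v-2\pi+\kappa_*(v)$ together with $V-E+F=1$ is correct.

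However, your treatment of the edge terms contains a false claim. It is not true in this setting that the geodesic curvatures of an interior edge seen from its two adjacent $2$-cells differ by sign: the two cells of $D$ are each isometric to planar regions, but they are glued along $e$ abstractly, not as complementary subregions of one plane, so $e$ may well be convex (or concave) towards both cells simultaneously. This is precisely the situation for edges of $\X$ lying over non-principal admissible lines, and the paper repeatedly uses that interior edges of a reduced diagram have strictly \emph{negative} curvature $\kappa(e)$ (e.g.\ in the proof of Theorem~\ref{thm:disc_embeds}, where $\kappa(e)\leq -2\kappa(e')$ for an interior edge $e$); your cancellation claim would force $\kappa(e)=0$ for all interior edges and contradict this. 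The reason your final identity nevertheless comes out right is that no cancellation is needed: by Definition, $\kappa(e)$ for an interior edge is \emph{defined} as the sum of the geodesic-curvature integrals over the (one or two) cells containing $e$, so the total boundary-integral contribution from summing Gauss--Bonnet over all $2$-cells is literally $\sum_{e\subset D}\kappa(e)$, interior edges included. Replace the cancellation argument by this observation and the proof is complete.
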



We have the following consequence of \cite{Stad}*{Thm~2}. For the reader's convenience, we include the proof in our $2$-dimensional setup. A similar
proof was earlier provided in \cite{Bader}*{Thm~12.1} in the case where each edge $e$ of $X$ is a Riemannian geodesic in the $2$-cells
containing it.

\begin{thm}
\label{thm:disc_embeds} Suppose that $X$ is $\mathrm{CAT}(0)$. Let $\gamma\colon S^1\to X$ be an embedded closed path consisting of $3$
or $4$ geodesic segments. Then each reduced relative disc diagram $\phi\colon D\to X$ with boundary $\gamma$ is embedded.
\end{thm}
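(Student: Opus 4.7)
The plan is to pull back the metric from $X$ to $D$ via $\phi$. Since $\gamma$ is a concatenation of geodesic segments, the restriction of $\phi$ to each edge of $\partial D$ mapping into the interior of a $2$-cell of $X$ is a geodesic, and so the pullback is a well-defined (possibly degenerate) piecewise smooth Euclidean metric on $D$; by Definition~\ref{def:Euclideanmetric}, $D$ is CAT(0). Its boundary is then a geodesic $k$-gon with $k\in\{3,4\}$, meeting at corners $u_1,\ldots,u_k$ that map to the corners $v_1,\ldots,v_k$ of $\gamma$.

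I would argue by contradiction, assuming $\phi$ is not injective. Since $\phi|_{\partial D}$ is a homeomorphism onto the embedded $\gamma$, at least one of any pair of identified points $p_1\neq p_2\in D$ with $\phi(p_1)=\phi(p_2)$ lies in $\mathrm{int}\,D$. Let $\alpha$ be the (unique) CAT(0) geodesic from $p_1$ to $p_2$ in $D$; its image $\phi(\alpha)$ is a closed loop in $X$ which is a local geodesic away from vertices of $D$. Cutting $D$ along $\alpha$ produces a sub-disc-diagram $D'$ whose boundary is a concatenation of geodesic arcs: a subset of the original boundary arcs of $\partial D$, together with at most two copies of $\alpha$, for a bounded total of corners.

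The heart of the argument is then Gauss--Bonnet (Theorem~\ref{thm:GB}) applied to $D'$. Interior vertex curvatures are $\le 0$ by the CAT(0) property of $D$; interior edges of $D'$ map combinatorially onto edges of $X$, and the contributions from the two adjacent $2$-cells to the geodesic curvature sum to zero (the edge is traversed with opposite orientations); and the boundary curvature is concentrated at the finitely many corners, each contributing at most $\pi$. The constraint $k\in\{3,4\}$, combined with the bounded number of extra corners introduced at the endpoints of $\alpha$ and at intersections of $\alpha$ with $\partial D$, bounds the total boundary curvature strictly below $2\pi$, contradicting $\sum\kappa=2\pi$.

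The main obstacle will be making the sub-disc construction rigorous when $\phi$ fails to be a local isometric embedding at interior vertices, which can happen for a reduced but not locally Euclidean diagram (unlike in Remark~\ref{rem:isometric}(i)). At such a vertex $v$ the link map $\mathrm{Lk}(v,D)\to \mathrm{Lk}(\phi(v),X)$ may fold, and passing from $p_1p_2$ in $D$ to an honest geodesic arc in $X$ requires controlling how $\alpha$ enters and leaves the link of $\phi(v)$. Here I expect to invoke the CAT(1) property of vertex links in $X$ (so that no short unwanted cycles are produced in the link at a fold) and to exploit the reduced condition at open edges to rule out back-tracking; the small number of allowed corners ($k\le 4$) is what ultimately leaves room for only a trivial fold, forcing $p_1=p_2$.
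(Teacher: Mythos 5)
Your setup (the pulled-back degenerate piecewise Euclidean metric making $D$ a $\mathrm{CAT}(0)$ disc, the curvature budget $<4\pi$ coming from $k\le 4$ corners, and the $\mathrm{CAT}(1)$ property of links of $X$) is exactly the right toolkit, and it is the one the paper uses. But the central step of your argument has a genuine gap, in two respects. First, the cutting construction is not available: since at least one of $p_1,p_2$ lies in $\mathrm{int}\,D$, the geodesic $\alpha\subset D$ has an interior endpoint and does not separate $D$, so there is no ``sub-disc-diagram $D'$'' bounded by pieces of $\partial D$ and copies of $\alpha$. Second, and more fundamentally, even when such a sub-polygon of $D$ exists, Gauss--Bonnet applied to it cannot yield a contradiction: $D$ with the pulled-back metric is an honest $\mathrm{CAT}(0)$ disc, and every geodesic sub-polygon of a $\mathrm{CAT}(0)$ disc satisfies Theorem~\ref{thm:GB} on the nose. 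The offending identification $\phi(p_1)=\phi(p_2)$ takes place in $X$ and is invisible to the intrinsic geometry of $D$; no amount of bookkeeping of corners of $\alpha$ inside $D$ records it. Your closing paragraph correctly identifies the folding of link maps at interior vertices as the obstruction, but ``CAT(1) links plus reducedness plus $k\le 4$ leaves room for only a trivial fold'' is not an argument: a geodesic of $D$ can pass through arbitrarily many interior vertices at which $\phi$ fails to be a local isometric embedding without creating any curvature defect at all, so $\phi(\alpha)$ need not be a local geodesic of $X$ and its closing up is not by itself contradictory.

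What is missing is a second diagram. The paper first proves that $\phi$ is a local embedding by a Gauss--Bonnet argument on $D$ alone: if two edges at a vertex $v$ of $D$ had the same image, the $\mathrm{CAT}(1)$ property of the link of $\phi(v)$ forces both complementary angles at $v$ to be $\ge 2\pi$, hence $\kappa(v)\le -2\pi$ (and similarly on the boundary), overdrawing the $<4\pi$ budget. For global injectivity it then takes a combinatorial path $\beta\subset D$ whose image is an embedded loop in $X$, fills $\phi(\beta)$ with a \emph{minimal} reduced relative disc diagram $\psi\colon D'\to X$, and plays the two diagrams off against each other: Gauss--Bonnet forces $\partial D'$ to carry at least $\pi$ of positive curvature away from the basepoint, and at each such point minimality guarantees that the germs of $D'$ and of the two sides of $\beta$ in $D$ are pairwise distinct in $X$, so the $\mathrm{CAT}(1)$ link condition converts each unit of positive curvature on $\partial D'$ into at least twice as much negative curvature in $D$ along $\beta$ --- again exceeding the budget. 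Some such transfer of curvature from a filling of the loop back into $D$ is unavoidable, and it is the step your proposal does not supply.
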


\begin{proof}
\noindent \textbf{Step~$1$.} $\phi$ is a local embedding.

\smallskip

Let $V\subset D$ denote the set of the (at most $4$) endpoints of the geodesic segments in~$\partial D$. Since $X$ is $\mathrm{CAT}(0)$, each vertex
$v\in \int \,D$ and each edge $e\not\subset \partial D$ has nonpositive curvature. Since $\gamma$ is locally geodesic outside $V$, each $e\subset
\partial D$ or $v\in
\partial D\setminus V$ has also nonpositive (boundary) curvature. Finally, each vertex of $V$ has curvature $\leq \pi$. (In fact, it is $<\pi$ or one
of the edges containing this vertex has negative curvature.)
Let $\mathcal K$ denote the sum of all
these curvatures.

Let $e\neq f$ be oriented edges starting at a vertex $v$ of $D$, and suppose $\phi(e)=\phi(f)$. If $v\in \int\, D$, then, since $X$ is
$\mathrm{CAT}(0)$, both angles at $v$ between $e$ and $f$ are $\geq 2\pi$, so the angle at $v$ is $\geq 4\pi$. Thus $\kappa(v)\leq -2\pi$, and so
$\mathcal K< 4\pi-2\pi$, which contradicts Theorem \ref{thm:GB}. If $v\in \partial D\setminus V$, then let $\theta_1+\theta_2+\theta_3$ be the angle
at $v$, decomposed to indicate the angles at $v$ between $\partial D, e,f,$ and $\partial D$ again. Since $X$ is $\mathrm{CAT}(0)$, we have
$\theta_2\geq 2\pi$. Since $v\in
\partial D\setminus V$, we have $\theta_1+\theta_3\geq \pi$. Consequently, $\kappa_\partial(v)\leq -2\pi$, which is again a contradiction. Finally, if
$v\in V$, then $\kappa_\partial(v)\leq -\pi$, and so $\mathcal K<3\pi - \pi$, contradiction.

\smallskip

\noindent \textbf{Step~$2$.} $\phi$ is an embedding.

\smallskip

Suppose that $\beta\subset D$ is an embedded combinatorial path from $w_1$ to $w_2$ such that the restriction of $\phi$ to $\beta$ is an embedding
except that $\phi(w_1)=\phi(w_2)$. Let $\psi \colon D'\to X$ be a reduced relative disc diagram with boundary $\phi_{|\beta}$, guaranteed by Lemma~\ref{lem:discexists}. Suppose that among
such $\beta$ and $\psi$, the disc diagram $D'$ has the minimal possible number of $2$-cells.

Let $v'\in \partial D'\setminus \psi^{-1}\phi(w_1)$ be a vertex with $\kappa_\partial(v')>0$ and let $e',f'$ be the edges of~$\partial D'$ containing
$v'$. Let $e,v,f\subset \beta$ be such that $\phi(e)=\psi(e'),\phi(v)=\psi(v'),\phi(f)=\psi(f')$. By the minimality assumption, the images of the
restrictions of $\phi$ and $\psi$ to the open neighbourhoods of $e,e'$ in any $2$-cells containing $e,e'$, are distinct, and the same property holds
for $f,f'$. Thus, if $v\in \int\, D$, then since $X$ is $\mathrm{CAT}(0)$, both angles in~$D$ between $e$ and $f$ are $\geq \pi+\kappa_\partial(v')$,
and so $\kappa(v)\leq -2\kappa_\partial(v')$. If $v\in
\partial D\setminus V$, then $\theta_2\geq \pi+\kappa_\partial(v')$ and $\theta_1+\theta_3\geq \kappa_\partial(v')$. Consequently, $\kappa_\partial(v)\leq
-2\kappa_\partial(v')$. Finally, if $v\in V$, then $\kappa_\partial(v)\leq -\kappa_\partial(v')\leq\pi-2\kappa_\partial(v')$.

Let now $e'\subset \partial D'$ be an edge with $\kappa(e')>0$, and let $e\subset \beta$ be such that $\phi(e)=\psi(e')$. Then
$\phi(e)$ is not a geodesic and so $e\not\subset
\partial D$. Moreover, since $X$ is $\mathrm{CAT}(0)$, we have $\kappa(e)\leq -2\kappa(e')$.

By Theorem \ref{thm:GB} applied to $D',$ the sum of all the curvatures of edges $e'\subset \partial D'$ and vertices $v'\in \partial D'\setminus
\psi^{-1}\phi(w_1)$ is at least $\pi$. By the previous paragraph, each positive contribution $\kappa_\partial(v')$ or $\kappa(e')$ to that sum
decreases the maximal possible curvature of $v,e$ in $D$ by at least $2\kappa_\partial(v')$ or $2\kappa(e')$. Thus $\mathcal K<4\pi-2\pi$, which is a
contradiction.
\end{proof}

\subsection{Frillings}

The following operation will be needed to discuss the limits of relative disc diagrams.

\begin{defin}
\label{def:frilling} Let $D$ be a half-plane diagram. A \emph{frilling} of $D$ is a combinatorial complex $D^*$ obtained from $D$ via the following
subdivision and quotient map $D\to D^*$. First, we subdivide some $2$-cells of $D$ along new edges (and their vertices) each of which has exactly one
endpoint in $\partial D$. Then, we quotient some of the edges intersecting $\partial D$ to vertices, including all the new edges. Finally, we
collapse some of the $2$-cells.

Here a \emph{collapse} of an open $2$-cell $\sigma\subset D$, such that $\overline {\sigma}\cap \partial D$ is a nontrivial path~$\tau$, is a
deformation retraction of $D$ onto $D\setminus (\sigma\cup \mathrm{int}\ \tau)$.
\end{defin}

\begin{figure}
\includegraphics{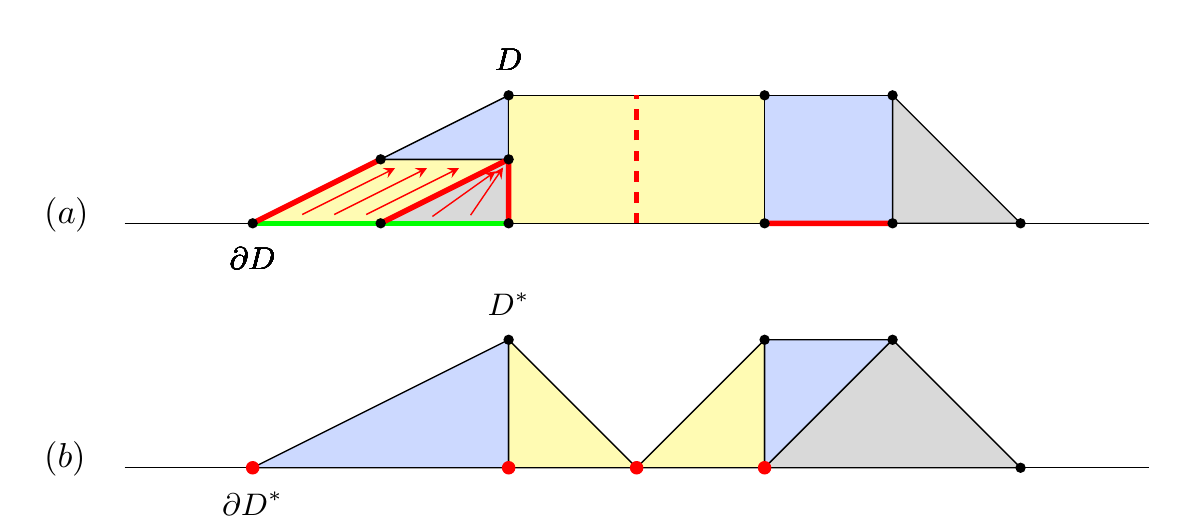}
\caption{An example of a frilling.}
\label{fig:frilling}
\end{figure}

See Figure~\ref{fig:frilling}(a), where we subdivide $D$ along the dashed edge, we quotient this edge along with the other red edges, and then we
collapse the two indicated $2$-cells~$\sigma$, where the paths $\tau$ are indicated in green. The resulting frilling $D^*$ is illustrated in Figure~\ref{fig:frilling}(b).

\begin{rem}
\label{rem:half-plane}  We denote by $\partial D^*$ 
the image of
$\partial D$ in $D^*$. Note that if the preimage in $\partial D$ of each point of $\partial D^*$ is compact, then $D^*$ is a half-plane diagram $D'$ wedged at points
of $\partial D'$ with combinatorial complexes that are contractible (which may exist only if $\partial D^*$ is not an embedded line) or homeomorphic
to $2$-spheres. In our applications in further sections we will always have $D'=D^*$.
\end{rem}

\begin{constr}
\label{constr:conv}
Let $D$ be a half-plane diagram. Suppose that $D$ is an increasing union of disc diagrams $D_n$ with $I_n=D_n\cap \partial D$. We then say that
$(D_n,I_n)$ \emph{converge} to $(D,\partial D)$.

Suppose that additionally we have relative disc diagrams $\phi_n\colon D_n\to X$ satisfying the assumptions (i--iii) below. In the construction below
we define their \emph{limit} $\phi^*\colon D^* \to X$, where $D^*$ is a particular frilling of $D$. The reason for passing to the frilling is that it
will turn out that if the hypothesis of Remark~\ref{rem:half-plane} holds, then the restriction $\phi'$ of $\phi^*\colon D^* \to X$ to $D'$ is a
relative half-plane diagram.

For future reference, whenever we say that $\phi^*\colon D^* \to X$ (or $\phi'\colon D' \to X$) is the \emph{limit} of $\phi_n$, then in particular
we mean that $(D_n,I_n)$ can be identified (by combinatorial isomorphisms) with subcomplexes of a half-plane diagram $D$ converging to $(D,\partial
D)$, that $\phi_n$ satisfy the assumptions (i--iii), and that $D^*$ (or $D'$) is the particular frilling described below.

The assumptions that we make on $\phi_n$ are the following.
\begin{enumerate}[(i)]
\item \label{constr:conv:i}
The attaching map of each cell in $X$ is injective, and the boundary of each~$\phi_n$ intersects $X^1$ at least twice (which implies
    that the attaching map of each cell in $D_n$ is injective, since otherwise the boundary of a $2$-cell would contain $\partial D$ and would map into $X\setminus X^1$ except for one point).
\item \label{constr:conv:ii}
For each cell $\sigma_D$ in $D$, the cell $\sigma_X$ in Definition~\ref{def:frilled} is the same for all $\phi_n$ such that $\sigma_D$ lies
    in~$D_n$.
\item \label{constr:conv:iii}
For each $2$-cell $\sigma_D$ whose closure intersects $\partial D$, the
closed paths $\phi_{n|\partial \sigma_D}$ converge to a
    closed path $\phi_{\sigma_D}\colon \partial \sigma_D\to \overline\sigma_X$. Moreorover, for $\phi^*_{\sigma_D}\colon \partial\sigma_D^*\to
    \overline\sigma_X$, obtained from $\phi_{\sigma_D}$ by quotienting to points the edges of $\partial \sigma_D$ on which $\phi_{\sigma_D}$ is
    constant, we have that
    \begin{itemize}
\item the restriction of $\phi^*_{\sigma_D}$ to the subset of $\partial\sigma_D^*$ that is the image of $\partial \sigma_D\cap \partial D$ is
    an embedding, and
\item $\phi^*_{\sigma_D}(\partial\sigma_D^*)$ bounds a finite (possibly empty) union of open discs in $\sigma_X$.
\end{itemize}
\end{enumerate}

Then the \emph{limit} $\phi^*\colon D^* \to X$ is the following map whose domain $D^*$ is the following frilling $D^*$ of $D$. To start, on each cell
$\sigma_D$ of $D$ disjoint from $\partial D$ (which is thus a cell of~$D^*$) we define~$\phi^*$ to be a homeomorphism onto $\sigma_X$. On each
remaining edge~$\sigma_D$ of $D$ not contained in~$\partial D$, we define~$\phi^*$ to be a homeomorphism onto the subset of the edge~$\sigma_X$
bounded by the limits of $\phi_n(\partial \sigma_D)$. In the case where these two limits coincide, we quotient~$\sigma_D$ to a vertex in $D^*$. 

Consider now a $2$-cell $\sigma_D$ of $D$ whose closure intersects $\partial D$. If $\phi^*_{\sigma_D}$ is an embedding on the entire
$\partial\sigma_D^*$, then we define~$\phi^*$ on~$\sigma_D$ to be a homeomorphism onto the subset of the $2$-cell~$\sigma_X$ bounded by
$\phi^*_{\sigma_D}(\partial\sigma_D^*)$. Then the $2$-cell $\sigma_D$ remains a $2$-cell of~$D^*$, and we 
identify $\partial\sigma_D^*$ with its
boundary, and $\phi^*_{\sigma_D}$ with the restriction of~$\phi^*$ to its boundary. Otherwise, we denote by $\tau_i,\tau_i'$ the maximal subpaths of
$\partial \sigma_D\cap\partial D,\overline{\partial \sigma_D\setminus\partial D}$ with the same image under $\phi_{\sigma_D}$, for $\tau_i$ not an endpoint of a
connected component of $\partial \sigma_D\cap\partial D$. We
\begin{itemize}
\item
subdivide the $2$-cell $\sigma_D$ along edges starting (resp.\ ending) at an endpoint of~$\tau_i$ (resp.\ the corresponding endpoint of~$\tau_i'$),
\item quotient these edges to vertices, and
\item collapse the resulting $2$-cells bounded by nontrivial $\tau_i,\tau'_i$.
\end{itemize}
This allows us to extend $\phi^*$ to an embedding on each of the $2$-cells of $D^*$ resulting from~$\sigma_D$ and on the image in $\partial\sigma_D^*$ of each edge of $\partial \sigma_D\cap \partial D$ that was not removed during a collapse.
Consequently, if the hypothesis of
Remark~\ref{rem:half-plane} holds, then the restriction $\phi'$ of $\phi^*\colon D^* \to X$ to $D'$ is a relative half-plane diagram.
\end{constr}

See Figure~\ref{fig:frilling2}, where on the top we illustrate a part of the complex $X$, and where $\phi_1(D_1)$ is located above the straight line
$\phi_1(I_1)$ in (a). Choosing $\phi_n$ so that $\phi_n(I_n)$ converges to the straight line in (b), we have that $(D_n,I_n)$ converge to
$(D,\partial D)$ from Figure~\ref{fig:frilling}(a), with limit $\phi^*=\phi'$ whose domain is the frilling~$D^*$ from Figure~\ref{fig:frilling}(b).

\begin{figure}
\includegraphics{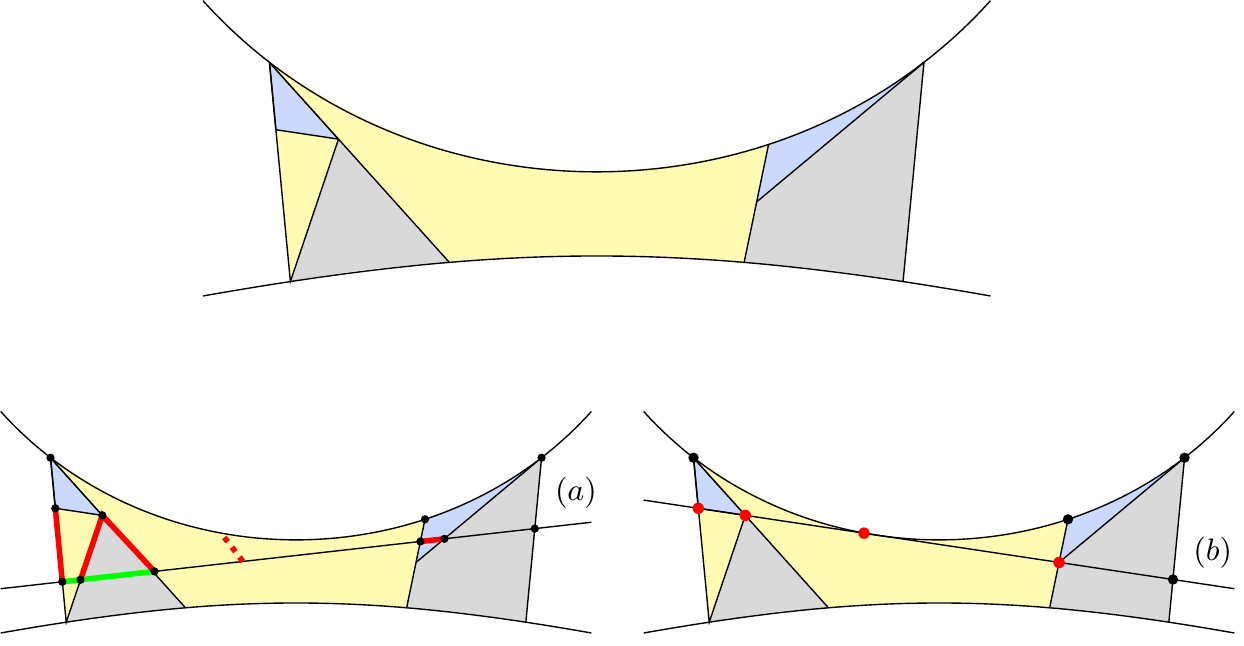}
\caption{}
\label{fig:frilling2}
\end{figure}

\section{Diagrams in \texorpdfstring{$\X$}{X}}
\label{sec:diagramsinX}

In this section we describe the additional structure of the relative disc or half-plane diagrams $D\to X$ for $X=\X$ or $X=\nabla^+$. 

Note that by Lemma~\ref{lem:propertiesX}(iii), each geodesic segment~$\gamma$ in~$\X$ passes through only finitely many chambers. Since admissible
lines form a locally finite set in $(\nabla^+, |\cdot,\cdot|)$ \cite{LP2}*{lem~4.1}, and bound convex half-spaces \cite{LP2}*{lem~5.1}, each geodesic
segment in $\nabla^+$ intersects the union of admissible lines in a union of a finite set of points and a finite set of subsegments. Thus there are
finitely many connected components of intersections of $\gamma$ with the open cells of $\X$, and Lemma~\ref{lem:discexists} can be applied to piecewise
geodesic closed paths in $\X$. Furthermore, the attaching maps of the cells in $\X$ and $\nabla^+$ are injective as required in Construction~\ref{constr:conv}\eqref{constr:conv:i}.

\subsection{Limits in \texorpdfstring{$\nabla^+$}{nabla+}}

\begin{rem}
\label{rem:converge_nabla} Suppose in Construction~\ref{constr:conv} that $X=\nabla^+$ and all $\phi_n$ are the compositions of reduced relative disc
diagrams $\widetilde \phi_n$ in $\X$ with~$\rho_+$. Furthermore, suppose that $\partial D$ is a concatenation $I_a\cdot I_b\cdot I_c$ and we have
$\epsilon>0$ and a positive function $\delta\colon I_a\cup I_c\to \R$ such that
\begin{itemize}
\item the restrictions of $\widetilde \phi_n$ to the intersections of $I_a,I_b,I_c$ with $I_n$ are geodesics concatenated at an Alexandrov angle $\geq
    \epsilon$, and
\item for each $n$, and each $t\in I_{a}\cap I_n$, we have $d_\X \big(\widetilde \phi_n(t),\widetilde \phi_n(I_c\cap I_n)\big)\geq \delta(t)$,
    and the same condition holds if we interchange $I_a$ with $I_c$.
\end{itemize}
Then:

\begin{enumerate}
\item[--] For each $2$-cell $\sigma_D$, the restriction of $\phi_n$ to $\partial \sigma_D\cap \partial D$ converges to a disjoint union of
    finitely many (possibly constant) piecewise geodesics. Thus the maps~$\phi^*_{\sigma_D}$ satisfy the assumption of
    Construction~\ref{constr:conv}\eqref{constr:conv:iii}. Furthermore, the preimage of each point under $\partial D\to \partial D^*$ is connected.

\item[--] All $D_n$ are locally $\mathrm{CAT}(0)$, and so $D^*$ is locally $\mathrm{CAT}(0)$. Thus, by Theorem~\ref{thm:GB}, we have that $D^*$
    contains no subcomplexes homeomorphic to $2$-spheres.

\item[--] Consequently $D'=D^*, \phi'=\phi^*$, and $\partial D'$ is a path. Note that $\phi'_{|\partial D'}$ is obtained from $\phi_{|\partial D}$ by
    quotienting to points the segments on which $\phi_{|\partial D}$ is constant.

\item[--] Since there is a uniform bound on the number of $2$-cells traversed by a geodesic in $\X$ contained in the star of a vertex $v$ of $\X$
    with given $\rho_+(v)\in \nabla^+$ \cite{OP}*{Claim in \S2}, the preimage of each point under $\partial D\to \partial D'$ is compact, and so
    Remark~\ref{rem:half-plane} applies. We conclude that $\partial D'$ is a line and $D'$ is a half-plane diagram.
\end{enumerate}
\end{rem}

\begin{defin}
\label{def:bordered} Let $\phi'\colon D'\to \nabla^+$ be a relative half-plane diagram that is the limit of~$\phi_n$ that are the compositions with
$\rho_+$ of reduced relative disc diagrams $\widetilde \phi_n\colon D_n\to \X$. Let $J\subset\partial D$ be homeomorphic with $[0,\infty)$.

Let $r\colon J\to \X$ be a geodesic ray and suppose that the restrictions of $\widetilde \phi_n$ to $\partial D_n\cap J$ coincide with the
restrictions of~$r$. We then say that $\phi'$ is \emph{bordered} by $r$. This terminology is justified since then
the sequence $\phi_n$ is constant on $J$, and so the map $D\to D'$ is the identity on $J$. We may thus
identify $J$ with a subset of $\partial D'$, and the restriction of $\phi'$ to~$J$ equals $\rho_+\circ r$.

Let $r\colon J\to \X$ be a geodesic ray disjoint from the vertex set $\X^0$ and transverse to the $1$-skeleton $\X^1$. Suppose that the restrictions
of $\widetilde \phi_n$ to $J\cap \partial D_n$ are geodesics that converge to~$r$. We then say that $\phi'$ is \emph{weakly bordered} by $r$. In that
case, we can arrange $\phi'$ to be bordered by $r$ after modifying $\widetilde \phi_n$ on the cells intersecting $J$.
\end{defin}

Below we describe how does the operation of passing to the limit of relative disc diagrams behave with respect to the operation of the union.

\begin{rem}
\label{rem:limit_union} Suppose that $(D^\pm_n,I^\pm_n)$ converge to half-plane diagrams $(D^\pm,\partial D^\pm)$, with frillings $D^{*\pm}=D^{'\pm}$
and $\phi^{'\pm}\colon D^{'\pm}\to \nabla^+$ the limits of the compositions $\phi^\pm_n$ of embedded relative disc diagrams $\widetilde
\phi^\pm_n\colon D_n^\pm\to \X$ with $\rho_+$. Let $J^\pm\subset
\partial D^\pm$ be rays with $\beta_n=\widetilde \phi^+_n(J^+\cap \partial D_n^+)=\widetilde \phi^-_n(J^-\cap \partial D^-_n)$ a connected
component of the intersection of $\widetilde \phi^\pm_n(\partial D^\pm_n)$. Let $\widehat \phi_n\colon \widehat D_n\to \X$ be the unions
of~$\widetilde \phi_n^+$ and~$\widetilde \phi_n^-$ at~$\beta_n$ with overlaps $R_n$.

After passing to subdiagrams and a subsequence, 
for all $n$ we have $(\widetilde \phi^\pm_n)^{-1}(R_n)=R^\pm\cap D^\pm_n$ for some
fixed $R^\pm\subseteq D^\pm$. Let $\widehat D$ be the space obtained from the disjoint union of $\overline{D^\pm\setminus R^\pm}$ by identifying the
subsets $P^\pm\subseteq \mathrm{fr}\, R^\pm$ consisting of points with the same image under $\widetilde \phi^\pm_n$ (this does not depend on $n$).

Suppose that each pair $(\overline{D^\pm\setminus R^\pm}, P^\pm)$ is homeomorphic to the pair consisting of the closed upper half-plane in $\R^2$ and
the negative real axis, so that we can assume that each $(\overline{D_n^\pm\setminus R_n^\pm}, P_n^\pm)$ is standard. Suppose that the condition
$(*)$ of Lemma~\ref{lem:union} holds for $P_n^\pm$, and so $\widehat \phi_n$ are disc diagrams relative to appropriate $\widehat I_n$.

Then $\widehat D$ has a combinatorial structure such that $(\widehat D_n,\widehat I_n)$ converge to $(\widehat D,\partial \widehat D)$. Moreover, we
have a frilling $\widehat D\to \widehat D'$ and a relative half-plane diagram $\widehat \phi'\colon \widehat D'\to\nabla^+$ that is the limit of
$\rho_+\circ\widehat \phi_n$. Note that $\widehat D\to
\widehat D'$ coincides with $D^\pm\to D^{'\pm}$ on the cells contained entirely in $D^+$ or $D^-$, and $\widehat \phi'$ coincides with $\phi^{'\pm}$
on their quotients.

Furthermore, if $\phi'^\pm$ are both bordered by a geodesic ray with domains $J^\pm$, and $R^\pm=J^\pm$, then each cell $\sigma$ of $\widehat D'$  with $\overline \sigma$ disjoint from $\partial
\widehat D'$, considered as a cell of $\widehat D$, is a union of cells~$\sigma_\sharp$ of~$D^\pm$, and $\widehat \phi'_{|\sigma}=\widehat
\phi_{|\sigma}$ is the union of $\phi^{'\pm}_{|\sigma_\sharp}=\phi^{\pm}_{|\sigma_\sharp}$.
\end{rem}

\begin{rem}
\label{rem:bizarreunion} More generally, in Remark~\ref{rem:limit_union} we can suppose that (see Figure~\ref{fig:bizarre}):
\begin{itemize}
\item $P^\pm$ is a (possibly empty or infinite) union of bounded paths $P^{k\pm}$ and an unbounded path $P^{\infty\pm}$, pairwise disjoint except
    possibly at the endpoints, and
\item $\overline{D^-\setminus R^-}$ is a union of disc diagrams $D^{k}$, and a half-plane diagram $D^{\infty}$, pairwise disjoint except possibly
    at the endpoints of $P^{k-}$.
\end{itemize}
We require that
\begin{itemize}
\item each  $(\overline{D^+\setminus R^+}, P^{\infty+}),(D^\infty, P^{\infty-})$ is homeomorphic to the pair consisting of the closed upper
    half-plane in $\R^2$ and the negative real axis, and

\item each $(\overline{D^+\setminus R^+}, P^{k+})$ is homeomorphic to the pair consisting of the closed upper half-plane in $\R^2$ and a bounded
    interval in its boundary, and
\item each $(D^{k}, P^{k-})$ is standard.
\end{itemize}

\begin{figure}
\includegraphics{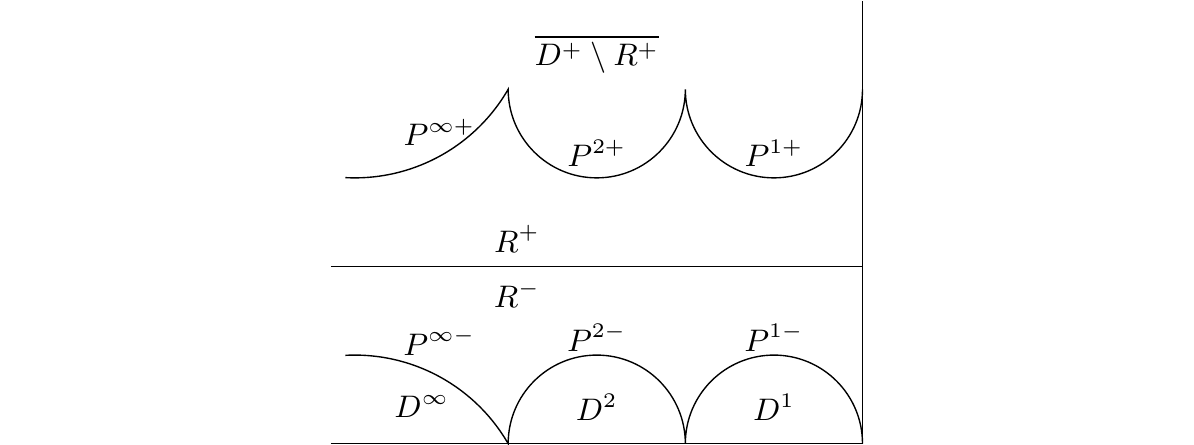}
\caption{}
\label{fig:bizarre}
\end{figure}

We can then assume that $(\overline{D_n^\pm\setminus R_n^\pm}, P_n^\pm)$ have analogous structure, where $\overline{D_n^+\setminus R_n^+}$ and
$D_n^\infty$ are disc diagrams. Suppose that the endpoints of $P^{\pm}_n$ are mapped under~$\phi'^\pm_n$ into the $1$-skeleton of $\nabla^+$. Then
$\widehat \phi_n$ are disc diagrams relative to appropriate $\widehat I_n$ and the last two paragraphs of Remark~\ref{rem:limit_union} hold as well.
\end{rem}

\subsection{Folding locus}

\begin{defin}
\label{def:locus} Let $\phi\colon D\to \nabla^+$ be a relative disc or half-plane diagram. We say that $\phi$ is \emph{$\X$-reduced} if at a
neighbourhood of each vertex of $D$ it is the composition of a reduced relative disc diagram in $\X$ and the map $\rho_+$. 

The \emph{folding locus} $L\subset D$ of $\phi$ is the closure of the union of open edges $e$ in $D\setminus\partial D$ that
\begin{itemize}
\item do not map to $\partial\nabla^+$ under $\phi$, but
\item at which $\phi$ is not a local embedding.
\end{itemize}
If such an edge $e$ is mapped under $\phi$ into a principal line, then we \emph{orient} $e$ in the direction opposite to a vertex of $\nabla$ (see
Figure~\ref{fig:orient}).
\end{defin}

For example, since in Remark~\ref{rem:limit_union} the relative disc diagrams $\widehat \phi_n$ are local embeddings, we have that each $\rho^+\circ \widehat \phi_n$, and consequently $\widehat \phi'$, is $\X$-reduced.

\begin{figure}
\includegraphics{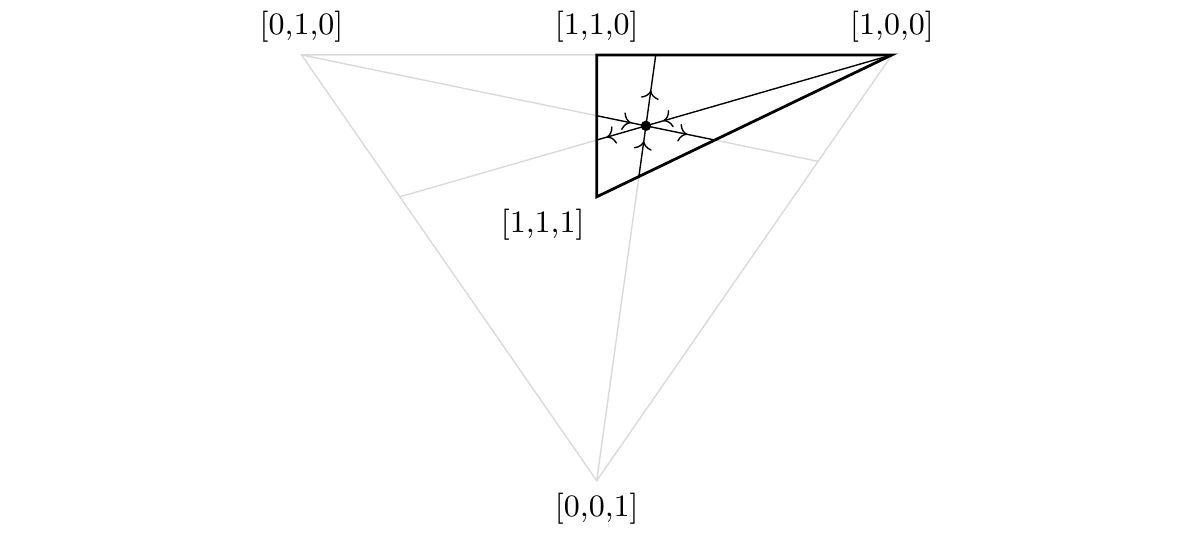}
\caption{}
\label{fig:orient}
\end{figure}

From Propositions~\ref{prop:linkpp1}, \ref{prop:linkp11}, and~\ref{prop:linkmp1} we deduce the following, where $\eps_0$ is the minimum of the
constants in Propositions~\ref{prop:linkp11} and~\ref{prop:linkmp1}.

\begin{cor}
\label{cor:consise} Let $\phi\colon D\to \nabla^+$ be an $\X$-reduced relative disc or half-plane diagram and let $v\in D\setminus\partial D$ be a vertex of
the folding locus $L\subset D$. Suppose that the angle at $v$ in $D$ is $<2\pi+\eps_0$. Then the intersection of $L$ with the star of $v$ in $D$ has
one of the forms in Figure~\ref{fig:concise}.

The angles at $v$ in (a),(b),(e) and the two top angles in (f) are the obvious multiples of $\frac{\pi}{3}$. Both angles in (c) are $>\pi$ and in (d)
they are $\geq \pi$. The two bottom angles in~(f), (g), the two top angles in~(h), as well as all the angles in~(i) are $>\frac{\pi}{3}$.
The two bottom angles in (h) are $>\frac{2\pi}{3}$. The sum of the top and bottom angle at each side of (g) is $\geq \pi$.
\end{cor}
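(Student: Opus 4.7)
\emph{Setup.} Since $\phi$ is $\X$-reduced, on a neighbourhood of $v$ we may write $\phi = \rho_+ \circ \widetilde\phi$, where $\widetilde\phi$ is a reduced relative disc diagram into $\X$; by $\T$-equivariance we may further assume $\widetilde\phi(v) = \nu_{\id,[\alpha]}$ for some $\alpha \in \Pi^+$. Since $v$ is interior to $D$, the link of $v$ in $D$ is a combinatorial cycle, and its image under $\widetilde\phi$ is a cycle $\gamma$ in the link of $\widetilde\phi(v)$ in $\X$, of length equal to the angle at $v$, hence $<2\pi+\eps_0$. An edge $e$ of $D$ at $v$ with $\phi(e)\not\subset\partial\nabla^+$ lies in $L$ precisely when the two edges of $\gamma$ at the corresponding link vertex belong to two distinct chamber-link paths $\Gamma^+_f$ and $\Gamma^+_g$; equivalently, $\widetilde\phi$ sends the two $2$-cells of $D$ adjacent to $e$ into two distinct chambers of $\X$ identified by $\rho_+$ along their common edge.

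\emph{Case analysis.} Apply the classification of cycles of length $<2\pi+\eps_0$ in the link of $\nu_{\id,[\alpha]}$: \cite{LP2}*{lem~3.4} if $[\alpha]=[1,1,1]$, Proposition~\ref{prop:linkpp1} if $[\alpha]=[m,m,1]$, Proposition~\ref{prop:linkp11} if $[\alpha]=[m,1,1]$, or Proposition~\ref{prop:linkmp1} if $[\alpha]\in\mathrm{int}\,\nabla^+$. Each resulting sub-case prescribes a decomposition of $\gamma$ into a concatenation of paths in individual $\Gamma^+_f$'s. From this decomposition we read off the edges of $L$ at $v$ (one per transition between two consecutive chamber-link paths) and the angles between consecutive such edges (each equal to the $\rho_+$-image length, in the link of $[\alpha]$ in $\nabla^+$, of the intervening piece). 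Matching each case to one of (a)--(i) in Figure~\ref{fig:concise} is then a finite check.

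\emph{Angles and orientation.} The angles that are multiples of $\pi/3$ in (a), (b), (e), and the upper part of (f) arise at weights $[1,1,1]$ and $[m,m,1]$, whose link edges all have length $\pi/3$. The lower bounds $>\pi$ in (c) and $\geq \pi$ in (d) follow from Proposition~\ref{prop:linkp11} together with Remark~\ref{rem:numberline} and Lemma~\ref{lem:isometry}, computing lengths in the link of $[m,1,1]$ in $\nabla^+$. The bounds $>\pi/3$, $>2\pi/3$, and the sum $\geq\pi$ of (g) come from the sub-cases of Proposition~\ref{prop:linkmp1} and Example~\ref{exa:1}(a)--(c), via angle estimates in $(\nabla,|\cdot,\cdot|)$ as in \cite{LP2}*{lem~5.9} and \cite{LP2}*{rem~4.10}. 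The orientation of $L$-edges mapped to principal lines is dictated by Remark~\ref{rem:commonrays}: the fixed-point inequalities of Corollary~\ref{cor:fixed_set} persist only on one side of a principal line (the side away from a vertex of $\nabla$), so two chambers of $\X$ can be folded together by $\rho_+$ only in that direction. The main obstacle is the sheer size of the case analysis — roughly a dozen sub-cases across the four link propositions — and in particular the sub-cases of Example~\ref{exa:1} require careful tracing of the underlying polynomial identities through the valuation $\nu_{\id,[\alpha]}$ in order to confirm the precise angle bounds.
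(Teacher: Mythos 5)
Your proposal follows the same route as the paper's proof, which likewise deduces the corollary by matching the bullet points of Propositions~\ref{prop:linkpp1}, \ref{prop:linkp11} and~\ref{prop:linkmp1} to the configurations (a)--(i) of Figure~\ref{fig:concise}, with the angle bounds read off from the link geometry established there (the weight $[1,1,1]$ contributes nothing, since every fold direction there lies in $\partial\nabla^+$ and is excluded from $L$ by Definition~\ref{def:locus}; likewise the first bullet ``equals $\Gamma_f$'' of each proposition produces no interior vertex of $L$). One correction to your setup: an interior edge $e$ at $v$ lies in $L$ not ``precisely when'' the two adjacent edges of $\gamma$ lie in distinct chamber-links $\Gamma^+_f,\Gamma^+_g$ --- that is necessary but not sufficient --- but precisely when those two link edges have the \emph{same} image under $\rho_+$ in the link of $[\alpha]$ in $\nabla^+$; two distinct chambers meeting along an edge of an admissible line interior to $\nabla^+$ may perfectly well have the two adjacent $2$-cells of $D$ mapped to opposite sides of that line, in which case there is no fold. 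This distinction matters when deciding which transition vertices of the classified cycles actually contribute edges of $L$, so it should be stated correctly before carrying out the finite check; also note that the orientation of the $L$-edges mapped to principal lines is a convention fixed in Definition~\ref{def:locus}, not a consequence of Corollary~\ref{cor:fixed_set}.
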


\begin{figure}
\includegraphics{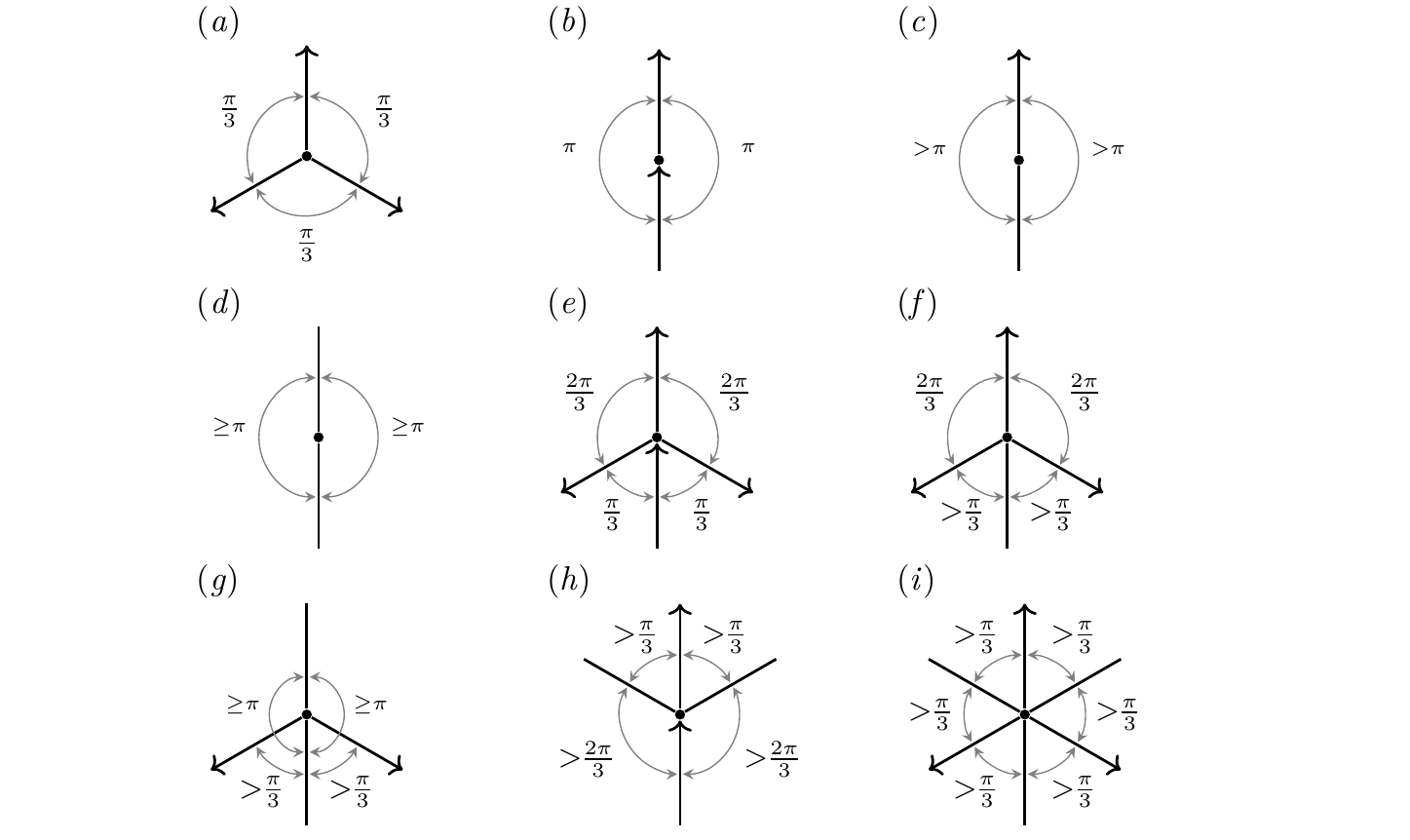}
\caption{}
\label{fig:concise}
\end{figure}

\begin{proof} The third and second bullet point in Proposition~\ref{prop:linkpp1} correspond to~(a) and~(b), respectively. The second bullet point in Propositions~\ref{prop:linkp11}
and~\ref{prop:linkmp1} corresponds to~(b),~(c), and~(d). The third bullet point in Proposition~\ref{prop:linkmp1} corresponds to~(e),~(f), and~(g).
The fifth and fourth bullet points in Proposition~\ref{prop:linkmp1} correspond to~(h),~(i), respectively.
\end{proof}

\begin{rem}\phantomsection
\label{rem:straight}
\begin{enumerate}[(i)]
\item If in Corollary~\ref{cor:consise} we have an edge $e$ of $L$ containing $v$ that is oriented towards $v$ (resp.\ non-oriented but not as in
    Figure~\ref{fig:concise}(h)), then there is an edge $e'$ of $L$ containing~$v$ at angle $\geq \pi$ from $e$ that is oriented from $v$ (resp.\
    or non-oriented).
\item Thus if at each interior vertex of $D$ the angle is $<2\pi+\eps_0$, then each oriented edge of $L$ is contained in a sequence of oriented
    edges of $L$ forming an oriented geodesic ray or segment ending at $\partial D$. Furthermore, each non-oriented edge~$e$ of $L$ is contained
    in a geodesic of $L$ whose each end is unbounded or lies at~$\partial D$ or at $v$ in Figure~\ref{fig:concise}(h).
\item Let $c\subset \nabla^+$ be the (typically non-geodesic) ray that is a parametrisation of an admissible line
    $\alpha_1=m_2\alpha_2+m_3\alpha_3$. Then there is a subray $c'$ of~$c$ such that any other admissible line in $\nabla^+$ with unbounded
    intersection with $\alpha_1\geq m_2\alpha_2+m_3\alpha_3$ is disjoint from $c'$ or has form $\alpha_2=m\alpha_3$.
\item Consequently, if in part (i) in Figure~\ref{fig:concise}(b,c,d) we have $\phi(e)\subset c'$, with $\phi(v)$ separating $\phi(e)$ from the unbounded end of $c'$, then
    $\phi(e')$ is the consecutive edge on $c'$, and we have Figure~\ref{fig:concise}(b) or~(d).
\end{enumerate}
\end{rem}

\begin{defin}
\label{def:u}For any point $[\alpha]\in \nabla^+$, let $S_{[\alpha]}$ denote its unit tangent space in the Euclidean plane $(\nabla,|\cdot,\cdot|)$,
which can be identified with the link of $[\alpha]$ in~$\nabla$. Furthermore, let $S^\angle_{[\alpha]}$ denote the quotient of $S_{[\alpha]}$ by the
action of the order $6$ dihedral group generated by the reflections in the lines through $[\alpha]$ parallel to the principal lines. Parallel
transporting in $(\nabla,|\cdot,\cdot|)$, we can identify all $S_{[\alpha]}$ and all $S^\angle_{[\alpha]}$, which we denote then shortly $S$ and
$S^\angle$. The direction towards (resp.\ away from) the vertices of $\nabla$ determines the \emph{principal point} (resp.\ \emph{antiprincipal
point}) of $S^\angle$. The union of these two points is denoted $\partial S^\angle\subset S^\angle$.

Consider now an $\X$-reduced relative disc or half-plane diagram $\phi\colon D\to \nabla^+\subset \nabla$ with folding locus $L\subset D$. For $x\in D$, let $S_x$
be its link in $D$, let $\phi_x\colon S_x\to S_{\phi(x)}$ be the map induced by $\phi$ between the links, and let $\phi^\angle_x$ be the composition
of $\phi_x$ with $S_{\phi(x)}\to S^\angle_{\phi(x)}$.

Suppose now that $D$, with the degenerate piecewise Euclidean metric pulled back from $\nabla^+$, is a convex subset of the Euclidean plane $\R^2$. For $x\in D$, let $\mathbb{S}_x$ be its unit tangent space in~$\R^2$.
Parallel transporting, we can identify all $\mathbb{S}_x$, which we denote then as $\mathbb{S}$. For $x\notin L\cup \phi^{-1}(\partial \nabla^+)$,
the map~$\phi_x$ extends to an isometry $\phi_x\colon \mathbb{S}=\mathbb{S}_x\to S_{\phi(x)}=S$.
Since $D\subset \R^2$, all edges of $L$ are mapped
under~$\phi$ to principal lines, and hence their directions are mapped to $\partial S^\angle$ in $S^\angle$. Thus it can be easily seen that
$\phi^\angle_x$ does not depend on $x$, and we can denote it $\phi^\angle$.
\end{defin}

Below, a \emph{positively} (resp.\ \emph{negatively}) \emph{oriented} geodesic ray in the folding locus consists of edges that are all oriented
towards (resp.\ away from) the unbounded end of the ray.

\begin{lemma}
\label{lem:noncompact} Let $\phi\colon D\to \nabla^+$ be an $\X$-reduced relative half-plane diagram with interior vertex angles $<2\pi+\eps_0$.
Let $r$ be a positively oriented geodesic ray in the folding locus $L\subset D$.
\begin{enumerate}[(i)]
\item Then $\phi(r)$ is unbounded.
\item Assume additionally that $r$ is a connected component of $L$. Then there is $M$ depending only on the first edge of $\phi(r)$, such that for the
    subray $r'$ obtained from $r$ by removing its initial length $M$ segment, we have that $\phi(r')$ is contained in a principal line.
\end{enumerate}
The same statements hold for $r$ negatively oriented.

Finally, if $r$ is a geodesic ray in $D$ disjoint from $L$, with the tangent direction of~$r$ at some (hence each) point~$x$ of~$r$ sent to $\partial
S^\angle$ under the map $\phi^\angle_x$ from Definition~\ref{def:u}, then $\phi(r)$ is unbounded as well.
\end{lemma}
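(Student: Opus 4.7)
The plan is to deduce (i) from (ii) and prove (ii) by showing that $\phi(r)$ eventually settles on a single principal line via Remark~\ref{rem:straight}(iv); the final statement will be handled separately using the local isometric nature of $\phi$ away from $L$.

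For (ii), let $e_1$ be the first edge of $r$ and let $\ell$ be the principal line containing $\phi(e_1)$. First I would apply Remark~\ref{rem:straight}(iii) to obtain a subray $c'\subset c = \ell\cap\nabla^+$ meeting no other admissible line of $\nabla^+$ with unbounded intersection, except possibly those of form $\alpha_2=m\alpha_3$. Since $r$ is a connected component of $L$, at each interior vertex $v$ of $r$, $L$ meets the star of $v$ in exactly the two edges of $r$ at $v$; by Corollary~\ref{cor:consise}, this restricts the local configuration to (b), (c), or (d) of Figure~\ref{fig:concise}, excluding the branching cases (a), (e), (f)--(i). Positive orientation combined with Remark~\ref{rem:straight}(i) then forces the outgoing edge $e'$ of $r$ to lie at angle $\geq \pi$ from the incoming $e$. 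As soon as $\phi(e)\subset c'$, positive orientation places $\phi(v)$ between $\phi(e)$ and the unbounded end of $c'$, so Remark~\ref{rem:straight}(iv) applies and $\phi(e')$ is the next consecutive edge on $c$. Iterating, the tail of $\phi(r)$ is contained in $\ell$, proving (ii); and (i) then follows at once since $\phi(r')$ covers an unbounded subray of the principal line $\ell$, with the negatively oriented case handled symmetrically.

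The bound $M$ depends only on $\phi(e_1)$ by the following reasoning: $c\setminus c'$ is a bounded subset of $\ell$, and by local finiteness of admissible lines (\cite{LP2}*{lem~4.1}), only finitely many principal lines $\ell_j$ pass through any fixed bounded region around $\phi(e_1)$, each with its own subray $c'_j$. Since every edge of $r$ is positively oriented---its image lies on a principal line moving away from a vertex of $\nabla$---$\phi(r)$ makes uniformly positive outward progress at each step, hence exits this bounded region in a length bounded in terms of the combinatorial data near $\phi(e_1)$, at which point $\phi$ of the current edge lies in some $c'_j$ and the iteration above takes over. For the final statement, disjointness of $r$ from $L$ combined with $\X$-reducedness makes $\phi$ a local isometric embedding on a neighborhood of $r$, so $\phi(r)$ is a local geodesic in the flat space $\nabla^+$, hence a geodesic ray; the hypothesis that the tangent direction of $r$ maps into $\partial S^\angle$ then forces $\phi(r)$ to proceed in a principal or antiprincipal direction at every point, so $\phi(r)$ is contained in a principal line heading either towards or away from a vertex of $\nabla$ at infinity, and in both cases $\phi(r)$ is unbounded.

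The hard part will be the quantitative control of $M$: I need to rigorously verify that positive orientation and local finiteness together force $\phi(r)$ out of every bounded region around $\phi(e_1)$ within a bounded distance, so that the image of some edge lands in one of the subrays $c'_j$ and the clean iteration via Remark~\ref{rem:straight}(iv) can take over. The combinatorial bookkeeping of possible transitions between distinct principal lines $\ell_j$ before reaching any $c'_j$ needs careful analysis using the full classification in Corollary~\ref{cor:consise} and the orientation convention on edges of $L$.
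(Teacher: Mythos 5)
There is a genuine gap, and it is exactly the issue you flag at the end as "the hard part": your argument assumes that once $\phi$ of an edge of $r$ lies on a principal line $\ell$, the image keeps advancing along $\ell$ towards its unbounded end. This fails for two reasons. First, for a principal line through $[1,0,0]$ (of the form $\alpha_2=m\alpha_3$) or through $[0,1,0]$ (of the form $\alpha_1=m\alpha_3$), the positively oriented direction (away from that vertex of $\nabla$) points \emph{towards} $\partial\nabla^+$, which it reaches after a bounded distance (at $[m,m,1]$, resp.\ $[m,1,1]$); there the image of $r$ switches to a principal line through a \emph{different} vertex of $\nabla$. So "positive orientation makes uniformly positive outward progress" is false, the tail of $\phi(r)$ need not lie in the line $\ell$ of the first edge, and this is precisely why the lemma only asserts containment in a principal line after deleting an initial segment of length $M$. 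The paper's proof tracks the sequence of directions: at $\partial\nabla^+$ the only transitions are (away from $[1,0,0]$) $\to$ (away from $[0,1,0]$) $\to$ (away from $[0,0,1]$), so for a connected component of $L$ at most two such reflections occur, after which the image runs along a line $\alpha_1=m\alpha_2$ to infinity; $M$ is the distance to the second reflection, determined by the first edge. Your invocation of Remark~\ref{rem:straight}(iii,iv) does not see these boundary reflections (and Remark~\ref{rem:straight}(iii) is stated for lines $\alpha_1=m_2\alpha_2+m_3\alpha_3$, which excludes $\alpha_2=m\alpha_3$).

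Second, you cannot deduce (i) from (ii): statement (i) does not assume $r$ is a connected component of $L$, so $r$ may pass through branching vertices of $L$ of types (e) and (h) of Figure~\ref{fig:concise}, where the direction of the image also changes (away from $[0,0,1]$ back to away from $[0,1,0]$, resp.\ away from $[0,1,0]$ to away from $[1,0,0]$), potentially infinitely often. Handling this requires the monotonicity observation that moving away from $[0,1,0]$ increases $\alpha_1/\alpha_2$, combined with the constraints $\alpha_1/\alpha_2\geq 2$ at type (e) and $\alpha_1/\alpha_2<2$ at type (h), to conclude that eventually every edge is directed away from $[0,1,0]$ or $[0,0,1]$ and the projection onto $\{\alpha_2=\alpha_3\}$ diverges. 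Your proposal contains no argument for this case. Finally, for the last assertion, note that edges mapping into $\partial\nabla^+$ are excluded from $L$ by Definition~\ref{def:locus} even when $\phi$ folds there, so "disjoint from $L$" does not make $\phi$ a local isometric embedding along $r$, and a principal direction pointing towards a vertex of $\nabla$ reaches $\partial\nabla^+$ after bounded distance; the same reflection bookkeeping as above is needed, not the claim that $\phi(r)$ is a straight geodesic in $\nabla^+$.
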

\begin{proof}
If $r$ is positively oriented, then each edge of $\phi(r)$ is directed away from $[1,0,0],[0,1,0],$ or $[0,0,1]$. The change of the direction might
occur only at $\partial \nabla^+$ or at $\phi(v)$ for a vertex $v$ described in Figure~\ref{fig:concise}(e,h).
We have the following possibilities, as illustrated in Figure~\ref{fig:noncompact}:

\begin{figure}
\includegraphics{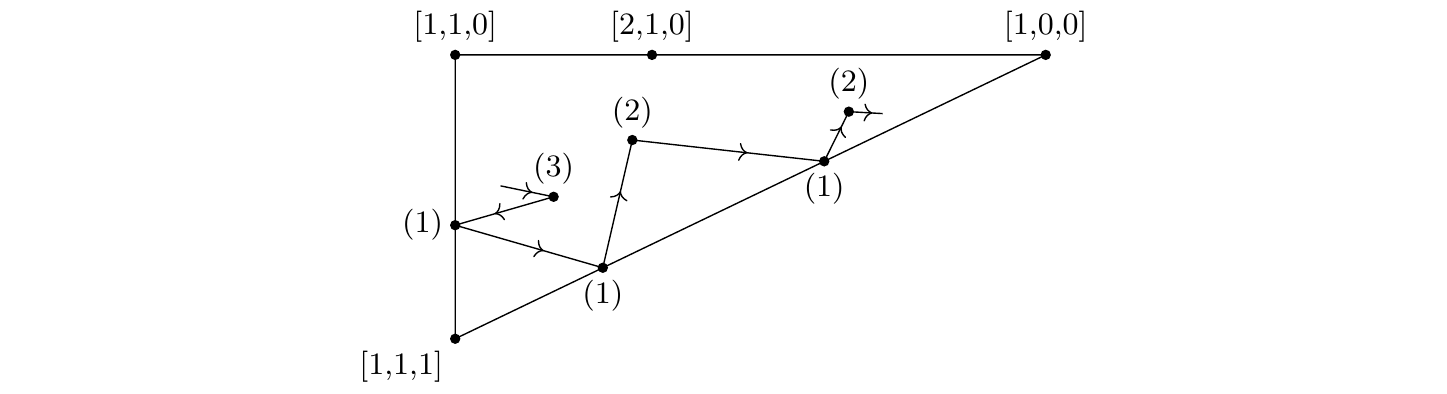}
\caption{Image of a positively oriented ray in $\nabla^+$. Numbers (1), (2), (3) correspond to the cases in the proof of Lemma \ref{lem:noncompact}.}
\label{fig:noncompact}
\end{figure}

\begin{enumerate}[(1)]
\item
At $\partial \nabla^+$, the direction
away from $[1,0,0]$ changes to the direction away from $[0,1,0]$ or the direction away from $[0,1,0]$ changes to the direction away from $[0,0,1]$.
\item
At $\phi(v)$ as in Figure~\ref{fig:concise}(e) (which cannot occur under the additional assumption in~(ii)), the direction away from $[0,0,1]$
changes to the direction away from $[0,1,0]$. In that case $\phi(v)$ has $\frac{\alpha_1}{\alpha_2}$ an integer $\geq 2$.
\item
At $\phi(v)$ as in Figure~\ref{fig:concise}(h) (which also cannot occur in~(ii)), the direction away from $[0,1,0]$
changes to the direction away from $[1,0,0]$. In that case $\phi(v)$ has $\frac{\alpha_1}{\alpha_2}<2$.
\end{enumerate}

Moving away from $[0,1,0]$ increases $\frac{\alpha_1}{\alpha_2}$. Consequently, for a subray $r'\subset r$ each edge of $\phi(r')$ is directed away
from $[0,1,0]$ (which eventually does not happen in~(ii)) or $[0,0,1]$. Thus the orthogonal projection w.r.t.\ $|\cdot,\cdot|$ of~$\phi(r)$ onto the
principal line $\alpha_2=\alpha_3$ diverges to $\infty$.

In $r$ is negatively oriented, for a subray $r'\subset r$ each edge of $\phi(r')$ is
directed towards $[0,1,0]$ (which eventually does not happen in~(ii)) or $[1,0,0]$. Thus the orthogonal projection w.r.t.\ $|\cdot,\cdot|$ of~$\phi(r)$ onto the principal line $\alpha_1=\alpha_2$ diverges to~$\infty$.

The last assertion is proved similarly.
\end{proof}

Below, a \emph{non-oriented component} $\LL$ of the folding locus $L$ is a connected component of the union of the open non-oriented edges of $L$ and
the vertices in Figure~\ref{fig:concise}(d,g).

\begin{lem}
\label{lem:curved_edges} There is a continuous function $M\colon \big(0,\frac{\pi}{3}\big)\to \R$ such that the following holds. Let $\phi\colon D\to
\nabla^+$ be an $\X$-reduced relative half-plane diagram with interior vertex angles $<2\pi+\eps_0$. Let $\LL\subset L$ be a non-oriented component,
let $x\in \LL$, and let ${u}\in S_x$ be the direction of an edge of $\LL$. If $\phi^\angle_x({u})\in S^\angle$ is at angle $\eps$ from the principal
point, then the connected component of $\LL\setminus x$, to which $u$ is tangent 
\begin{itemize}
\item
has length $<M(\eps)$ and ends at $\partial D$, or 
\item
contributes $\leq
-\eps$ to the sum of the curvatures of the interior edges and the interior vertices of~$D$.
\end{itemize}
\end{lem}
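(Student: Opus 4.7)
The plan is to relate the curvature contribution of $\LL'$—the connected component of $\LL\setminus x$ tangent to $u$—to the evolution of the tangent angle $\eps\in S^\angle$ along $\LL'$. The key identity is that, for a non-oriented edge $e\subset L$, both $2$-cells of $D$ adjacent to $e$ fold onto the same side of the non-principal admissible line $\phi(e)\subset\nabla^+$, so that $\kappa(e) = 2\int_e\kappa_g\,ds$; by the standard plane-curve tangent-rotation formula, this integral equals $\pm\Delta\eps_e$, the signed change of $\eps$ along $e$. A local analysis of the fold geometry shows that the sign is negative: the fold always lies on the convex side of the admissible curve in $\nabla^+$.

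First, I would analyse the geometry of non-principal admissible lines in $(\nabla,|\cdot,\cdot|)$. In log coordinates such a line is given by $e^{\ell_1}=m_2e^{\ell_2}+m_3e^{\ell_3}$, a smooth plane curve asymptotic to two principal lines; the tangent angle $\eps$ in $S^\angle$ starts at $0$ on one asymptote, rises monotonically to a maximum $\leq\pi/3$ at the midpoint, and returns to $0$ on the other asymptote. By a direct computation (and using Remark~\ref{rem:unique_admissible} to reduce to finitely many admissible lines up to isometry of $(\nabla,|\cdot,\cdot|)$), the arc length of the subset on which $\eps\geq\eps'$ is bounded by a continuous function $N(\eps')$. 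I would then set $M(\eps):=N(\eps/2)$.

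Second, I would handle the interior vertices of $\LL'$. By Corollary~\ref{cor:consise} such a vertex is necessarily of type~(d) (type~(g) vertices are leaves of $\LL$). At a type~(d) vertex $v$, both angles at $v$ are $\geq\pi$, so $\kappa(v)\leq 0$ and the deficit from $2\pi$ is at most $\eps_0$. Here $\phi(v)$ lies on the intersection of admissible lines in $\nabla^+$ and $\LL'$ may jump from one admissible line to another; using the angle relations at $v$ I would show that any increase $\delta\geq 0$ of $\eps$ across such a transition satisfies $\delta\leq -\kappa(v)$, by comparing the angle jump in $S$ to the angle deficit at the vertex. Thus the combined budget $\sum_{e}(-2\Delta\eps_e) + \sum_v(-\kappa(v))$ dominates the total drop of $\eps$ along $\LL'$.

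Finally, assume $\LL'$ does not reach $\partial D$ within length $M(\eps)$. Since any arc of a single admissible line on which $\eps\geq\eps/2$ has length $\leq M(\eps)$, and since transitions at interior vertices of $\LL'$ can only raise $\eps$ at the cost of (nonpositive) vertex curvature, the angle $\eps$ must drop below $\eps/2$ at some point of $\LL'$. The budget argument then yields $\sum_{e\subset\LL'}\kappa(e) + \sum_{v\in\LL'}\kappa(v)\leq -\eps$, as required. The main obstacle I anticipate is verifying the sign of $\kappa_g$ at folds (confirming that the fold lies on the convex side of the admissible curve in $\nabla^+$) together with the delicate bookkeeping at type~(d) transitions where $\LL'$ crosses between distinct admissible lines through $\phi(v)$; both require careful local analysis of the link configurations in Corollary~\ref{cor:consise}.
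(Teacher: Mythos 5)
Your overall strategy is the same as the paper's: normalise all non-principal admissible lines to a single model via Remark~\ref{rem:unique_admissible}, extract $M(\eps)$ from the arc-length of the region where the tangent direction in $S^\angle$ stays at angle $\geq\frac{\eps}{2}$ from the principal point, and run a budget comparing the drop of $\phi^\angle_{\LL(t)}(u(t))$ to the curvature of the edges and vertices of the component. However, there are concrete gaps. The most serious one is the case where the component has length $<M(\eps)$ but terminates at an \emph{interior} vertex of $D$ (configurations (c), (f), (h), (i) of Corollary~\ref{cor:consise}). Your final paragraph only treats ``$\LL'$ does not reach $\partial D$ within length $M(\eps)$'' by arguing that the angle must drop below $\frac{\eps}{2}$; for a short component ending in the interior (say a single short edge on which the angle barely changes) this is simply false, the edge curvature is tiny, and yet the lemma demands a contribution of $\leq-\eps$. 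The paper resolves this by appending at the terminal vertex the continuing edge $e$ of $L$ (or, in case (h), a designated edge outside $L$) whose direction maps to the principal point of $S^\angle$, so that the exterior angles at the terminal vertex are forced to absorb the entire remaining drop from $\eps$ down to $0$; this is the missing idea.

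Two further issues. First, by the definition of a non-oriented component, its interior vertices are those of Figure~\ref{fig:concise}(d) \emph{and} (g); your dismissal of (g) as leaves is incorrect, although the relevant angle estimate (``the sum of the top and bottom angle at each side of (g) is $\geq\pi$'') makes (g) amenable to the same treatment as (d). Second, you allow the $S^\angle$-angle to \emph{increase} at interior vertices, charged to $-\kappa(v)$, whereas the paper proves that $t\mapsto\phi^\angle_{\LL(t)}(u(t))$ is monotonically decreasing, including across vertices. This monotonicity is not cosmetic: it is what guarantees that the consecutive edges of $\phi(\LL)$ can be translated to \emph{disjoint, consistently ordered} arcs of a single model admissible line, so that total length $\geq M(\eps)$ forces the direction to leave $\left[\frac{\eps}{2},\eps\right]$. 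If increases at vertices were permitted, the image arcs could overlap or backtrack (possibly across several admissible lines through $\phi(v)$), and a long component confined to a thin angle window would no longer be excluded by your length count, so even your first case does not close without proving the monotonicity you set aside as ``delicate bookkeeping''.
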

\begin{proof} Consider the arc-length parametrisation $\LL\colon [0,T)\to D$ (possibly $T=\infty$) of the union of $x$ and the connected component of $\LL\setminus x$ tangent to $u$,
with $\LL(0)=x$. Let ${u}(t)\in S_{\LL(t)}$ be the direction of $\LL[t,T)$. In the case where $T<\infty$ and $\lim_{t\to T}\LL(t)$ is a vertex in the
interior of $D$, which we call~$\LL(T)$, we have the configuration of Figure~\ref{fig:concise}(c,f,h,i). For (c,f,i), we define additionally
${u}(T)\in S_{\LL(T)}$ to be the direction of the edge $e$ of~$L$ opposite at $\LL(T)$ to the non-oriented edge of $\LL$ (Remark~\ref{rem:straight}(i)). For (h) we denote by $e$
the edge outside $L$ indicated in Figure~\ref{fig:h} whose direction ${u}(T)\in S_{\LL(T)}$ has $\phi^\angle_{\LL(T)}\big({u}(T)\big)$ principal.

\begin{figure}
\includegraphics{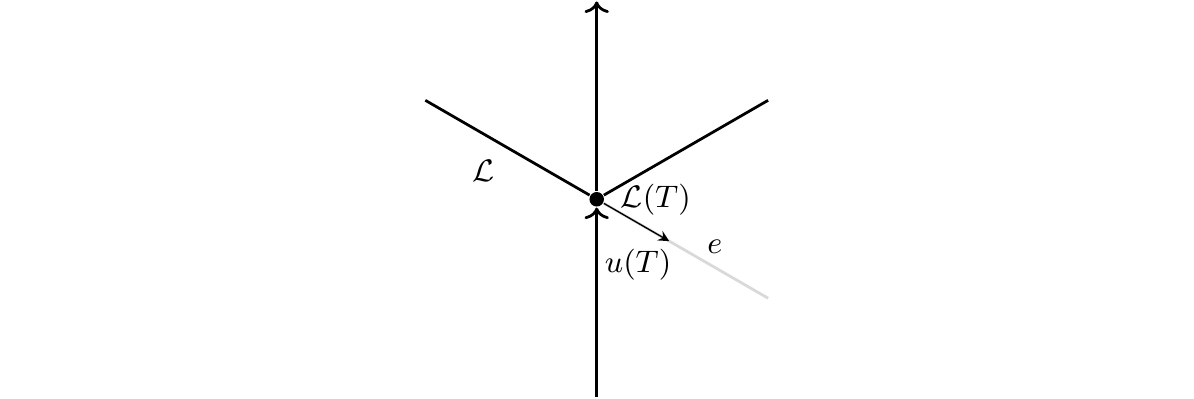}
\caption{}
\label{fig:h}
\end{figure}

We claim that $t\to \phi^\angle_{\LL(t)}\big({u}(t)\big)\in S^\angle$ is a decreasing function, where the interval~$S^\angle$ of length $\frac{\pi}{3}$ is parametrised by the angle from the principal point. Furthermore, the sum of the integral of the geodesic
curvature and the exterior angles at each side of an interval $\LL(t_1,t_2)$ (resp. $\LL(t_1,t_2]$, including possibly the exterior angle at $\LL(T)$ of $\LL\cdot e$ for $t_2=T$), is at least as large as the angle
between~$\phi^\angle_{\LL(t_1)}\big(u(t_1)\big)$ and $\lim_{t\nearrow t_2}\phi^\angle_{\LL(t)}\big({u}(t)\big)$ (resp.\
$\phi^\angle_{\LL(t_2)}\big({u}(t_2)\big)$).

Indeed, for $\LL(t_1,t_2)$ a single edge this follows from the
definition of geodesic curvature and the position of non-principal admissible lines in $\nabla$. 
Furthermore, for $t\in (t_1,t_2]$ with $\LL(t)$ a vertex, we have the configuration of Figure~\ref{fig:concise}(d,g) for $t< T$ or
Figure~\ref{fig:concise}(c,f,h,i) for $t=T$. Except for (h,i), in all these cases the angle between $\lim_{t'\nearrow t} \phi^\angle_{\LL(t')}\big({u}(t')\big)$ and
$\phi^\angle_{\LL(t)}\big({u}(t)\big)$ is the exterior angle at $\LL(t)$ of $\LL\cdot e$. In case~(i), and on one side in case (h) the exterior angle is even larger. This justifies the claim.

Let now $\LL'\colon (-\infty,+\infty)\to \nabla$ be any non-principal admissible line, all of which are isometric by
Remark~\ref{rem:unique_admissible}. Let $K$ be the path in $\LL'$ of points where the direction of $\LL'$ maps under $S\to S^\angle$ to the interval
$\big[\frac{\eps}{2}, \eps\big]$. Let $M=M(\eps)$ be the length of~$K$.

Without loss of generality, we can assume that $\phi(x)$ is the starting point of $K$. Furthermore, by the monotonicity in the claim
and by Remark~\ref{rem:unique_admissible}, consecutive edges of $\phi\big(\LL[0,T)\big)$ can be mapped by the translations of $\nabla$ to disjoint (except at
the endpoints) edges of~$\LL'$, ordered consistently. Consequently, if $\LL$ has length $\geq M$, then the minimal subpath~$K'$ of $\LL'$ containing
all of these edges has also length $\geq M$. Then $K'$ contains a point where the direction of $\LL'$ maps under $S\to S^\angle$ into $[0,\frac{\eps}{2}]$.
By the claim, the sum of the curvatures at each side of $\LL$ is $\leq -\frac{\eps}{2}$, as desired. If $\LL$ has length $<M$ but ends in the interior of $D$, then the image under $S\to S^\angle$ of the direction
of $\phi(e)$ above is the principal point and so by the claim the sum of the curvatures at each side of $\LL$ is $\leq -\eps$.
\end{proof}

\section{Rays}
\label{sec:rays}

In this section, we discuss particular geodesic rays in $\X$ that represent some points in $\partial_\infty \X$ fixed by elements conjugate into $C$ or $B'$.

\subsection{Principal rays}

Let $a\geq 1$. An \emph{$a$-principal ray} is a geodesic ray in a chamber~$\Ap^+_f$ that is the reparametrisation of the curve $\nu_{f,[ta, t,1]}$,
where $t\geq t_0$ for some $t_0\geq 1$. An \emph{$a$-antiprincipal ray} is a geodesic ray in a chamber $\Ap^+_f$ that is the reparametrisation of
the curve $\nu_{f,[t, a,1]}$, where $t\geq t_0$ for some $t_0\geq a$. A geodesic ray is \emph{principal} (resp.\ \emph{antiprincipal}) if it is an
$a$-principal (resp.\ $a$-antiprincipal) ray for some $a\geq 1$. See Figure~\ref{fig:rays}.

\begin{figure}
\includegraphics{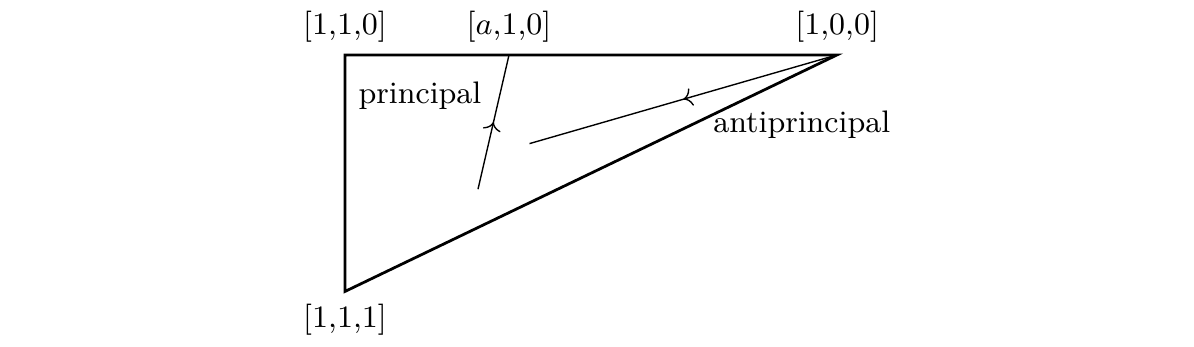}
\caption{Principal and antipricipal rays in a chamber $\Ap^+_f$.}
\label{fig:rays}
\end{figure}

Principal rays $r,r'$ are \emph{elementarily parallel}, if there is a chamber containing them both in which $r,r'$ are parallel with respect to the Euclidean metric $|\cdot,\cdot|$. 
This generates an equivalence relation on the set of principal rays, and we say that $r,r'$ are \emph{parallel} if
they are in the same equivalence class. 
A class of parallel rays determines a point $\zeta$ in the visual boundary $\partial_\infty  \X$. We call
such a point $\zeta$ \emph{principal}. Analogously we define \emph{antiprincipal} points in $\partial_\infty \X$.

\begin{lem}
\label{lem:easystab} Let $\zeta\in \partial_\infty  \X$ be the principal \parent{resp.\ antiprincipal} point that is the class of the principal \parent{resp.\
antiprincipal} rays in $\Ap^+_\id$. Then the stabilizer $\St(\zeta) < \T$ equals $C$ \parent{resp.\ $B'$}.
\end{lem}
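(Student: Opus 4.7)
The plan is to prove the two inclusions separately in each case; the antiprincipal case is symmetric to the principal one, so I focus on the latter.

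For the inclusion $C\subseteq \St(\zeta)$, I would invoke Proposition~\ref{prop:stab}. In case~(4), applied to $\alpha=(t,t,1)$, the stabiliser $\St(\nu_{\id,[t,t,1]}) = M_\alpha \rtimes L_\alpha$ coincides with the subgroup of $K$ whose polynomial pieces $P_1, P_2 \in \K[x_3]$ have degree at most $t$. Thus any given $k \in K$ lies in $\St(\nu_{\id,[t,t,1]})$ as soon as $t$ exceeds the degrees of the $P_i$ occurring in $k$, so $k$ fixes the tail of the $1$-principal ray $t \mapsto \nu_{\id,[t,t,1]}$, and hence the asymptotic class $\zeta$ that this ray represents. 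An analogous application of case~(2) with $\alpha=(ta,t,1)$ for $a,t$ large (chosen larger than the $x_2$- and $x_3$-degrees appearing in a given $h\in H$) shows that $h$ fixes a point on an interior $a$-principal ray, giving $H\subseteq \St(\zeta)$; combining yields $C=\langle K,H \rangle\subseteq \St(\zeta)$. For the antiprincipal case, case~(3) with $\alpha=(t,a,1)$ on the wall $\alpha_2=\alpha_3$ exhausts $B$ as $t\to\infty$, and case~(2) again handles $H$, yielding $B'=\langle B, H\rangle \subseteq \St(\zeta)$.

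For the reverse inclusion $\St(\zeta)\subseteq C$ (principal case), take $g\in \St(\zeta)$. The principal point $\zeta$ is characterised by the asymptotic behaviour of its rays: along any principal ray $r(t)=\nu_{\id,[ta,t,1]}$ the valuation of $x_3$ stays bounded, while those of $x_1, x_2$ tend to $-\infty$ linearly in $t$. Applying this to the image ray $g\cdot r(t) = \nu_{g,[ta,t,1]}$, the value $\nu_{\id,[ta,t,1]}(g(x_3))$ must remain bounded as $t\to\infty$; if $g(x_3)$ contained any monomial of positive degree in $x_1$ or $x_2$, this valuation would instead tend to $-\infty$ linearly in $t$, and the image ray would represent a different principal class. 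Hence $g(x_3)\in \K[x_3]$, and by tameness of $g$ this forces $g(x_3)=cx_3+d$ with $c\neq 0$. A structural result essentially from the van der Kulk-type decomposition of $\T$ in \cite{Wright,LamyToulouse} identifies the subgroup $\{g\in \T : g(x_3)=cx_3+d\}$ with $C=\langle K,H\rangle$, giving $g\in C$. The antiprincipal case is symmetric: preservation of $\zeta$ forces $g(x_1)=ax_1+P(x_2,x_3)$, and this subgroup of $\T$ equals $B'=\langle B,H\rangle$ by the corresponding decomposition \cite{vdk}.

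The main obstacle is the step forcing $g$ to preserve the distinguished coordinate, since $g\cdot r$ may lie in a chamber far from $\Ap^+_\id$; translating the ``asymptotic principal-ray preservation'' condition into a sharp constraint on the polynomial pieces of $g$ requires careful tracking of how the chain-parallelism relation from Section~\ref{sub:X} propagates the principal direction across chambers, most cleanly done via the projection $\rho_+\colon\X\to \nabla^+$.
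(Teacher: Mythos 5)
Your first inclusion ($C\subseteq \St(\zeta)$, resp.\ $B'\subseteq\St(\zeta)$) is correct and is essentially the paper's argument: both amount to reading off from Proposition~\ref{prop:stab} (equivalently Corollary~\ref{cor:fixed_set}) that each element of $K$ fixes the tail of the $1$-principal ray and each element of $H$ fixes the tail of an $a$-principal ray for $a$ large, hence fixes $\zeta$.

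The converse inclusion, however, has a genuine gap, and it sits exactly at the step you yourself flag as ``the main obstacle''. You assert that $g\in\St(\zeta)$ forces $\nu_{\id,[ta,t,1]}\big(g(x_3)\big)$ to stay bounded as $t\to\infty$, on the grounds that otherwise ``the image ray would represent a different principal class''. But the hypothesis only gives that $g\cdot r$ is \emph{asymptotic} to $r$ in the quotient pseudo-metric $d_\X$, and the image ray lives in the chamber $\Ap^+_g$, not in $\Ap^+_\id$; boundedness of $d_\X$ between points in different chambers does not directly translate into a comparison of the valuations of a fixed polynomial such as $g(x_3)$. (Note also that $\rho_+$ cannot detect this: $\rho_+\big(g\cdot r(t)\big)=\rho_+\big(r(t)\big)=[ta,t,1]$ identically, so projecting to $\nabla^+$ gives no information here.) Converting ``asymptotic'' into ``parallel via a chain of chambers'' is precisely the content of Lemma~\ref{lem:asymptoticparallel}, whose proof uses the realisation of $d_\X$ by finite chains (Lemma~\ref{lem:propertiesX}(iii)) together with Remark~\ref{rem:commonrays}; without it your claimed ``characterisation'' of $\zeta$ by valuation asymptotics is only a heuristic, and the case $\angle$-distinguishing argument is circular.

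There is a second problem with your endgame. Even granting $g(x_3)=cx_3+d$, the identification $\{g\in\T: g(x_3)=cx_3+d\}=C$ is a nontrivial structural statement that is not established in the paper and is not simply ``the van der Kulk-type decomposition'': $C=K\ast_{K\cap H}H$ is an amalgam of two specific subgroups with \emph{polynomial} (in $x_3$) coefficients, and it is not obvious that every tame automorphism preserving $x_3$ up to affine factors through them. The paper avoids needing any such result: once Lemma~\ref{lem:asymptoticparallel} produces a principal subray of $f(r)$ parallel to $r$, the chain of elementary parallelisms exhibits chambers $\Ap^+_{g_0}=\Ap^+_\id,\ldots,\Ap^+_{g_k}=\Ap^+_f$ with consecutive ones sharing a principal ray, and Proposition~\ref{prop:stab} then shows each $g_i^{-1}g_{i+1}$ lies in $H$ or $K$, giving $f\in C$ directly. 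You should replace your valuation argument by this route, or else supply proofs of both missing steps.
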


We need a preparatory result.

\begin{lem}
\label{lem:asymptoticparallel} Let $r$ be a principal \parent{resp.\ antiprincipal} ray in $\Ap^+_\id$, and let $r'$ be any geodesic ray in~$\X$
asymptotic to $r$. Then $r'$ has a subray that is principal \parent{resp.\ antiprincipal} and parallel to $r$.
\end{lem}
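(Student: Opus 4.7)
The plan is to compare $r'$ with $r$ via a limit of embedded relative disc diagrams, then to exploit the folding-locus machinery of Section~\ref{sec:diagramsinX} together with the asymptotic hypothesis to force $r'$ eventually into a single chamber as a principal \parent{resp.\ antiprincipal} ray parallel to~$r$.

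First, I may assume $r\cap r'=\emptyset$, since passing to sub-rays preserves being principal or antiprincipal. For each $n\geq 1$, I form the piecewise-geodesic closed quadrilateral
\[
\Gamma_n=r[0,n]\cdot[r(n),r'(n)]\cdot r'[0,n]^{-1}\cdot[r'(0),r(0)]
\]
in~$\X$; Lemma~\ref{lem:discexists} produces a reduced relative disc diagram $\phi_n\colon D_n\to\X$ with boundary~$\Gamma_n$, and Theorem~\ref{thm:disc_embeds} makes $\phi_n$ embedded. Setting $\psi_n=\rho_+\circ\phi_n$ and taking $I_n\subseteq\partial D_n$ to be the three sides other than $[r(n),r'(n)]$ (which has length $\leq D$ while sitting at $d_\X$-distance $\geq n-D$ from $r(0)$, and hence escapes to infinity), I apply Remark~\ref{rem:converge_nabla} to obtain an $\X$-reduced relative half-plane diagram $\psi'\colon D'\to\nabla^+$. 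After an arbitrarily small transverse perturbation of the~$\phi_n$, the diagram $\psi'$ becomes bordered by $\rho_+\circ r$ and weakly bordered by $\rho_+\circ r'$ in the sense of Definition~\ref{def:bordered}.

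The crux is to show that $\psi'(D')$ is contained in a finite-width Euclidean strip $\mathcal{S}\subset(\nabla^+,|\cdot,\cdot|)$ around the principal \parent{resp.\ antiprincipal} line $\ell$ carrying $\rho_+\circ r$. The boundary image lies in the $D$-neighbourhood of $\ell$ because $\rho_+$ is $1$-Lipschitz and $r,r'$ are $D$-asymptotic; extending the bound to the interior uses the $\mathrm{CAT}(0)$ structure on $D'$ (whose interior vertex angles lie in $[2\pi,2\pi+\eps_0)$ by Corollary~\ref{cor:consise}) together with chamber-by-chamber isometric lifting of the pieces of $\psi'$ via the embedded $\phi_n$. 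With this width bound in hand, the folding locus $L\subset D'$ is tightly controlled: by Lemma~\ref{lem:noncompact}(i,ii), each positively or negatively oriented maximal ray of $L$ has unbounded image in $\nabla^+$ that eventually lies on a principal line; within~$\mathcal{S}$ only finitely many principal lines parallel to $\ell$ occur, so $L$ contains only finitely many such rays, each asymptotically parallel to~$\ell$. Applying Theorem~\ref{thm:GB} to a compact exhaustion of $D'$, in combination with Lemma~\ref{lem:curved_edges}, then bounds the total length of non-oriented components of $L$ outside a compact initial part of $D'$.

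Hence, for all $t$ sufficiently large, the slice of $D'$ transverse to $\ell$ at Euclidean coordinate $t$ is cut by $L$ into finitely many locally Euclidean strips, each of which, by Remark~\ref{rem:isometric}(i), lifts isometrically into a single chamber $\Ap^+_{f_i}$. The strip containing the $r'$-border thus fits inside some fixed chamber $\Ap^+_g$, and in this chamber $\rho_+\circ r'$ is a Euclidean geodesic parallel to $\rho_+\circ r$, so the restriction of $r'$ to $[t,\infty)$ is itself a principal \parent{resp.\ antiprincipal} ray parallel to~$r$. The main obstacle is the width-bound step: propagating the $D$-neighbourhood condition on $\partial D'$ into a global strip condition on the interior of $D'$ requires carefully combining Gauss--Bonnet with the chamber-by-chamber isometric-lift property and the asymptotic hypothesis, while keeping track of the interplay between the $d_\X$-metric on $D'$ inherited via $\phi_n$ and the Euclidean metric $|\cdot,\cdot|$ on $\nabla^+$.
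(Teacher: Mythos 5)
Your proposal does not close the argument, and the two places where it breaks are precisely the ones you would need most. First, the ``width-bound'' step that you yourself flag as the main obstacle is not a routine verification, and neither is the curvature control you assume alongside it: the quadrilaterals $\Gamma_n$ are not $(p,\eps,R)$-quadrilaterals, and for two merely asymptotic rays the Alexandrov angle sum of $r(0)r(n)r'(n)r'(0)$ need not approach $2\pi$ (the angles at $r(0)$ and $r'(0)$ are uncontrolled), so Theorem~\ref{thm:GB} only bounds the total interior curvature of $D_n$ below by roughly $-2\pi$, not by $-\eps_0$. Corollary~\ref{cor:consise} does not assert that interior vertex angles lie in $[2\pi,2\pi+\eps_0)$; it classifies the folding locus \emph{at} vertices that happen to satisfy that bound. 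Hence Lemmas~\ref{lem:noncompact} and~\ref{lem:curved_edges} cannot be applied on all of $D'$ without first isolating a compact subdiagram carrying the excess curvature, which you do not do. Second, and more fatally, your final step is false: a locally Euclidean component of $D'\setminus L$ does \emph{not} lift into a single chamber. Remark~\ref{rem:isometric}(i) only says that such a piece maps locally isometrically into the $\mathrm{CAT}(0)$ target; the diagrams $\widetilde\phi_n$ can cross from one chamber to an adjacent one along an edge of $\X$ while $\rho_+\circ\widetilde\phi_n$ remains a local embedding there, so the complement of the folding locus can traverse arbitrarily many chambers. Since a principal ray is by definition a ray contained in one chamber, this is exactly the point you cannot skip.

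The intended proof is far more elementary and uses no diagrams at all. After passing to subrays one arranges that $\rho_+(r)$ is at distance $>d$ from $\{\alpha_2=\alpha_3\}$ in $(\nabla,|\cdot,\cdot|)$, where $d$ bounds $d_\X\big(r(t),r'(t)\big)$. By Lemma~\ref{lem:propertiesX}(iii) a minimal chain $\overline x_0=r(0),\dots,\overline x_k=r'(0)$ realises the distance, consecutive points lie in a common chamber $\Ap^+_{f_i}$, and the chain projects under $\rho_+$ to a path of length $\le d$, hence stays off $\{\alpha_2=\alpha_3\}$. Remark~\ref{rem:commonrays} then shows that the principal rays issued from $\overline x_i$ in $\Ap^+_{f_{i-1}}$ and $\Ap^+_{f_i}$ coincide, so the principal ray from $r'(0)$ is parallel to $r$, hence asymptotic to $r$; by uniqueness of the geodesic ray from $r'(0)$ in the class $[r]$ it must equal $r'$. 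I suggest looking for this kind of chain argument before reaching for the folding-locus machinery, which this paper reserves for situations where the relevant quadrilaterals are nearly flat by construction.
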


\begin{proof} Suppose first that $r\colon [0,\infty)\to \Ap^+_\id$ is principal.
Let $d>0$ be such that $d_\X\big(r(t),r'(t)\big)\leq d$ for all $t\geq 0$. After passing to subrays of $r$ and $r'$, we can assume that $\rho_+(r)$
is at distance $>d$ from the set $\{\alpha_2=\alpha_3\}$ in the metric $|\cdot, \cdot|$. Let $\overline{x}_0=r(0), \overline{x}_1,\ldots,
\overline{x}_k=r'(0)$ be points guaranteed by Lemma~\ref{lem:propertiesX}(iii) such that for each $i=0,\ldots, k-1$ both $\overline{x}_i$ and
$\overline{x}_{i+1}$ lie in a common chamber $\Ap_{f_i}^+$, and $\sum_{i=0}^{k-1}\big|\rho_+(\overline{x}_i), \rho_+(\overline{x}_{i+1})\big|=d$.
Thus all $\rho_+(\overline{x}_i)$ lie outside $\{\alpha_2=\alpha_3\}$. Hence, by Remark~\ref{rem:commonrays}, the principal rays from
$\overline{x}_i$ in $\Ap_{f_i}^+$ and $\Ap_{f_{i-1}}^+$ coincide. Consequently~$r$ and~$r'$ are parallel, as desired. If $r$ is antiprincipal, the
argument is analogous.
\end{proof}

\begin{proof}[Proof of Lemma~\ref{lem:easystab}]
Recall from Section~\ref{sec:point_stabilizers} that the group $C$ is generated by $K$ and~$H$. Applying Corollary~\ref{cor:fixed_set}, we obtain the
following. If $f\in K$, then~$\Ap^+_\id$ and~$\Ap^+_f$ share a $1$-principal ray. If $f\in H$, then $\Ap^+_\id$ and $\Ap^+_f$ share an $a$-principal
ray for $a$ sufficiently large (and thus they have parallel $1$-principal rays). In particular, $\St(\zeta)$ contains~$C$. Conversely, by
Proposition~\ref{prop:stab}, if for $f\in \T$ the chambers~$\Ap^+_\id$ and~$\Ap^+_f$ share a principal ray, then $f$ belongs to $H$ or $K$. Thus to
show that for $f\in \St(\zeta)$ we have $f\in C$ it remains to justify that $\Ap^+_\id$ and $\Ap^+_f$ have parallel principal rays. This follows from
applying Lemma~\ref{lem:asymptoticparallel} to $r'=f(r)$. The proof of the second assertion is analogous.
\end{proof}

\subsection{H-generic rays}

\begin{constr}\phantomsection
\label{rem:B'rank2} 
\begin{enumerate}[(1)]
\item \label{831}
Let $f$ be an element of $B'$ not conjugate into $B$ or $H$. Then there is an isometrically embedded Euclidean half-plane $\mathbf{H}$ in $\X$, invariant under~$f$.
Indeed, by Corollary~\ref{cor:fixed_set}, for $g,g'\in
B'$ with $g^{-1}g'\in B\setminus H$, the chambers $\Ap^+_g,\Ap^+_{g'}$ intersect along a $1$-antiprincipal ray, and for $g^{-1}g'\in H\setminus B$
they intersect along a region whose boundary contains an
$a$-antiprincipal ray for some $a\geq 2$. 
After possibly conjugating~$f$, we can assume that its normal form in $B'=B\ast_{B\cap H}H$ is $h_1b_1\cdots h_nb_n$. For $i=1,\ldots, n$, let $r'_i,
r_i$ be the above antiprincipal rays for $g=h_1\cdots b_{i-1}, g'=gh_i$ and $g=h_1\cdots h_i,g'=gb_i$, respectively. Choose a geodesic~$\sigma$ in
$(\nabla,|\cdot,\cdot|)$ perpendicular to the line $\alpha_2=\alpha_3$ that intersects the images under $\rho_+$ of all $r_i,r_i'$. Let
$r_0=f^{-1}(r_n)$. For $i=1,\ldots, n$, consider the geodesic $\sigma_i$ in $\Ap^+_{g}$ for $g=h_1\cdots b_{i-1}$ (respectively, $\sigma_i'$
in~$\Ap^+_{g'}$ for $g'=gh_{i}$) that joins $r_{i-1}$ to $r'_i$ (respectively, $r'_i$ to~$r_i$) and maps into $\sigma$ under~$\rho_+$. Then the union
of the concatenation $\sigma_1\cdot\sigma_1'\cdots \sigma_n\cdot\sigma_n'$ and its translates under~$\langle f \rangle$ is an axis of $f$ that
bounds an isometrically embedded Euclidean half-plane~$\mathbf{H}$ in~$\X$, which is the union of the half-strips bounded by $\sigma_i,\sigma_i'$ and
subrays of $r_i,r_i'$ (Remark~\ref{rem:isometric}(i)).

\item \label{832}
We have a following similar construction of an invariant convex subset $\mathbf{H}$ for an element $f\in C$ that is not conjugate into $K$ or $H$. By
Corollary~\ref{cor:fixed_set}, for $g,g'\in C$ with $g^{-1}g'\in K\setminus H$, the chambers $\Ap^+_g,\Ap^+_{g'}$ intersect along a $1$-principal
ray, and for $g^{-1}g'\in H\setminus K$ they intersect along a region whose boundary contains a curve projecting under $\rho_+$ inside the (possibly
non-principal) admissible line $\alpha_1=m_2\alpha_2+m_3\alpha_3$ for some $m_2\geq 1, m_2+m_3\geq 2$.
After possibly conjugating~$f$, we can assume that its normal form in $C=K\ast_{K\cap H}H$ is $h_1k_1\cdots h_nk_n$. For $i=1,\ldots, n$, let $c_i,
r_i$ be the above curves and principal rays for $g=h_1\cdots k_{i-1}, g'=gh_i$ and $g=h_1\cdots h_i,g'=gk_i$, respectively. Choose a
geodesic~$\sigma$ in $(\nabla,|\cdot,\cdot|)$ perpendicular to the line $\alpha_1=\alpha_2$ that intersects the images under $\rho_+$ of all
$c_i,r_i$. Let $r_0=f^{-1}(r_n)$. For $i=1,\ldots, n$, consider the geodesic $\sigma_i$ in $\Ap^+_{g}$ for $g=h_1\cdots k_{i-1}$ (respectively,
$\sigma_i'$ in~$\Ap^+_{g'}$ for $g'=gh_{i}$) that joins $r_{i-1}$ to $c_i$ (respectively, $c_i$ to $r_i$) and maps into~$\sigma$ under~$\rho_+$. Let
$R_i,R_i'$ be the Euclidean half-strips bounded by $\sigma_i,\sigma_i'$ and the principal rays issuing from their endpoints. Let $R'_0=f^{-1}(R'_n)$.
Note that for $i=1,\ldots, n,$ the half-strips $R_{i-1}', R_{i}$ intersect along the $1$-principal rays by which they are bounded, and $R_i, R'_i$
intersect along the regions bounded by the other principal rays and the curves inside $c_i$. Thus the union of all $R_i,R_i'$ and their translates
under $\langle f \rangle$ form a subset~$\mathbf{H}$ that is, by Remark~\ref{rem:isometric}(iii), locally convex, and hence convex in~$\X$.
\end{enumerate}
\end{constr}

We have the following immediate consequence of Construction~\ref{rem:B'rank2}.

\begin{cor}\phantomsection
\label{cor:not_rank_1}
\begin{enumerate}[(1)]
\item
Let $f$ be an element of $B'$ not conjugate into $B$ or $H$. Then $f$ is loxodromic not of rank~$1$. Furthermore, the translation length $|f|$ is the sum of the distances from $r_{i-1}$ to $r'_i$, and from $r'_i$ to~$r_i$, for $i=1,\ldots, n$, in the notation of Construction~\ref{rem:B'rank2}\eqref{831}.
\item
Let $f$ be an element of $C$ not conjugate into $K$ or $H$. Then $f$ is loxodromic not of rank~$1$, if all $c_i$ are principal rays, or parabolic, otherwise. 
Furthermore, $|f|$ is the sum of the distances \parent{that might be zero} from $r_{i-1}$ to $c_i$, and from $c_i$ to~$r_i$, for $i=1,\ldots, n$, in the notation of Construction~\ref{rem:B'rank2}\eqref{832}.
\end{enumerate}
\end{cor}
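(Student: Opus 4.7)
The plan is to exploit the $\langle f\rangle$-invariant convex subset $\mathbf{H}\subseteq\X$ built in Construction~\ref{rem:B'rank2}, together with the transversal concatenation $\sigma_1\cdot\sigma_1'\cdots\sigma_n\cdot\sigma_n'$. My first step will be to identify the lengths $|\sigma_i|$ and $|\sigma_i'|$ with the distances appearing in the statement: since $\sigma_i$ lives in the convex chamber $\Ap^+_g$ as a geodesic segment perpendicular to two parallel principal (or antiprincipal) rays, it realises their distance in $\X$ by Lemma~\ref{lem:propertiesX}(ii), giving $|\sigma_i|=d_\X(r_{i-1},r_i')$ in part~(1) and $|\sigma_i|=d_\X(r_{i-1},c_i)$ in part~(2), with analogous identities for the $\sigma_i'$. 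Since $\sigma$ is perpendicular to the common principal/antiprincipal direction, consecutive $\sigma_i,\sigma_i'$ meet at angle $\pi$ at their shared endpoint on $r_i'$ or $c_i$, so the whole concatenation is a geodesic segment of $\X$ of length $\sum_i(|\sigma_i|+|\sigma_i'|)$, and its $\langle f\rangle$-translates fit together into an axis-like path $\alpha$.

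Next, in the loxodromic cases --- part~(1) throughout, and part~(2) when every $c_i$ is principal --- I will verify that $\mathbf{H}$ is an isometrically embedded Euclidean half-plane whose boundary line is exactly $\alpha$. Because $f$ acts on $\mathbf{H}$ as an orientation-preserving Euclidean isometry preserving the boundary line without fixing any point of it, $f$ must act as a translation; hence $f$ is loxodromic with $|f|=\sum_i(|\sigma_i|+|\sigma_i'|)$, as required. To rule out rank~1 I will use the elementary fact that a geodesic bounding an isometrically embedded Euclidean half-plane fails to be $M$-contracting for any fixed $M$, because concentric balls of increasing radius inside the half-plane project onto segments of $\alpha$ of comparable length.

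In the parabolic case of part~(2), where some $c_i$ is non-principal, $\mathbf{H}$ develops a bend along the $\langle f\rangle$-orbit of such $c_i$ and so cannot be a flat half-plane. I will use that the nearest-point projection $\pi\colon\X\to\mathbf{H}$ is $f$-equivariant and $1$-Lipschitz to identify $|f|=\inf_\X d_\X(\cdot,f\cdot)$ with the corresponding infimum over $\mathbf{H}$; by examining points deep inside a strip $R_i$ along the principal direction I will show that this infimum equals $\sum_i\bigl(d_\X(r_{i-1},c_i)+d_\X(c_i,r_i)\bigr)$, asymptotically attained. Finally, to see the infimum is not attained, I will argue that any minimising $y_0$ would produce an $f$-axis of $\X$ by \cite{BH}*{II.6.8.(1)} which, by convex projection, would lie inside $\mathbf{H}$; such an axis would necessarily cross some translate of a non-principal $c_i$, and this crossing would be incompatible with the non-principal structure of the admissible line carrying $\rho_+(c_i)$.

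The principal obstacle will be this last step: rigorously ruling out an $f$-axis in the parabolic case. The needed technical input is that whenever two chambers $\Ap^+_g,\Ap^+_{g'}$ meet along a curve $c_i$ projecting to a non-principal admissible line, no geodesic of $\X$ can pass transversally from one chamber to the other along the translation direction of $f$ on $\mathbf{H}$. This should follow from Corollary~\ref{cor:fixed_set} combined with a direct analysis of the identification of $\Ap^+_g$ and $\Ap^+_{g'}$ along $c_i$: the $\rho_+$-image of any geodesic crossing $c_i$ would have to develop a break at the bend corresponding to the non-principal admissible line, contradicting the straightness of the lift in $\X$.
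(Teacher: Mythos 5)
Your treatment of the loxodromic cases (all of part (1), and part (2) when every $c_i$ is principal) is essentially what the paper intends: Construction~\ref{rem:B'rank2}\eqref{831} already asserts that the concatenation of the $\sigma_i,\sigma_i'$ is an axis bounding a flat half-plane, and your contracting-ball argument correctly rules out rank~1. The problems are concentrated in part (2) when some $c_i$ is non-principal. First, your opening identification $|\sigma_i|=d_\X(r_{i-1},c_i)$ is false there: $c_i$ is not a ray parallel to $r_{i-1}$ but a convex curve only asymptotic to a principal line, so the perpendicular segment $\sigma_i$ does not realise the infimum distance (for $f=(x_2,x_1+x_2x_3,x_3)$ one has $|f|=0$ while every $|\sigma_i|>0$). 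Second, the concatenation is \emph{not} a geodesic at a junction on a non-principal $c_i$: computing in the link of the junction point (a theta-graph of three length-$\pi$ arcs), the incoming direction of $\sigma_i$ and the outgoing direction of $\sigma_i'$ have the same $\rho_+$-image, and the two paths between them through the endpoints of the arcs have lengths $2\psi$ and $2\pi-2\psi$, where $\psi$ is the angle between $\sigma$ and $c_i$; this is $\geq\pi$ only when $\psi=\frac{\pi}{2}$, i.e.\ only when $c_i$ is principal. If your paragraph-1 claims were correct, $f$ would be loxodromic with $|f|=\sum_i(|\sigma_i|+|\sigma_i'|)$, contradicting both the parabolicity and the translation-length formula you set out to prove; so the proposal is internally inconsistent even though paragraph~3 switches to the correct computation of $|f|$ as a limit of widths deep in the cusp.

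The concluding step also has two gaps. The obstruction you propose --- that no geodesic of $\X$ can cross a non-principal $c_i$ transversally in the translation direction --- is not true: $\mathbf H$ is convex, so geodesics joining points of $\mathbf H$ on opposite sides of $c_i$ do cross it; what actually fails is quantitative, namely that any such crossing at finite depth traverses a width strictly exceeding the asymptotic gap between $c_i$ and the adjacent principal ray, so the displacement function never attains its infimum (this is exactly the cusp geometry made precise in the proof of Proposition~\ref{prop:parabolicdynamics}). Moreover you only exclude a loxodromic axis; you must also exclude the case where the infimum is $0$ and attained, i.e.\ $f$ elliptic. For that, note that $f\in C=\St(\zeta)$ fixes the principal point $\zeta$ (Lemma~\ref{lem:easystab}), so a fixed point in $\X$ would force $f$ to fix pointwise a geodesic ray to $\zeta$, a subray of which is principal by Lemma~\ref{lem:asymptoticparallel}; Proposition~\ref{prop:stab} then places $f$ in a conjugate of $H$, contradicting the hypothesis.
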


\begin{exa}
The map $f=(x_2,x_1+x_2x_3, x_3)\in C$ is parabolic of translation length $0$. The map $f=(x_2,x_1+x^2_2x_3, x_3)\in C$ is parabolic with positive translation length.
\end{exa}

\begin{defin} 
\label{def:generic}
Let $f$ be an element of $B'$ not conjugate into $B$ or $H$ or an element of $C$ not conjugate into $K$ or $H$, with positive translation length. Let $\xi\in \partial_\infty  \X$ be a
limit point of $f$. A geodesic ray $r\colon J\to \mathbf{H}$ representing~$\xi$, for~$\mathbf{H}$ as in Construction~\ref{rem:B'rank2}, is
\emph{$\mathbf{H}$-generic} if it is disjoint from the vertex set $\X^0$ and transverse to the $1$-skeleton $\X^1$. The consecutive points $l_k\in J$
at which $r$ passes from one chamber ~$\Ap^+_{g}$ to another are called \emph{transition points}.
\end{defin}

\begin{rem}
\label{rem:generic_exist}

Let $f$ be an element of $B'$ not conjugate into $B$ or $H$ or an element of $C$ not conjugate into $K$ or $H$, with positive translation length, and let $\mathbf{H}$ be as in
    Construction~\ref{rem:B'rank2}. 
    Let $\zeta\in \partial_\infty \X$ be the antiprincipal or principal point fixed by $B'$ or~$C$, respectively.
   Let $\xi\in \partial_\infty \X$ be a limit point of $f$. Note that since $\mathbf{H}$ is convex and $f$-invariant, each geodesic ray starting
   in~$\mathbf{H}$ and representing~$\xi$ is contained in~$\mathbf{H}$. By \cite{CL}*{Thm 1.1} (see also \cite{D}*{thm~6.1}), we have that~$\xi$ is at
   Tits distance~$\pi$ in $\partial_\infty \mathbf{H} \subset \partial_\infty \X$ from the other limit point of~$f$, and so
   $\angle(\xi,\zeta)=\frac{\pi}{2}$. In particular $\xi\neq
   \zeta$.

\begin{enumerate}[(i)]
\item An $\mathbf{H}$-generic ray exists. Indeed, choose a ray $s$ representing $\zeta$ contained in~$\mathbf{H}$ and disjoint from all the
    curves~$c_i$. Let $\mathcal R$ be the set of geodesic rays $r$ representing~$\xi$ starting at the points of~$s$. Note that distinct rays
    $r,r'\in\mathcal R$ are disjoint, because they are contained in a subspace of $\mathbf{H}$ with no edges of degree $3$, and because $r\subset
    r'$ would imply $\xi=\zeta$. Since there are uncountably many points in~$s$, there exists $r\in \mathcal R$ disjoint from the vertices of
    $\X$ in $\mathbf{H}$. Similarly, since the edges of $\X$ have geodesic curvature of constant sign in each $2$-cell, if two rays in~$\mathcal
    R$ are
tangent to a given edge of $\X$ intersecting $\mathbf{H}$, then we have one of the configurations in Figure~\ref{fig:atmost2}. The bottom configuration is not
allowed
since the subspace bounded by the two rays and the edge, which is CAT(0) by the same argument as in Definition~\ref{def:frilled}, would have two
asymptotic geodesic rays starting at the same point.
Consequently, at most two rays in~$\mathcal R$ are tangent to a given edge of $\X$. Thus, there exists a ray in~$\mathcal R$ transverse
to~$\X^1$ and disjoint from $\X^0$.

\item
    By \cite{BH}*{II.9.8(2)}, the Alexandrov angle at $r(t)$ between $r$ and the geodesic ray representing
    $\zeta$
    converges to~$\frac{\pi}{2}$ and so the same holds at the intersection with the curves $c_i$. 

\item Consequently, using \cite{BH}*{II.9.3}, if at least one $c_i$ projects to a non-principal admissible line in $\nabla^+$, then $\rho_+(r)$
    leaves every compact set of $\nabla^+$.

\item Conversely, suppose that all $c_i$ project to principal lines in $\nabla^+$. Then $\mathbf{H}$ is isometric to the Euclidean half-plane and
    all above intersection angles are~$\frac{\pi}{2}$. Consequently, we have $f(r)\subset r$.

\item By (ii), the distances in~$J$ between the consecutive transition points of $r$ are uniformly bounded from above.
\end{enumerate}
\end{rem}

\begin{figure}
\includegraphics{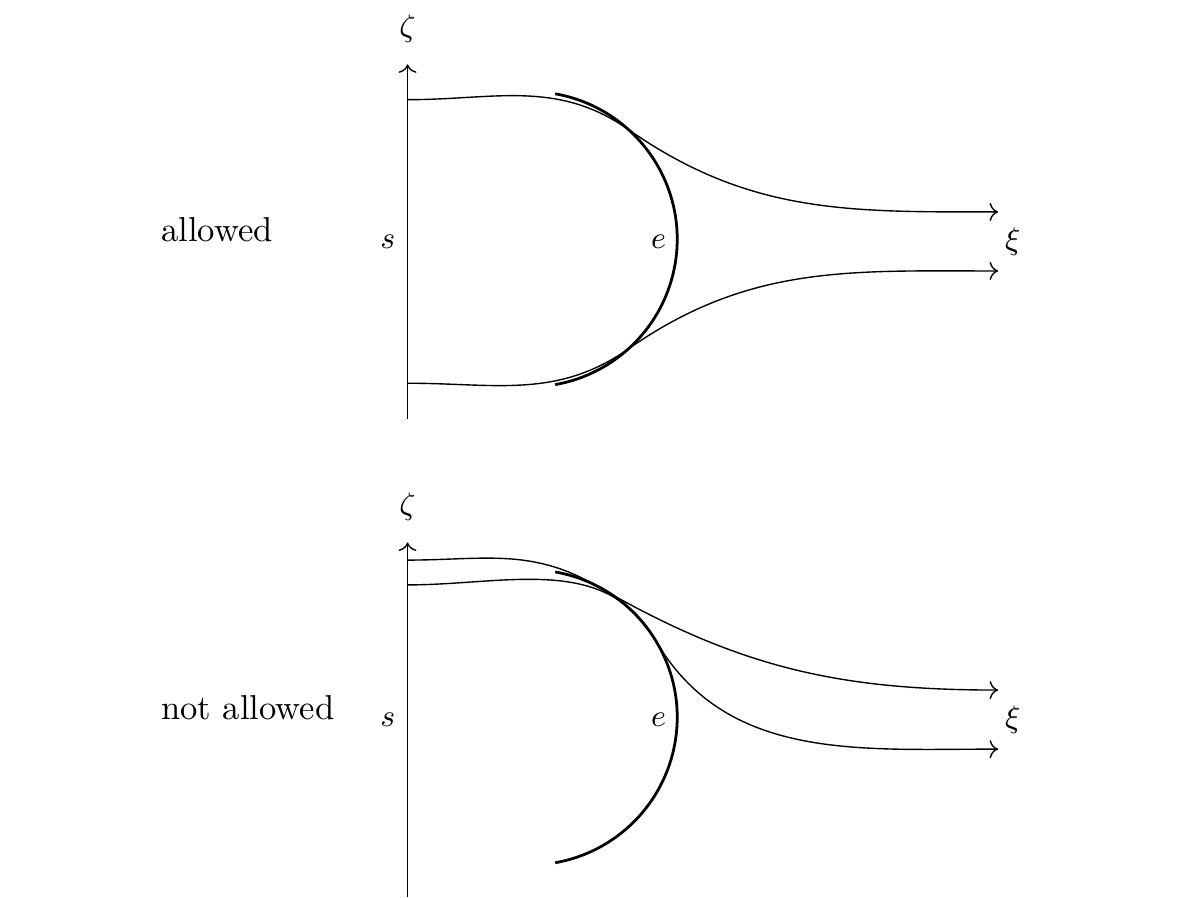}
\caption{The solid curve $e$ represents an edge to which two distinct rays in $\mathcal{R}$ are tangent.}
\label{fig:atmost2}
\end{figure}

\begin{lem}
\label{lem:generic} Let $\widetilde\phi_n \colon D_n\to \X$ be a sequence of reduced relative disc diagrams. Suppose that
$\phi_n=\rho_+\circ \widetilde \phi_n$ have the limit half-plane diagram $\phi'\colon D'\to \nabla^+$ with folding locus $L$. Furthermore, assume
that $\phi'$ is bordered by an $\mathbf{H}$-generic ray $r\colon J\to \X$. Suppose that the sum of the curvatures
of the vertices of $J\subset \partial D'$ is $\geq -\eps_0$ from Corollary~\ref{cor:consise}.

Then the transition points $l_k$ are contained in single edges of $L$ at angle to $J$ converging to~$\frac{\pi}{2}$. See Figure~\ref{fig:lk}.

Furthermore, for any $l\neq l_k$ in $L\cap J$, the angles between the edges of $L$ at $l$ and~$J$ are~$<\frac{\eps_0}{2}$.
\end{lem}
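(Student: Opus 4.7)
The plan is to prove the two claims (angle converging to $\pi/2$ at transition points, and angle $< \eps_0/2$ elsewhere on $L \cap J$) through a localised analysis of $D'$ at each point of $L \cap J$, combining the combinatorial convergence $\phi_n \to \phi'$ with the geometric hypotheses.

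For a transition point $l_k$, the ray $r$ crosses an edge $e_X^{(k)}$ of $\X$ whose $\rho_+$-image is an admissible line $\ell_k \subset \nabla^+$. Analysing $D_n$ near the approximating point $l_{k,n}$: the $2$-cells of $D_n$ adjacent to $\partial D_n$ just before and just after $l_{k,n}$ on its interior side lie in the distinct chambers $\Ap^+_{g_k}$ and $\Ap^+_{g_{k+1}}$. Tracing the interior $2$-cells around $l_{k,n}$ and using the reducedness of $\widetilde \phi_n$, I would identify a unique edge of $D_n$ belonging to the folding locus, with $\phi_n$-image on a segment of $\ell_k$. Passing to the limit yields a single edge $e_k$ of $L$ with $l_k$ as endpoint and $\phi'(e_k) \subseteq \ell_k$. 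Since $\phi'$ is isometric on each $2$-cell, the angle at $l_k$ between $e_k$ and $J$ equals the Euclidean angle in $\nabla^+$ between $\ell_k$ and the direction of $\rho_+(r)$ at $\phi'(l_k)$. By Remark~\ref{rem:generic_exist}(ii), the Alexandrov angle between $r$ and the geodesic ray toward $\zeta$ at $r(l_k)$ converges to $\pi/2$; via the chamber-wise isometric nature of $\rho_+$, the direction of $\rho_+(r)$ in $\nabla^+$ becomes asymptotically perpendicular to the $\zeta$-direction. The admissible lines $\ell_k$ corresponding to the crossings inside $\mathbf H$ either align already with the $\zeta$-direction (for crossings of the principal or antiprincipal rays $r_i$) or have directions converging to it (for crossings of the finitely many curves $c_i$ modulo $\langle f \rangle$); thus the angle between $\ell_k$ and $\rho_+(r)$ tends to $\pi/2$.

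For $l \in L \cap J$ with $l \neq l_k$, the ray $r$ does not meet $\X^1$ at $l$, so $\phi'(l)$ lies in the interior of a $2$-cell of $\nabla^+$ and no edge of $L$ can terminate at~$l$. Thus any appearance of $l$ in $L$ must come from accumulation of interior edges of $L$ toward $l$. Suppose for contradiction that some sequence of such edges approaches $l$ at angle $\geq \eps_0/2$ with $J$. Applying Lemma~\ref{lem:curved_edges} to the relevant non-oriented components of $L$ near $l$ (together with Corollary~\ref{cor:consise} at nearby interior vertices), each such occurrence contributes at least $\eps_0/2$ to the negative curvature total along $J$. Combined with the near-zero contributions from the transition points $l_k$ (given the first step's angle estimate $\approx \pi/2$), this forces the total boundary curvature on $J$ to fall below $-\eps_0$, contradicting the hypothesis. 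Hence each angle at such $l$ is $< \eps_0/2$.

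The main obstacle is the first step: a careful combinatorial analysis of $D_n$ near $l_{k,n}$ is needed to establish both the existence and uniqueness of the edge of $L$, and to control the frilling process so that this edge persists in the limit $D'$ without spurious additional folds. A secondary subtlety is the direction control for $\ell_k$ in the case of crossings of non-principal curves $c_i$, which requires a finer argument using the finiteness of the $c_i$-orbits under $\langle f \rangle$.
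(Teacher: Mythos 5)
Your write-up contains two genuine gaps, both at places where the paper's argument is a short curvature count that you replace with something that either is deferred or rests on a false premise.

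First, for the transition points you never actually establish that $l_k$ lies in a \emph{single} edge of $L$: you defer this to ``a careful combinatorial analysis of $D_n$ near $l_{k,n}$ using reducedness'' and yourself flag it as the main obstacle. Reducedness cannot give this. Since $r(l_k)$ lies in the interior of an edge $e$ of $\X$, the link of $r(l_k)$ in $\X$ is a graph with two vertices (the directions $e_1,e_2$ of $e\setminus r(l_k)$) and arbitrarily many edges of length $\pi$; a reduced diagram may perfectly well wind back and forth across $e$ several times at $l_k$, producing several folding-locus edges there. What rules this out is the curvature hypothesis, which you never invoke for this claim: every interior edge of $D'$ at $l_k$ maps into $e_1$ or $e_2$, consecutive ones are separated by angle $\pi$, and the two boundary angles converge to $\frac{\pi}{2}$ by Remark~\ref{rem:generic_exist}(ii) (since $e$ lies in a translate of $c_i,r_i,r_i'$); hence two or more edges of $L$ at $l_k$ would force the curvature at $l_k$ to have limit inferior $\leq -\pi$, contradicting the assumption that the curvatures of the vertices of $J$ sum to $\geq -\eps_0$. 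Once a single edge is secured, the angle statement is immediate because that angle coincides with the Alexandrov angle between $r$ and $e$; your detour through directions in $\nabla^+$ is workable but unnecessary.

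Second, your argument for $l\neq l_k$ starts from the assertion that $r$ does not meet $\X^1$ at $l$, which is false. Transition points are only the points where $r$ \emph{changes chamber}; the ray $r$ also crosses edges of $\X$ interior to a single chamber, and these are exactly the non-transition points at which $L$ can meet $J$ (if $r(l)$ lay in an open $2$-cell of $\X$, a neighbourhood of $l$ in $D'$ would map into an open $2$-cell of $\nabla^+$ and $l$ could not lie in $L$ at all). Consequently your picture of ``edges of $L$ accumulating at $l$'' and the appeal to Lemma~\ref{lem:curved_edges} do not apply. The correct argument is again a direct curvature count at $l$: there are two edges $f_1,f_2$ of $D'$ at $l$ mapping into the two components of $e\setminus r(l)$, they are at angle $\pi$ from each other and make equal angles with the two sides of $J$; if these angles were $\geq \frac{\eps_0}{2}$, the angle at $l$ would be $\geq \pi+\eps_0$ and the boundary curvature $\leq -\eps_0$, contradicting the hypothesis.
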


\begin{figure}
\includegraphics{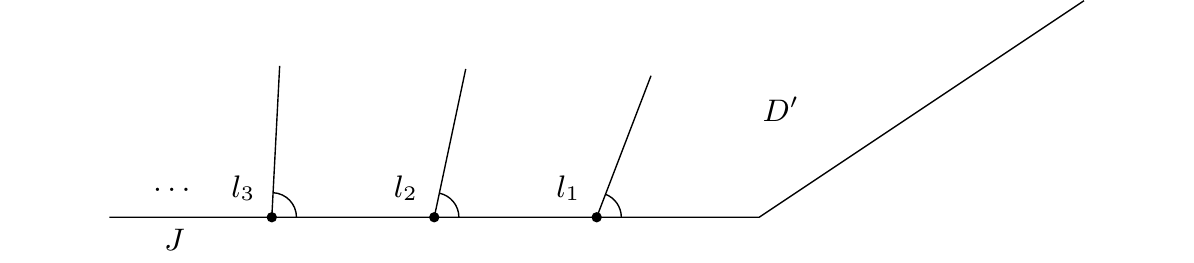}
\caption{}
\label{fig:lk}
\end{figure}

\begin{proof}
Let $l=l_k$ be a transition point. Then $l\in L$. Moreover, since $x=r(l)$ is not a vertex of $\X$, there is an edge~$e$ in $\X$ containing $x$ in
its interior. Since $e$ lies in a translate of $c_i,r_i$, or $r_i'$, the Alexandrov angles between~$r$ and such $e$ converge with~$k$ to $\frac{\pi}{2}$ by
Remark~\ref{rem:generic_exist}(ii).

Let $e_1,e_2$ be the connected components of $e\setminus x$. If the point $l$ is not contained in a single edge $f$ of $L$, since all such edges map
under $\widetilde \phi_n$ into $e_1$ or $e_2$, we have that the limit inferior (as $k\to \infty$) of the curvature of such $l$ is $\leq -\pi$, which is
a contradiction. Similarly, the angles between $J$ and $f$ have to coincide with the Alexandrov angles between $r$ and~$e$.

For the last assertion, let $x=r(l)$, where $l$ is not a transition point. If $l\in L$, then $l$ is
contained in two edges $f_1,f_2$ mapping under $\widetilde \phi_n$ into $e_1$ and $e_2$. The angles that $f_i$ make with $J$ are equal, so if they are
$\geq\frac{\eps_0}{2}$, then, since the angle between $f_1$ and $f_2$ is $\pi$, this contradicts the assumption that the curvature of $l$ is
$>-\eps_0$. See Figure~\ref{fig:lk2}.
\end{proof}

\begin{figure}
\includegraphics{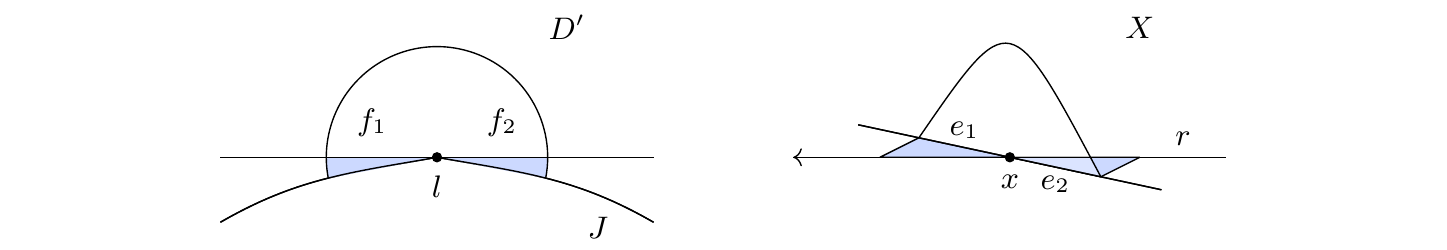}
\caption{}
\label{fig:lk2}
\end{figure}

\begin{prop}
\label{prop:diagram->fixedpoint} Under the assumptions of Lemma~\ref{lem:generic}, suppose additionally that the sum of the curvatures
of the interior edges and the interior vertices of $D'$ as well as
of the vertices of $J$ is $\geq -\eps_0$.

\begin{enumerate}[(1)]
\item
Suppose that $l_k$ are contained in connected components $L_k$ of
$L$ that are asymptotic geodesic rays. Then there is $k$ and a subray $L'_k\subset L_k$ such that

\begin{itemize}
\item $\phi'_{|L'_k}$ is an embedding into an admissible line diverging from $\{\alpha_2=\alpha_3\}$ \parent{resp.\ $\{\alpha_1=\alpha_2$\}}, and
\item for each $l'_k\in L'_k$, for $n$ sufficiently large, the class in $\partial_\infty \X$ of the unique principal \parent{resp.\ antiprincipal} ray in $\X$ starting at $\widetilde \phi_n(l_k')$ is fixed by $f$ from Definition~\ref{def:generic}.
\end{itemize}

\item Alternatively, suppose that $L_k$ are geodesic segments, whose length becomes
arbitrarily large when $k\to \infty$, ending at points $v_k\in \partial D'$ where $v_kv_{k+1}\subset \partial D'$ is geodesic and $\angle_{v_k}(L_k,v_{k+1})+\angle_{v_{k+1}}(L_{k+1},v_k)\geq \pi$. 
Then there is $k$ such that for all $m\geq k$ we have that
\begin{itemize}
\item $\phi'(v_m)$ lies outside $\{\alpha_2=\alpha_3\}$ \parent{resp.\ $\{\alpha_1=\alpha_2\}$}, and
\item
for $n$ sufficiently large, the class in $\partial_\infty \X$ of the unique principal \parent{resp.\ antiprincipal} ray in $\X$ starting at $\widetilde \phi_n(v_m)$ is fixed by $f$.
\end{itemize}
\end{enumerate}
\end{prop}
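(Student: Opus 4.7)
Plan: Both cases follow the same three-step template, so I handle case~(1) in detail and indicate the modifications for case~(2). The steps are: (i) control the folding locus along each~$L_k$; (ii) identify the admissible line carrying $\phi'(L'_k)$; and (iii) establish the $f$-invariance of the resulting class at infinity.

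For~(i), the curvature bound $\sum\kappa\geq-\eps_0$ combined with Lemma~\ref{lem:curved_edges} shows that any non-oriented component of~$L$ whose direction under~$\phi^\angle$ stays far from the principal point contributes a fixed negative amount, so only finitely many such unbounded components can occur. For oriented rays in~$L$, Lemma~\ref{lem:noncompact}(ii) together with Remark~\ref{rem:straight}(ii) produces a subray whose image lies in a principal line after a bounded initial segment. Hence, for $k$ large, each $L_k$ admits a subray $L'_k$ with $\phi'|_{L'_k}$ an embedding into a single admissible line $\ell_k\subset\nabla^+$.

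For~(ii), consecutive $L_k, L_{k+1}$ are asymptotic geodesics in the locally Euclidean diagram~$D'$, so they bound a flat strip of bounded width, and by Lemma~\ref{lem:generic} each $L_k$ meets~$J$ nearly perpendicularly. On the other hand, Remark~\ref{rem:generic_exist}(ii) together with $\angle(\xi,\zeta)=\pi/2$ forces the perpendicular direction to~$r$ in $\mathbf{H}$ to be the direction of~$\zeta$. In the $B'$ case this is antiprincipal, so $\phi'(L'_k)$ runs in~$\nabla^+$ along an admissible line that does not stay parallel to $\{\alpha_2=\alpha_3\}$: if it were principal of the form $\alpha_2=m\alpha_3$, then the transition geometry of~$r$ would be incompatible with the existence of a long embedded subray of folding edges projecting to it. Thus $\ell_k$ diverges from $\{\alpha_2=\alpha_3\}$; the $C$~case is symmetric. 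For~(iii), fix $l'_k\in L'_k$ and $n$ large. The edge $\widetilde\phi_n(L'_k)\subset\X$ projects onto $\ell_k$, so Corollary~\ref{cor:fixed_set} and Remark~\ref{rem:commonrays} yield that the two chambers meeting along it share a common principal (resp.\ antiprincipal) ray from each of its points. Denoting its class by $\zeta_k\in\partial_\infty\X$, Lemma~\ref{lem:asymptoticparallel} together with the asymptoticity of the $L_k$ in~$D'$ promotes to parallelism of these rays in~$\X$, so $\zeta_k$ stabilizes to a common $\zeta'$ for $k$ large. Since $r$ lies in the $f$-invariant half-plane~$\mathbf{H}$ and represents a limit point of~$f$, translating by $|f|$ along~$r$ permutes the configurations near the various $L_k$, whence $f(\zeta')=\zeta'$.

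For case~(2), the angle-sum hypothesis $\angle_{v_k}(L_k,v_{k+1})+\angle_{v_{k+1}}(L_{k+1},v_k)\geq\pi$ forces the boundary arc $v_kv_{k+1}\subset\partial D'$ to be geodesic in~$D'$, so consecutive $L_k, L_{k+1}$ bound Euclidean rectangles whose widths stay bounded, and the three steps above go through with $v_m$ in place of $l'_k$. The main obstacle is step~(iii): the diagrams $\widetilde\phi_n$ are not $f$-equivariant, so the $f$-invariance of $\zeta'$ must be extracted from the asymptotic geometry of $r$ inside $\mathbf{H}$ and from the dynamics of~$f$ along its axis, rather than from any direct action of~$f$ on~$D'$. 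This, together with the careful identification of~$\ell_k$ in step~(ii), constitutes the technical heart of the proof.
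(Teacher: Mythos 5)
Your step~(iii) is where the argument genuinely breaks down, and you essentially concede this yourself: you write that the $f$-invariance of $\zeta'$ ``must be extracted from the asymptotic geometry of $r$ inside $\mathbf{H}$ and from the dynamics of $f$'' and that this ``constitutes the technical heart of the proof'' --- but you never supply that extraction. The phrase ``translating by $|f|$ along $r$ permutes the configurations near the various $L_k$'' is not a proof: $f$ acts on $\X$, not on the diagram $D'$, and the diagrams $\widetilde\phi_n$ are not $f$-equivariant, so there is no a priori reason why the principal rays you produce at points of $L'_k$ should be permuted by $f$ at all. The paper's actual mechanism is a doubling trick: after reducing to the case where all $c_i$ project to principal lines (so that $f(r)\subset r$ by Remark~\ref{rem:generic_exist}(iv)), one applies Remark~\ref{rem:limit_union} with $D^+=D^-=D$, $\phi^+=\phi$, $\phi^-=f\circ\phi$, glued along $J$ via the translation $f_D$. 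One then analyses the overlap region $R^\pm$: if its frontier is asymptotically parallel to $J$ (angle $\theta=0$), a symmetry argument forces an infinite non-oriented curve into the folding locus of the union, contradicting Lemma~\ref{lem:curved_edges}; if $\theta>0$, one finds explicit points $l'_k=f_D^{-1}(l'_m)$ with $l'_m\in R^+_n$, so that $f\bigl(\widetilde\phi_n(l'_k)\bigr)=\widetilde\phi_n(l'_m)$ and the principal rays at these two points are parallel by Remark~\ref{rem:commonrays}. None of this machinery appears in your proposal.

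You also miss the organising case distinction: when some $c_i$ projects to a \emph{non-principal} admissible line, $\rho_+(r)$ leaves every compact set (Remark~\ref{rem:generic_exist}(iii)) and the doubling argument is replaced by an angle comparison in $\mathbf{H}$ and in $D'$ forcing $\widetilde\phi_n(L_k)$ into a translate $c''_k$ of $c_i$ via Remark~\ref{rem:straight}(iv); your step~(ii) heuristic about ``incompatible transition geometry'' does not substitute for this. Your step~(i) (using Lemma~\ref{lem:noncompact}(ii) and the curvature budget to get subrays mapping into principal lines) is consistent with the paper, but without the union construction the conclusion $f(\zeta')=\zeta'$ is unproved, so the proposal has a genuine gap at its central claim.
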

\begin{proof}
We focus on the assertion (1). Suppose first that $f\in C$ and there exists~$c_i$ in Construction~\ref{rem:B'rank2} that projects to a
non-principal admissible line in~$\nabla^+$. By Remark~\ref{rem:generic_exist}(iii), the projection $\rho_+(r)$ leaves every compact set $K$
of~$\nabla^+$. Choose~$K$ to contain $c\setminus c'$ for $c=\rho_+(c_i)$ and $c'$ as in Remark~\ref{rem:straight}(iii). Let $c_i'$ be the
preimage in~$c_i$ of $c'$ under $\rho_+$. Then we can consider $c_k',c_m'$ defined as two consecutive translates of~$c_i'$ intersected by $r$ in points
$r(l_k),r(l_m)$. Let $c''_k,c''_m$ be the unbounded connected components of $c'_k\setminus r(l_k), c'_m\setminus r(l_m)$. By \cite{BH}*{II.9.3}
applied in the subspace of $\mathbf{H}$ bounded by $c''_k, c''_m$ and $r(l_k)r(l_m)$, we then have
$\angle_{r(l_m)}\big(c''_m,r(l_{k})\big)+\angle_{r(l_{k})}\big(c''_{k},r(l_{m})\big)<\pi$.

Similarly, since $D'$ is $\mathrm{CAT}(0)$ and $L_k,L_m$ are asymptotic, we have $\angle_{l_m}(L_m,l_{k})+\angle_{l_{k}}(L_{k},l_{m})<\pi$.
Consequently, without loss of generality the first edge of $L_k$ is mapped under each $\widetilde \phi_n$ to~$c''_k$. By
Remark~\ref{rem:straight}(iv), we have $\widetilde \phi_n(L_k)\subset c''_k$, as desired.

Second, suppose that all $c_i$ in Construction~\ref{rem:B'rank2} project to principal lines in~$\nabla^+$. Then we have $f(r)\subset r$ by
Remark~\ref{rem:generic_exist}(iv). Note that since $J$ intersects all~$L_k$ perpendicularly, and all $L_k$ are asymptotic, by
\cite{BH}*{II.9.3(2)}, the regions between consecutive $L_k$ are Euclidean half-strips. Cutting $D$ along $L_1$, we can assume that $D'=D$ is a
quadrant in $\R^2$ with one boundary ray $J$, and $\phi'=\phi$. By Remark~\ref{rem:isometric}(i), we can assume that all $\widetilde \phi_n$ are
embeddings. Let $f_D\colon D\to D$ be the translation that acts on $J$ as $f$ does on $r$.

We apply Remark~\ref{rem:limit_union} with $D^+=D^-=D, \phi^{+}=\phi, \phi^{-}=f\circ \phi$ and $J^+=f_D(J),J^-=J$. By
Remark~\ref{rem:isometric}(ii), we have that $R^\pm$ are convex in $D$. Note that $R^+=f_D(R^-)$.

The path $P^+$ is asymptotic to a line in $D$
determining a direction in $\partial_\infty  D$ at angle $\theta$ from the direction of $J$. Consider first the case where $\theta=0$. Then there is
an infinite subcurve $\gamma\subset P^+$ with unit tangent directions mapped under $\phi^\angle$ \parent{see Definition~\ref{def:u}} arbitrarily close
in $S^\angle$ to the midpoint of $S^\angle$.

We claim that a neighbourhood $N$ of $\gamma$ in $\overline{D^+\setminus R^+}$ 
can be
identified with a subset of $\widehat D=\widehat D'$ 
on which $\phi^+,\widehat \phi$ coincide.
If
$\gamma=J^+$, then the claim follows from the last paragraph of Remark~\ref{rem:limit_union}. Otherwise, $\gamma$ maps to the $1$-skeleton of
$\widehat D$ under $\overline{D^+\setminus R^+}\subset \widehat D$, so $N$ can be taken as a union of cells of $\widehat D$ contained entirely in $D^+$ and the claim follows from the
second to last paragraph of Remark~\ref{rem:limit_union}. 

\begin{figure}
\includegraphics{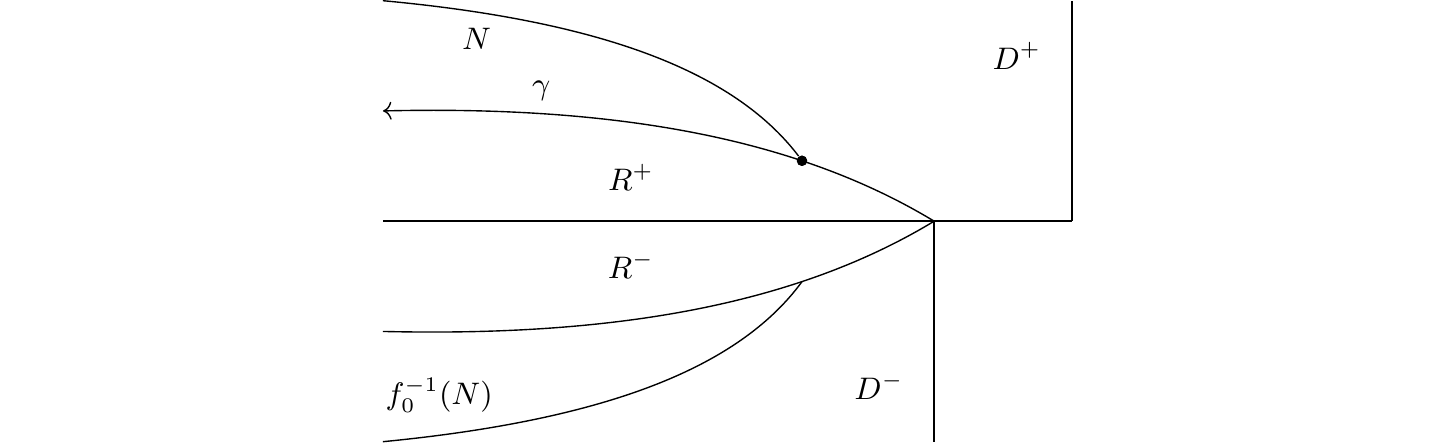}
\caption{}
\label{fig:N}
\end{figure}

We keep the notation $\gamma,N$ for their image in $\widehat D$. Since the function $\phi^\angle$ is invariant under translation, and applying the
claim to the neighbourhood $f^{-1}_D(N)$ of $f^{-1}_D(\gamma)$ in $\overline{D^-\setminus R^-}$, we have that the symmetry of $N\cup
f^{-1}_D(N)\subset \widehat D$ interchanging $N$ and~$f^{-1}_D(N)$ preserves $\widehat\phi^\angle$. See Figure~\ref{fig:N}. Since the unit tangent
directions of $\gamma$ are not mapped under $\phi^\angle$ to $\partial S^\angle$, we conclude that $\gamma$ is contained in the folding
locus~$\widehat L$ of~$\widehat D$. This contradicts Lemma~\ref{lem:curved_edges}.

Second, consider the case where $\theta>0$. We treat $L_k$ as rays $L_k\colon [0,\infty)\to D$ parametrised by arc-length. By
Lemma~\ref{lem:noncompact}\parent{ii}, there is $M$ such that each $\phi\big(L_k[M,\infty)\big)$ is contained in a principal line. Furthermore, since
$l_k$ are at distance uniformly bounded from above (Remark~\ref{rem:generic_exist}(v)), we can assume that the geodesic path $L_k(M)L_{k+1}(M)$
in~$D$ maps under $\phi$ outside $\{\alpha_2=\alpha_3\}$ (resp.\ $\{\alpha_1=\alpha_2\}$).

Since $\theta>0$, there are $m$ and $n$ with $l'_m=L_m(M)$ contained in $R^+_n$. Then for $l'_k=f^{-1}_D(l'_m)$, and
$x_k=\widetilde\phi_n(l'_k),x_m=\widetilde\phi_n(l'_m)$, we have $f(x_k)=x_m$. Consequently, $f$ maps the unique principal (resp.\ antiprincipal) ray
starting at $x_k$ to the one starting at $x_m$. These rays are parallel in $\X$ since $l'_k,l'_m$ are connected by a path in~$D_n$ mapping under
$\phi_n$ to a piecewise geodesic path outside $\{\alpha_2=\alpha_3\}$ (resp.\ $\{\alpha_1=\alpha_2\}$), and by Remark~\ref{rem:commonrays}.

The proof of assertion (2) is identical, with the following modifications. In the beginning of the proof, to justify
$\angle_{l_m}(L_m,l_{k})+\angle_{l_{k}}(L_{k},l_{m})<\pi$, we use the inequality in the hypothesis. Later, $D$ might be no longer a quadrant but a
possibly smaller subset of $\R^2$. The last difference is that now $L_k$ are bounded, but using the assumption that $L_k$ become longer than $M$ when
$k\to \infty$, we can still find $k$ such that for each $m\geq k,$ for sufficiently large $n$, we have that $l'_m=L_m(M)$ belongs to~$R^+_n$.
\end{proof}

\section{Classification}
\label{sec:classification}

In this section we finish the classification of parabolic isometries and loxodromic isometries not of rank 1, for the action of $\T$ on $\X$.
In particular, as a converse to Corollary \ref{cor:not_rank_1}, we show that such isometries always belong to the conjugates of the groups $B', C$ introduced in Section~\ref{sec:point_stabilizers}. 

\subsection{Parabolic isometries}

\begin{thm}
\label{thm:classify_isometries_parabolic} If $f\in \T$ is parabolic, then $f$ belongs to a conjugate of $C$. Furthermore, such a conjugate is unique.
\end{thm}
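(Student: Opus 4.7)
The plan is to exhibit a principal point $\zeta\in\partial_\infty\X$ fixed by $f$, so that Lemma~\ref{lem:easystab} yields $f\in\Stab(\zeta)$, which is a conjugate of $C$. Uniqueness will reduce to showing that a parabolic element of $\T$ fixes a unique principal point.

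To set up, I would first rule out the competing possibilities. Since $f$ is parabolic, $f$ has no fixed point in $\X$, and by Proposition~\ref{prop:stab} no conjugate of $f$ lies in $H$, $B$, or $K$. By Corollary~\ref{cor:not_rank_1}(1), $f$ is not conjugate into $B'$ either, since elements of $B'\setminus(B\cup H)$ are loxodromic. Consequently, once a fixed point $\zeta\in\partial_\infty\X$ of $f$ is located, it must be principal rather than antiprincipal, else Lemma~\ref{lem:easystab} would place $f$ in a conjugate of $B'$.

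To produce such a $\zeta$, I would run the limit-of-diagrams machinery. Since $f$ is parabolic, it has a positive limit point $\xi^+\in\partial_\infty\X$, once one rules out that $f$ is vile. Choose a basepoint $p\in\X$ and for $n<m$ consider the geodesic quadrilateral $\square_{n,m}$ with successive vertices $f^n(p), f^{n+1}(p), f^{m+1}(p), f^m(p)$. By Theorem~\ref{thm:disc_embeds} and Lemma~\ref{lem:discexists}, $\square_{n,m}$ bounds an embedded reduced relative disc diagram $\widetilde\phi_{n,m}\colon D_{n,m}\to\X$. Post-composing with $\rho_+$ and passing to the limit via Construction~\ref{constr:conv}, first in $m$ and then in $n$, produces a relative half-plane diagram $\phi'\colon D'\to\nabla^+$ bordered by a ray representing $\xi^+$. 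The curvature control provided by Corollary~\ref{cor:consise}, Lemma~\ref{lem:curved_edges}, and the Gauss--Bonnet formula (Theorem~\ref{thm:GB}) then sets up the hypotheses of Proposition~\ref{prop:diagram->fixedpoint}, whose application delivers a principal ray in $\X$ whose class is fixed by $f$.

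For uniqueness, suppose $f\in g_1 C g_1^{-1}\cap g_2 C g_2^{-1}$ with $g_1(\zeta)\ne g_2(\zeta)$. Representing both principal points by rays in the chambers $\Ap^+_{g_i}$ and using the amalgam $C=K\ast_{K\cap H}H$ together with Proposition~\ref{prop:stab}, one checks that the common stabiliser of two distinct principal points reduces to a subgroup of a conjugate of $K\cap H$; every such element has a fixed vertex in $\X$ and is therefore elliptic, contradicting that $f$ is parabolic. The main obstacle will be the convergence argument in the existence part: since $\X$ is not proper, the limit of the $\widetilde\phi_{n,m}$ must be taken after projection to $\nabla^+$ and interpreted in the frilling framework of Definition~\ref{def:frilling}; verifying that the limiting half-plane diagram is bordered by a ray genuinely representing $\xi^+$ and meets the curvature hypothesis of Proposition~\ref{prop:diagram->fixedpoint} is the technical heart of the argument, and it presupposes ruling out the vile case for $f$.
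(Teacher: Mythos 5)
Your overall frame (find a principal point fixed by $f$, invoke Lemma~\ref{lem:easystab}, rule out the antiprincipal/$B'$ alternative via Corollary~\ref{cor:not_rank_1}) matches the paper, but the engine you propose for producing that fixed point does not work, for two reasons. First, Proposition~\ref{prop:diagram->fixedpoint} cannot be applied here: its hypotheses are those of Lemma~\ref{lem:generic}, which require the limit half-plane diagram to be bordered by an $\mathbf{H}$-\emph{generic} ray, and $\mathbf{H}$-generic rays (Definition~\ref{def:generic}) are only defined for elements \emph{already known} to lie in a conjugate of $C$ or $B'$ with positive translation length — the subspace $\mathbf{H}$ itself comes from Construction~\ref{rem:B'rank2}, which presupposes the normal form of $f$ in the amalgam. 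Using this machinery to prove $f\in C$ is circular. Second, your argument needs the positive limit point $\xi^+$ to exist, i.e.\ it needs $f$ not to be vile when $|f|=0$; but the paper's only handle on vileness for parabolics (Proposition~\ref{prop:parabolicdynamics}) is proved \emph{after} this theorem and uses $f\in C$ together with the quasicusp structure of $\mathbf{H}$. You acknowledge this presupposition but have no independent way to discharge it, so the zero-translation-length case is genuinely open in your outline.

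The paper's actual proof is far more elementary and avoids all of Sections~\ref{sec:diagrams}--\ref{sec:rays}: it studies the sublevel sets $\X_l=\{x: d_\X(x,f(x))\le l\}$ of the displacement function. One shows that $\rho_+(\X_l)$ misses any prescribed compact set of $\nabla^+$ for suitable $l>|f|$ (else $f$ would have a fixed point), and that $\rho_+(\X_l)\subset\nabla^+_l$, the $l$-neighbourhood of $\partial\nabla^+$ (else, by Corollary~\ref{cor:fixed_set}, a short chain from $x$ to $f(x)$ would force a common fixed point of the chambers it meets). Since $\X_l$ is convex, hence connected, it projects into a single component of $\nabla^+_l$ minus a large ball around $[1,1,1]$, i.e.\ close to $\{\alpha_1=\alpha_2\}$ or to $\{\alpha_2=\alpha_3\}$; Remark~\ref{rem:commonrays} then makes the principal (resp.\ antiprincipal) rays at $x$ and $f(x)$ parallel, giving $f\in C$ or $f\in B'$, and the latter is excluded by Corollary~\ref{cor:not_rank_1}(1). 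For uniqueness your sketch is also off-track: the claim that the common stabiliser of two distinct principal points sits in a conjugate of $K\cap H$ is unsubstantiated (and would still need an argument when the two points are far). The paper instead shows directly that $f\in C\cap gCg^{-1}$ forces $g\in C$: the geodesic ray from a point of $\X_l$ to the principal point of $\Ap^+_g$ stays in $\X_l$ by convexity of the displacement function, and Lemma~\ref{lem:asymptoticparallel} then produces a chamber $\Ap^+_h$ with $h\in C\cap gC$. You should replace your existence and uniqueness arguments by this displacement-function approach.
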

\begin{proof}
Let $\X_l\subset \X$ be the subspace of points $x$ with $d_\X\big(x,f(x)\big)\leq l$. We first prove that for each compact set $K\subset \nabla^+$
there is $l>|f|$ with $\rho_+(\X_l)\cap K=\emptyset$. Indeed, otherwise, there are $x_n\in \X_{|f|+\frac{1}{n}}$ with $\rho_+(x_n)\in K$. After
passing to a subsequence, we can assume $\rho_+(x_n)\to [\alpha]\in K$. Choose $\epsilon$ so that each $[\alpha']\in \nabla_+$ with
$|[\alpha'],[\alpha]|<\epsilon$ lies in the star of the cell containing $[\alpha]$ in its interior. Choose~$n$ such that
$\frac{1}{n}<\frac{\epsilon}{3}$ and $|\rho_+(x_n),[\alpha]|<\frac{\epsilon}{3}$. Then there is $y\in \X$ with $\rho_+(y)=[\alpha]$ and
$|x_n,y|<\frac{\epsilon}{3}$. Since $f$ does not have a fixed point and acts without inversions on $\X$, we have $f(y)\neq y$ and $|y,f(y)|\geq
\epsilon$. This contradicts $|y,f(y)|\leq |y,x_n|+|x_n,f(x_n)|+|f(x_n),f(y)|<\frac{\epsilon}{3}+\frac{\epsilon}{3}+\frac{\epsilon}{3}$.

Let $\nabla^+_l\subset \nabla^+$ be the set of points at distance $\leq l$ from $\partial \nabla^+$. We claim that $\rho_+(\X_l)\subset \nabla^+_l$.
Indeed, for each $x\in \X_l$, by Lemma~\ref{lem:propertiesX}(iii), there is a chain of length~$\leq l$ from $x$ to $f(x)$. If $|\rho_+(x),\partial
\nabla^+|> l$, then all the points in that chain map under~$\rho_+$ outside $\partial \nabla^+$. Consequently, by Corollary~\ref{cor:fixed_set},
the chambers containing the points of that chain have nonempty intersection, which is fixed pointwise by $f$, contradiction. 

Now, let $K$ be the $4|f|$ neighbourhood of $[1,1,1]$ in $\nabla^+$, and choose $2|f|>l>|f|$ with $\rho_+(\X_l)\cap K=\emptyset$. Note that $K$
separates $\nabla^+_l$ into two connected components, which are contained in the $l$-neighbourhoods of the sets $\{\alpha_1=\alpha_2\}$, or
$\{\alpha_2=\alpha_3\}$, respectively. Since $\X_l$ is convex, hence connected, it is mapped under $\rho_+$ into one of these two connected
components.

Consider first the former possibility. Let $x,f(x)\in \X_l$ and assume without loss of generality $x\in \Ap^+_\id$. Then the minimal length chain from
$x$ to $f(x)$ in $\X_l$ maps under~$\rho_+$ outside the set $\{\alpha_2=\alpha_3\}$. 
Then by Remark~\ref{rem:commonrays}, the principal rays in $\Ap^+_\id, \Ap^+_f$ starting at $x$ and at $f(x)$ are parallel.
By Lemma~\ref{lem:easystab}, we obtain $f\in C$, as desired. If $\X_l$ maps under $\rho_+$ to the other
connected component of $\nabla^+_l\setminus K$, we analogously obtain $f\in B'$. However, this contradicts Corollary~\ref{cor:not_rank_1}(1).

For the remaining assertion, consider parabolic $f\in C\cap gCg^{-1}$ for some $g\in \T$. Let $r\colon [0,\infty)\to \X$ be the geodesic ray starting
at $r(0)=x\in \X_l$ that represents the same point $\zeta$ in $\partial_\infty \X$ as the principal rays in $\Ap^+_g$. Since $f\in gCg^{-1}$, we have
$f(\zeta)=\zeta$. Consequently, for each $t\geq 0$, we have we have $d_\X\big(r(t),f(r(t))\big)\leq d_\X\big(r(0),f(r(0))\big)\leq l$, and so
$r[0,\infty)\subset \X_l$. By Lemma~\ref{lem:asymptoticparallel}, we have that a subray of $r$ is parallel to a principal ray in $\Ap^+_g$.
Consequently, there is a chamber $\Ap^+_h$ with $h\in C$ with $h$ simultaneously in~$gC$, and so $g\in C$.
\end{proof}

\begin{prop}
\label{prop:parabolicdynamics} Let $f\in C$ be parabolic with translation length zero and let $\zeta\in \partial_\infty  \X$ be the class of the
principal rays in $\Ap^+_{\id}$. Then $\zeta$ is the positive \parent{and the negative} limit point of~$f$.
\end{prop}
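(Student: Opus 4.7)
I will show $f^n(x)\to\zeta$ in the cone topology for every $x\in\X$ as $n\to+\infty$; the case $n\to-\infty$ is symmetric, since $f^{-1}$ satisfies the same hypotheses.

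First, Lemma~\ref{lem:easystab} gives $\St(\zeta)=C$, so $f(\zeta)=\zeta$. Because $f$ is parabolic it has no fixed point in $\X$, whereas Proposition~\ref{prop:stab} shows every element of $K$ or $H$ fixes a point of $\X$; hence $f$ is not conjugate into $K\cup H$, and Construction~\ref{rem:B'rank2}(2) applies. This produces a convex $f$-invariant subspace $\mathbf{H}\subset\X$ assembled from chamber pieces $R_i,R_i'$ glued along principal rays $r_i$ (all asymptotic to $\zeta$) and curves $c_i$, with $f(r_0)=r_n$. By Corollary~\ref{cor:not_rank_1}(2), the hypothesis $|f|=0$ forces every $d(r_{i-1},c_i)$ and $d(c_i,r_i)$ to be zero, so the connecting geodesics $\sigma_i,\sigma_i'$ collapse to points and the half-strips $R_i,R_i'$ degenerate onto their principal-ray boundaries. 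In particular, consecutive rays $r_{i-1}$ and $r_i$ share meeting points with $c_i$ in the relevant chambers.

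Next, I fix $y=r_0(0)$ and extend the sequences of rays and curves periodically via $r_{i+n}=f(r_i)$ and $c_{i+n}=f(c_i)$, so that $f^k(y)=r_{kn}(0)$ lies on a ray arbitrarily far along in the sequence. Since $f$ has no fixed point in the complete CAT(0) space $\X$, any bounded orbit would yield an $f$-fixed circumcentre; thus the orbit $\{f^k(y)\}$ is unbounded, and using the two-dimensionality of $\X$ together with subadditivity of $k\mapsto d(y,f^k(y))$, one upgrades unboundedness to $d(y,f^k(y))\to\infty$. To conclude $f^k(y)\to\zeta$ in the cone topology I also need $\angle_y\bigl(f^k(y),\zeta\bigr)\to 0$. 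The $\X$-geodesic from $y$ to $f^k(y)$ lies in the convex subspace $\mathbf{H}$, and the degenerate gluing argument shows it must begin by travelling along $r_0$ up to its intersection with $c_1$, then continue along subrays of $r_1,\ldots,r_{kn}$ connected at the collapsed meeting points on $c_2,\ldots,c_{kn}$ with no lateral detours. As $k\to\infty$ the path passes through increasingly many such transition points, each strictly further along the principal direction, so the initial segment of the geodesic inside $r_0$ must grow without bound, which is precisely the statement $\angle_y\bigl(f^k(y),\zeta\bigr)\to 0$. For arbitrary $x\in\X$, the equality $d\bigl(f^k(x),f^k(y)\bigr)=d(x,y)$ combined with the stability of cone convergence under bounded perturbations yields $f^k(x)\to\zeta$ as well.

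The hard part will be making the \emph{no-shortcut} direction argument rigorous: showing that the $\X$-geodesic from $y$ to $f^k(y)$ has an initial segment along $r_0$ whose length tends to infinity as $k\to\infty$. This depends delicately on how the degenerate chamber pieces of $\mathbf{H}$ glue together, on the exact positions of the meeting points on the $c_i$, and on the fact that at least one $c_i$ is non-principal (guaranteed by Corollary~\ref{cor:not_rank_1}(2) for parabolic $f$), which prevents the collapse of $\mathbf{H}$ to a flat half-plane and so forces the transition points to march toward $\zeta$.
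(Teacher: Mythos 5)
There is a genuine gap, and it originates in a misreading of what $|f|=0$ gives you. In Corollary~\ref{cor:not_rank_1}(2) the translation length is the sum of the distances from $r_{i-1}$ to $c_i$ and from $c_i$ to $r_i$; these are \emph{infima} of distances between disjoint sets, and $|f|=0$ means each infimum vanishes, not that the sets meet. Indeed $\rho_+(r_{i-1})$ lies in the principal line $\alpha_1=\alpha_2$, while $\rho_+(c_i)$ lies in an admissible line $\alpha_1=\alpha_2+m\alpha_3$ with $m\geq 1$ (the vanishing of $|f|$ is exactly what forces the coefficient of $\alpha_2$ to be $1$); these lines are disjoint in $\nabla^+$ but asymptotic in the metric $|\cdot,\cdot|$. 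So the half-strips $R_i,R_i'$ do not degenerate onto their principal-ray boundaries and there are no ``collapsed meeting points'': each region $R_i\cap R_i'$ is an exponentially pinched sector (the paper computes that its width decays like $e^{-\sqrt{3/2}\,t}$), and $\mathbf{H}$ is the universal cover of a quasicusp. Consequently your description of the geodesic from $y$ to $f^k(y)$ as travelling along $r_0$ to an intersection with $c_1$ and then hopping along subrays of $r_1,\dots,r_{kn}$ rests on a false premise; that geodesic cuts across the sectors, climbing into the cusp and descending, just as a geodesic joining two points of a horocycle in $\H^2$ does.

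Second, the step you yourself flag as ``the hard part'' --- that the geodesics $yf^k(y)$ fellow-travel the ray towards $\zeta$ for longer and longer initial segments --- is the entire content of the proposition, and no argument is supplied for it. The paper's route is: the explicit computation in logarithmic coordinates exhibits $\mathbf H$ as the universal cover of an $a$-quasicusp with $a=\sqrt{3/2}$; Lemma~\ref{lem:quasiH} then gives an equivariant quasi-isometry to a horodisc in $\frac{1}{a}\H^2$ conjugating $\langle f\rangle$ to a parabolic group; the orbit $f^k(x)$ therefore converges with respect to the Gromov product to the unique Gromov boundary point of $\mathbf H$, and since $\mathbf H$ is proper and Gromov-hyperbolic this is upgraded to cone-topology convergence to $\zeta$ via \cite{BH}*{III.H.3.7(2) and 3.17(6)}. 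Note also that your intermediate claim $d_\X\big(y,f^k(y)\big)\to\infty$ needs care: the absence of a fixed point gives an unbounded orbit (via circumcentres), but unboundedness of the sequence is weaker than divergence; in the paper this, too, falls out of the quasi-isometry with the horodisc rather than from a subadditivity argument.
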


In the language of Section~\ref{sec:rank1}, Proposition~\ref{prop:parabolicdynamics} states that $f$ is not vile.

In the proof we will use the following. Let $a>0$. An \emph{$a$-sector} is the $\mathrm{CAT}(0)$ space obtained by glueing two Euclidean half-strips
$[0,\infty) \times [0,R]$ along copies of the set of points $(t,y)$ defined by $y\geq f(t)$ for some convex function $f\colon [0,\infty)\to [0,R]$
satisfying $\frac{1}{c}{e^{-at}}\leq f\leq ce^{-at}$ for some $c> 0$. We call the union of the two copies of the ray $[0,\infty)\times \{0\}$ the
\emph{frontier} of the sector.
An \emph{$a$-quasicusp} is the locally $\mathrm{CAT}(0)$ space glued of finitely many $a$-sectors by cyclically identifying them along their frontiers.

\begin{lem}
\label{lem:quasiH}
The universal cover $\mathbf{H}$ of an $a$-quasicusp 
is quasi-isometric with a horodisc $\mathbf{H}'$ in the rescaled hyperbolic plane $\frac{1}{a}\H^2$. Furthermore, the quasi-isometry conjugates the
deck transformation group of $\mathbf{H}$ with a group of parabolic isometries of $\mathbf{H}'$. \end{lem}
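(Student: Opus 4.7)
The plan is to construct an explicit quasi-isometry $\phi\colon\mathbf{H}\to\mathbf{H}'$ by embedding each Euclidean sheet of $\mathbf{H}$ bilipschitzly into the horodisc via an exponential reparametrization of the transverse coordinate, with orientations alternating and offsets chosen to match the two kinds of gluings.

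First I would set up coordinates. Model $\mathbf{H}'$ as $\{(\tau,\xi):\tau\geq 0,\ \xi\in\R\}$ with metric $d\tau^2+e^{-2a\tau}d\xi^2$, so the parabolic isometries of $\mathbf{H}'$ fixing the end at $\tau\to+\infty$ are the translations $\xi\mapsto\xi+c$. Unfolding the $k$-fold cyclic gluing of $a$-sectors produces a bi-infinite chain of Euclidean half-strips $\Sigma_n=[0,\infty)\times[0,R]$, $n\in\Z$, with $(\Sigma_{2i-1},\Sigma_{2i})$ the two sheets of sector~$i$ (glued along $\{y\geq f_i(t)\}$ with $\frac1c e^{-at}\leq f_i\leq c e^{-at}$) and $(\Sigma_{2i},\Sigma_{2i+1})$ glued along the frontier ray $\{y=0\}$; the deck group $\Z$ is generated by $n\mapsto n+2k$. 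On each $\Sigma_n$ I would define $\phi(t,y)=(t,\xi_n+\sigma_n e^{at}y)$ with alternating $\sigma_n=(-1)^{n+1}$, with $\xi_{2i}=\xi_{2i+1}$ (enforcing exact continuity at frontier gluings), and with $\xi_{2i-1}=\xi_{2i}+R$ (fixing the cumulative offset shift across within-sector gluings).

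Next I would verify that $\phi$ is a quasi-isometry conjugating the deck group to parabolics. A direct computation gives the pullback of the horodisc metric as $(1+a^2y^2)dt^2+2\sigma_n ay\,dt\,dy+dy^2$; the associated quadratic form has determinant $1$ and eigenvalues bounded above and below by positive constants depending only on $a,R$ for $0\leq y\leq R$, so $\phi$ is uniformly bilipschitz on each sheet. At a within-sector gluing the two $\phi$-images of an identified point $(t,y)$ with $y\geq f_i(t)$ differ in $\xi$-coordinate by $R+2e^{at}y$, corresponding to horodisc distance $Re^{-at}+2y\leq 3R$, uniformly bounded. Although $\phi$-images overlap with multiplicity $\sim e^{a\tau}$ at depth $\tau$, any two preimages of a common $\mathbf{H}'$-point lie in sheets at most $\sim 2e^{a\tau}$ apart and are connected in $\mathbf{H}$ by a horocycle path through $O(e^{a\tau})$ sectors of total cost $O(1)$ (the exponential sheet count cancelling the exponential per-sector cost $O(e^{-a\tau})$), plus vertical segments of length $\leq R$ at the endpoints; fibers therefore have uniformly bounded diameter. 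Finally, the deck generator sends $\Sigma_n\to\Sigma_{n+2k}$ with $\sigma_{n+2k}=\sigma_n$ and $\xi_{n+2k}=\xi_n-kR$, so $\phi\circ g=T_{-kR}\circ\phi$ exactly, where $T_{-kR}(\tau,\xi)=(\tau,\xi-kR)$ is a parabolic isometry of $\mathbf{H}'$ fixing $\tau\to+\infty$; this gives the required conjugation.

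The main obstacle will be the fiber diameter bound: the cancellation between the exponential growth $e^{a\tau}$ of coverage multiplicity in $\mathbf{H}'$ and the exponential decay $e^{-a\tau}$ of per-sector horocycle length in $\mathbf{H}$ reflects the matching hyperbolic rate $a$ on both sides, and is what ultimately makes the quasi-isometry possible.
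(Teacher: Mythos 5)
Your construction is correct in outline but takes a genuinely different route from the paper. The paper never builds a map on all of $\mathbf{H}$: it observes that the lifts of the boundary curves $[0,\infty)\times\{0\}$ and $\{(t,f(t))\}$ of the sectors form a coarsely dense net in $\mathbf{H}$, that the orbit $\bigcup_{n}f'^n(r)$ of a geodesic ray under a parabolic $f'$ forms a coarsely dense net in $\mathbf{H}'$, and that by equivariance it suffices to check that the induced metric on one adjacent pair of curves is bilipschitz to the induced metric on $r\cup f'(r)$; both are governed by the same formula $d\big((t',0),(t,f(t))\big)\asymp |t-t'|+e^{-at}$. Your explicit sheet-by-sheet map $(t,y)\mapsto (t,\xi_n+\sigma_n e^{at}y)$ buys exact equivariance ($\phi\circ g=T_{-kR}\circ\phi$) and concrete bilipschitz control on each sheet (the pullback-metric computation is right, up to the irrelevant sign $\sigma_n$ in the cross term, and the image of each sheet is the convex region between a geodesic and an equidistant curve, so its intrinsic and restricted metrics agree), at the cost of having to control paths and fibers that meet many sheets.

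That last point is where the write-up needs tightening. A unit-length path in $\mathbf{H}$ at depth $t$ can cross on the order of $e^{at}$ within-sector gluing loci, so the bound of $3R$ per jump is useless for the coarse Lipschitz inequality; what saves you is your own finer expression $Re^{-at}+2y$ evaluated on the actual crossing curve $y=f_i(t)\leq ce^{-at}$, which makes each jump $O(e^{-at})$, comparable to the at least $\frac{1}{c}e^{-at}$ that the path must spend inside a sheet between consecutive crossings. Symmetrically, for the lower quasi-isometry bound it is not enough that point-fibers have bounded diameter: you need coarse injectivity (preimages of $C$-balls have diameter bounded in terms of $C$), after which the standard subdivision argument along a geodesic of $\mathbf{H}'$ closes the proof. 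Your horocycle-path argument does deliver this stronger statement, since two sheets whose images meet a common $C$-ball at depth $\tau$ have indices differing by $O(e^{a(\tau+C)})$ and can be joined at depth roughly $\tau$ at total cost $O(e^{aC})$. With these two points made explicit, the proof is complete.
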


Note that we could replace $\frac{1}{a}\H^2$ by $\H^2$ in the statement by composing with the obvious quasi-isometry between
$\frac{1}{a}\H^2$ and $\H^2$, but a constant $a$ will appear naturally in the proof of Proposition~\ref{prop:parabolicdynamics}.

\begin{proof}
Let $f'$ be a parabolic isometry preserving $\mathbf{H}'$. Let $r$ be the image of a geodesic ray in $\mathbf{H}'$ starting on the horosphere
bounding $\mathbf{H}'$. It suffices to show that the union of $f'^n(r)$, over all $n\in \Z$, with the metric induced from $\mathbf{H}'$, is
quasi-isometric with the union of all the lifts to $\mathbf{H}$ of all $[0,\infty)\times \{0\}$ and all $\{(t,f(t))\}$ in all the sectors of a
quasicusp, with the metric induced from $\mathbf{H}$. To do that, it suffices to show that this metric on $r\cup f'(r)$ is bilipschitz to the metric
on the union of $[0,\infty)\times \{0\}$ and
$\{(t,f(t))\}$ in $[0,\infty)\times [0,R]$. 

To do that, note first that the maps $t\to (t,0),t\to (t,f(t))$ from $[0,\infty)$ to their image in $[0,\infty)\times [0,R]$ are bilipschitz
\parent{the first one is in fact an isometric embedding}. Furthermore, the distance between $(t',0)$ and $(t,f(t))$ is bounded from above by
$|t'-t|+f(t)$ and from below by both $|t'-t|$ and $f(t)$. Consequently, up to a multiplicative constant, this distance equals $|t-t'| +e^{-at}$. The
same formula holds in $\mathbf{H}'\subset \frac{1}{a} \H^2$ for the distances between the points on $r$ and $f'(r)$.
\end{proof}

\begin{proof}[Proof of Proposition~\ref{prop:parabolicdynamics}]
Let $\mathbf{H}$ be the $f$-invariant convex subset of $\X$ described in Construction~\ref{rem:B'rank2}(2). We claim that $\mathbf{H}$ is the universal
cover of a quasicusp with $\langle f \rangle$ the deck transformation group. Since $|f|=0$ (Corollary~\ref{cor:not_rank_1}(2)), for each $i=1,\ldots, n$, the curve~$c_i$ bounding the
intersection $R_i\cap R_i'\subset \mathbf{H}$ maps under $\rho_+$ to the admissible line with equation $\alpha_1=\alpha_2+m\alpha_3$ for some integer
$m\geq 1$. Passing to the Euclidean coordinates on the plane $\beta_1+\beta_2+\beta_3=0$ for $\beta_i=\log \alpha_i$ and $\alpha_1\alpha_2\alpha_3=1$
(see Section~\ref{sub:X}), this means $$e^{\beta_1}=e^{\beta_2}+me^{\beta_3}=e^{\beta_2}+me^{-(\beta_1+\beta_2)}.$$ On that plane we have orthogonal
coordinates $$y=\frac{\beta_1-\beta_2}{\sqrt 2},t=\frac{\sqrt {3}(\beta_1+\beta_2)}{\sqrt 2}.$$
The equation above has then the form $$e^\frac{\sqrt \frac{2}{3}t+\sqrt{2}y}{2}=e^\frac{\sqrt \frac{2}{3}t-\sqrt{2}y}{2}+me^{-\sqrt \frac{2}{3}t},$$
i.e.\ $2\sinh{\frac{y}{\sqrt 2}}=me^{-\sqrt \frac{3}{2}t}$. For $y$ close to $0$, up to a multiplicative constant, $\sinh{\frac{y}{\sqrt{2}}}$ equals
$y$, and so $y=y(t)$ has the required asymptotics for an $a$-sector with $a=\sqrt \frac{3}{2}$. This justifies the claim.

By Lemma~\ref{lem:quasiH}, we have an equivariant quasi-isometry $\mathbf{H}\to \mathbf{H}'$ for $\mathbf{H}'$ a horodisc in a hyperbolic
plane~$\frac{1}{a}\H^2$ preserved by a parabolic isometry $f'$. The horodisc~$\mathbf{H}'$ is Gromov-hyperbolic and for any point $x'\in
\mathbf{H}'$, for $n\to \pm \infty$, the sequence~$f'^n(x')$ converges w.r.t.\ the Gromov product to the unique point in the Gromov boundary
of~$\mathbf{H}'$. Consequently, $\mathbf{H}$ is Gromov-hyperbolic and for any point $x\in \mathbf{H}$, for $n\to \pm \infty$, the sequence $f^n(x)$
converges w.r.t.\ the Gromov product to the unique point in the Gromov boundary of $\mathbf{H}$. Since $\mathbf{H}$ is proper, by \cite{BH}*{III.H.3.7(2) and 3.17(6)}, for $n\to \pm \infty$, the sequence $f^n(x)$ converges in the cone topology to $\zeta$.
\end{proof}

\subsection{Loxodromic isometries not of rank 1}

\begin{thm}
\label{thm:classify_isometries_hyperbolic} Let $f\in \T$ be loxodromic not of rank~$1$. Then $f$ belongs to a conjugate of $C$ or $B'$. Furthermore,
there is at most one such a conjugate of $C$ and at most one such a conjugate of $B'$.
\end{thm}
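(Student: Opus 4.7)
The plan is to produce a principal or antiprincipal fixed point of $f$ in $\partial_\infty\X$ from the failure of rank~$1$, and then invoke Lemma~\ref{lem:easystab} to place $f$ in a conjugate of $C$ or $B'$. Since $f$ is loxodromic, it has an axis $\gamma$ with limit points $\xi^\pm\in\partial_\infty\X$. Because $f$ is not of rank~$1$, Theorem~\ref{thm:contracting} implies that $\xi^+,\xi^-$ are not $(\eps,R)$-far for any $\eps,R>0$. Hence for each $n$ there is a $(p_n,\eps_n,R_n)$-quadrilateral $\square_n$ with $p_n\in\gamma$, $\eps_n\to 0$ and $R_n\to\infty$, whose side through $p_n$ lies along $\gamma$. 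Using $\langle f\rangle$-equivariance, we arrange that these sides cover a segment of $\gamma$ of unbounded length.

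Lemma~\ref{lem:discexists} fills each $\square_n$ with a reduced relative disc diagram $\widetilde\phi_n\colon D_n\to\X$, and Theorem~\ref{thm:disc_embeds} makes it an embedding. Post-composing with $\rho_+$ and passing to the limit via Construction~\ref{constr:conv} and Remark~\ref{rem:converge_nabla}, we obtain a relative half-plane diagram $\phi'\colon D'\to\nabla^+$ weakly bordered by a subray $r$ of $\gamma$, whose total positive curvature (interior plus boundary) converges to zero. Applying Corollary~\ref{cor:consise}, Lemma~\ref{lem:noncompact} and Lemma~\ref{lem:curved_edges} to constrain the folding locus $L\subset D'$, either $L$ contains asymptotic rays, or $L$ consists of segments of unbounded lengths ending at $\partial D'$ satisfying the angle-sum condition of Proposition~\ref{prop:diagram->fixedpoint}(2), or $L$ is empty and $\phi'$ isometrically embeds a flat half-plane. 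In each case, adapting the proof of Proposition~\ref{prop:diagram->fixedpoint}, we extract a principal or antiprincipal point $\zeta\in\partial_\infty\X$ fixed by $f$, and Lemma~\ref{lem:easystab} places $f$ in $\St(\zeta)$, a conjugate of $C$ or $B'$.

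For uniqueness, suppose $f\in C\cap gCg^{-1}$; we show $g\in C$. Since $f\in gCg^{-1}$, $f$ fixes $g\zeta_0$, where $\zeta_0$ is the principal point stabilized by $C$. By Corollary~\ref{cor:not_rank_1}(2) applied to $f\in gCg^{-1}$, all the curves $c_i$ in Construction~\ref{rem:B'rank2}(2) are principal rays, so the $f$-invariant set $\mathbf{H}'$ is a Euclidean half-plane containing a geodesic ray $r$ representing $g\zeta_0$. By Lemma~\ref{lem:asymptoticparallel}, a subray of $r$ is parallel to a principal ray in some $\Ap^+_h$ with $h\in gC$. Applying the same construction to $f\in C$ gives an $f$-invariant Euclidean half-plane $\mathbf{H}$ built from chambers $\Ap^+_{h'}$ with $h'\in C$; since $\mathbf{H}$ and $\mathbf{H}'$ share the axis $\gamma$, the chamber containing the subray of $r$ lies in $\mathbf{H}$, so $h\in C\cap gC$, giving $g\in C$. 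The argument for conjugates of $B'$ is parallel, using Construction~\ref{rem:B'rank2}(1) and $1$-antiprincipal rays instead.

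The main obstacle is the second paragraph: applying the folding-locus machinery to $\phi'$ without pre-supposing that $f$ is already conjugate into $C$ or $B'$. Proposition~\ref{prop:diagram->fixedpoint} is stated in terms of an $\mathbf{H}$-generic ray (Definition~\ref{def:generic}), whose very existence presumes such a conjugacy. We expect to re-run its internal argument with the $\mathbf{H}$-generic hypothesis replaced by direct use of the axis $\gamma$ and the chambers it meets. A subsidiary difficulty is justifying, in the uniqueness step, that $\mathbf{H}$ and $\mathbf{H}'$ overlap densely enough along $\gamma$ to force the required chamber identification.
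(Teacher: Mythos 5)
Your setup (axis, non-contracting quadrilaterals via Lemma~\ref{lem:new}/Theorem~\ref{thm:contracting}, filling with disc diagrams, passing to a limit half-plane diagram bordered by $\gamma$) matches the paper, but the core of the existence argument is exactly the step you flag as "the main obstacle," and it is a genuine gap: the trichotomy you propose for the folding locus $L$ of a single limit diagram $\phi'\colon D'\to\nabla^+$ does not by itself produce an $f$-fixed principal or antiprincipal point, and "adapting Proposition~\ref{prop:diagram->fixedpoint}" is not how the paper closes it. The missing idea is a doubling/equivariance trick: since $D'$ is a flat half-plane bordered by $\gamma$ on which $f$ acts, one forms the union (Remark~\ref{rem:limit_union}) of $\phi$ with $f\circ\phi$ along $\partial D$ and lets $R^\pm$ be the overlap. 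If $R^\pm\neq D$, the union is a Euclidean plane whose folding locus is invariant under a glide reflection; combining Remark~\ref{rem:straight}(ii), Corollary~\ref{cor:consise}, Lemma~\ref{lem:noncompact}(i) and the fact that $\rho_+(\gamma)$ is bounded forces the folding locus to be empty, contradicting the last assertion of Lemma~\ref{lem:noncompact}. Hence $R^+=D$, so $f_D$ preserves $L$, $L$ consists (far from $\partial D$) of parallel oriented rays $L_k$ whose tails map into principal lines by Lemma~\ref{lem:noncompact}(ii), and one finds $k,m,n$ with $f(\widetilde\phi_n(L_k(M)))=\widetilde\phi_n(L_m(M))$; the unique principal (or antiprincipal) rays at these two points are asymptotic, and Lemma~\ref{lem:easystab} concludes. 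Without this equivariant identification of two points of $\X$ interchanged by $f$, your case analysis gives information about $L$ but no $f$-invariant object; in particular your third case ($L$ empty, $\phi'$ a flat half-plane) yields no fixed boundary point at all and must instead be ruled out.

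The uniqueness step also has an unresolved gap, and here your route diverges from the paper's. You want to show that the two $f$-invariant convex sets $\mathbf H,\mathbf H'$ of Construction~\ref{rem:B'rank2} share enough of an axis to force a common chamber; but $f$ need not have a unique axis, and nothing in your sketch prevents $\mathbf H$ and $\mathbf H'$ from being disjoint flat half-planes in the (possibly large) min-set of $f$ — this is precisely the "subsidiary difficulty" you name and do not close. The paper argues entirely at infinity: if $f\in B'\cap gB'g^{-1}$ with $g\notin B'$, the two antiprincipal points $\zeta\neq g\zeta$ are midpoints of two distinct length-$\pi$ arcs $\omega_{\id},\omega_g\subset\partial_\infty\X$ joining the limit points of $f$; since $\partial_\infty\X$ is $\mathrm{CAT}(1)$ and locally an $\R$-tree (Remark~\ref{rem:dim1}), $\omega_{\id}\cdot\omega_g$ is an isometrically embedded circle of length $2\pi$, Lemma~\ref{lem:circle} produces a flat, so $\zeta,g\zeta$ are at angle $\pi$ and not far, contradicting Lemma~\ref{lem:principal_relation} (which forces angle $\frac{2\pi}{3}$ for distinct non-far antiprincipal points). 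If you want to avoid Lemma~\ref{lem:principal_relation}, you would need to supply the missing overlap argument for $\mathbf H\cap\mathbf H'$; as written, neither half of your proof is complete.
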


\begin{exa}
\label{exa:B'andC}
The map $f=(x_2,x_1+x_2^2, x_3)$ belongs to both $C$ and $gB'g^{-1}$ for $g=(x_3,x_2,x_1)$.
\end{exa}

To prove the uniqueness in Theorem~\ref{thm:classify_isometries_hyperbolic}, it will be convenient to use the following, whose proof we postpone to
Section~\ref{sec:limitnotfar}.

\begin{lem}
\label{lem:principal_relation} Let $\xi\neq\eta\in \partial_\infty \X$ be principal and not far. Then $\angle (\xi,\eta)=\frac{2\pi}{3}$, the
midpoint of the geodesic $\xi\eta$ in $\partial_\infty \X$ is antiprincipal, and there is a chamber containing geodesic rays representing $\xi$ and that
midpoint. The same statement holds if we interchange the words `principal' and `antiprincipal'.
\end{lem}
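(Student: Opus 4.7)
Represent $\xi,\eta$ by $\mathbf{H}$-generic principal rays $r_\xi,r_\eta$ starting at a common point $x\in\X$ and contained in chambers $\Ap^+_{f_\xi},\Ap^+_{f_\eta}$ (Definition~\ref{def:generic}). Since all principal rays in a given chamber project in $\nabla^+$ parallel to the boundary $\{\alpha_1=\alpha_2\}$ and both start at $\rho_+(x)$, the images $\rho_+\circ r_\xi$ and $\rho_+\circ r_\eta$ coincide as a single ray $\rho\subset\nabla^+$. For each $n$, fill the geodesic triangle with vertices $x,r_\xi(n),r_\eta(n)$ by a reduced relative disc diagram $\widetilde\phi_n\colon D_n\to\X$ (Lemma~\ref{lem:discexists}, Theorem~\ref{thm:disc_embeds}); setting $\phi_n=\rho_+\circ\widetilde\phi_n$ and passing to a subsequence, Construction~\ref{constr:conv} and Remark~\ref{rem:converge_nabla} produce a relative half-plane diagram $\phi'\colon D'\to\nabla^+$ bordered by $\rho_+\circ r_\xi$ and $\rho_+\circ r_\eta$, whose entire boundary maps into $\rho$.

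The ``not far'' hypothesis enters via the contrapositive of Definition~\ref{def:Rsquare}: applied with $P$ a closed ball containing $x$, it produces, for every $\eps,R>0$, pairs $u\to\xi,v\to\eta$ whose geodesics $uv$ are cluttered by $(p,\eps,R)$-quadrilaterals. Running Lemma~\ref{lem:new} in reverse along the boundaries of the approximating $\widetilde\phi_n$ then precludes any uniform contribution of negative curvature from the interior, so in the limit the sum of the interior curvatures of $\phi'$ is bounded below by $-\eps_0$. This places us in the scope of Corollary~\ref{cor:consise} and Proposition~\ref{prop:diagram->fixedpoint}: the vertices of the folding locus $L\subset D'$ fall into the configurations of Figure~\ref{fig:concise}, and each unbounded component of $L$ is, by Lemma~\ref{lem:noncompact} and Lemma~\ref{lem:curved_edges}, a geodesic ray whose image eventually lies in a principal admissible line of $\nabla^+$.

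Since $\xi\neq\eta$ while both boundary rays of $\phi'$ project to the same $\rho$, $\phi'$ fails to be an isometric embedding, so $L$ is non-empty and unbounded. Gauss--Bonnet (Theorem~\ref{thm:GB}) applied to $\phi'$ with the curvature budget $\geq-\eps_0$ forces $L$ to consist of a single non-oriented geodesic ray emanating from the apex of $D'$, whose image under $\phi'$ lies in a principal line parallel to the boundary $\{\alpha_2=\alpha_3\}$. Proposition~\ref{prop:diagram->fixedpoint}(1) then gives, for $n$ large, a chamber containing $r_\xi$ and sharing an antiprincipal ray with a chamber containing $r_\eta$; the class $\zeta\in\partial_\infty\X$ of this shared antiprincipal ray is therefore the midpoint of the Tits geodesic $\xi\eta$, with $\angle(\xi,\eta)=\pi/3+\pi/3=\tfrac{2\pi}{3}$ the sum of the two $\pi/3$ cone angles of $\nabla^+$ at $[1,1,1]$. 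The chamber containing $r_\xi$ contains a representative of $\zeta$, giving the chamber-containment conclusion; the interchanged statement follows by the symmetric role of principal and antiprincipal rays, with $C$ replaced by $B'$ throughout.

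The principal obstacle is the middle paragraph: converting the metric ``not far'' condition into the uniform interior curvature bound $\geq-\eps_0$ on the limit diagram $\phi'$, which requires marrying the $(p,\eps,R)$-quadrilateral counting of Lemma~\ref{lem:new} with the frilling-limit construction of Section~\ref{sec:diagrams} so as to control curvature loss in the limit. The subsequent folding-locus bookkeeping is then a careful application of Corollary~\ref{cor:consise} and the monotonicity of Lemma~\ref{lem:noncompact}, and is the main technical content of the deferred Section~\ref{sec:limitnotfar}.
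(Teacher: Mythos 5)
There are several genuine gaps here, and the overall route diverges from what actually works. First, the setup is flawed: geodesic rays from a common basepoint $x$ representing $\xi$ and $\eta$ are not themselves principal rays (only subrays are, by Lemma~\ref{lem:asymptoticparallel}), and even those subrays project under $\rho_+$ to parallel but in general \emph{distinct} curves $[ta,t,1]$, $[ta',t,1]$ in $\nabla^+$; so the claim that both boundary rays of the limit diagram map to a single ray $\rho$, and the ensuing deduction that the folding locus is non-empty and consists of a single ray, do not follow. Second, the ``not far'' hypothesis is not actually converted into anything: it yields the dichotomy of Construction~\ref{con:diagram_sequence} --- either \eqref{con:diagram_sequencei} geodesics $u_nv_n$ escaping every ball, in which case the curvature bound is already supplied by Lemma~\ref{lem:almostflat}, or \eqref{con:diagram_sequenceii} a sequence of $(p_n,\frac{1}{n},n)$-quadrilaterals, in which case $D'$ is a flat half-plane and the contradiction comes from $\phi'^\angle$ (Definition~\ref{def:u}) assigning both the principal and the antiprincipal point of $S^\angle$ to a tangent direction of $\partial D'$ via Lemma~\ref{lem:boundarygood}. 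Your construction covers neither branch correctly, and the step you flag as ``the principal obstacle'' is precisely the part of the argument that is missing.

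Third, the invocation of $\mathbf{H}$-generic rays and Proposition~\ref{prop:diagram->fixedpoint}(1) is out of scope: both presuppose an element $f$ of $B'$ or $C$ with its invariant convex set $\mathbf{H}$, transition points $l_k$, and asymptotic folding-locus components $L_k$ through them --- none of which exist for two arbitrary principal points. Fourth, the angle $\frac{2\pi}{3}$ is asserted as ``$\pi/3+\pi/3$'' rather than derived; the paper's mechanism is Lemma~\ref{lem:anglesmatch}, which parallel-transports a boundary tangent direction across the diagram and, using the curvature budget, forces $\pi-\angle(\xi,\eta)$ to lie within $3\eps$ of a length of an embedded arc in $S$ joining preimages of the principal and antiprincipal points, i.e.\ an odd multiple of $\frac{\pi}{3}$. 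Finally, the midpoint and chamber-containment claims are proved in the paper not by folding-locus bookkeeping but by a direct comparison argument: \cite{BH}*{II.9.10} gives $d_\X\big(r_\xi(2t),r_\zeta(t)\big)/t\nearrow\sqrt 3<2$, so $\rho_+(r_\zeta)$ avoids $\{\alpha_2=\alpha_3\}$ and escapes compacta, and the preimage of $\{\alpha_3<\alpha_2\leq m\alpha_3\}\setminus K$ is quasi-isometric to a tree whose unique infinite path is an antiprincipal ray. You would need to rebuild the argument along these lines.
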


Note that since $\partial_\infty \X$ is $\mathrm{CAT}(1)$, the geodesic $\xi\eta$ above is unique.

\begin{rem}
\label{rem:dim1} $\partial_\infty \X$ has geometric dimension~1 (see \cite{K}), which can be justified by the following argument proposed by
Pierre-Emmanuel Caprace. Namely, $\partial_\infty \X$ embeds in the space of directions of any asymptotic cone of $\X$, where the sequence of
basepoints is constant. Since $\X$ has geometric dimension~2, we have that the asymptotic cone has also geometric dimension $2$ \cite{LY}*{Lem~11.1},
and so $\partial_\infty \X$ has geometric dimension $1$. 

Consequently, $\partial_\infty \X$ is locally an $\R$-tree, see e.g.\ the proof of
\cite{OP}*{Lem~6.3}.
\end{rem}

\begin{proof}[Proof of Theorem \ref{thm:classify_isometries_hyperbolic}]
Let $\gamma$ be an axis of $f$ in $\X$. If $f$ is not of rank~$1$, then $\gamma$ is not contracting. Let $p_0\in \gamma$ and for $k\in \Z$ let
$p_k=f^k(p_0)$. By Lemma~\ref{lem:new} applied with $\beta=\gamma$ and $\mathcal P=\{p_k\}$, for each $n>0$, there is a $(p_k,\frac{1}{n},n)$-quadrilateral
with a side in $\gamma$. Thus its translate under $f^{-k}$ is a $(p_0,\frac{1}{n},n)$-quadrilateral $\square_n$ with a side in $\gamma$. By
Lemma~\ref{lem:square}, we can assume that $\square_n$ is embedded.

Let $\overline \phi_n\colon \overline D_n\to \X$ be a reduced relative disc diagram with boundary $\square_n$ guaranteed by
Lemma~\ref{lem:discexists}. Note that $\overline \phi_n$ is an embedding by Theorem~\ref{thm:disc_embeds}. Then for each subcomplex $D_n\subset
\overline D_n$ with $I_n=D_n\cap \partial \overline D_n,$ the restriction $\widetilde \phi_n$ of $\overline \phi_n$ to $D_n$ is a disc diagram in
$\X$ relative to $I_n$.

Since $\nabla^+$ is locally finite and since $\overline \phi_n$ are reduced disc diagrams, by Theorem~\ref{thm:GB} there is a uniform bound on the
degree of a vertex in $\overline D_n$ at a given distance from~$q_n=\overline \phi_n^{-1}(p_0)$. Furthermore, since $\square_n$ is a
$(p_0,\frac{1}{n},n)$-quadrilateral, we can assume that the radius $n$ closed ball in $\overline D_n$ centred at $q_n$ is disjoint from $\partial
\overline D_n\setminus \overline \phi_n^{-1}(\gamma)$. Thus we can find $D_n$ as above so that $I_n=\widetilde \phi^{-1}_n(\gamma)$, the pairs
$(D_n,I_n)$ converge to a half-plane diagram $(D,\partial D)$, and all $q_n$ coincide.

Furthermore, we can assume that we have the limit $\phi\colon D\to \nabla^+$ of $\rho_+\circ \widetilde \phi_n$ bordered by two rays with union
$\gamma$. (Here $D^*=D'=D$.) Let $\gamma_n=\phi_n(I_n)$. Note that since $\square_n$ were $(p_0,\frac{1}{n},n)$-quadrilaterals, by Theorem
\ref{thm:GB} we have that $D$, with the degenerate piecewise smooth Euclidean metric pulled back under $\phi$ from~$\nabla^+$, is isometric to the
Euclidean half-plane. Let $f_D\colon D\to D$ be the translation that preserves $\partial D$ and acts on it as $f$ does on $\gamma$.

We apply a variant of Remark~\ref{rem:limit_union} with $D^+=D^-=D, \phi^{+}=\phi, \phi^{-}=f\circ \phi$, identity frillings, and $J^+=J^-=\partial
D$, which are lines instead of rays. By Remark~\ref{rem:isometric}(ii), we have that $R^\pm$ are convex in $D^\pm$. Thus $R^+=R^-$ is a Euclidean
strip (possibly $R^\pm=D^\pm$).

Consider first the case where $R^\pm\neq D^\pm$. Then $\widehat D$ from Remark~\ref{rem:limit_union} becomes a combinatorial complex isometric to the
Euclidean plane and $\widehat \phi \colon \widehat D\to \nabla^+$ is combinatorial and $\X$-reduced. Let $L^\pm\subset D^\pm,\widehat L\subset
\widehat D$ denote the folding loci. We then have $\widehat L \cap (D^\pm\setminus R^\pm)=L^\pm\setminus R^\pm$. Observe that $\widehat L \cap
(D^+\setminus R^+)$ is send to $\widehat L \cap (D^-\setminus R^-)$ by the glide reflection $f_{\widehat D}$ of $\widehat D$ that is the composition
of $f_{D}$ with the reflection in $\mathrm{fr}\, R^+=\mathrm{fr}\, R^-$.

All the edges of $\widehat L$ are oriented, since $\widehat D$ is locally Euclidean. By Remark~\ref{rem:straight}(ii), for each oriented edge $e$ of
$\widehat L$, the entire oriented geodesic ray in $\widehat D$ starting at $e$ lies in~$\widehat L$. By Lemma~\ref{lem:noncompact}(i), such a
geodesic ray cannot be parallel to $\mathrm{fr}\, R^+$, in view of the fact that $\rho_+(\gamma)$ is bounded. Furthermore, by
Corollary~\ref{cor:consise}, none of such geodesics can intersect, except when one is contained in another. Thus by the observation above on
$f_{\widehat D}$, we have that $\widehat L$ has no vertices~$v$ of degree $\geq 3$, since otherwise some of the six oriented geodesics issuing from
$v$ and $f_{\widehat D}(v)$ would intersect.

If a connected component of $\widehat L$ was a line $l$ intersecting $\mathrm{fr}\, R^+$, then either $l$ would intersect~$f_{\widehat D}(l)$, which
is a contradiction, or $l$ would be perpendicular to $\mathrm{fr}\, R^+$. Then the function $\widehat\phi^\angle\colon \mathbb S\to S^\angle$ as in
Definition~\ref{def:u} would simultaneously assign to the direction of~$l$, which is the opposite of the direction of~$f_{\widehat D}(l)$, the
principal and the antiprincipal point, contradiction. Consequently, $\widehat L$ is empty, which contradicts the last assertion of
Lemma~\ref{lem:noncompact}.

Second, consider the case where $R^+=D$. Then $f_D$ preserves the folding locus~$L$ of $D$. Thus, as in the previous case, by
Remark~\ref{rem:straight}(ii)
and Corollary~\ref{cor:consise}, all the vertices of order $\geq 3$ in~$L$ are at a uniform distance from $\partial D$. Since $L$ is again nonempty, and
contains no line parallel to $\partial D$, there is a half-plane $D_0\subseteq D$ such that $D_0\cap L$ is a family of parallel oriented geodesic
rays $(L_k)_{k=-\infty}^{\infty}$ starting or ending at $\partial D_0$. By Lemma~\ref{lem:noncompact}(ii), there is $M$ such that each
$\phi(L_k[M,\infty))$ is contained in a principal line. Choose $m$ and $n$ with $L_m(M)$ contained in $R^+_n$. Then for $L_k(M)=f^{-1}_D(L_m(M))$,
and $x_k=\widetilde\phi_n(L_k(M)),x_m=\widetilde\phi_n(L_m(M))$, we have $f(x_k)=x_m$. Consequently, $f$ maps the unique (see
Remark~\ref{rem:commonrays}) principal or antiprincipal ray starting at $x_k$ to the one starting at $x_m$. These rays are asymptotic in $\X$ since
$L_k$ and $L_m$ are asymptotic in~$D$. By Lemma~\ref{lem:easystab}, $f$ lies in a conjugate of $C$ or $B'$.

For the uniqueness of the conjugate of $B'$, suppose $f\in B'\cap gB'g^{-1}$ with $g\notin B'$ and let $\zeta\neq g\zeta$ be the antiprincipal points
represented by the antiprincipal rays in $\Ap^+_\id, \Ap^+_g$. Let $\xi,\eta$ be the limit points of $f$. By
Construction~\ref{rem:B'rank2}(1), it follows that $\zeta,g\zeta$ are the midpoints of length $\pi$ paths $\omega_\id\neq \omega_g
\subset\partial_\infty \X$ between $\xi$ and $\eta$. Since $\partial_\infty \X$ is $\mathrm{CAT}(1)$ and is locally an $\R$-tree
(Remark~\ref{rem:dim1}), we have that $\omega=\omega_\id\cdot \omega_g$ is a circle of length $2\pi$ isometrically embedded in $\partial_\infty \X$.
By Lemma~\ref{lem:circle}, we obtain a flat $F\subset \X$ with $\partial_\infty F=\omega$. Thus $\zeta$ and $g\zeta$ are at distance~$\pi$ but they
are not far. This contradicts Lemma~\ref{lem:principal_relation}.

The proof of the uniqueness of the conjugate of $C$ is analogous.
\end{proof}

\section{\texorpdfstring{$(\xi,\eta)$}{(xi,eta)}-diagrams}
\label{sec:xieta}

We will be using the following diagrams to describe pairs of points in $\partial_\infty \X$ that are not far (see Definition~\ref{def:Rsquare}).

\begin{constr}
\label{con:diagram_sequence}
Let $\xi\neq\eta\in \partial_\infty\X$ be not far, and let $x\in \X$ be such that $\angle_x(\xi,\eta)>0$.
We construct a following $\X$-reduced relative half-plane diagram $\phi' \colon D'\to \nabla^+$, which we call a \emph{$(\xi,\eta)$-diagram}.

Consider $u_n,v_n\in \X\cup
\partial_\infty  \X$, with $u_n\to \xi, v_n\to \eta$,
and at most one of each $u_n,v_n$ in $\partial_\infty  \X$, such that either:
\begin{enumerate}[(i)]
\item \label{con:diagram_sequencei}
for each closed metric ball $P$ in $\X$, the geodesics $u_nv_n$ are disjoint from $P$ for $n$ large enough, or
\item \label{con:diagram_sequenceii} there is a closed metric ball $P$ in $\X$ containing a point $p_n$ on each $u_nv_n$, such that there is a
    $(p_n,\frac{1}{n},n)$-quadrilateral $\square_n$ with a side $\bar u_n\bar v_n$ in $u_nv_n$.
\end{enumerate}

In case \eqref{con:diagram_sequencei},
by \cite{BH}*{II.9.8(4)}, if $\angle(\xi,\eta)<\pi$, then we can assume that $u_n,v_n$ lie on the geodesic rays from $x$ to $\xi,\eta$, respectively.
Let $\bar u_n\in xu_n\cap u_nv_n, \bar v_n\in xv_n\cap u_nv_n$ be closest possible to $x$. Let $\triangle_n$ be the geodesic triangle $x\bar u_n\bar
v_n$, which is an embedded closed path. Let $\overline \phi_n\colon \overline D_n\to \X$ be a reduced relative disc diagram with boundary
$\triangle_n$ guaranteed by Lemma~\ref{lem:discexists}. Note that $\overline \phi_n$ is an embedding by Theorem \ref{thm:disc_embeds}. Then for each
subcomplex $D_n\subset \overline D_n$ with $I_n=D_n\cap \partial \overline D_n,$ the restriction $\widetilde \phi_n$ of $\overline \phi_n$ to $D_n$
is a disc diagram in~$\X$ relative to $I_n$.

Since $\nabla^+$ is locally finite and since $\overline \phi_n$ are reduced disc diagrams, by Theorem~\ref{thm:GB} there is a uniform bound on the
degree of a vertex in $\overline D_n$ at a given distance from~$q_n=\overline \phi_n^{-1}(x)$. Furthermore, by the hypothesis in case (i), we can
assume that the radius~$n$ closed ball in $\overline D_n$ centred at $q_n$ is disjoint from $\overline \phi_n^{-1}(\bar u_n\bar v_n)$. Thus, after
possibly passing to a subsequence, we can find~$D_n$ as above so that $(D_n,I_n)$ converge to a half-plane diagram $(D,\partial D)$ and all~$q_n$
coincide with a point $q\in \partial D$. Furthermore, we can assume Construction~\ref{constr:conv}\eqref{constr:conv:ii}. By
Remark~\ref{rem:converge_nabla}, where $I_a,I_b$ are the connected components of $\partial D\setminus q$ and $I_c$ is empty, we also have
Construction~\ref{constr:conv}\eqref{constr:conv:iii}. This gives the frilling $D'$ of $D$, and the limit map $\phi'\colon D'\to \nabla^+$ of
$\phi_n=\rho_+\circ \widetilde \phi_n$ that is an $\X$-reduced relative half-plane diagram. Note that by \cite{BH}*{II.9.8(4)} we have that $I_a,I_b$
represent points in $\partial_\infty D$ at angle~$\angle(\xi,\eta)$.

In case \eqref{con:diagram_sequenceii}, 
by Lemma~\ref{lem:square}, we can assume that each $\square_n$ is an embedded closed path. Let $\overline \phi_n\colon \overline D_n\to \X$ be
reduced relative disc diagrams with boundary $\square_n$ guaranteed by Lemma~\ref{lem:discexists}. Again, $\overline \phi_n$ is an embedding by
Theorem \ref{thm:disc_embeds}. All $\rho_+(p_n)$ lie in a finite set of cells of $\nabla^+$. Thus, after possibly passing to a subsequence, we have
$D_n\subset \overline D_n$ with $I_n=D_n\cap
\partial \overline D_n\subset \overline \phi_n^{-1}(\bar u_n\bar v_n)$ such that $(D_n,I_n)$ converge to a half-plane diagram $(D,\partial D)$.
Moreover, all $q_n=\overline\phi^{-1}(p_n)$ coincide with a point $q\in \partial D$. Furthermore, denoting $\widetilde \phi_n=\overline \phi_{|D_n}$,
we have the frilling~$D'$ of~$D$, and the limit map $\phi'\colon D'\to \nabla^+$ of $\phi_n=\rho_+\circ \widetilde \phi_n$ that is an $\X$-reduced
relative half-plane diagram. Since $\square_n$ were $(p_n,\frac{1}{n},n)$-quadrilaterals, $D'$ is isometric to the Euclidean half-plane in~$\R^2$.
\end{constr}

\begin{lemma}
\label{lem:boundarygood} Let $\phi'\colon D'\to \nabla^+$ be a $(\xi,\eta)$-diagram. If $\xi$ is principal or antiprincipal, then there is $J'\subset
\partial D'$ with $\phi'_{|J'}$ an embedding into a principal line.
\end{lemma}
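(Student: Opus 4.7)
My strategy is to apply Lemma~\ref{lem:asymptoticparallel}, which ensures that any geodesic ray in $\X$ asymptotic to a principal (resp.\ antiprincipal) point of $\partial_\infty \X$ has a subray that is itself principal (resp.\ antiprincipal), lies in a single chamber $\Ap^+_g$, and thus has $\rho_+$-image an embedded subray of a principal line of $\nabla^+$. Since $\rho_+|_{\Ap^+_g}$ is an isometric embedding by Lemma~\ref{lem:propertiesX}(ii), the desired $J'\subset\partial D'$ will be the one on which $\phi'$ becomes the composition of $\rho_+$ with such a principal (or antiprincipal) subray. I would treat the two cases of Construction~\ref{con:diagram_sequence} separately, in each of them locating a ray $J_\xi\subset\partial D'$ corresponding to the ``$\xi$-side'' of the diagram and exhibiting a subray $J'\subset J_\xi$ with $\phi'(J')$ on a principal line.

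In case~(i), the ray $I_a\subset\partial D\subset\partial D'$ is canonically identified through each $\widetilde\phi_n$ with the geodesic ray $r_\xi\colon[0,\infty)\to\X$ from $x$ to $\xi$; hence $\phi'|_{I_a}=\rho_+\circ r_\xi$. Applying Lemma~\ref{lem:asymptoticparallel} to $r_\xi$ yields a subray $r'_\xi\subset r_\xi$ which is principal (resp.\ antiprincipal) and lies in a single chamber $\Ap^+_g$. Letting $J'\subset I_a$ be the corresponding subray, $\phi'|_{J'}=\rho_+\circ r'_\xi$ is then an embedding into the principal line of $\nabla^+$ containing $\rho_+(r'_\xi)$.

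In case~(ii), I would exploit the freedom in the construction: the hypothesis that $\xi,\eta$ are not far permits $u_n$ to be chosen in any neighbourhood of $\xi$, so I pick $u_n$ along a fixed principal (resp.\ antiprincipal) ray $r_\xi$ representing $\xi$. Using CAT(0) comparison together with the fact that each $\square_n$ is a $(p_n,1/n,n)$-quadrilateral with $R=n\to\infty$, the portion of $u_nv_n$ near $u_n$ becomes Hausdorff-close to $r_\xi$ as $n\to\infty$. After passing to a subsequence and, if necessary, gluing an auxiliary disc diagram (using Lemma~\ref{lem:union}) filling in the region between $u_nv_n$ and $r_\xi$ near $u_n$, I would arrange each $\widetilde\phi_n$ so that an initial segment of $I_n$ at the $\xi$-end lies along $r_\xi$. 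The limit identification then puts a subray $J'\subset J_\xi\subset\partial D'$ in bijection with a principal subray of $r_\xi$, and the argument of case~(i) applies.

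The main obstacle is case~(ii): rigorously upgrading the Hausdorff-closeness of $u_nv_n$ to $r_\xi$ near $u_n$ into an exact coincidence of a segment of $I_n$ with a segment of $r_\xi$. This will require a careful fill-in argument exploiting convexity of chambers and the union construction of Section~\ref{subs:unions}, while preserving the defining features of a $(\xi,\eta)$-diagram from Construction~\ref{con:diagram_sequence}.
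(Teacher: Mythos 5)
Your first step (invoking Lemma~\ref{lem:asymptoticparallel}) is the same as the paper's, and your treatment of case~(i) is correct precisely when $\phi'$ is bordered by a geodesic ray representing $\xi$ --- which the paper notes is the easy situation. But already in case~(i) your claim that $I_a$ is ``canonically identified'' with the ray $r_\xi$ from $x$ to $\xi$ is only valid when $\angle(\xi,\eta)<\pi$: Construction~\ref{con:diagram_sequence} only lets one assume $u_n$ lies on the ray $x\xi$ in that case, and for $\angle(\xi,\eta)=\pi$ the sides $x\bar u_n$ merely \emph{converge} to $r_\xi$ on compact sets, so $\phi'|_{I_a}$ need not equal $\rho_+\circ r_\xi$. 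This is fixable (the convergence plus the definition of the limit $\phi'$ still forces $\phi(J)$ into the principal line $l$ containing $\rho_+(r)$), but as written it is a gap.

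The serious problem is case~(ii). First, you are not free to re-choose $u_n$ along a principal ray: the lemma is a statement about a \emph{given} $(\xi,\eta)$-diagram, which later sections apply to diagrams produced by whatever sequences $u_n,v_n$ witness the failure of Definition~\ref{def:Rsquare}; and the negation of ``far'' only asserts that \emph{some} $u\in U$, $v\in V$ violate the quadrilateral condition --- it does not let you prescribe $u_n$. Second, even granting the choice, the ``exact coincidence'' of a segment of $I_n$ with a segment of $r_\xi$ that you defer to a fill-in argument is both unattainable in general (the side $\bar u_n\bar v_n$ lies on the geodesic $u_nv_n$, not on $r_\xi$) and unnecessary. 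The paper's actual mechanism, which your proposal is missing, is: since $p_n$ stay in the fixed ball $P$, Lemma~\ref{lem:asymptoticparallel} gives each geodesic ray $p_n\xi$ a subray $r_n$ parallel to $r$ with $\rho_+(r_n)$ in a \emph{uniform} neighbourhood of $\rho_+(r)$; the restrictions of $\widetilde\phi_n$ to compact subsets of a subray $J\subset J_\xi$ become arbitrarily close to $r_n$ because $u_n\to\xi$; and since by construction the maps $\rho_+\circ\widetilde\phi_n$ converge, their limit on $J$ is forced into a single principal line $l$, whence $\phi'(J')\subset l$ by Remark~\ref{rem:converge_nabla}. No modification of the diagram and no gluing via Lemma~\ref{lem:union} is needed; conversely, without this convergence argument your proposal does not prove the statement.
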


Note that if $\phi'$ is bordered by a geodesic ray representing $\xi$, which happens for example for $\angle(\xi,\eta)<\pi$, then Lemma~\ref{lem:boundarygood}
follows immediately from Lemma~\ref{lem:asymptoticparallel}.

\begin{proof} By Lemma~\ref{lem:asymptoticparallel}, there is a point $x'$ on the geodesic ray from $x$ to $\xi$
such that the geodesic ray $r$ from $x'$ to $\xi$ is principal or antiprincipal. Let $J_\xi\subset \partial D$ be the ray starting with $q$ with $J_\xi\cap I_n$ mapping into $x\bar
u_n$ or $p_n\bar u_n$ under $\widetilde \phi_n$.

In case \eqref{con:diagram_sequencei} of Construction~\ref{con:diagram_sequence}, since $u_n\to \xi$, the restrictions of $\widetilde \phi_n$ to some
$J\subseteq J_\xi$ converge to $r$ (in $C^1$ piecewise, up to a parametrisation). Let $l$ be the principal line containing~$\rho_+(r)$.

In case \eqref{con:diagram_sequenceii} of Construction~\ref{con:diagram_sequence}, since $p_n\in P$, after possibly passing to a subray of~$r$, each
geodesic ray from $p_n$ to $\xi$ contains a subray $r_n$ parallel to $r$ with $\rho_+(r_n)$ in a uniform neighbourhood of $\rho_+(r)$. Furthermore, since
$u_n\to \xi$, the restrictions of $\widetilde\phi_n$ to each compact subset of some subray $J\subset J_\xi$, become arbitrarily close to $r_n$. Since $\rho_+\circ \widetilde
\phi_n$ have a limit, we conclude that $\rho_+(r_n)$ and $\rho_+\circ \widetilde \phi_n(J)$ converge into some principal line $l$.

In both cases, $\phi(J)$ lies in $l$, and so does $\phi'(J')$ for the image $J'$ of $J$ in $\partial D'$ by Remark~\ref{rem:converge_nabla}.
\end{proof}

\begin{lem}
\label{lem:almostflat} Let $\phi'\colon D'\to \nabla^+$ be a $(\xi,\eta)$-diagram from
Construction~\ref{con:diagram_sequence}\eqref{con:diagram_sequencei}, and assume
$\angle_x(\xi,\eta)>\angle(\xi,\eta)-\eps$, for some $\eps>0$. Suppose furthermore $u_n,v_n\in \X$ and $\angle_x(\xi,u_n),\angle_x(\eta,v_n)<\eps$.
Then for any $D_n$ \parent{and hence for $D'$} the sum of the curvatures of vertices and edges in the interior of $D_n$ and in the interior of
$I_n\setminus q_n$ is $\geq -3\eps$. Furthermore, the angle at $q_n$ is $\leq\angle(\xi,\eta)$.
\end{lem}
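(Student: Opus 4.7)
Plan: The plan is to apply the Gauss--Bonnet formula (Theorem~\ref{thm:GB}) to the ambient disc $\overline D_n$, whose boundary is the embedded geodesic triangle $\triangle_n = x\bar u_n\bar v_n$. By Theorem~\ref{thm:disc_embeds}, the map $\overline\phi_n$ is an embedding, so the three boundary corners of $\overline D_n$ have combinatorial angles equal to the Alexandrov angles of $\triangle_n$, which I denote $\alpha$ (at $q_n$), $\beta$ (at the preimage of $\bar u_n$), $\gamma$ (at the preimage of $\bar v_n$). In particular $\alpha = \angle_x(\bar u_n,\bar v_n) = \angle_x(u_n,v_n)$, since $\bar u_n \in xu_n$ and $\bar v_n \in xv_n$.

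Next I verify that every non-corner curvature contribution to Gauss--Bonnet for $\overline D_n$ is non-positive: interior vertices have angle $\geq 2\pi$ because the link of each vertex of $\X$ is $\mathrm{CAT}(1)$; non-corner boundary vertices lying on an open side of $\triangle_n$ have angle $\geq \pi$ because two link-directions along a geodesic of $\X$ are antipodal; interior edges satisfy $\kappa(e) \leq 0$ by $\mathrm{CAT}(0)$; and boundary edges lie along geodesics, so $\kappa(e) = 0$. Writing $S$ for the sum of all non-corner contributions in $\overline D_n$, Gauss--Bonnet rearranges as
\[
S \;=\; 2\pi - \kappa_\partial(q_n) - \kappa_\partial(\bar u_n') - \kappa_\partial(\bar v_n') \;=\; \alpha + \beta + \gamma - \pi.
\]
The $\mathrm{CAT}(0)$ triangle comparison gives $\alpha+\beta+\gamma \leq \pi$, confirming $S \leq 0$. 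For the lower bound I use the triangle inequality in the link of $x$ (which is $\mathrm{CAT}(1)$):
\[
\alpha = \angle_x(u_n, v_n) \;\geq\; \angle_x(\xi, \eta) - \angle_x(\xi, u_n) - \angle_x(\eta, v_n) \;>\; \angle(\xi,\eta) - 3\eps.
\]
Since $\beta,\gamma \geq 0$ and in the setting of the paper $\angle(\xi,\eta) = \pi$, this yields $S \geq -3\eps$.

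To pass from $\overline D_n$ to an arbitrary subdisc $D_n$ with $I_n = D_n\cap \partial\overline D_n$, I observe that every vertex or edge lying in $\mathrm{int}(D_n)$ or $\mathrm{int}(I_n\setminus q_n)$ is also a non-corner vertex or edge of $\overline D_n$, with the same curvature value; the contributions to $S$ that are omitted (those of interior vertices and edges of $\overline D_n$ sitting on $\gamma_n = \partial D_n \setminus I_n$, with the appropriate conversion between $\kappa$ and $\kappa_\partial$) are all non-positive by the same $\mathrm{CAT}(0)$ arguments. So dropping them can only increase the sum, and the sum requested by the lemma is $\geq S \geq -3\eps$.

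For the ``furthermore'' statement, the angle at $q_n$ in $D_n$ equals the combinatorial angle at $q_n$ in $\overline D_n$ (the sector near $q_n$ is contained in $D_n$), which by the embedding property is $\alpha = \angle_x(u_n,v_n)$. The triangle inequality in the link of $x$ gives $\alpha \leq \angle_x(\xi, u_n) + \angle_x(\xi, \eta) + \angle_x(\eta, v_n) < \angle(\xi,\eta) + 2\eps$, yielding the stated bound. The main obstacle is the careful accounting when restricting from $\overline D_n$ to the subdisc $D_n$, which rests on having the right sign ($\leq 0$) for every curvature contribution one wishes to discard; this in turn relies crucially on the $\mathrm{CAT}(0)$ nature of $\X$ and on the embedding guaranteed by Theorem~\ref{thm:disc_embeds}.
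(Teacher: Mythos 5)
Your Gauss--Bonnet bookkeeping is fine: reducing everything to the identity $S=\alpha+\beta+\gamma-\pi$ for the ambient triangle diagram $\overline D_n$, checking that all non-corner contributions are nonpositive, and observing that passing to a subdiagram $D_n$ only discards nonpositive terms, is exactly the right framework and matches the paper. The gap is in how you bound $\alpha+\beta+\gamma$ from below. You discard $\beta$ and $\gamma$ entirely ("$\beta,\gamma\geq 0$") and then assert that "in the setting of the paper $\angle(\xi,\eta)=\pi$". That assertion is false: the lemma is invoked in the proofs of Lemma~\ref{lem:principal_relation} and Proposition~\ref{prop:far_limit} for pairs $\xi,\eta$ whose angle is a priori arbitrary in $(0,\pi]$ (indeed determining that angle is the point of those arguments, and the values $\frac{\pi}{3}$ and $\frac{2\pi}{3}$ genuinely occur). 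When $\angle(\xi,\eta)<\pi$ your estimate only gives $S\geq \angle(\xi,\eta)-3\eps-\pi$, which is far from $-3\eps$. The missing ingredient is Lemma~\ref{lem:twoinone}: in case \eqref{con:diagram_sequencei} with $\angle(\xi,\eta)<\pi$ the construction places $\bar u_n,\bar v_n$ on the rays from $x$ to $\xi,\eta$, and that lemma yields $\beta+\gamma=\angle_{\bar u_n}(x,\bar v_n)+\angle_{\bar v_n}(x,\bar u_n)\geq \pi-\angle(\xi,\eta)$, which combined with $\alpha>\angle(\xi,\eta)-\eps$ gives $S\geq-\eps$. The separate case $\angle(\xi,\eta)=\pi$ is the only one where your "$\beta,\gamma\geq 0$" suffices.

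The same omission breaks your proof of the "furthermore" statement. You only obtain $\alpha<\angle(\xi,\eta)+2\eps$ from the triangle inequality in the link of $x$, but the lemma claims the sharp bound $\alpha\leq\angle(\xi,\eta)$, and that sharpness is used downstream (e.g.\ to place $\theta$ in the interval $[\pi-\angle(\xi,\eta),\pi-\angle(\xi,\eta)+\eps]$ in the proof of Lemma~\ref{lem:principal_relation}). The correct derivation is again via Lemma~\ref{lem:twoinone}: Gauss--Bonnet with nonpositive non-corner terms gives $\alpha+\beta+\gamma\leq\pi$, and $\beta+\gamma\geq\pi-\angle(\xi,\eta)$ then forces $\alpha\leq\angle(\xi,\eta)$.
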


\begin{proof}
Suppose first that $\angle (\xi,\eta)<\pi$, in which case we assumed that $\bar u_n,\bar v_n$ lie on the geodesic rays from $x$ to $\xi,\eta$,
respectively. 
By Lemma~\ref{lem:twoinone}, we observe $\angle_{\bar u_n}(x,\bar v_n)+\angle_{\bar v_n}(x,\bar u_n)\geq\pi-\angle(\xi,\eta)$.
Thus by Theorem~\ref{thm:GB}, the sum of the curvatures in question is
$\geq -\eps$. The assertion on the angle at $q_n$ also follows from the above observation and Theorem~\ref{thm:GB}.

Second, suppose $\angle (\xi,\eta)=\pi$, and so $\angle_x(\xi,\eta)>\pi-\eps$. Thus $\angle_x(\bar u_n,\bar v_n)>\pi-3\eps.$ Then again by
Theorem \ref{thm:GB}, the sum of the curvatures in question is $\geq -3\eps$.
\end{proof}

We need a similar result for $\xi=\eta$. We start with the following.

\begin{lem}
\label{lem:morerays} Let $X$ be a complete $\mathrm{CAT}(0)$ space, and let $d > 0$. Then there exists a function $\psi\colon [0,\infty)\to \R_{>0}$
with $\lim_{t\to \infty}\psi(t)=0$, such that for any disjoint geodesic rays $r,r'$ representing the same point $\xi \in \partial_\infty X$, with
$d_X\big(r(0),r'(0)\big)\leq d$, the projection~$p(t)$ of $r(t)$ to $r'[0,\infty)$ satisfies
$$\Big|\angle_{r(t)}\big(r(0),p(t)\big) -\frac{\pi}{2}\Big|\leq \psi(t).$$
\end{lem}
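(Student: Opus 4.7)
The plan is to analyse the triangle $r(0)\,r(t)\,p(t)$ in $X$ using the projection property at $p(t)$ combined with the $\mathrm{CAT}(0)$ angle sum inequality, carrying along quantitative estimates in $t$. For the setup: since $r$ and $r'$ are asymptotic, the function $u\mapsto d_X\!\big(r(u),r'(u)\big)$ is convex and bounded on $[0,\infty)$, hence non-increasing, giving $b:=d_X\!\big(r(t),p(t)\big)\le d$. Writing $p(t)=r'(s_t)$ and using that the Busemann function $b_\xi$ is $1$-Lipschitz with slope $-1$ along both $r$ and $r'$, combined with $|b_\xi(r(0))-b_\xi(r'(0))|\le d$, a routine computation gives $|s_t-t|\le 2d$; in particular $s_t>0$ once $t>2d$, so that $p(t)\ne r'(0)$.

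For the upper bound on $\alpha_t:=\angle_{r(t)}\!\big(r(0),p(t)\big)$ I would argue in three substeps. By the standard projection property onto the convex geodesic ray $r'[0,\infty)$ one has $\angle_{p(t)}\!\big(r(t),r'(0)\big)\ge\tfrac\pi2$. In the triangle $p(t)\,r(0)\,r'(0)$, the side $r(0)r'(0)$ has length $\le d$ while the two sides incident to $p(t)$ have length $\ge t-2d$, so the Euclidean law of sines in the comparison triangle together with $\angle\le\bar\angle$ gives $\angle_{p(t)}\!\big(r(0),r'(0)\big)\le \arcsin\!\bigl(d/(t-2d)\bigr)$. The triangle inequality in the $\mathrm{CAT}(1)$ link at $p(t)$ then yields $\angle_{p(t)}\!\big(r(0),r(t)\big)\ge \tfrac\pi2-\arcsin\!\bigl(d/(t-2d)\bigr)$; plugging this together with $\angle_{r(0)}\ge 0$ into the $\mathrm{CAT}(0)$ angle sum for the triangle $r(0)\,r(t)\,p(t)$ gives $\alpha_t\le\tfrac\pi2+\arcsin\!\bigl(d/(t-2d)\bigr)$.

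For the matching lower bound I would use the projection property in the forward direction: for any $s>s_t$ one has $\angle_{p(t)}\!\big(r(t),r'(s)\big)\ge\tfrac\pi2$, so the $\mathrm{CAT}(0)$ angle sum in the triangle $r(t)\,p(t)\,r'(s)$ forces $\angle_{r(t)}\!\big(p(t),r'(s)\big)\le\tfrac\pi2$. Letting $s\to\infty$, the geodesics from $r(t)$ to $r'(s)$ converge uniformly on compact sets to the geodesic ray from $r(t)$ to $\xi$, so their initial directions converge in the link at $r(t)$, and hence $\angle_{r(t)}\!\big(p(t),\xi\big)\le\tfrac\pi2$. Since $r(0)$ and $\xi$ are opposite at $r(t)$ on the geodesic through $r(t)$, the link triangle inequality yields $\alpha_t\ge\pi-\angle_{r(t)}\!\big(p(t),\xi\big)\ge\tfrac\pi2$. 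Defining $\psi(t):=\arcsin\!\bigl(d/(t-2d)\bigr)$ for $t\ge 3d$ (and $\psi(t):=\tfrac\pi2$ otherwise) gives $|\alpha_t-\tfrac\pi2|\le\psi(t)\to 0$, with $\psi$ depending only on $d$.

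The main technical subtlety is the passage from $\angle_{r(t)}\!\big(p(t),r'(s)\big)$ to $\angle_{r(t)}\!\big(p(t),\xi\big)$: since $X$ need not be proper, this continuity is not automatic, but it follows from the uniform-on-compacts convergence of geodesics whose endpoints converge in the cone topology, which translates to convergence of initial directions in the link.
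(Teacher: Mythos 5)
Your proof is correct, but it is organised differently from the paper's. The paper normalises the parametrisations so that $b_\xi\big(r(t)\big)=b_\xi\big(r'(t)\big)=-t$ for a Busemann function $b_\xi$, deduces $\angle_{r(t)}\big(r(0),r'(t)\big)\geq\frac{\pi}{2}$ from the fact that horoballs are limits of balls, obtains the complementary bound $\angle_{r(t)}\big(r(0),r'(t)\big)\leq\frac{\pi}{2}+\frac{\psi(t)}{2}$ by projecting a point of the segment $r(0)r'(0)$ onto the chord $r(t)r'(t)$ and applying \cite{BH}*{II.1.7(5)}, and only at the very end transfers the estimate from the direction of $r'(t)$ to the direction of $p(t)$ via the angle sum in the triangle $r(t)r'(t)p(t)$. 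You instead work with $p(t)$ throughout: your upper bound comes from the projection inequality at $p(t)$ toward $r'(0)$ combined with the law of sines in the long thin comparison triangle for $p(t)r(0)r'(0)$, and your lower bound comes from the forward projection inequality $\angle_{p(t)}\big(r(t),r'(s)\big)\geq\frac{\pi}{2}$ together with a limiting argument showing $\angle_{r(t)}\big(p(t),\xi\big)\leq\frac{\pi}{2}$. The one point that genuinely needs care is the one you flag: passing from $\angle_{r(t)}\big(p(t),r'(s)\big)\leq\frac{\pi}{2}$ to the same bound for $\xi$ in a non-proper space. This does go through, because the geodesics $r(t)r'(s)$ converge uniformly on compacts to the ray $r(t)\xi$ (this is how the asymptotic ray is constructed in \cite{BH}*{II.8.2}, using only completeness), and for geodesics issuing from a common point the Alexandrov angle between them is bounded above by the comparison angle of their time-$a$ points for any fixed $a$, which tends to $0$; the triangle inequality in the space of directions then gives the claim. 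What each approach buys: the paper gets both inequalities from one normalisation and one horoball observation, at the price of a slightly delicate ``without loss of generality'' reparametrisation; yours avoids the Busemann normalisation entirely and produces an explicit $\psi(t)=\arcsin\big(d/(t-2d)\big)$, at the price of the limiting argument just described. Both are valid and yield $\psi$ of the same order $d/t$.
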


\begin{proof}
Without loss of generality, we can assume $b_\xi(r(t))=b_\xi(r'(t))=-t$ for a Busemann function $b_\xi$. Since the horoball $b_\xi\leq -t$ is a limit
of balls, we have the inequality $\angle_{r(t)}\big(\xi,r'(t)\big)\leq \frac{\pi}{2}$.  Consequently,
$\alpha(t)=\angle_{r(t)}\big(r(0),r'(t)\big)\geq \frac{\pi}{2}$, and analogously $\beta(t)=\angle_{r'(t)}\big(r'(0),r(t)\big)\geq \frac{\pi}{2}$.

Let $x$ be the last point on the geodesic $r(0)r'(0)$ whose projection to the geodesic $r(t)r'(t)$ equals $r(t)$.
Then we have $\angle_{r(t)}\big(r'(t),x\big)=\frac{\pi}{2}$.
Considering the triangle $r(0)r(t)x$, by \cite{BH}*{II.1.7\parent{5}} we have
$$
d \ge d_X\big(r(0),r'(0)\big)\geq d_X\big(r(0),x\big)\geq t\sin \big(\alpha(t)-\tfrac{\pi}{2}\big).
$$
Consequently, there is a function $\psi$ with $\lim_{t\to \infty}\psi(t)=0$ satisfying for all $r, r'$ the bounds $\frac{\psi(t)}{2} \ge
\alpha(t)-\frac{\pi}{2}$, and analogously $\frac{\psi(t)}{2} \ge \beta(t)-\frac{\pi}{2}$. Since the sum of the Alexandrov angles in the triangle
$r(t)r'(t)p(t)$ is $\leq \pi$, we obtain $\angle_{r(t)}\big(r'(t),p(t)\big)\leq \frac{\psi(t)}{2}$. Consequently,
\[
\Big|\angle_{r(t)}\big(r(0),p(t)\big) -\frac{\pi}{2}\Big| \leq
\Big|\angle_{r(t)}\big(r(0),r'(t)\big) -\frac{\pi}{2}\Big|+ \Big|\angle_{r(t)}\big(r'(t),p(t)\big)\Big|\leq \psi(t).
\qedhere
\]
\end{proof}

\begin{cor}
\label{cor:2rays}
Let $r\colon [0,\infty)\to X$ be a geodesic ray in a complete $\mathrm{CAT}(0)$ space~$X$. Then for any $\eps>0, d>0$ and any
non-increasing function $\delta\colon [0,\infty)\to \R_{>0}$, there is $T>0$ such that
\begin{itemize}
\item for any geodesic ray $r'\colon [0,\infty)\to X$ asymptotic to $r$ satisfying
\begin{itemize}
\item $d_X\big(r(0),r'(0)\big)\leq d,$ and
\item $d_X\big(r(t),r'[0,\infty)\big)\geq \delta(t)$ for each $t\in [0,\infty),$ and
\end{itemize}
\item for any $t\in (T,\infty)$, for any $0<\eps'<\min\{\delta(t)^4,d,\eps\}$, and for any point $s\in X$ at distance $\leq \eps'$ from
    $r'(t+4d)$ such that the projections $s(t),s(T)$ of $r(t),r(T)$ onto $sr'(0)$ are distinct,
\end{itemize}
we have that the Alexandrov angles of the geodesic quadrilateral $s(t)s(T)r(T)r(t)$ are $\geq \frac{\pi}{2}$ at $s(t)$ and $s(T)$ and $\geq
\frac{\pi}{2}-9\sqrt{d}\sqrt[4]{\eps}$ at $r(t)$ and $r(T)$.
\end{cor}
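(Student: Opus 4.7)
The plan is to split the four angles of the quadrilateral $s(t)s(T)r(T)r(t)$ into two pairs, treated by different methods: the angles at $s(t), s(T)$ are handled directly by the first-variation property of the CAT(0) projection onto the segment $sr'(0)$, while the angles at $r(t), r(T)$ require Lemma~\ref{lem:morerays} together with an estimate comparing the projections onto $sr'(0)$ with those onto the full ray $r'[0,\infty)$.

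First I would apply Lemma~\ref{lem:morerays} to $r, r'$ to obtain the function $\psi$ with $\psi(t)\to 0$, and choose $T$ large enough that $\psi(t)\leq \sqrt{d}\,\sqrt[4]{\eps}$ for all $t\geq T$. The angles at $s(t), s(T)$ are then immediately $\geq \pi/2$: both points lie on the convex geodesic segment $sr'(0)$, so applying the first-variation inequality for the CAT(0) projection at $s(t)$ (respectively $s(T)$) gives $\angle_{s(t)}(r(t), s(T))\geq \pi/2$ and $\angle_{s(T)}(r(T), s(t))\geq \pi/2$.

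For the angles at $r(t), r(T)$, let $p(t), p(T)$ denote the projections of $r(t), r(T)$ onto the ray $r'[0,\infty)$. Lemma~\ref{lem:morerays} yields $|\angle_{r(t)}(r(0), p(t)) - \tfrac{\pi}{2}|\leq \psi(t)$; since $r(0), r(T), r(t)$ lie on the geodesic $r$ with $T<t$, the directions from $r(t)$ towards $r(0)$ and towards $r(T)$ coincide, so $\angle_{r(t)}(r(T), p(t))\in [\tfrac{\pi}{2}-\psi(t), \tfrac{\pi}{2}+\psi(t)]$, and analogously at $r(T)$. By the triangle inequality for the angular pseudo-metric at $r(t)$, it therefore suffices to control $\angle_{r(t)}(p(t), s(t))$ by a small multiple of $\sqrt{d}\,\sqrt[4]{\eps}$.

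The main obstacle is this last bound. Since the Busemann function $b_\xi$ is $1$-Lipschitz and $d_X(r(t), r'(t))\leq d$ by CAT(0) convexity of the distance function between asymptotic rays, $p(t)=r'(s^*)$ with $|s^*-t|\leq d$, hence $p(t)\in r'[0, t+4d]$. Since $s$ is within $\eps'$ of $r'(t+4d)$, the two segments $sr'(0)$ and $[r'(0), r'(t+4d)]$ share the endpoint $r'(0)$, and CAT(0) convexity bounds their Hausdorff distance by $\eps'$, providing $p'\in sr'(0)$ with $d_X(p', p(t))\leq \eps'$. The first-variation bound $\angle_{s(t)}(r(t), p')\geq \pi/2$ combined with CAT(0) Euclidean comparison in the triangle $r(t)s(t)p'$ yields
\[
d_X\big(s(t), p'\big)^2 \leq d_X\big(r(t), p'\big)^2 - d_X\big(r(t), s(t)\big)^2 = O(d\eps'),
\]
using $d_X(r(t), p(t))\leq d$ and $d_X(r(t), s(t))\geq d_X(r(t), p(t))-\eps'$. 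Hence $d_X(s(t), p(t)) = O(\sqrt{d\eps'})$, and Euclidean comparison in the triangle $r(t)s(t)p(t)$, whose sides adjacent to $r(t)$ both have length $\geq \delta(t)-\eps'$, gives $\angle_{r(t)}(s(t), p(t)) = O(\sqrt{d\eps'}/\delta(t))$. The hypothesis $\eps'\leq \delta(t)^4$ yields $\delta(t)\geq \sqrt[4]{\eps'}$, so $\sqrt{d\eps'}/\delta(t) \leq \sqrt{d}\,\sqrt[4]{\eps'} \leq \sqrt{d}\,\sqrt[4]{\eps}$. Adding $\psi(t)$ and bookkeeping the implicit constants yields the required $9\sqrt{d}\,\sqrt[4]{\eps}$; the identical argument at $r(T)$ completes the proof.
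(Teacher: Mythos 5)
Your proposal is correct and follows essentially the same route as the paper's proof: choose $T$ via the function $\psi$ of Lemma~\ref{lem:morerays}, get the angles at $s(t),s(T)$ from the projection property of the segment $sr'(0)$, and control the angles at $r(t),r(T)$ by showing $d_X\big(s(t),p(t)\big)=O(\sqrt{d\eps'})$ and converting via $\delta(t)\geq\sqrt[4]{\eps'}$. The only (immaterial) difference is that you extract the $O(\sqrt{d\eps'})$ bound from the right angle at $s(t)$ in the triangle $r(t)s(t)p'$, whereas the paper uses the right angle at $p(t)$ in the triangle $r(t)p(t)p'(t)$.
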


In particular, by Theorem~\ref{thm:GB}, for any reduced relative disc diagram $\phi\colon D\to X$ with boundary path $s(t)s(T)r(T)r(t)$, the angles
in~$D$ at the preimages under $\phi$ of $s(t),s(T),r(T),r(t)$ are nearly $\frac{\pi}{2}$.

\begin{figure}
\includegraphics{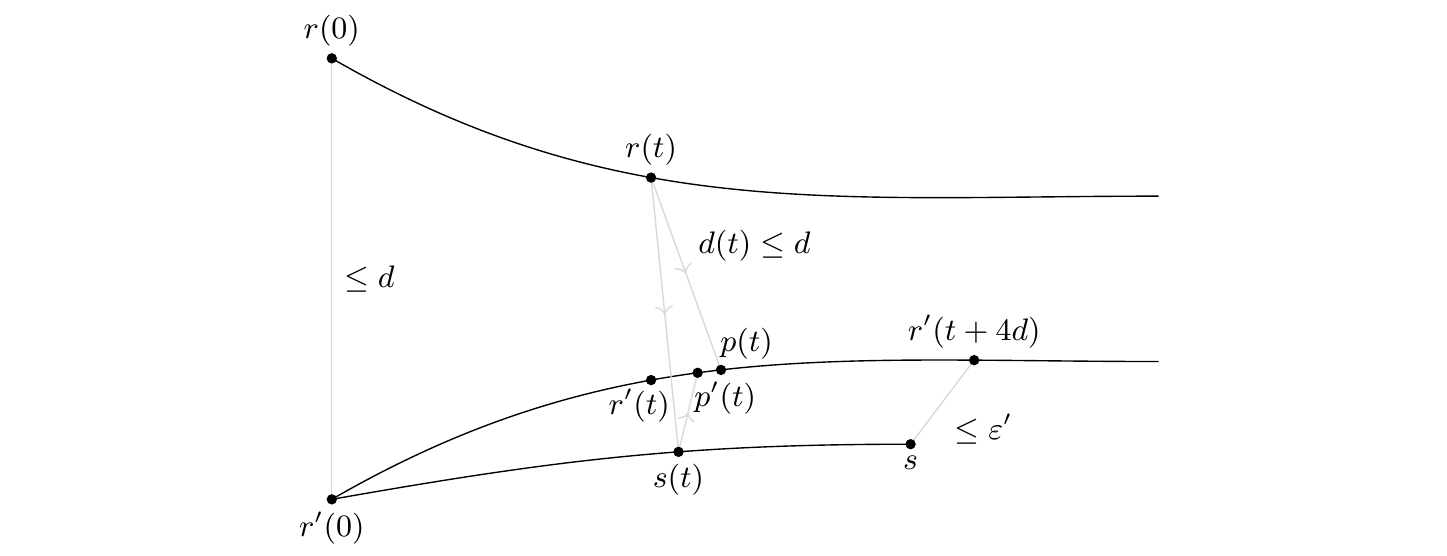}
\caption{Proof of Corollary \ref{cor:2rays}. Projections are denoted by arrows.}
\label{fig:de} 
\end{figure}

\begin{proof}
Choose $T$ so that the function $\psi$ from Lemma~\ref{lem:morerays} satisfies $\psi[T,\infty)<\sqrt{d}\sqrt[4]{\eps}$ (this choice does not matter
until the last line of the proof).
 
Let $p(t),p'(t)$ be the projections of $r(t),s(t)$ to $r'$, and let $d(t)=d_X\big(r(t),p(t)\big)$. See Figure~\ref{fig:de}. Since $p(t)$ is at
distance $\leq \eps'$ from a point on $r'(0)s$, and $s(t)$ is the projection of $r(t)$ onto $r'(0)s$, we have $d_X\big(r(t),s(t)\big)\leq d(t)+\eps'$.
In particular, we have $d_X\big(r(t), s(t)\big)< 2d$, and so $d_X\big(r'(t), s(t)\big)< 3d$, which implies that $s(t)$ is distinct from~$s$.
Analogously, $s(T)$ is distinct from~$s$. Similarly, taking $T>3d$ we have that $s(t)$ and~$s(T)$ are distinct from $r'(0)$. Then
$\angle_{s(T)}\big(s(t),r(T)\big)\geq \frac{\pi}{2}$ and $\angle_{s(t)}\big(s(T),r(t)\big)\geq \frac{\pi}{2}$.

Furthermore, we have $d_X\big(r(t),p'(t)\big)\leq d(t)+2\eps'$ and so, considering the triangle $r(t)p(t)p'(t)$, we obtain $d_X\big(p(t),p'(t)\big)\leq
\sqrt{4d(t)\eps'+4\eps'^2}$. Using the assumptions $d(t) \le d$ and $\eps' < d$, we get  $d_X\big(p(t),s(t)\big)\leq
\eps'+\sqrt{4d\eps'+4\eps'^2}< 4\sqrt {d\eps'}$. Analogously, the projection $p(T)$ of $r(T)$ onto $r'$ is at distance $< 4\sqrt {d\eps'}$ from
$s(T)$. Consequently, by \cite{BH}*{II.1.7(5)} we have that $\sin \angle_{r(t)}\big(p(t),s(t)\big)$ and $\sin \angle_{r(T)}\big(p(T),s(T)\big)$ are
$< 4\sqrt {d\eps'}/\delta(t)$. Since $\theta \leq 2\sin \theta$ for $\theta\in [0,\frac{\pi}{2}]$, and $\sqrt{\eps'}/\delta(t)<
\sqrt[4]{\eps'}$, these Alexandrov angles are $< 8 \sqrt{d}\sqrt[4]{\eps}$. By the definition of $\psi$, the differences between $\frac{\pi}{2}$
and $\angle_{r(t)}\big(p(t),r(T)\big)$ and $\angle_{r(T)}\big(p(T),r(t)\big)$ are $\leq\sqrt{d}\sqrt[4]{\eps}$ and the corollary follows.
\end{proof}

\section{Limit points that are not far}
\label{sec:limitnotfar}
 
In this section we describe particular pairs of limit points that are not far. 

\subsection{Angles between principal (and antiprincipal) points in \texorpdfstring{$\partial_\infty \X$}{dX}}

We first prepare the proof of Lemma~\ref{lem:principal_relation}.

\begin{lem}
\label{lem:anglesmatch} Let $\phi\colon D\to \nabla^+$ be an $\X$-reduced half-plane diagram with $\partial D=I_a\cdot I_b\cdot I_c$, where $I_a,I_c$
are geodesic rays, and $I_b$ is a possibly trivial geodesic segment. Suppose that the sum of the curvatures of vertices and edges in the interior of
$D$ and in the interior of $I_a,I_b, I_c$ is $\geq -\eps\geq -\eps_0$ from Corollary~\ref{cor:consise}. Let $\theta$ be the sum of the curvatures of
$I_a\cap I_b$ and $I_b\cap I_c$.

Then for each compact subcomplex $F\subset D$, there is a compact subcomplex $F\subseteq F'\subset D$, such that for any point $y_a\in I_a\setminus
F'$ and a unit vector $u_a$ tangent to~$I_a$ at~$y_a$, there exists a point $y_c\in I_c\setminus F$ such that the unit vector $u_c$ tangent to~$I_c$
at $y_c$, directed towards the same end of $\partial D$ as $u_a$, satisfies the following. Namely, there is an embedded path of length in
$[\theta-3\eps,\theta +3\eps]$ in the unit circle $S$, whose endpoints project to $\phi^\angle_{y_a}(u_a),\phi^\angle_{y_c}(u_c)\in S^\angle$.
\end{lem}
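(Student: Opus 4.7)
The plan is to choose a geodesic $\gamma$ in $D$ from $y_a$ to a suitable $y_c\in I_c$, apply Gauss--Bonnet to the enclosed region $R$ in order to relate its interior angles at $y_a,y_c$ to $\theta$, and then compare $u_a,u_c$ by parallel-transporting $u_a$ along $\gamma$ and exploiting the invariance of $\phi^\angle$ along such transport. First I would enlarge $F$ to a compact $F'\subseteq D$ large enough that, for every $y_a\in I_a\setminus F'$, one can pick $y_c\in I_c\setminus F$ far along $I_c$ so that the unique $\mathrm{CAT}(0)$ geodesic $\gamma$ from $y_a$ to $y_c$ in $D$, together with the sub-path of $\partial D$ through $I_b$, bounds a compact region $R\subset D$ containing both corners of $\partial D$ in its boundary. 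Let $\alpha_{y_a},\alpha_{y_c}$ denote the interior angles of $R$ at $y_a,y_c$.

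Applying Theorem~\ref{thm:GB} to $R$, all interior vertex and edge curvatures of $D$ are nonpositive, as are the boundary curvatures at the interior vertices of the geodesic sub-paths of $I_a,I_b,I_c$ and of $\gamma$ (the latter because $\gamma$ is a $\mathrm{CAT}(0)$ geodesic, so each of its interior vertices has angle $\geq\pi$ on each side), while the boundary edge curvatures along all four sub-paths vanish. The restriction to $R$ of the hypothesis of the lemma gives lower bound $-\eps$ for the total of the first three groups of nonpositive terms, and the interior-vertex contributions of $\gamma$ satisfy $\bigl|\sum_{v\in\gamma}\kappa_\partial(v)\bigr|\leq\bigl|\sum_{v\in\gamma}\kappa(v)\bigr|\leq\eps$. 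Adding the corner contributions $\theta$ and the vertex contributions $\pi-\alpha_{y_a},\pi-\alpha_{y_c}$ at $y_a,y_c$ and balancing with $2\pi$, I obtain
\[
\alpha_{y_a}+\alpha_{y_c}\in[\theta-2\eps,\theta].
\]

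Finally, consider the parallel transport $T\colon S_{y_a}\to S_{y_c}$ along $\gamma$ in the degenerate Euclidean metric on $D$. Since $\gamma$ is a geodesic, its tangent is $T$-invariant, so a direct angle computation gives $\angle_{S_{y_c}}\bigl(T(u_a),u_c\bigr)=\alpha_{y_a}+\alpha_{y_c}$. Moreover, $\phi$ restricts to a local isometry on every $2$-cell of $D$, so $\phi^\angle$ is constant along smooth arcs within a single cell; crossings of $1$-cells of $D$ whose image in $\nabla^+$ lies in a principal line are neutralised by the quotient $S\to S^\angle$ exactly as in the reasoning behind Definition~\ref{def:u}. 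Perturbing $\gamma$ generically off interior vertices of $D$ and off non-principal edges of the folding locus $L$, one obtains $\phi^\angle_{y_a}(u_a)=\phi^\angle_{y_c}\bigl(T(u_a)\bigr)$ in $S^\angle$, and the arc in $S_{y_c}\cong S$ of length $\alpha_{y_a}+\alpha_{y_c}$ between $T(u_a)$ and $u_c$ provides the required embedded path in $S$. The main obstacle is to justify that $\gamma$ can be arranged to cross only principal-line edges of $L$: non-oriented components of $L$ as in Figure~\ref{fig:concise}(h,i) can map to non-principal admissible lines, and crossings of these shift $\phi^\angle$ by an element outside the dihedral group. I would handle this using Corollary~\ref{cor:consise} and Lemma~\ref{lem:curved_edges} to bound the length and placement of such components and by further enlarging $F'$ to push $\gamma$ beyond them, absorbing the residual discrepancy into the extra $\eps$ of slack in the stated bound $[\theta-3\eps,\theta+3\eps]$.
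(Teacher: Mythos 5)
Your overall architecture --- Gauss--Bonnet applied to the region cut off by a transversal from $I_a$ to $I_c$, followed by parallel transport and the invariance of $\phi^\angle$ across principal edges --- is the same as the paper's, and your curvature computation giving $\alpha_{y_a}+\alpha_{y_c}\in[\theta-2\eps,\theta]$ is essentially the paper's (the paper uses a piecewise geodesic transversal $\beta$ in $D\setminus D^0$ rather than a genuine geodesic, precisely to sidestep the issues with interior vertices and curved interior edges that you wave at by ``perturbing generically''; the turning of $\beta$ is then absorbed into the Gauss--Bonnet bookkeeping). The genuine gap is in your last paragraph, which is where the lemma actually lives. You propose to deal with crossings of non-oriented (non-principal) components of the folding locus by ``further enlarging $F'$ to push $\gamma$ beyond them''. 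This cannot work: enlarging $F'$ only constrains where $y_a$ may be chosen, while the transversal must still run all the way from $I_a$ to $I_c$ and will cross whatever non-oriented components separate $y_a$ from $I_c$; moreover such components need not be confined to any compact subcomplex of $D$ (nothing in the hypotheses prevents a sequence of them occurring arbitrarily far out), so no compact $F'$ puts the path past all of them.

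The correct mechanism, and the actual content of the proof, is to cross these components and to pay for each crossing out of the curvature budget: if $\beta$ crosses a non-oriented edge $f$ at $z$ and $\vec f\in S_z$ is the tangent direction of $f$ not pointing towards $\partial D$, then Lemma~\ref{lem:curved_edges} says that either the component of $\LL\setminus z$ tangent to $\vec f$ is short and ends on $\partial D$, or it contributes at most $-\eps'$ to the interior curvature of $D$, where $\eps'$ is the angle of $\phi^\angle_z(\vec f)$ from the principal point. The specific choice of $F'$ (containing the non-oriented components with both ends on $\partial D$ that start at $I_b$ or meet $F$) and the requirement that $\beta$ cross each non-oriented component at most once, leaving a sub-component not ending on $\partial D$, are exactly what rule out the first alternative. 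Summing over all crossings against the total budget $-\eps$ shows that the angles $\eps'$ add up to at most $\eps$, whence the total drift of $\phi^\angle$ along $\beta$ is at most $2\eps$; this is where the extra $\eps$ in $[\theta-3\eps,\theta+3\eps]$ comes from. Your proposal names Lemma~\ref{lem:curved_edges} but uses it for the wrong purpose (to bound ``length and placement'' so that the crossings can be avoided), so the decisive estimate is missing.
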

\begin{proof}
Choose $F'$ to be the minimal subcomplex of $D$ such that
\begin{itemize}
\item $D\setminus F'$ is simply connected, and
\item $F'$ contains $F$, and
\item $F'$ contains all the non-oriented components of the
folding locus $L$ of $D$ with 
\begin{itemize}
\item both ends on $\partial D$, and 
\item starting at $I_b$ or intersecting $F$.
\end{itemize}
\end{itemize}

Let $u_a$ be a unit vector tangent to~$I_a$ at a point $y_a$ outside $F'$. Then there is a piecewise geodesic path $\beta$ in $D\setminus D^0$ from
$y_a$ to a point $y_c\in I_c$ that intersects each non-oriented component $\LL$ of $L$ at most once. Furthermore, by our choice of $F'$, we can
assume that for each such $\LL$, there is a connected component of $\LL\setminus \beta$ that does not end on $\partial D$. Let $u_c$ be the unit
vector tangent to~$I_c$ at $y_c$, directed towards the same end of $\partial D$ as $u_a$.

We consider the non-oriented edges $f$ of $L$ intersected by $\beta$ at points $z$, and we denote by $\vec f$ a unit tangent vector to $f$ at~$z$
that is not directed towards $\partial D$. By Lemma~\ref{lem:curved_edges}, the total angle of all $\phi_z^\angle(\vec f)$ from the principal
direction is $\leq \eps$.

Let $D_\beta$ be the disc diagram that is the bounded connected component of $D\setminus \beta$. The sum of the curvatures outside $\beta$ in
$D_\beta$ lies in $[\theta-\eps, \theta]$. By Theorem \ref{thm:GB}, the sum of the curvatures along~$\beta$ \parent{including its endpoints} lies in
$[2\pi-\theta, 2\pi-\theta+\eps]$.
Thus, if we parallel transport $u_a$ along $\beta$, we obtain a vector $u$ in the obvious extension of~$S_{y_c}$
to the unit circle, at angle from $u_c$ in $[\theta-\eps,\theta]$. Furthermore, $\phi^\angle_{y_c}(u)$ and $\phi^\angle_{y_a}(u_a)$ differ by an
angle $\leq 2\eps$ since $\phi_z^\angle(\vec f)$ were at total angle $\leq \eps$ from $\partial S^\angle$.
\end{proof}

\begin{proof}[Proof of Lemma~\ref{lem:principal_relation}]
Suppose $\angle(\xi,\eta)\neq\frac{2\pi}{3}$. Choose $\eps\leq \eps_0$ from Corollary~\ref{cor:consise} such that the interval
$[\angle(\xi,\eta)-4\eps,\angle(\xi,\eta)+4\eps]$ does not contain $0$ and does not contain~$\frac{2\pi}{3}$. Choose $x$ so that $\angle_x(\xi,\eta)>
\angle(\xi,\eta)-\frac{\eps}{3}$. Let $\phi'\colon D'\to \nabla^+$ be a $(\xi,\eta)$-diagram for that $x$.

Suppose first that we are in case \eqref{con:diagram_sequenceii} of Construction~\ref{con:diagram_sequence}, where $D'$ is isometric to the Euclidean half-plane in~$\R^2$.
Consider a unit vector $u$ tangent to $\partial D'$.
Since $\xi,\eta$ are both principal, by Lemma~\ref{lem:boundarygood} the function $\phi'^\angle\colon \mathbb S\to S^\angle$ as in
Definition~\ref{def:u} simultaneously assigns to $u$ the
principal and the antiprincipal point, contradiction.

Now assume that we are in case \eqref{con:diagram_sequencei} of Construction~\ref{con:diagram_sequence}. Let $q'$ be the image of~$q$ in~$D'$. By
Lemma~\ref{lem:almostflat}, the sum of the curvatures of vertices and edges in the interior of $D'$ and in the interior of $\partial D' \setminus q'$
is $\geq -\eps$. Let $J_\xi,J_\eta\subset \partial D'$ be the rays~$J'$ from Lemma~\ref{lem:boundarygood} applied to $\xi$ and $\eta$. Then the
closure $F$ of $\partial D'\setminus (J_\xi\cup J_\eta)$ is compact. Choose $F'\supseteq F$ provided by Lemma~\ref{lem:anglesmatch}, and let $y_a\in
J_\xi\setminus F'$ with $\phi_{y_a}'^\angle(u_a)$ the principal point. Let $u_c\in S_{y_c}$ be the vector guaranteed by Lemma~\ref{lem:anglesmatch}.
By Lemma~\ref{lem:almostflat}, we have $\theta\in [\pi-\angle(\xi,\eta),\pi-\angle(\xi,\eta)+\eps]$, which is at distance $>3\eps$ from
$\frac{\pi}{3}$ and from $\pi$. Since $\phi_{y_c}'^\angle(u_c)$ is the antiprincipal point, there is no path in $S$ of length in $[\theta-
3\eps,\theta+3\eps]$ whose endpoints project to $\phi'^\angle_{y_a}(u_a),\phi'^\angle_{y_c}(u_c)$. This contradicts the conclusion of
Lemma~\ref{lem:anglesmatch}.

Consequently, we have $\angle (\xi,\eta)=\frac{2\pi}{3}$. Let $\zeta$ be the midpoint of the geodesic~$\xi\eta$ in $\partial_\infty  \X$. Let
$\X_\xi,\X_\eta$ be the connected components of the preimage of $\nabla^+\setminus \{\alpha_2=\alpha_3\}$ under $\rho_+$, containing the interior of some
(hence all by Lemma~\ref{lem:asymptoticparallel}) principal ray $r_\xi,r_\eta\colon [0,\infty)\to \X$ representing $\xi,\eta$, respectively. By
\cite{BH}*{II.9.10} (applied with $\frac{a'}{a}=2$), the geodesic ray $r_\zeta$ representing $\zeta$ starting at $r_\xi(1)$ satisfies
$$d_\X\big(r_\xi(2t),r_\zeta(t)\big)/t\nearrow \sqrt 3.$$ Consequently, $\rho_+(r_\zeta)$ is disjoint from $\{\alpha_2=\alpha_3\}$ and
so $r_\zeta\subset \X_\xi$. Furthermore, since $\sqrt 3<2,$ for any compact subset $K\subset \nabla^+$ we have that $\rho_+\big(r_\zeta(t)\big)$ is
disjoint from $K$ for $t$ sufficiently large. We have analogous properties for the geodesic ray $r'_\zeta$ representing $\zeta$ starting at
$r_\eta(1)$.

Since $\xi\neq \eta$, we have that $\X_\xi,\X_\eta$ are disjoint. Since $r_\zeta,r'_\zeta$ are asymptotic, we have that $\rho_+(r_\zeta)$ is
contained in the set $\{\alpha_2\leq m\alpha_3\}$, for some $m\geq 2$. We set~$K$ above to $\{\alpha_1=\alpha_2\leq m\alpha_3\}$, and we conclude
that a subray $r_\zeta[T,\infty)$ of $r_\zeta$ is contained in the preimage $\X'_\xi\subset \X_\xi$ under $\rho_+$ of $\{\alpha_3<\alpha_2\leq
m\alpha_3\}\setminus K$. Each connected component of $\X'_\xi$ is quasi-isometric to the connected component of the preimage under $\rho_+$ of
$\{\alpha_2=m\alpha_3\}\setminus K$, which is a tree with only one infinite path and that path is an antiprincipal ray. Consequently,
$r_\zeta[T,\infty)$ is antiprincipal, and it lies in the same chamber as the principal ray representing $\xi$ starting at $r_\zeta(T)$, as desired.

The last assertion of the lemma follows from interchanging in the proof the words `principal' and `antiprincipal', defining $K=\{\alpha_1\leq
m\alpha_2=m\alpha_3\}$ and replacing $\{\alpha_3<\alpha_2\leq m\alpha_3\}\setminus K$ by $\{\alpha_2<\alpha_1\leq m\alpha_2\}\setminus K$.
\end{proof}

The following application of Lemma~\ref{lem:principal_relation} produces distinct principal points in $\partial_\infty  \X$ at angle $\neq\frac{2\pi}{3}$ (and hence far).

\begin{lem}
\label{lem:2pi3} Let $g\in C$ be parabolic and let $\zeta$ be the class of the principal rays in $\Ap^+_\id$. Let $\eta\in \partial_\infty  \X$ be
principal with $\angle (\zeta,\eta)=\frac{2\pi}{3}$. Then $\angle \big(\eta,g(\eta)\big)\neq\frac{2\pi}{3}$ or $\angle
\big(\eta,g^2(\eta)\big)\neq\frac{2\pi}{3}$.
\end{lem}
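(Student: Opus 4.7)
The plan is to assume $\angle(\eta, g\eta) = \angle(\eta, g^2\eta) = \frac{2\pi}{3}$ and force a contradiction from the configuration of antiprincipal midpoints in $\partial_\infty\X$, using Lemma~\ref{lem:principal_relation}, Theorem~\ref{thm:classify_isometries_parabolic}, and the geometric dimension of $\partial_\infty\X$ from Remark~\ref{rem:dim1}.

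First I would set $\eta_i := g^i\eta$. Since $g\in C = \Stab(\zeta)$ by Lemma~\ref{lem:easystab} and $g$ is an isometry, all pairwise Tits angles among $\zeta, \eta_0, \eta_1, \eta_2$ equal $\frac{2\pi}{3}$. These four points are pairwise distinct: the pairs $(\zeta,\eta_i)$ because the angle is positive, and the pairs $(\eta_i,\eta_j)$ because otherwise $g^{j-i}$ would be a nontrivial parabolic fixing two distinct principal points $\zeta$ and $\eta_i$, hence belonging to two distinct conjugates of $C$, violating the uniqueness part of Theorem~\ref{thm:classify_isometries_parabolic}.

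Next, by Lemma~\ref{lem:principal_relation} applied to each principal pair, the midpoints $\omega_i$ of $\zeta\eta_i$ and $\omega_{01}$ of $\eta_0\eta_1$ are antiprincipal. Uniqueness of CAT(1) midpoints of geodesics of length $<\pi$ yields $g\omega_i=\omega_{i+1}$, and the same uniqueness together with Theorem~\ref{thm:classify_isometries_parabolic} shows that $\omega_0, \omega_1, \omega_{01}$ are pairwise distinct: if for instance $\omega_0=\omega_1$, then $g$ would fix $\omega_0$, hence fix pointwise the length-$\frac{\pi}{3}$ geodesic $\zeta\omega_0$ and its unique length-$\frac{2\pi}{3}$ extension to $\eta_0$, placing the parabolic $g$ in two conjugates of $C$. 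Now apply Lemma~\ref{lem:principal_relation} (antiprincipal version) to each of the three pairs in $\{\omega_0,\omega_1,\omega_{01}\}$; each admits a common principal point ($\zeta$, $\eta_0$, $\eta_1$ respectively) as a candidate midpoint at distance $\frac{\pi}{3}$, so each pair has Tits distance $\leq \frac{2\pi}{3}$, hence is not far, and the lemma gives that the pairwise Tits distance is exactly $\frac{2\pi}{3}$ with midpoint the common principal point. Thus $\omega_0\omega_1\omega_{01}$ is an equilateral geodesic triangle of perimeter $2\pi$ in $\partial_\infty\X$ whose three side-midpoints are the three \emph{pairwise distinct} principal points $\zeta, \eta_1, \eta_0$.

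The main step is to derive the contradiction from this. By Remark~\ref{rem:dim1}, $\partial_\infty\X$ has geometric dimension $1$ and is locally an $\R$-tree, so any geodesic triangle of perimeter $<2\pi$ is a tripod, and a triangle of perimeter exactly $2\pi$ is either a tripod or lies on an isometrically embedded circle of length $2\pi$. The tripod case is immediately excluded, since the Steiner point of a tripod equals the midpoint of each side, forcing $\zeta=\eta_0=\eta_1$. In the circle case, Lemma~\ref{lem:circle} yields a flat $F\subset\X$ with $\partial_\infty F$ equal to this circle; since $\eta_2\notin\partial_\infty F$ (the only points of the circle at Tits distance $\frac{2\pi}{3}$ from $\zeta$ are $\eta_0$ and $\eta_1$) but $g$ fixes $\zeta$ and sends $\eta_1\mapsto\eta_2$, the flats $g^nF$ form an infinite family of pairwise distinct flats, each consecutive pair sharing a length-$\frac{2\pi}{3}$ boundary arc through $\zeta$. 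Exploiting this structure at a vertex lying on a common principal ray to $\zeta$, one applies the cycle-length classification of Propositions~\ref{prop:linkpp1}--\ref{prop:linkmp1} to the vertex link: the repeated sharing forces some power of $g$ to act trivially on a chamber, hence to lie in the pointwise chamber stabilizer and so be elliptic, contradicting that every nontrivial power of a parabolic is parabolic. The hard part is this final link-theoretic analysis in the circle case; the tripod case is immediate.
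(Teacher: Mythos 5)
Your setup is sound and runs parallel to the paper's: assume all pairwise angles among $\zeta,\eta,g(\eta),g^2(\eta)$ are $\frac{2\pi}{3}$, pass to the antiprincipal midpoints via Lemma~\ref{lem:principal_relation}, rule out the degenerate (tripod) case by a coincidence of midpoints, and invoke Lemma~\ref{lem:circle} to produce a flat $F$. (Two small repairs are needed there: you only verify $\omega_0\neq\omega_1$, but Lemma~\ref{lem:principal_relation} also requires $\omega_0\neq\omega_{01}$ and $\omega_1\neq\omega_{01}$ — these do reduce to the case you treated, since e.g.\ $\omega_0=\omega_{01}$ forces $\omega_0$ onto the geodesic $\zeta\eta_1$ at distance $\frac{\pi}{3}$ from $\zeta$, i.e.\ $\omega_0=\omega_1$; and the dichotomy ``tripod or embedded circle'' needs the observation that any intermediate overlap of the sides would produce a closed local geodesic of length $<2\pi$ in the $\mathrm{CAT}(1)$ space $\partial_\infty\X$.)

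The genuine gap is the circle case, which is where all the work lies. You correctly observe that $g^nF$ is an infinite family of distinct flats whose boundary circles consecutively share an arc of length $\frac{2\pi}{3}$, but sharing boundary arcs at infinity does not give you any shared points of the flats in $\X$, let alone a common vertex at which to run a link analysis; and the claim that ``the repeated sharing forces some power of $g$ to act trivially on a chamber'' is asserted, not proved, and is not how the contradiction actually arises. Note also that your main case never uses the hypothesis $\angle\big(\eta,g^2(\eta)\big)=\frac{2\pi}{3}$, so if your sketch worked it would prove the stronger statement $\angle\big(\eta,g(\eta)\big)\neq\frac{2\pi}{3}$ unconditionally — a strong signal that the missing step is where the real content is. The paper instead builds a \emph{second} flat $F'$ from the triangle $\zeta,\eta,g^2(\eta)$ (this is exactly where the $g^2$ hypothesis enters), proves $F\cap F'\neq\emptyset$ by a homotopy argument around a point $y$ deep inside a sector of $F$, then shows by an angle count that $F\cap F'\cap g(F)$ contains a geodesic ray $r$ toward $\zeta$, and finally concatenates the translates $g^k\big(yg(y)\big)$ for $y\in r$ into a local geodesic, exhibiting an axis for $g$ and contradicting parabolicity. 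You would need to supply an argument of comparable substance to close your proof.
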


\begin{proof}
Suppose by contradiction $\angle\big(\eta,g(\eta)\big)=\angle \big(\eta,g^2(\eta)\big)=\frac{2\pi}{3}$. Consider the unique geodesics
$\omega_1,\omega_2,\omega_3\subset
\partial_\infty  \X$ joining $\zeta,\eta,$ and $g(\eta)$. The midpoints $\xi_i$ of~$\omega_i$ are antiprincipal by Lemma~\ref{lem:principal_relation}.
If $\omega_1\cdot \omega_2$ is not a locally embedded path, then $\angle (\xi_1,\xi_2)<\frac{2\pi}{3}$, and so by Lemma~\ref{lem:principal_relation}
we have $\xi_1=\xi_2=\xi_3$. Consequently, $g$ fixes the entire geodesic $\zeta\xi_1$, which by Lemma~\ref{lem:principal_relation} constitutes the
boundary of a chamber. Thus by Lemma~\ref{lem:easystab}, we have that $g$ lies in a conjugate of $C\cap B'=H$. 
By Proposition~\ref{prop:stab},
$g$ is elliptic, which is a contradiction. 

Since $\partial_\infty \X$ is $\mathrm{CAT}(1)$ and is locally an $\R$-tree (Remark~\ref{rem:dim1}), we have that $\omega=\omega_1\cdot \omega_2\cdot \omega_3$ is isometrically
embedded and thus by Lemma~\ref{lem:circle} there is a flat $F\subset \X$ with $\partial_\infty  F=\omega$. Since $g^2$ is also not elliptic, we have
analogously a flat $F'$ with $\partial_\infty  F'$ containing $\zeta,\eta,g^2(\eta)$.

We will now show that $F\cap F'$ is nonempty. Choose $x\in F, x'\in F'$ and let $a=d_\X(x,x')$. Let $r_\zeta,r_\eta\colon [0,\infty)\to F,
r'_\zeta,r'_\eta\colon [0,\infty)\to  F'$ be the geodesic rays representing $\zeta,\eta$ starting at $x,x'$. Let $y$ be a point in the sector bounded by
$r_\zeta,r_\eta$ in $F$ and at distance $>a$ from $r_\zeta\cup r_\eta$. We can assume that $y$ lies in the interior of a $2$-cell. Let
$R>d_\X(x,y)+a$, and let $\beta\subset F$ be the arc of the circle centred at $x$ of radius $R$ from~$r_\zeta(R)$ to~$r_\eta(R)$. Let $\beta'\subset
F'$ be the arc of the circle centred at $x'$ of radius~$R$ from~$r'_\zeta(R)$ to~$r'_\eta(R)$. Then the corresponding points of the closed paths
$\alpha=r_\zeta[0,R]\cdot\beta\cdot r_\eta[R, 0]$ and $\alpha'=r'_\zeta[0,R]\cdot\beta' \cdot r'_\eta[R, 0]$ are at distance $\leq a$, since they are
points of the corresponding asymptotic geodesic rays starting at $x,x'$. Consequently, $\alpha'$ is homotopic to $\alpha$ in $\X-y$. By projecting to a small
metric ball around $y$ homeomorphic to a disc, we see that $\alpha$ is homotopically nontrivial in $\X-y$. Thus the same is true for $\alpha'$, and
so $y\in F'$. Consequently, $F\cap F'$ contains the geodesic ray representing $\zeta$ starting at $y$. Analogously, $F\cap g(F)$ and $F'\cap g(F)$ contain
geodesic rays representing $\zeta$.

We will now prove that there is a geodesic ray $r$ representing $\zeta$ in $F\cap F'\cap g(F)$. For contradiction, assume that all of $V=F\cap F',V'=F\cap
g(F),V''=g(F)\cap F'$ are disjoint. Let $b_\zeta$ be a Busemann function at $\zeta$. Note that $V,V'\subset F$ are nonempty, closed, convex
(Remark~\ref{rem:isometric}(i)) and invariant under positive translation in the direction~$\zeta$. Let $c,c'\colon \R\to F$ parametrise $\partial
V,\partial V',$ in a way that for $t$ sufficiently large we have $b_\zeta(c(t))=b_\zeta(c'(t))=-t$. There is $t$ such that for $T>t$ the curves
$\partial V,\partial V'$ are at $c(T),c'(T)$ at Alexandrov angle $>\frac{\pi}{6}$ from the geodesic $c(T)c'(T)$. We can arrange that the same
condition holds for $c'(T)c''(T)\subset gF$ joining $\partial V'$ with $\partial V''$ and for $c''(T)c(T)\subset F'$ joining $\partial V''$
with~$\partial V$. Then the geodesic triangle $c(T)c'(T)c''(T)$ has all Alexandrov angles $>\frac{\pi}{3}$, which is a contradiction. 

Consequently, there is a geodesic ray $r$ representing $\zeta$ in $F\cap F'\cap g(F)$. Note that $r$ lies in $\partial V,\partial V',$ and $\partial V''$.
Choose $y\in r$ and let $\gamma$ be the geodesic $yg(y)$. Note that $\gamma\cap V'=y$, since otherwise $g$ would fix the midpoint of $\gamma$ or
would fix $g(\eta)$. See Figure~\ref{fig:gamma}. Consequently, $g^{-1}(\gamma)\cdot\gamma$ is a local geodesic. Thus $\bigcup_{k\in \Z}g^k(\gamma)$
is an axis of $g$, and so $g$ is loxodromic, which is a contradiction.
\end{proof}

\begin{figure}
\includegraphics{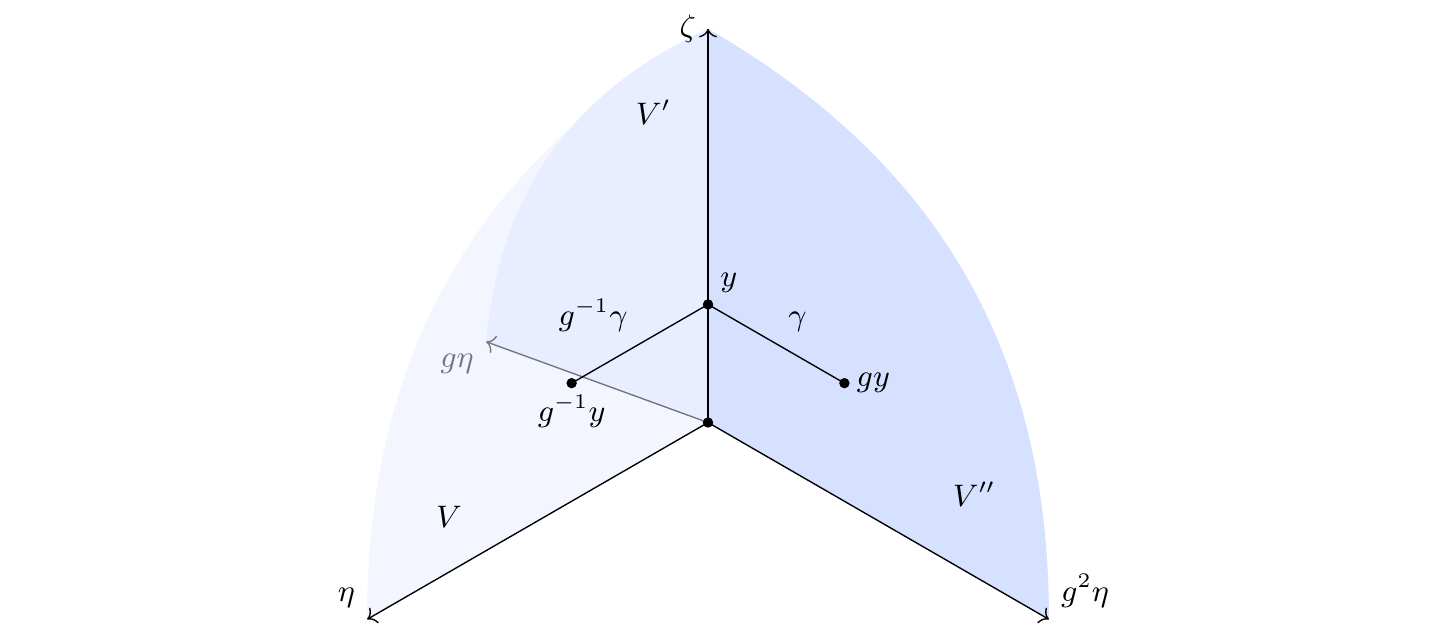}
\caption{}
\label{fig:gamma}
\end{figure}

\subsection{Reaching far}

\begin{prop}
\label{prop:far_limit} Let $g,g'\in \T$ be conjugate and parabolic of nonzero translation length or loxodromic not of rank~$1$. 
Suppose that for principal points $\zeta,\zeta'\in \partial_\infty \X$ fixed by $g,g',$ respectively (if these points exist), we have $\zeta\neq \zeta'$, and the same holds for $\zeta,\zeta'$ antiprincipal. 
\parent{Equivalently, $g,g'$ do not belong to a common conjugate of $C$ or $B'$}. 
Then any limit points $\xi$ of $g$
and $\eta$ of $g'$ are far.
\end{prop}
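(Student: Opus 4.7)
I argue by contradiction: assuming $\xi,\eta$ are not far, I construct a point of $\partial_\infty\X$ fixed by both $g$ and $g'$. By Lemma~\ref{lem:easystab} and the uniqueness of conjugates in Theorems~\ref{thm:classify_isometries_parabolic} and~\ref{thm:classify_isometries_hyperbolic}, this forces $g,g'$ into a common conjugate of $C$ or $B'$, contradicting the hypothesis. The two ingredients are a $(\xi,\eta)$-diagram bordered simultaneously by $\mathbf{H}$-generic rays for $g$ and for $g'$, together with the application of Proposition~\ref{prop:diagram->fixedpoint} from each of the two border rays.

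\textbf{Setup and bordered diagram.} By Theorems~\ref{thm:classify_isometries_parabolic} and~\ref{thm:classify_isometries_hyperbolic}, each of $g,g'$ lies in a conjugate of $C$ or $B'$. Let $\mathbf{H}_g,\mathbf{H}_{g'}$ be the convex invariant subspaces of Construction~\ref{rem:B'rank2} and let $\zeta_g,\zeta_{g'}\in\partial_\infty\X$ be the associated principal or antiprincipal fixed points, which are distinct by hypothesis. Remark~\ref{rem:generic_exist}(i) supplies $\mathbf{H}$-generic rays $r_\xi\colon[0,\infty)\to\mathbf{H}_g$ and $r_\eta\colon[0,\infty)\to\mathbf{H}_{g'}$ representing $\xi$ and $\eta$. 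Since $\xi,\eta$ are not far, Construction~\ref{con:diagram_sequence} applies and yields a $(\xi,\eta)$-diagram $\phi'\colon D'\to\nabla^+$ with approximating $\widetilde\phi_n\colon D_n\to\X$. I coordinate the choices (the base point $x$, and the approximating sequences $u_n=r_\xi(t_n),\,v_n=r_\eta(s_n)$) so that one boundary ray $J_\xi\subset\partial D'$ is weakly bordered by $r_\xi$ and the other $J_\eta\subset\partial D'$ by $r_\eta$; after modifying $\widetilde\phi_n$ on cells adjacent to $\partial D'$ via Definition~\ref{def:bordered}, $\phi'$ is bordered by $r_\xi$ on $J_\xi$ and by $r_\eta$ on $J_\eta$. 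The case $\angle(\xi,\eta)<\pi$ uses case~\eqref{con:diagram_sequencei} of Construction~\ref{con:diagram_sequence}, and $\angle(\xi,\eta)=\pi$ uses case~\eqref{con:diagram_sequenceii}, where $D'$ is isometric to the Euclidean half-plane.

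\textbf{Folding-locus analysis and common class.} By Lemma~\ref{lem:almostflat} (case~\eqref{con:diagram_sequencei}) or the flatness of $D'$ (case~\eqref{con:diagram_sequenceii}), I arrange that the total curvature of interior vertices and edges, as well as of the interior of $J_\xi\cup J_\eta$, is $\geq-\eps_0$. Lemma~\ref{lem:generic} then locates the transition points of $r_\xi$ and of $r_\eta$ on single edges of the folding locus $L$ meeting $\partial D'$ nearly perpendicularly, while Corollary~\ref{cor:consise} combined with Remark~\ref{rem:straight}(ii) forces each such edge to extend within $L$ to a geodesic ending on $\partial D'$ or going to infinity. Distinguishing the \emph{ray regime} (components of $L$ issuing from $J_\xi$ are pairwise asymptotic unbounded geodesic rays) from the \emph{segment regime} (they are segments terminating at $J_\eta$), Proposition~\ref{prop:diagram->fixedpoint}(1) or~(2) applied with $r=r_\xi,\,f=g$ yields a point $y\in\X$ on the $\widetilde\phi_n$-image of a component whose associated principal or antiprincipal ray in $\X$ has class in $\partial_\infty\X$ fixed by $g$; by the uniqueness of conjugates this class must be $\zeta_g$. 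Applying the same proposition with $r=r_\eta,\,f=g'$ to the same component yields a point $y'\in\X$ whose associated ray has class $\zeta_{g'}$. Since the component projects under $\rho_+\circ\widetilde\phi_n$ outside $\{\alpha_2=\alpha_3\}$ (respectively $\{\alpha_1=\alpha_2\}$) by the conclusion of Proposition~\ref{prop:diagram->fixedpoint}, Remark~\ref{rem:commonrays} shows that the rays at $y,y'$ are parallel in $\X$; their common class is both $\zeta_g$ and $\zeta_{g'}$, contradiction.

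\textbf{Main obstacle.} The principal difficulty is constructing a diagram simultaneously bordered by $\mathbf{H}$-generic rays for $g$ and $g'$ living in \emph{different} invariant subspaces $\mathbf{H}_g,\mathbf{H}_{g'}$: Lemma~\ref{lem:asymptoticparallel} and Remark~\ref{rem:limit_union}, together with careful base point and approximating sequence selections, are required to control the behaviour of the boundary at the limit. A secondary subtlety is the \emph{mixed case} where one of $\zeta_g,\zeta_{g'}$ is principal and the other antiprincipal; then a single folding-locus component cannot simultaneously produce a principal ray (diverging from $\{\alpha_2=\alpha_3\}$) and an antiprincipal ray (diverging from $\{\alpha_1=\alpha_2\}$), so the contradiction there should come directly from the impossibility of these two folding-locus orientations coexisting along the same component, rather than from the equality $\zeta_g=\zeta_{g'}$.
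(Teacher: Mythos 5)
Your overall strategy (assume $\xi,\eta$ not far, build a $(\xi,\eta)$-diagram, analyse the folding locus, and use Proposition~\ref{prop:diagram->fixedpoint} to produce a common fixed principal or antiprincipal point) is the same as the paper's, but there is a genuine gap at the step you yourself flag as the ``main obstacle'' and then do not resolve. You assert that by coordinating the basepoint and the sequences $u_n,v_n$ you can arrange the $(\xi,\eta)$-diagram to be (weakly) bordered by $\mathbf{H}$-generic rays $r_\xi$ and $r_\eta$ on its two boundary rays. This fails in general: in case~\eqref{con:diagram_sequencei} the boundary rays of $D'^+$ are limits of segments of the geodesic rays from the \emph{fixed} basepoint $x$ to $\xi$ and to $\eta$, and a single $x$ cannot in general be chosen so that both of these are $\mathbf{H}$-generic (the generic rays live in the two different invariant subspaces $\mathbf{H}_g$ and $\mathbf{H}_{g'}$, which need not even intersect); in case~\eqref{con:diagram_sequenceii} the boundary is a side $\bar u_n\bar v_n$ of a quadrilateral and has no relation to rays from a basepoint at all. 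The paper's proof spends most of its length precisely on this point: it builds an auxiliary diagram $D'^-$ between the $\mathbf{H}$-generic ray $r$ and the rays $r'_n$ from $x$ (or $p_n$) to $\xi$, using Corollary~\ref{cor:2rays} to control its curvature, and then glues it to $D'^+$ via the union construction of Remarks~\ref{rem:limit_union} and~\ref{rem:bizarreunion} (with the dichotomy \eqref{heartsuit} governing whether this is needed), obtaining a diagram $\widehat D'$ that \emph{is} bordered by a subray of $r$. This is done separately for $\xi$ and for $\eta$.

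Two further pieces of the paper's argument are absent from your sketch. First, before the folding-locus analysis one must pin down $\angle(\xi,\eta)\in\{\frac{\pi}{3},\frac{2\pi}{3},\pi\}$ (Part~2 of the paper, via Lemma~\ref{lem:anglesmatch}); the lower bound $\angle(\xi,\eta)\geq\frac{\pi}{3}>\frac{\pi}{6}$ is used repeatedly later to exclude folding-locus components crossing to the other boundary ray. Second, your dichotomy ``ray regime versus segment regime'' skips the substantial combinatorial classification of how the components $L_k$ through the transition points can terminate at interior branch vertices (the descending/ascending/minimal/wicked analysis of Part~4 and the configurations of Figures~\ref{fig:branchinganti}--\ref{fig:descendingbis}); it is only after excluding all but the ``all $v_k$ descending'' and ``all $L_k$ are components'' configurations that the hypotheses of Proposition~\ref{prop:diagram->fixedpoint}(1) or~(2) can be verified, and the final contradiction requires matching the $\xi$-side and $\eta$-side structures inside $D'^+$ (Part~5), not applying the proposition twice to ``the same component'' of a single bordered diagram. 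Your concern about the mixed principal/antiprincipal case is not an additional difficulty: a single folding-locus ray determines a point of one definite type, and the hypothesis excludes a common fixed point of either type.
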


\begin{proof} To begin with, we justify $\xi\neq \eta$. Otherwise, if, say, $g\in B'$ and $\zeta\neq \zeta' \in \partial_\infty \X$ are the antiprincipal points
fixed by $g,g'$, then, by Construction~\ref{rem:B'rank2}\eqref{831}, we have length $\frac{\pi}{2}$ paths $\omega,\omega'
\subset\partial_\infty \X$ between $\xi=\eta$ and $\zeta,\zeta'$, respectively. By Lemma~\ref{lem:principal_relation}, we have that $\zeta$ is at
distance $\frac{2\pi}{3}$ from $\zeta'$. Since $\partial_\infty \X$ is $\mathrm{CAT}(1)$ and is locally an $\R$-tree (Remark~\ref{rem:dim1}), the
geodesic $\zeta\zeta'$ is contained in the union of $\omega$ and~$\omega'$, and so in particular the midpoint of $\zeta\zeta'$, which is principal by
Lemma~\ref{lem:principal_relation}, lies in $\omega$ and $\omega'$. Consequently, this midpoint is represented by a geodesic ray in~$\mathbf H$ from
Construction~\ref{rem:B'rank2}. However, such a ray does not satisfy Lemma~\ref{lem:asymptoticparallel}, which is a contradiction.

The remaining proof is divided into five parts.

\begin{partt}
Defining $D'^+$ and $D'^-$.
\end{partt}

Choose $x$ so that $\angle_x(\xi,\eta)> \angle(\xi,\eta)-\frac{\eps}{3}$ for $\eps<\eps_0$ from Corollary~\ref{cor:consise}. Let $\phi'^+\colon
D'^+\to \nabla^+$ be a $(\xi,\eta)$-diagram from Construction~\ref{con:diagram_sequence}.

Denote by $q'\in \partial D'^+$ the image of $q\in \partial D^+$, and by $J^+_\xi,J^+_\eta$ the closures of the appropriate connected components of
$\partial D'^+\setminus q'$. Recall that $D'^+$ is $\mathrm{CAT}(0)$, and $J^+_\xi,J^+_\eta$ are geodesic rays in $D'^+$ representing points in
$\partial_\infty D'^+$ at angle $\angle(\xi,\eta)$. 
Let $\beta^+_n=\widetilde \phi^+_{n|J^+_\xi\cap I_n}$.

Let $r$ representing $\xi$ be an $\mathbf{H}$-generic ray guaranteed by Remark~\ref{rem:generic_exist}(i). Let $r'_n$ representing $\xi$ be starting
at $p_n$ (in case~\eqref{con:diagram_sequenceii} of Construction~\ref{con:diagram_sequence}) or $x$ (in case~\eqref{con:diagram_sequencei}). Suppose first that:
\begin{equation}
\tag{$\heartsuit$} \label{heartsuit}
\text{for each $t$ there is $\delta(t)>0$ such that each $r'_n$ it at distance $\geq \delta(t)$ from~$r(t)$.}
\end{equation}
Let $d$ be the maximal distance from $r(0)$ to $x$ or a point of $P$. For $\eps$ above replaced by $\eps^4/18^4d^2$, let $T>0$ be provided by
Corollary~\ref{cor:2rays}, which applies to all $r'_n$. Then for each $t>T,$ for sufficiently small $\eps'>0$ there is $n$ with $s$ in $\beta^+_n$
satisfying the second bullet point of Corollary~\ref{cor:2rays}. By Lemma~\ref{lem:discexists}, we have reduced relative disc diagrams $\widetilde
\phi^-_n\colon D^-_n\to \X$ (embedded by Theorem \ref{thm:disc_embeds}) bounded by geodesic quadrilaterals $r(t)r(T)s(T)s(t)$ with total curvature
$\geq -\eps$ outside the preimages under $\widetilde \phi^-_n$ of $s(t),s(T),r(T),r(t)$. See Figure~\ref{fig:D}.

\begin{figure}
\includegraphics{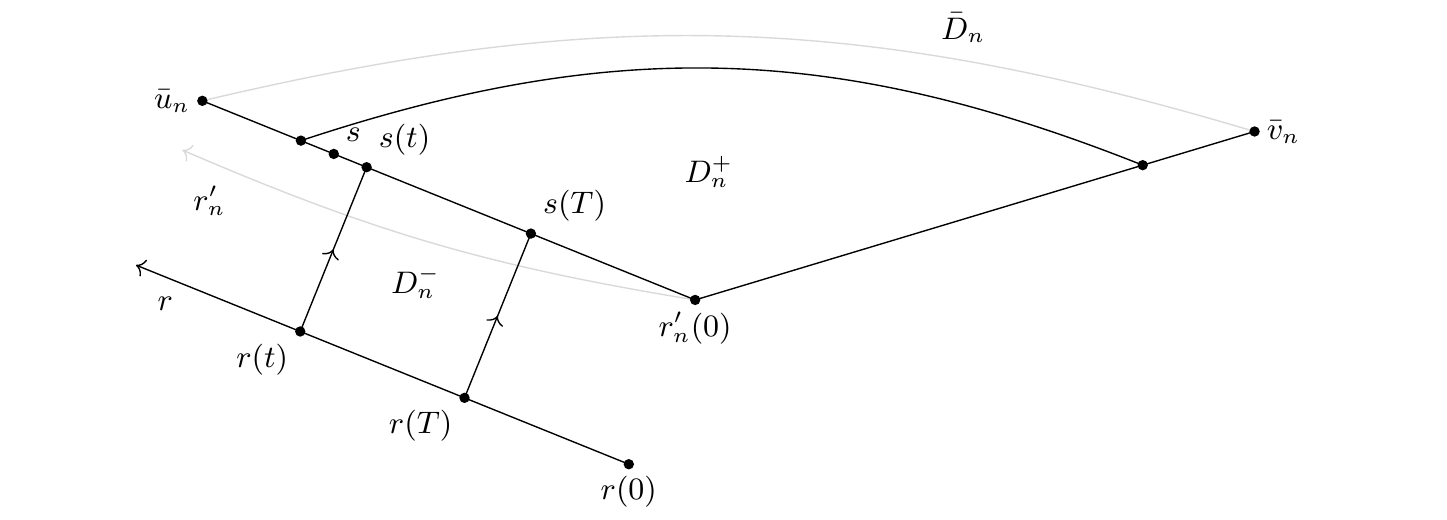}
\caption{The images of $D^\pm_n$.}
\label{fig:D}
\end{figure}

Let $I^-_n\subset \partial D^-_n$ be the union of the three sides not mapping under $\widetilde \phi^-_n$ to $r(t)s(t)$.
By passing to a subsequence and subdiagrams, 
we can assume that $(D^-_n,I^-_n)$ converge to a half-plane diagram $(D^-,\partial D^-)$ with $\partial D^-=J\cdot I_b\cdot I_c$, where the
intersections of $J,I_b,I_c$ with $\partial D^-_n$ map under $\widetilde \phi^-_n$ into $r(t)r(T), r(T)s(T), s(T)s(t)$, respectively. Since
$r(T)s(T)$ has uniformly bounded length, the first part of the second bullet point in Remark~\ref{rem:converge_nabla} implies the second part. Thus
$D^-$ has the frilling~$D'^-$ with the $\X$-reduced relative half-plane diagram $\phi'^-\colon D'^-\to \nabla^+$ that is the limit of
$\phi^-_n=\rho_+\circ \widetilde \phi^-_n$ bordered by $r_0=r_{|[T,\infty)}$, where we identify $[T,\infty)$ with $J \subset \partial D'^-$.

\begin{partt} Calculating the angle.
\end{partt}

By Corollary~\ref{cor:2rays} and Theorem~\ref{thm:GB}, the angles of $J\cap I_b,I_b\cap J_c$ are at distance $\leq \eps$ from $\frac{\pi}{2}$. We now
apply three times Lemma~\ref{lem:anglesmatch}, to $D'^-,D'^+$ (using Lemma~\ref{lem:almostflat}), and to the analogue $D'^-(\eta)$ of $D'^-$ with $\xi$ replaced by $\eta$. For $\eps$
sufficiently small, we obtain vectors $u,u(\eta)$ tangent to $J,J(\eta)$ at points $y,y(\eta)$, with $\phi'^{-\angle}_y(u),
\phi'^{-\angle}_{y}(u)(\eta)$ that are projections of the endpoints of an embedded path in $S$ of length arbitrarily close to $\pi-
\angle(\xi,\eta)$. Furthermore, we can arrange that $y,y(\eta)$ are in arbitrary subrays of $J,J(\eta)$. Since $r_0,r_0(\eta)$ are $\mathbf
H$-generic, this implies that $\phi'^{-\angle}_y(u), \phi'^{-\angle}_y(u)(\eta)$ are arbitrarily close to the midpoint of $S^\angle$. It follows that
$\angle(\xi,\eta)=\frac{\pi}{3},\frac{2\pi}{3},$ or $\pi$.

\begin{partt} Curvature in $Q\subset \widehat D'$.
\end{partt}

Let $\widehat \phi_n \colon \widehat D_n\to\X$ be the unions of $\widetilde\phi_n^\pm$ at $s(T)s(t)$, to which we apply Remark~\ref{rem:limit_union}
describing their limit $\widehat \phi' \colon \widehat D'\to\nabla^+$. In the case where $\overline{D'^-\setminus R^-}$ is not homeomorphic to a
half-plane, we use the more general Remark~\ref{rem:bizarreunion}. This covers the case of empty $\overline{D'^-\setminus R^-}$, where $\widehat
D'=\overline{D'^+\setminus R^+}$. After possibly increasing $T$, we can assume that $R^+$ is disjoint from $J^+_\eta$, and so $J^+_\eta\subset
\widehat D'$. Note that $\widehat \phi'$ is bordered by $r_0$. Observe that a geodesic ray in $D'^+\setminus R^+$ asymptotic to $J^+_\xi$ in $D'^+$
is also asymptotic to $J$ in~$\widehat D'$. In particular, the rays $J$ and $J^+_\eta$ represent points in $\partial_\infty \widehat D'$ at angle
$\angle(\xi,\eta)$.

At this point, let us note that if \eqref{heartsuit} was not satisfied, and so there is $T$ such that a subsequence of~$r'_n$ becomes arbitrarily
close to $r(T)$, then there are subrays of $r'_n$ converging to $r_0=r_{|[T,\infty)}$. This implies that $\beta^+_n$ have subpaths converging to
$r_0$ and so $\phi'^+$ is weakly bordered by~$r_0$ (see Definition~\ref{def:bordered}). After modifying~$\phi^+_n$ on the cells intersecting $J$, we
have that the same $\phi'^+$ is bordered by~$r_0$, and we set $\widehat\phi'=\phi'^+$ in the remaining discussion (where the following claim is
trivial).

We claim that the sum of the curvatures of the vertices and edges of $\widehat D'$ is finite. Indeed, if $\angle(\xi,\eta)<\pi$, then by
\cite{BH}*{II.9.8(4)}, the geodesics in $\widehat D'$ joining the points on $J$ and $J^+_\eta$ sufficiently far from a basepoint are disjoint from
any given compact subcomplex of~$\widehat D'$. Consequently, by Theorem~\ref{thm:GB}, there is a uniform bound on the total curvature in any compact
subcomplex of $\widehat D'$, as desired. For $\angle(\xi,\eta)=\pi$, one repeats this argument after subdividing $\widehat D'$ into two half-plane
diagrams along a ray representing at point in $\partial_\infty \widehat D'$ at distance $\frac{\pi}{2}$ from the points represented by $J$ and
$J^+_\eta$. This justifies the claim.

By the claim, there exists a compact subcomplex $F\subset \widehat D'$, such that the sum of the curvatures in $\widehat D'$ of the vertices and
edges outside $F$ is $\geq -\eps_0$. Let $l_k\in J$ be the sequence of transition points of $r_0$. There is $k_0$ such that the unbounded connected
component of $J\setminus l_{k_0}$ is disjoint from $F$. Applying Lemma~\ref{lem:generic} to a neighbourhood of that component in $\widehat D'$, we
obtain that for $k\geq k_0$ the points $l_k$ are contained in single edges of the folding locus $\widehat L$ and at angle to $J$ in
$\big(\frac{\pi}{2}-\eps_0,\frac{\pi}{2}+\eps_0\big)$ and converging to~$\frac{\pi}{2}$.

After possibly increasing $k_0$, a geodesic $\gamma$ in $\widehat D$, issuing from $l_{k_0}$ at angle $\frac{\pi}{2}+\eps_0$ to the unbounded
connected component of $J\setminus l_{k_0}$, separates $F$ from $l_{k_0+1}$. Cutting~$\widehat D'$ along~$\gamma$, we obtain a half-plane
diagram~$Q$ containing $l_{k_0+1}$, where the sum of all curvatures, except the curvature of $l_{k_0}$, and possibly except of the curvature of other endpoint of $\gamma$,
if $\gamma$ is bounded, is $\geq -\eps_0$.

\begin{partt} Folding locus.
\end{partt}

Let $L_k\subset \widehat L$ be the connected component of $\widehat L$ minus the degree $>2$ vertices of~$\widehat L$, containing~$l_k$. For each
$k\geq k_0$, we have that $L_k\cap Q$ are geodesics in $Q$, as are $J\cap Q$ and $\gamma$. Considering the angles that $L_k$ and $\gamma$ make with
$J$, we obtain that if $L_k\cap Q$ is bounded, then it does not end in $J\cup \gamma$. In particular, $L_k\cap Q=L_k$.

If $L_k$ is contained in a non-oriented component $\LL$ of $\widehat L$, then analogously $\LL\cap Q$ does not end in $J\cup \gamma$, and so $\LL\cap
Q=\LL$. Furthermore, since $J$ is not asymptotic to $J^+_\eta$, after possibly increasing $k_0$, we have that if $\LL$ has both ends in $\partial Q$,
then it has arbitrarily large length. Thus, by Lemma~\ref{lem:curved_edges}, for each $x\in L_k$ and $\vec{f}\in S_x$ the direction of an edge
of~$L_k$, directed away from $l_k$, we have that $\widehat\phi'^\angle_x(\vec{f})\in S^\angle$ is at angle $\leq\eps_0$ from a point in~$\partial
S^\angle$ (same point for all~$k$). The same would hold for other edges of the folding locus starting at $J$ sufficiently far from $\gamma$, which is excluded by the last assertion
of Lemma~\ref{lem:generic}. Consequently, after possibly increasing $k_0$, we have $\widehat L\cap J\cap Q=\{l_k\}_{k\geq k_0}$.

\begin{figure}
\includegraphics{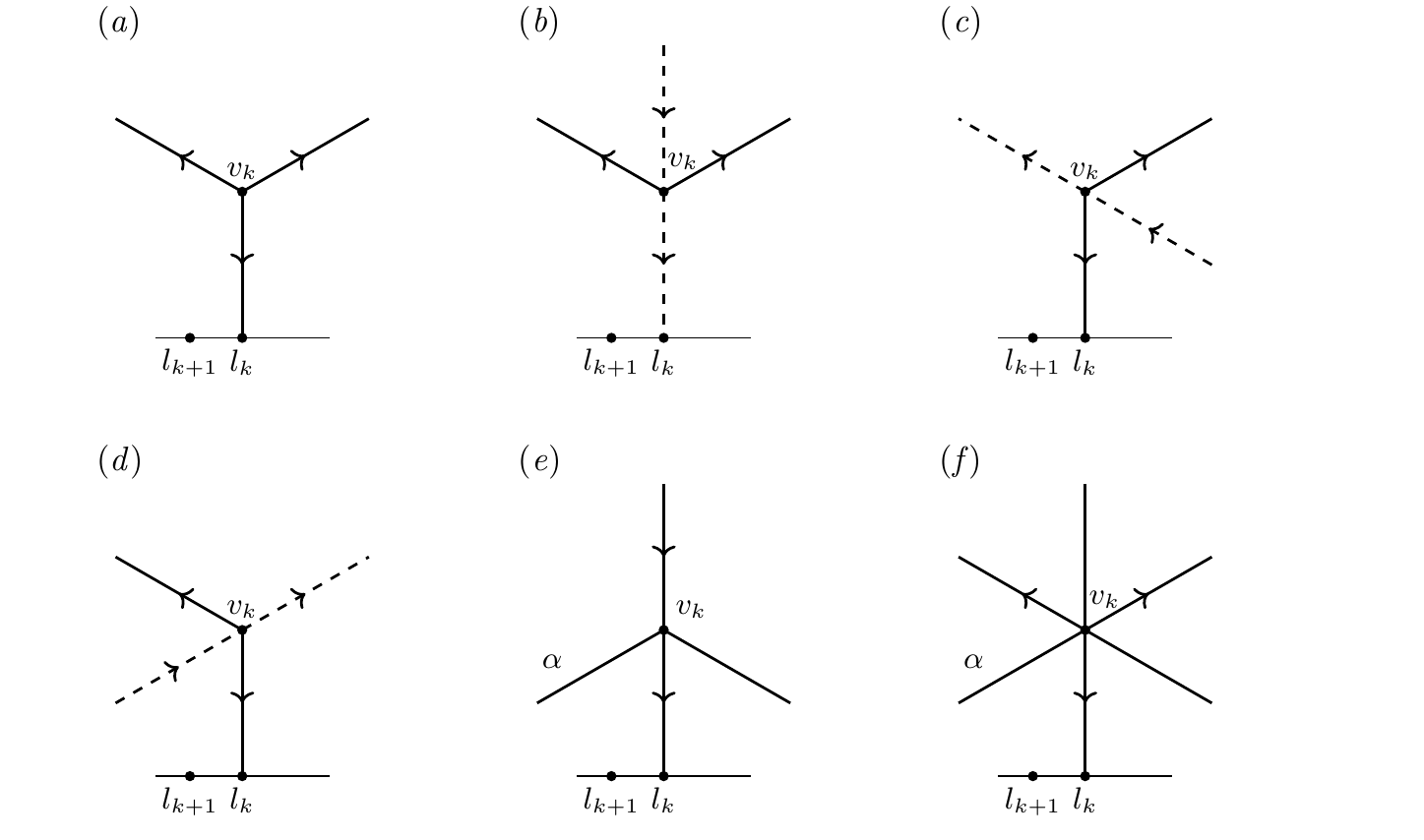}
\caption{}
\label{fig:branchinganti}
\end{figure}

\begin{figure}
\includegraphics{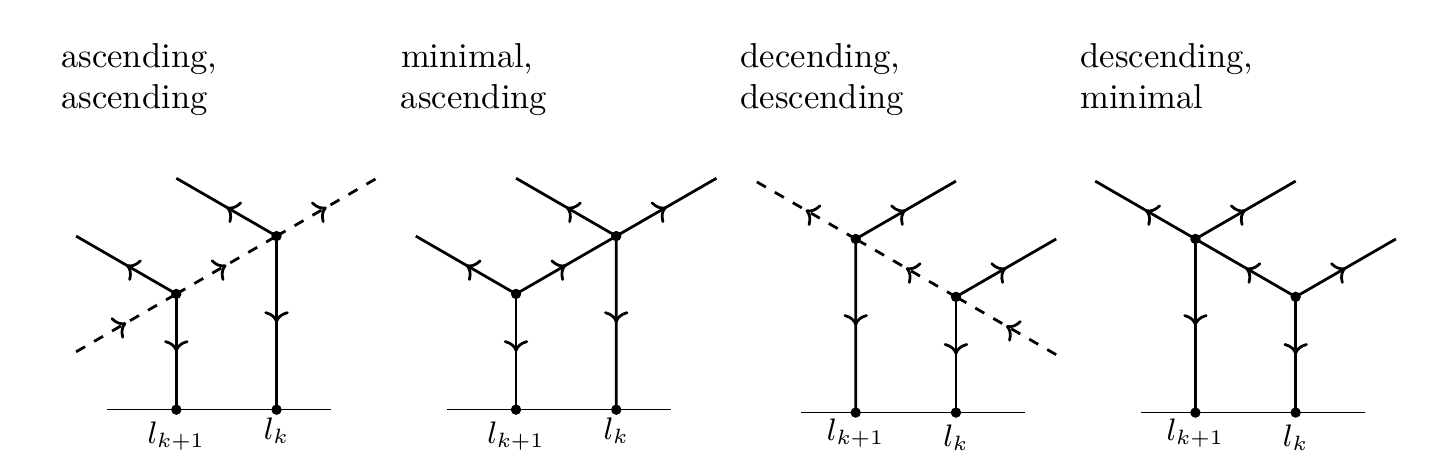}
\caption{}
\label{fig:doubles}
\end{figure}

Suppose first that all $\widehat\phi'^\angle_x(\vec{f})\in S^\angle$ above are at angle $\leq\eps_0$ from the antiprincipal point. If $L_k$ is
bounded and ends with an interior vertex $v_k$ of $Q$, then by Corollary~ \ref{cor:consise}, we have one of the configurations in Figure~\ref{fig:branchinganti}, where the dashed
edges might be oriented as indicated or non-oriented. 
However, configurations~(e,f) can be excluded in the following way. If the non-oriented component $\alpha$ intersects~$\partial Q$, then this contradicts the previous
paragraph. If $\alpha$ is disjoint from~$\partial Q$, then cutting~$\alpha$ into two and applying Lemma~\ref{lem:curved_edges}, we see that $\alpha$
contributes at least $\frac{2\pi}{3}$ to the curvature of $Q$, which is a contradiction. 
Analogously, the edges in configuration~(d) must be oriented.

We call $v_k$ as in configuration~(c) \emph{descending},
in~(d)~\emph{ascending}, and in~(a,b)~\emph{minimal}. 
Let $\alpha'$ be the connected component of $\widehat L$ minus the degree $>2$ vertices of~$\widehat L$, containing the north-west edge in (a,b,c) or the south-west edge in (d). If $\alpha'$ is non-oriented (which might happen only for $v_k$ descending), and ends at $v$ in Figure~\ref{fig:concise}(h), then we call $v_k$ \emph{wicked}.

Suppose that $v_k$ is not wicked. Since $J$ and $J^+_\eta$ represent points at $\partial_\infty \widehat D'$ at angle $\geq
\frac{\pi}{3}>\frac{\pi}{6}$, we have that $\alpha'$ does not end on $\partial Q$. Consequently,
$L_{k+1}$ also ends with an interior vertex $v_{k+1}$ of $Q$. Moreover, applying Corollary~ \ref{cor:consise} to the endpoint of $\alpha'$ distinct from $v_k$, and recalling that the edges in configuration (d) must be oriented, we conclude that this endpoint coincides with $v_{k+1}$, and
 $v_{k+1},v_k$ are either 
 \begin{itemize}
 \item
 ascending, ascending (see Figure~\ref{fig:doubles}), or 
 \item
 minimal, ascending (see Figure~\ref{fig:doubles} for the case that uses the minimal configuration from Figure~\ref{fig:branchinganti}(a)), or, symmetrically, 
\item 
descending, descending, or 
\item
descending, minimal.
\end{itemize}

Note that for $v_k$ wicked we could have other combinations. 
However, all $v_{k'}$ with $k>k'\geq k_0$ are then descending and not wicked. Consequently, after possibly replacing $k_0$ with $k+1$, we can assume that none of $v_k$ are wicked.

If the sequence $v_{k+2},v_{k+1},v_k$ was descending, minimal, ascending, we would obtain a contradiction with Remark~\ref{rem:straight}(ii) applied
to the north-east edge in Figure~\ref{fig:branchinganti}(c) and the north-west edge in Figure~\ref{fig:branchinganti}(d). Furthermore, we cannot have
an infinite sequence of ascending vertices. Consequently, after possibly increasing $k_0$, all $v_k$ are descending, see Figure~\ref{fig:descending},
left. The remaining possibility is that all $L_k$ are connected components of $\widehat L$, where $L_k$ might be unbounded (see Figure~\ref{fig:descending}, centre), or bounded (possibly
non-oriented) if they end on~$\partial Q$ (see Figure~\ref{fig:descending}, right).

\begin{figure}
\includegraphics{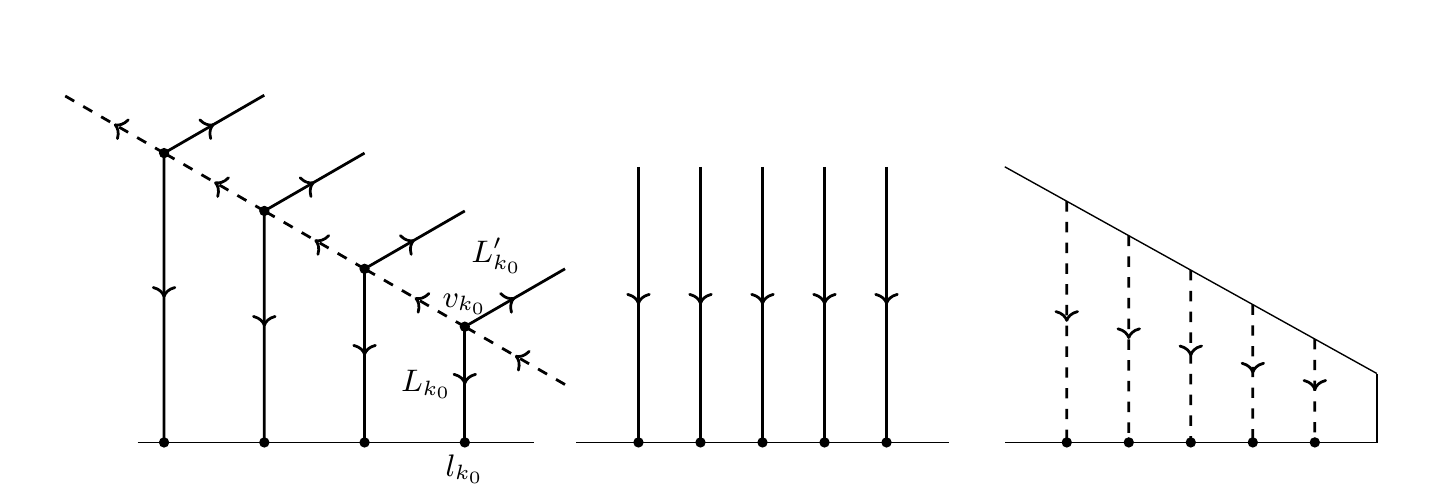}
\caption{}
\label{fig:descending}
\end{figure}

\begin{figure}
\includegraphics{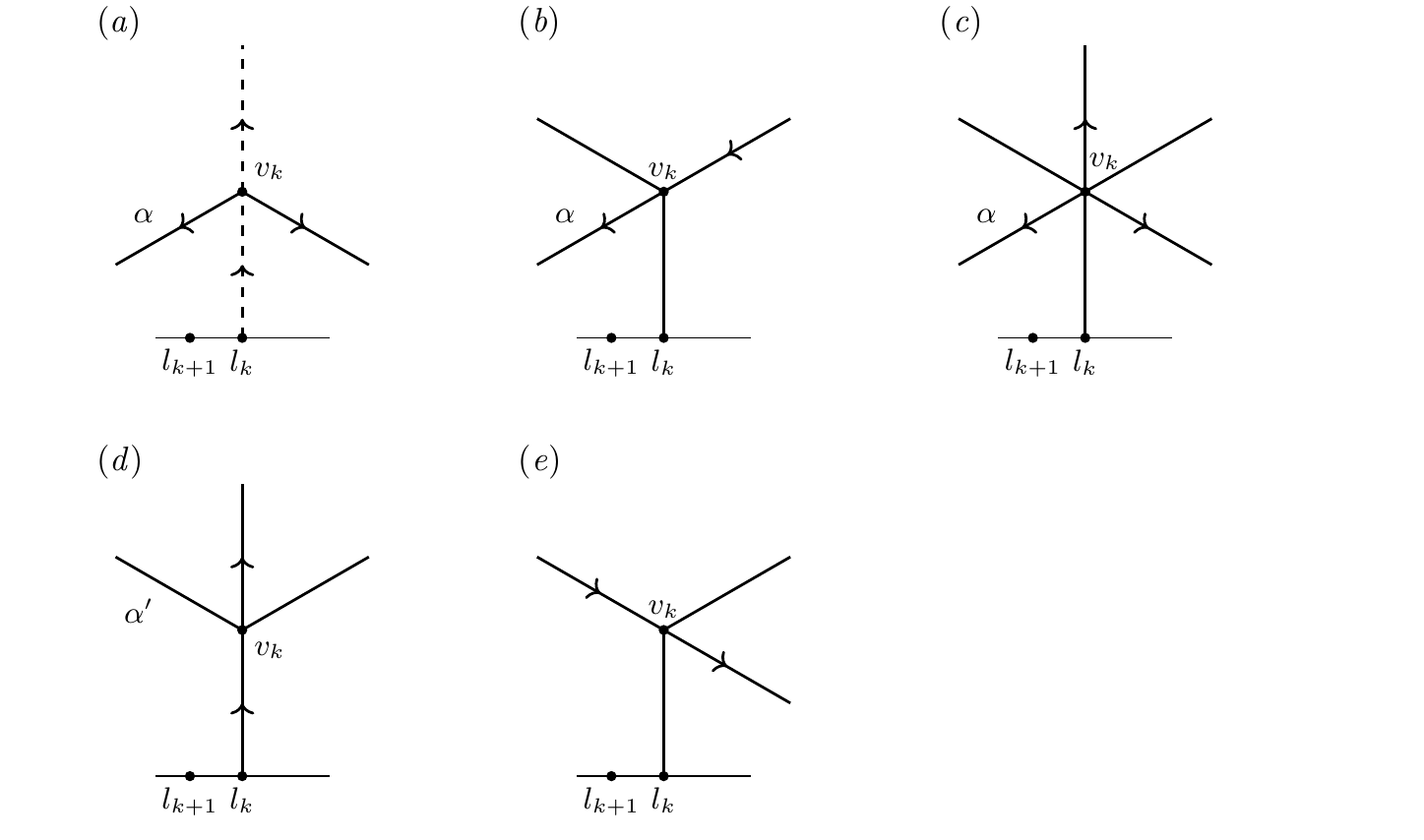}
\caption{}
\label{fig:branching}
\end{figure}

Second, suppose that all $\widehat \phi'^\angle_x(\vec{f})\in S^\angle$ are at angle $\leq\eps_0$ from the principal point. If $L_k$ is bounded and
ends with an interior vertex $v_k$ of $Q$, then we have one of the configurations in Figure~\ref{fig:branching}, where the dashed edges in (a) might
be oriented as indicated or non-oriented. In (a,b,c), the oriented geodesic $\alpha$ starting at $v_k$, given by Remark~\ref{rem:straight}(ii),
terminates at $J$ at angle contradicting $\widehat L\cap J\cap Q=\{l_k\}_{k\geq k_0}$. In~(d), the non-oriented component $\alpha'$ is disjoint from
$\partial Q$, since $J$ and $J^+_\eta$ represent points at $\partial_\infty \widehat D'$ at angle $>\frac{\pi}{6}$. Cutting $\alpha'$ into two and
applying Lemma~\ref{lem:curved_edges}, we see that $\alpha'$ contributes at least $\frac{2\pi}{3}$ to the curvature of $Q$, which is a contradiction.
It follows that all $v_k$ are as in (e), see Figure~\ref{fig:descendingbis}, left, or all $L_k$ are connected
components of~$\widehat L$, see Figure~\ref{fig:descendingbis}, centre and right (where some $L_k$ are possibly non-oriented).

\begin{figure}
\includegraphics{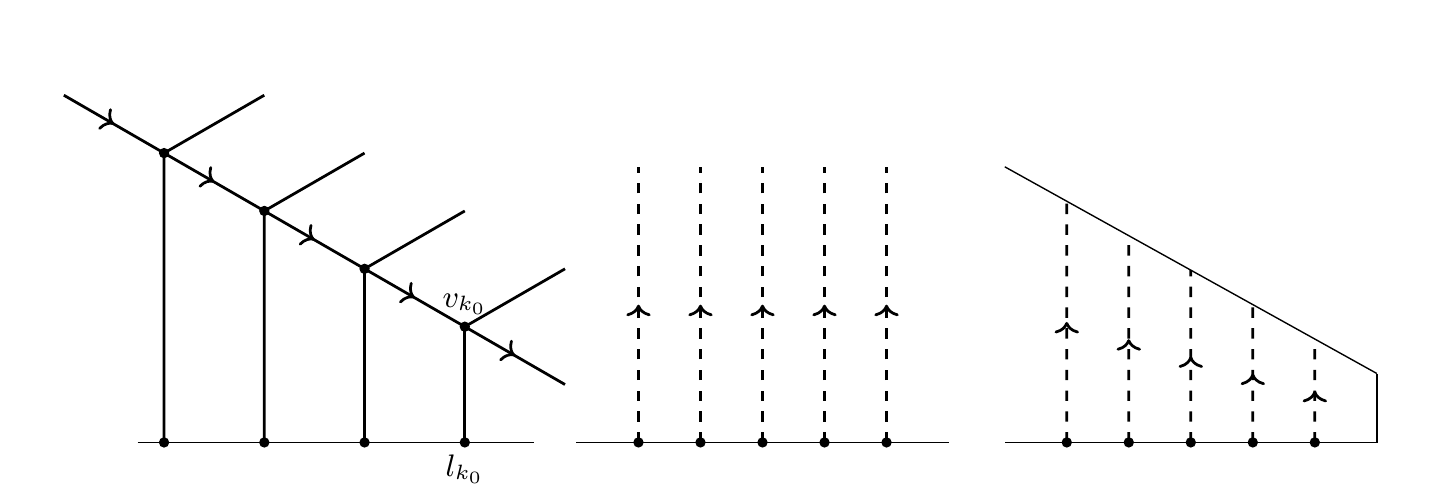}
\caption{}
\label{fig:descendingbis}
\end{figure}

\begin{partt} Interaction.
\end{partt}

To summarise all the possibilities for both $\xi$ and $\eta$, consider first the situation where there is $k_0$ such that for all $k\geq k_0$ the
$L_k$, denoted from now on by $L_k(\xi)$, are unbounded. Since they are at angle to $J$ converging to $\frac{\pi}{2}$ as $k\to \infty$, they are
geodesic rays representing points in $\partial_\infty \widehat D'$ at distance $\geq \frac{\pi}{2}$ from the point represented by $J$. Suppose that
after replacing $\xi$ by~$\eta$ we reach the same conclusion for $L_k(\eta)$. Then the subrays of $L_k(\xi)$ and $L_k(\eta)$ lying in $D'^+$ are
disjoint. Since they represent points in $\partial_\infty D'^+$ at distance $\geq \frac{\pi}{2}$ from $J^+_\xi,J^+_\eta$, they have to be asymptotic.
Proposition~\ref{prop:diagram->fixedpoint}(1), applied twice, implies
that $g,g'$ fix a common principal or antiprincipal point in~$\partial_\infty \X$, which is a contradiction.

Second, suppose that $L_k(\xi)$ are bounded but end on $\partial \widehat D'$. If $L_k(\eta)$ are unbounded, then subrays of $L_k(\eta)$ and
$L_k(\xi)$ lying in $D'^+$ intersect, which is a contradiction. If $L_k(\eta)$ are bounded connected components of the appropriate folding locus,
then, after renumbering, subrays of $L_k(\xi)$ and $L_k(\eta)$ lying in $D'^+$ coincide. This implies that $J^+_\xi$ and $J^+_\eta$ are asymptotic,
which is a contradiction.

Finally, suppose that we have the configuration from Figure~\ref{fig:descending}, left. Consider the north-east oriented geodesics $L'_k\subset
\widehat L$ starting at $v_k$. In particular, if $L_k(\eta)$ are unbounded, then subrays of $L_k(\eta)$ and $L'_k$ lying in $D'^+$ intersect, which
is a contradiction. Otherwise, we have that subsegments of $L_k(\eta)$ coincide with subsegments of~$L'_k$, since otherwise they would intersect in
$D'^+$. See Figure~\ref{fig:fir}, left. The analogous remaining possibility is described in Figure~\ref{fig:fir}, right.

\begin{figure}
\includegraphics{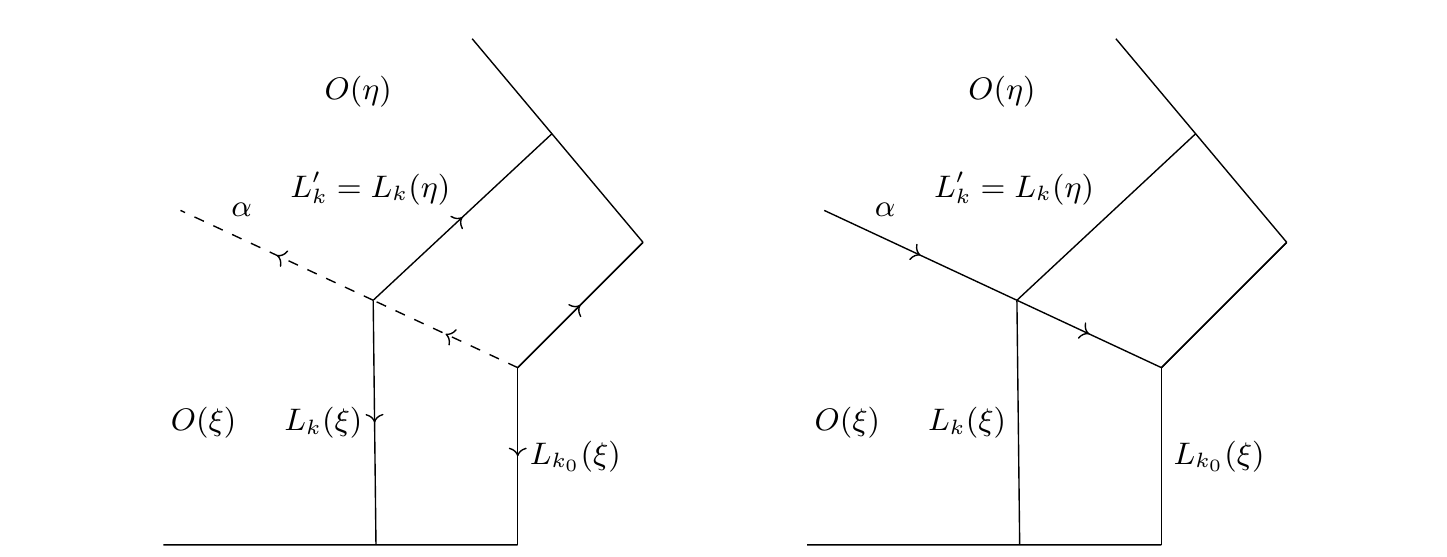}
\caption{}
\label{fig:fir}
\end{figure}

In both of these configurations, Let $O(\xi)$ be the half-plane diagram in $\widehat D'$ bounded by $L_{k_0}(\xi)$, and the ray $\alpha\subset
\widehat L$ containing all $v_k$ for $k\geq k_0$, and let $O(\eta)$ be defined analogously.
Applying Proposition~\ref{prop:diagram->fixedpoint}(2) to $O(\xi)$ and~$O(\eta)$, we find $k$ such that the unique antiprincipal or principal ray
starting at $v_k$ represents an antiprincipal or principal point in $\partial_\infty \X$ fixed by both $g$ and $g'$, which is a contradiction.
\end{proof}

\section{Proof of the Tits alternative}
\label{sec:proof}

We start with the following preparatory lemmas. In the applications of the first one, the set $\partial$ will be the visual boundary $\partial_\infty \X$, which explains the notation. 

\begin{lem}
\label{lem:set}
Let $G$ be a group acting on a set~$\partial$.
Let $Z$ be a finite subset of~$\partial$. Suppose that for each $u\in G$, the sets $Z$
and $u(Z)$ intersect. Then there is a nonempty finite subset of the union of $u(Z)$ over $u\in G$ that is invariant under $G$.
\end{lem}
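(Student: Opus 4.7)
The plan is to reduce the problem to finding a single finite $G$-orbit inside $Y := \bigcup_{u \in G} u(Z)$: any such orbit is automatically a nonempty, finite, $G$-invariant subset of $Y$, which is exactly what is required.

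To locate a finite orbit, I will translate the hypothesis into a covering of $G$ by cosets of point stabilizers. For each $u \in G$, pick some $z'_u \in Z \cap u(Z)$ and set $z_u := u^{-1}(z'_u) \in Z$; then $u \in C_{z_u, z'_u}$, where
\[
C_{z,z'} := \{v \in G : v(z) = z'\}.
\]
For fixed $z, z' \in Z$, the set $C_{z,z'}$ is either empty (if $z'$ does not lie in the $G$-orbit of $z$) or a single left coset of $\Stab(z)$. Since the pairs $(z_u, z'_u)$ range over the finite set $Z \times Z$, we obtain
\[
G = \bigcup_{(z,z') \in Z \times Z} C_{z,z'},
\]
a finite union of cosets of subgroups drawn from the finite list $\{\Stab(z) : z \in Z\}$.

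To conclude, I invoke B.H.~Neumann's covering theorem, which asserts that whenever a group is written as a finite union of cosets of subgroups, at least one of those subgroups must have finite index. Applied here, some $\Stab(z)$ with $z \in Z$ has finite index in $G$, so the orbit $Gz$ is finite; it is manifestly $G$-invariant and contained in $Y$ since $z \in Z \subseteq Y$. The only conceptual step in this plan is recognising that the hypothesis ``$u(Z)$ always meets $Z$'' is precisely the data of a coset covering of $G$; after that Neumann's lemma closes the argument immediately, and there is no real technical obstacle.
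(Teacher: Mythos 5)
Your argument is correct and complete: the hypothesis does exactly encode the coset covering $G=\bigcup_{(z,z')\in Z\times Z}C_{z,z'}$, each nonempty $C_{z,z'}$ is a left coset of $\Stab(z)$, and B.~H.~Neumann's covering lemma then forces some $\Stab(z)$ with $z\in Z$ to have finite index, so the finite orbit $Gz$ is the required invariant set (it lies in $\bigcup_{u\in G} u(Z)$ because $z\in Z$). This is, however, a genuinely different route from the paper's. The paper gives a self-contained induction on $|Z|$: it takes a maximal proper subset $Z'\subsetneq Z$ contained in infinitely many distinct translates $u(Z)$, shows that either the whole union $\bigcup_u u(Z)$ is already finite (when $Z'=\emptyset$) or that $Z'$ again satisfies the hypothesis of the lemma, and recurses. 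The two proofs buy slightly different things: yours is shorter and conceptually cleaner, and pinpoints a single point of $Z$ whose orbit is finite, at the cost of importing Neumann's theorem (itself proved by an induction of comparable difficulty); the paper's is elementary and self-contained but produces the invariant set less explicitly. Either version suffices for the applications in the paper, where $|Z|\le 4$.
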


Let $|Z|$ denote the number of elements of~$Z$. Before we start the proof, note that for $|Z|=2$, the sets $u(Z)$ have either an element in common or take on only three values $\{\xi,\eta\},\{\eta,\zeta\},\{\zeta,\xi\}$. In particular, the invariant nonempty finite set can be taken as $\bigcap_{u \in G} u(Z)$ or $\bigcup_{u \in G} u(Z)$. For $|Z|=3$, this is no longer true. Namely, consider the 2-dimensional simplicial complex $Y$ built of a triangle with vertices $\xi,\eta,\zeta$, and infinitely many triangles $T_i^\zeta$ (resp.\ $T_i^\eta,T_i^\xi$) sharing the edge $\xi\eta$ (resp.\ $\xi\zeta, \eta\zeta$). Let $G$ be the automorphism group of $Y$, let $\partial$ be the vertex set of $Y$ and let $Z$ be the vertex set of $T_1^\zeta$. Then $\bigcap_{u \in G} u(Z)=\emptyset, \bigcup_{u \in G} u(Z)=\partial$, and the required $G$-invariant finite set is $\{\xi,\eta,\zeta\}$.

We will be applying Lemma~\ref{lem:set} only with $|Z|\leq 4$.

\begin{proof}[Proof of Lemma~\ref{lem:set}]
We prove the lemma by induction on $|Z|$. If $|Z|=1$, then $Z$ is invariant under $G$. If $|Z|>1$, then let
$Z'\subsetneq Z$ be a maximal subset contained in infinitely many distinct translates $u(Z)$, where $u\in G$. If $Z'$ is empty, then the union of
all $u(Z)$ over $u\in G$ is finite and constitutes the desired finite set invariant under~$G$. If $Z'$ is nonempty, then we claim that for each $u\in
G$ the set $u(Z')$ intersects $Z'$, which allows to appeal to the induction and finishes the proof.

To justify the claim, observe that by the maximality of $Z'$, there is a sequence $(u_i)_{i \ge 1}$ in $G$ such that the sets $u_i(Z)\setminus Z'$ are pairwise disjoint. Consequently, the sets $uu_i(Z)\setminus u(Z')$ are pairwise disjoint. In particular, we can fix $j$ such that $uu_j(Z)\setminus u(Z')$ is disjoint
from~$Z'$. Since $uu_j(Z)$ intersects all $u_i(Z)$, and all the sets $u_i(Z)\setminus Z'$ are pairwise disjoint, we have that $uu_j(Z)$ intersects
$Z'$. By the choice of~$j$, we have that $u(Z')$ intersects~$Z'$, justifying the claim.
\end{proof}

\begin{lem}
\label{lem:marcin} Let $G$ be a group and let $F$ be a finite group. Suppose that for each finitely generated subgroup $H<G$ we have a nonempty
family $\mathcal F_H$ of homomorphisms from $H$ to $F$ satisfying the following. For all finitely generated subgroups $H<H'$ of $G$, and each $f\in
\mathcal F_{H'}$, we have $f_{|H}\in \mathcal F_{H}$. Then there is a homomorphism $f\colon G\to F$ such that for each finitely generated $H<G$ we
have $f_{|H}\in \mathcal F_{H}$.
\end{lem}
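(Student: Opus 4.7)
The plan is to use a standard compactness argument in the function space $F^G$, equipped with the product (pointwise-convergence) topology. Since $F$ is finite and thus compact discrete, Tychonoff's theorem makes $F^G$ a compact Hausdorff space, and this is really the only nontrivial ingredient. The goal is to produce a homomorphism as a point in a suitable nonempty intersection.

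First I would, for each finitely generated subgroup $H<G$, define $X_H\subseteq F^G$ to be the preimage of $\mathcal{F}_H$ under the restriction map $F^G\to F^H$. The key observation is that $X_H$ is closed (in fact clopen) in $F^G$: because $H$ is finitely generated by some finite set $S\subset H$, every homomorphism $H\to F$ is determined by its values on $S$, so $\mathcal{F}_H$ corresponds to a finite subset $A_H\subseteq F^S$ (finiteness also follows from $F$ and $S$ being finite), and $X_H$ is the preimage of $A_H$ under the continuous projection $F^G\to F^S$.

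Next I would verify the finite intersection property for the family $\{X_H\}$. Given finitely generated $H_1,\dots,H_n<G$, set $H=\langle H_1\cup\cdots\cup H_n\rangle$, which is again finitely generated. By hypothesis $\mathcal{F}_H$ is nonempty; pick $\varphi\in\mathcal{F}_H$. By the compatibility assumption, $\varphi_{|H_i}\in\mathcal{F}_{H_i}$ for each~$i$. Extending $\varphi$ arbitrarily to a function $G\to F$ produces an element of $X_{H_1}\cap\cdots\cap X_{H_n}$.

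Finally, by compactness of $F^G$, the intersection $\bigcap_H X_H$ over all finitely generated $H<G$ is nonempty; let $f$ be any element of it. To check that $f$ is a homomorphism, for any $g_1,g_2\in G$ apply the defining property to the finitely generated subgroup $H=\langle g_1,g_2\rangle$: since $f_{|H}\in\mathcal{F}_H$ is a homomorphism, $f(g_1g_2)=f(g_1)f(g_2)$. The same argument (with $H=\langle g\rangle$) shows $f(e)=e$, so $f\colon G\to F$ is a homomorphism whose restriction to each finitely generated $H$ lies in $\mathcal{F}_H$ by construction. There is no real obstacle here; the proof is purely formal, and the only point requiring any care is confirming that $X_H$ is closed, which is immediate from finite generation of $H$ and finiteness of $F$.
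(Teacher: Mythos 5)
Your proposal is correct and follows essentially the same compactness argument as the paper: both work in the compact space $F^G$, use finite generation of $H$ and finiteness of $F$ to see that the sets $X_H$ (the paper's $K_H$) are closed, verify the finite intersection property via $\langle H_1,\ldots,H_k\rangle$, and extract $f$ from the nonempty intersection. Your explicit verification that $f$ is a homomorphism (via $H=\langle g_1,g_2\rangle$) is a small point the paper leaves implicit, but otherwise the two arguments coincide.
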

\begin{proof} The following proof was found by Marcin Sabok. Let $K=F^G$ be the compact space of all the functions from $G$ to $F$. For each finitely
generated subgroup $H<G$, let $K_H$ be the nonempty subset of $K$ consisting of all $f\in K$ satisfying $f_{|H}\in \mathcal F_{H}$. Since $F$ is
finite and $H$ is finitely generated, we have that $\mathcal F_{H}$ is finite, and so $K_H$ is closed. Note that the family $\{K_H\}$ has the finite
intersection property. Indeed, for finitely generated subgroups $H_1,\ldots, H_k$ of $G$, the group $H=\langle H_1,\ldots, H_k\rangle$ is also
finitely generated, and so any element of $K_H$ belongs to all $K_{H_i}$. Since $K$ is compact, there is $f\in K$ in the intersection of $\{K_H\}$,
as desired.
\end{proof}
\begin{cor}
\label{cor:marcin}
 Let $n,m>0$ and let $G$ be a group. Suppose that each finitely generated subgroup of $G$ contains a non-abelian free group or a
solvable subgroup of index $\leq n$ and step $\leq m$. Then $G$ satisfies the strong Tits alternative.
\end{cor}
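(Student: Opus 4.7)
The plan is to reduce to Lemma~\ref{lem:marcin} in a fairly direct way. Let $G'<G$ be any subgroup; I must show $G'$ is virtually solvable or contains a non-abelian free group. If some finitely generated subgroup of $G'$ contains a non-abelian free group, we are done, so assume no such subgroup exists. Then by hypothesis, every finitely generated subgroup $H<G'$ contains a solvable subgroup $S_H$ of index $\leq n$ and step $\leq m$. Passing to the normal core of $S_H$ in $H$, which has index $\leq n!$ and is still solvable of step $\leq m$, yields a normal solvable subgroup $S_H^\circ \triangleleft H$ of index $\leq n!$ and step $\leq m$.

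Next I would invoke Lemma~\ref{lem:marcin} with the finite group $F=S_{n!}$ (the symmetric group on $n!$ letters, into which every group of order $\leq n!$ embeds). For each finitely generated $H<G'$, set
\[
\mathcal F_H = \{f\colon H\to F \text{ a homomorphism with } \ker(f) \text{ solvable of step } \leq m\}.
\]
The set $\mathcal F_H$ is nonempty: composing $H\to H/S_H^\circ$ with an embedding $H/S_H^\circ\hookrightarrow F$ gives an element of $\mathcal F_H$. The compatibility hypothesis of Lemma~\ref{lem:marcin} is immediate, since if $H<H'$ and $f\in\mathcal F_{H'}$, then $\ker(f_{|H})=\ker(f)\cap H$ is a subgroup of a solvable group of step $\leq m$, hence itself solvable of step $\leq m$.

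Lemma~\ref{lem:marcin} then provides a homomorphism $f\colon G'\to F$ such that $f_{|H}\in \mathcal F_H$ for every finitely generated $H<G'$. Let $K=\ker(f)$, so $[G':K]\leq |F|<\infty$. To finish, I would show $K$ is solvable of step $\leq m$. Solvability of step $\leq m$ is equivalent to vanishing of the $m$-th derived subgroup $K^{(m)}$, and every element of $K^{(m)}$ lies in $H^{(m)}$ for some finitely generated $H<K$. For such $H$, the restriction $f_{|H}$ is trivial, so $H=\ker(f_{|H})$ is solvable of step $\leq m$, whence $H^{(m)}=1$. Thus $K^{(m)}=1$ and $G'$ is virtually solvable, as required.

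There is no real obstacle: the only substantive point is recognising that solvability of a given step is a property detectable on finitely generated subgroups, which is what allows Lemma~\ref{lem:marcin} to be deployed via the finite target $S_{n!}$ obtained from the normal-core trick.
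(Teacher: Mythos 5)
Your proof is correct and follows essentially the same route as the paper: both reduce to Lemma~\ref{lem:marcin} with a symmetric group as the finite target and conclude by noting that solvability of step $\leq m$ is detected on finitely generated subgroups. The only (cosmetic) difference is that the paper takes $F=S_n$ directly, using the coset action of $H$ on $H/S_H$ to produce an element of $\mathcal F_H$, whereas you first pass to the normal core and then embed the quotient into $S_{n!}$ via Cayley's theorem.
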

\begin{proof} Suppose that $G$ does not contain a non-abelian free
group. Let $F$ be the symmetric group on $n$ elements. For each finitely generated subgroup $H<G$, let $\mathcal F_H$ be the nonempty family of
homomorphisms from $H$ to $F$ whose kernel is solvable of step $\leq m$. By Lemma~\ref{lem:marcin}, there is a homomorphism $f\colon G\to F$ such
that for each finitely generated $H<G$ we have $f_{|H}\in \mathcal F_{H}$. Let $G'$ be the kernel of $f$. Thus each finitely generated subgroup of
$G'$ is solvable of step $\leq m$. Consequently, $G'$ is solvable of step $\leq m$.
\end{proof}

\begin{proof}[Proof of the Main Theorem.] Let $G< \T$. 
We distinguish three cases according to the type of elements in $G$ with respect to the action of $\T$ on $\X$.
\smallskip

\noindent \textbf{Case 1.} All elements of $G$ are elliptic.

\smallskip

Consider any finitely generated subgroup $H$ of $G$.  By \cite{NOP}*{Cor~2.6}, the group $H$ has a global fixed point in $\X$. By
Remark~\ref{rem:BKTits}(i,ii), the group $H$ contains a non-abelian free group or a solvable subgroup of uniformly bounded step and index. Hence $G$
satisfies the strong Tits alternative by Corollary~\ref{cor:marcin}.

\smallskip

\noindent \textbf{Case 2.} $G$ contains a parabolic element $g$ or a loxodromic element $g$ that is not of rank $1$.

\smallskip
By Remark~\ref{rem:BKTits}(iii) and Lemma \ref{lem:easystab}, we can suppose that $G$ is not \emph{elementary} in the sense that it is not virtually contained in the stabiliser
of a principal or antiprincipal point in~$\partial_\infty \X$. In other words, there is no finite subset of the union of the principal and
antiprincipal points in~$\partial_\infty \X$ that is invariant under $G$.

Suppose first that $g$ has zero translation length. Let $\zeta$ be the unique principal point in $\partial_\infty \X$ fixed by~$g$ guaranteed by
Theorem \ref{thm:classify_isometries_parabolic}. Since $G$ is not elementary, there is $h\in G$ with $h(\zeta)\neq \zeta$. If $\angle\big(\zeta,h(\zeta)\big)\neq \frac{2\pi}{3}$, then $\zeta$ and $h(\zeta)$ are far by Lemma~\ref{lem:principal_relation}. We set $h'=h$ in that case. If $\angle\big(\zeta,h(\zeta)\big)=\frac{2\pi}{3}$, then 
by Lemma~\ref{lem:2pi3}, we have $\angle\big(h(\zeta),gh(\zeta)\big)\neq\frac{2\pi}{3}$ or $\angle\big(h(\zeta),g^2h(\zeta)\big)\neq \frac{2\pi}{3}$. Furthermore, $h(\zeta)\neq gh(\zeta), g^2h(\zeta)$ by the uniqueness in Theorem~\ref{thm:classify_isometries_parabolic} applied to $g$ and $g^2$. Hence for $h'=h^{-1}gh$ or $h'=h^{-1}g^2h$, we have that $\zeta$ and $h'(\zeta)$ are far by Lemma~\ref{lem:principal_relation}. 

Let
$g'=h'gh'^{-1}$. By Proposition~\ref{prop:Ruane} and Proposition~\ref{prop:parabolicdynamics}, for sufficiently large $n$ we have that
$f_n=(g')^ng^{-n}$ has far limit points, in arbitrary neighbourhoods of $\zeta, h'(\zeta)$. Since $G$ is not elementary, by Lemma~\ref{lem:set} there
is $u\in G$ such that $Z=\{\zeta,h'(\zeta)\}$ is disjoint from $u(Z)=\{u(\zeta),uh'(\zeta)\}$. In particular, for $n$ sufficiently large, $f_n$ and
$uf_nu^{-1}$ have distinct limit points. Thus by Corollary~\ref{cor:BF}, there are powers of
$f_n$ and $uf_nu^{-1}$ that generate a non-abelian free group.

Second, suppose that $g$ has nonzero translation length. By Theorems~\ref{thm:classify_isometries_parabolic}
and~\ref{thm:classify_isometries_hyperbolic}, there is a unique principal point $\zeta$ and/or a unique antiprincipal point~$\zeta'$ fixed by~$g$
in~$\partial_\infty \X$. Let $Z\subset \partial_\infty \X$ be $\{\zeta\},\{\zeta'\}$ (if only one of the two points exists), or $\{\zeta,\zeta'\}$.
Since $G$ is not elementary, by Lemma~\ref{lem:set} there is $h\in G$ with $Z\cap h(Z)=\emptyset$. By Proposition~\ref{prop:far_limit}, the limit
points $\xi,\eta$ of $g$ and $h(\xi),h(\eta)$ of $g'=hgh^{-1}$ are far. Again, by Proposition~\ref{prop:Ruane} and
Proposition~\ref{prop:parabolicdynamics}, for sufficiently large $n$ we have that $f_n=(g')^ng^{-n}$ has far limit points, in arbitrary
neighbourhoods of $\xi, h(\xi)$. Since $G$ is not elementary, applying Lemma~\ref{lem:set} to the set $Z\cup h(Z)$ in the place of $Z$, there is
$u\in G$ such that $\zeta,\zeta',h(\zeta),h(\zeta')$ are distinct from $u(\zeta),u(\zeta'),uh(\zeta),uh(\zeta')$. By
Proposition~\ref{prop:far_limit}, the points $\xi,h(\xi), u(\xi),uh(\xi)$ are distinct. Thus for $n$ sufficiently large, $f_n$ and $uf_nu^{-1}$ have distinct limit points, and we conclude as before.
\smallskip

\noindent \textbf{Case 3.} $G$ contains a loxodromic element $g$, and all its elements are loxodromic of rank $1$ or elliptic.

\smallskip

If a finite index subgroup $G'$ of $G$ fixes one of the two limit points $\xi\in \partial_\infty \X$ of $g$, then let $b_\xi$ be a Busemann function
for $\xi$. The group $G'$ acts on the level sets of $b_\xi$ giving rise to a homomorphism $G'\to \R$, whose kernel $H$ preserves the level
sets. By Case~1, we can assume that there is $h\in H$ that is not elliptic. Thus $h$ is loxodromic. Let $\sigma\colon \R\to \X$ be an axis of $h$.
Since $h\in H$, the axis $\sigma$ is contained in a level set, say $b_\xi=0$. For each $s\in \R, t\geq 0$, let $\sigma(s,t)$ be the projection of
$\sigma(s)$ onto the horoball $b_\xi\leq -t$. Thus for each $s,s'\in \R, t\geq 0$, we have $d_\X\big(\sigma(s,t),\sigma(s',t)\big)\leq
d_\X\big(\sigma(s),\sigma(s')\big)$. On the other hand, since $h$ acts on the curve $\sigma(\cdot, t)$ as a translation, and since $\sigma$ was an
axis of $h$, we have a converse inequality, and so $d_\X\big(\sigma(s,t),\sigma(s',t)\big)= d_\X\big(\sigma(s),\sigma(s')\big)$. Consequently,
$\sigma$ bounds an isometrically embedded Euclidean half-plane $\sigma(\cdot,\cdot)$, and so $h$ is not of rank~$1$, which is a contradiction.

Thus, by Lemma~\ref{lem:set}, there is $u\in G$ such that $g$ and $ugu^{-1}$ have distinct limit points, and we conclude as before.
\end{proof}

\begin{bibdiv}
\begin{biblist}

\bib{Bader}{article}{
   author={Bader, Shaked},
   title={Higher rank median spaces},
status={MSc thesis, Technion -- Israel Institute of Technology},
   date={2022}}

\bib{Ball}{book}{
   author={Ballmann, Werner},
   title={Lectures on spaces of nonpositive curvature},
   series={DMV Seminar},
   volume={25},
   note={With an appendix by Misha Brin},
   publisher={Birkh\"{a}user Verlag, Basel},
   date={1995},
   pages={viii+112}}

\bib{BB}{article}{
   author={Ballmann, Werner},
   author={Brin, Michael},
   title={Orbihedra of nonpositive curvature},
   journal={Inst. Hautes \'{E}tudes Sci. Publ. Math.},
   number={82},
   date={1995},
   pages={169--209 (1996)}}

\bib{BaBu}{article}{
   author={Ballmann, Werner},
   author={Buyalo, Sergei},
   title={Nonpositively curved metrics on $2$-polyhedra},
   journal={Math. Z.},
   volume={222},
   date={1996},
   number={1},
   pages={97--134}}

\bib{BF}{article}{
   author={Bestvina, Mladen},
   author={Fujiwara, Koji},
   title={A characterization of higher rank symmetric spaces via bounded
   cohomology},
   journal={Geom. Funct. Anal.},
   volume={19},
   date={2009},
   number={1},
   pages={11--40}}

\bib{BFH}{article}{
    AUTHOR = {Bestvina, M.},
     AUTHOR = {Feighn, M.},
      AUTHOR = {Handel, M.},
     TITLE = {The {T}its alternative for {${\rm Out}(F_n)$}. {I}.
              {D}ynamics of exponentially-growing automorphisms},
   JOURNAL = {Ann. of Math. (2)},
    VOLUME = {151},
      YEAR = {2000},
    NUMBER = {2},
     PAGES = {517--623}}

 \bib{BFH2}{article}{
   author={Bestvina, M.},
   author={Feighn, M.},
   author={Handel, M.},
   title={The Tits alternative for ${\rm Out}(F_n)$. II. A Kolchin type
   theorem},
   journal={Ann. of Math. (2)},
   volume={161},
   date={2005},
   number={1},
   pages={1--59}}

\bib{BFL}{article}{
   author={Bisi, Cinzia},
   author={Furter, Jean-Philippe},
   author={Lamy, St\'{e}phane},
   title={The tame automorphism group of an affine quadric threefold acting
   on a square complex},
   journal={J. \'{E}c. polytech. Math.},
   volume={1},
   date={2014},
   pages={161--223},
}

\bib{BH}{book}{
   author={Bridson, Martin R.},
   author={Haefliger, Andr\'e},
   title={Metric spaces of non-positive curvature},
   series={Grundlehren der Mathematischen Wissenschaften [Fundamental
   Principles of Mathematical Sciences]},
   volume={319},
   publisher={Springer-Verlag},
   place={Berlin},
   date={1999}
   }

\bib{Cantat}{article}{
   author={Cantat, Serge},
   title={Sur les groupes de transformations birationnelles des surfaces},
   journal={Ann. of Math. (2)},
   volume={174},
   date={2011},
   number={1},
   pages={299--340},
}

\bib{CL}{article}{
   author={Caprace, Pierre-Emmanuel},
   author={Lytchak, Alexander},
   title={At infinity of finite-dimensional ${\rm CAT}(0)$ spaces},
   journal={Math. Ann.},
   volume={346},
   date={2010},
   number={1},
   pages={1--21}}

\bib{CS}{article}{
   author={Caprace, Pierre-Emmanuel},
   author={Sageev, Michah},
   title={Rank rigidity for $\mathrm{CAT}(0)$ cube complexes},
   journal={Geom. Funct. Anal.},
   volume={21},
   date={2011},
   number={4},
   pages={851--891}}

\bib{CV}{article}{
   author={Charney, Ruth},
   author={Vogtmann, Karen},
   title={Subgroups and quotients of automorphism groups of RAAGs},
   conference={
      title={Low-dimensional and symplectic topology},
   },
   book={
      series={Proc. Sympos. Pure Math.},
      volume={82},
      publisher={Amer. Math. Soc., Providence, RI},
   },
   date={2011},
   pages={9--27}}

\bib{DGO}{article}{
   author={Dahmani, F.},
   author={Guirardel, V.},
   author={Osin, D.},
   title={Hyperbolically embedded subgroups and rotating families in groups
   acting on hyperbolic spaces},
   journal={Mem. Amer. Math. Soc.},
   volume={245},
   date={2017},
   number={1156},
   pages={v+152}}

\bib{D}{article}{
   author={Duchesne, Bruno},
   title={Des espaces de Hadamard sym\'etriques
de dimension infinie et de rang fini},
status={th\`ese de doctorat, Univ.\ Gen\`eve},
   date={2011}}

\bib{G}{collection}{
   title={Sur les groupes hyperboliques d'apr\`es Mikhael Gromov},
   language={French},
   series={Progress in Mathematics},
   volume={83},
   editor={Ghys, \'{E}tienne},
   editor={de la Harpe, Pierre},
   note={Papers from the Swiss Seminar on Hyperbolic Groups held in Bern,
   1988},
   publisher={Birkh\"{a}user Boston, Inc., Boston, MA},
   date={1990},
   pages={xii+285}}

\bib{Ho}{article}{
   author={Horbez, Camille},
   title={The Tits alternative for the automorphism group of a free product},
   eprint={arXiv:1408.0546},
   status={submitted}, 
   date={2014}}

\bib{Iv}{article}{
    AUTHOR = {Ivanov, N. V.},
     TITLE = {Algebraic properties of the {T}eichm\"uller modular group},
   JOURNAL = {Dokl. Akad. Nauk SSSR},
    VOLUME = {275},
      YEAR = {1984},
    NUMBER = {4},
     PAGES = {786--789}}

\bib{JS}{article}{
   author={Januszkiewicz, Tadeusz},
   author={\'{S}wi\c{a}tkowski, Jacek},
   title={Simplicial nonpositive curvature},
   journal={Publ. Math. Inst. Hautes \'{E}tudes Sci.},
   number={104},
   date={2006},
   pages={1--85}}

\bib{vK}{article}{
   author={Kampen, Egbert R. Van},
   title={On Some Lemmas in the Theory of Groups},
   journal={Amer. J. Math.},
   volume={55},
   date={1933},
   number={1-4},
   pages={268--273}}

\bib{KM}{book}{
   author={Kargapolov, M. I.},
   author={Merzljakov, Ju. I.},
   title={Fundamentals of the theory of groups},
   series={Graduate Texts in Mathematics},
   volume={62},
   note={Translated from the second Russian edition by Robert G. Burns},
   publisher={Springer-Verlag, New York-Berlin},
   date={1979},
   pages={xvii+203}}

\bib{K}{article}{
   author={Kleiner, Bruce},
   title={The local structure of length spaces with curvature bounded above},
   journal={Math. Z.},
   volume={231},
   date={1999},
   number={3},
   pages={409--456}}

\bib{L}{article}{
   author={Lamy, St\'{e}phane},
   title={L'alternative de Tits pour ${\rm Aut}[{\mathbb C}^2]$},
   language={French, with French summary},
   journal={J. Algebra},
   volume={239},
   date={2001},
   number={2},
   pages={413--437}}

\bib{LamyToulouse}{article}{
   author={Lamy, St\'{e}phane},
   title={Combinatorics of the tame automorphism group},
   journal={Ann. Fac. Sci. Toulouse Math. (6)},
   volume={28},
   date={2019},
   number={1},
   pages={145--207},
}

\bib{LamyBook}{book}{
   author={Lamy, St\'{e}phane},
   title={The {C}remona group},
   note={Book in preparation, preliminary version available at \url{https://www.math.univ-toulouse.fr/~slamy/blog/cremona.html}},
   date = {2022},
 }

\bib{LP1}{article}{
   author={Lamy, St\'ephane},
   author={Przytycki, Piotr},
   title={Acylindrical hyperbolicity of the three-dimensional tame
   automorphism group},
   language={English, with English and French summaries},
   journal={Ann. Sci. \'{E}c. Norm. Sup\'{e}r. (4)},
   volume={52},
   date={2019},
   number={2},
   pages={367--392}}

\bib{LP2}{article}{
   author={Lamy, St\'ephane},
   author={Przytycki, Piotr},
   title={Presqu'un immeuble pour le groupe des automorphismes mod\'er\'es},
   journal={Annales Henri Lebesgue},
   volume={4},
   pages={605--651},
   date={2021}}

\bib{Leeb}{book}{
   author={Leeb, Bernhard},
   title={A characterization of irreducible symmetric spaces and Euclidean
   buildings of higher rank by their asymptotic geometry},
   series={Bonner Mathematische Schriften [Bonn Mathematical Publications]},
   volume={326},
   publisher={Universit\"{a}t Bonn, Mathematisches Institut, Bonn},
   date={2000},
   pages={ii+42}}

\bib{LY}{article}{
   author={Lytchak, Alexander},
   title={Rigidity of spherical buildings and joins},
   journal={Geom. Funct. Anal.},
   volume={15},
   date={2005},
   number={3},
   pages={720--752}}
	
\bib{MP1}{article}{
	author={Martin, Alexandre},
	author={Przytycki, Piotr},
	title={Tits alternative for Artin groups of type FC},
	journal={J. Group Theory},
	volume={23},
	date={2020},
	number={4},
	pages={563--573}
}

\bib{MP2}{article}{
	author = {Martin, Alexandre},
	AUTHOR = {Przytycki, Piotr},
	title={Acylindrical actions for two-dimensional {A}rtin groups of hyperbolic type},
	journal={Int. Math. Res. Not.},
	doi={10.1093/imrn/rnab068},
        date = {2021}
}

\bib{McC}{article}{
    AUTHOR = {McCarthy, John},
     TITLE = {A ``{T}its-alternative'' for subgroups of surface mapping
              class groups},
   JOURNAL = {Trans. Amer. Math. Soc.},
    VOLUME = {291},
      YEAR = {1985},
    NUMBER = {2},
     PAGES = {583--612}}

\bib{NOP}{article}{
   author={Norin, Sergey},
   author={Osajda, Damian},
   author={Przytycki, Piotr},
   title={Torsion groups do not act on $2$-dimensional $\mathrm{CAT}(0)$ complexes},
   journal={Duke Mathematical Journal},
   volume={171},
   number={6},
   pages={1379--1415},
   date={2021}}

\bib{OP0}{article}{
   AUTHOR = {Osajda, Damian},
   AUTHOR = {Przytycki, Piotr},
   note={With an appendix by J.\ McCammond, D.\ Osajda, and P.\ Przytycki},
   TITLE = {Tits Alternative for groups acting properly on $2$-dimensional recurrent complexes},
   journal={Adv. Math.},
   doi={10.1016/j.aim.2021.107976},
   date = {2021}}

\bib{OP}{article}{
   author={Osajda, Damian},
   author={Przytycki, Piotr},
   title={Tits Alternative for 2-dimensional $\mathrm{CAT}(0)$ complexes},
   eprint={arXiv:2110.01845},
   status={submitted},
   date={2021}}

\bib{R}{article}{
   author={Ruane, Kim E.},
   title={Dynamics of the action of a ${\rm CAT}(0)$ group on the boundary},
   journal={Geom. Dedicata},
   volume={84},
   date={2001},
   number={1-3},
   pages={81--99}}

\bib{Stad}{article}{
   author={Stadler, Stephan},
   title={The structure of minimal surfaces in ${\rm CAT}(0)$ spaces},
   journal={J. Eur. Math. Soc.},
   volume={23},
   date={2021},
   number={11},
   pages={3521--3554}} 

\bib{SW}{article}{
	author={Sageev, Michah},
	author={Wise, Daniel T.},
	title={The Tits alternative for ${\rm CAT}(0)$ cubical complexes},
	journal={Bull. London Math. Soc.},
	volume={37},
	date={2005},
	number={5},
	pages={706--710}
}

\bib{T}{article}{
   author={Tits, Jacques},
   title={Free subgroups in linear groups},
   journal={J. Algebra},
   volume={20},
   date={1972},
   pages={250--270}}

\bib{Urech}{article}{
   author={Urech, Christian},
   title={Subgroups of elliptic elements of the Cremona group},
   journal={J. Reine Angew. Math.},
   volume={770},
   date={2021},
   pages={27--57},
}

\bib{vdk}{article}{
   author={van der Kulk, W.},
   title={On polynomial rings in two variables},
   journal={Nieuw Arch. Wisk. (3)},
   volume={1},
   date={1953},
   pages={33--41}}

\bib{W}{article}{
   author={Wang, S. P.},
   title={A note on free subgroups in linear groups},
   journal={J. Algebra},
   volume={71},
   date={1981},
   number={1},
   pages={232--234}}

\bib{Wright}{article}{
   author={Wright, David},
   title={The generalized amalgamated product structure of the tame
   automorphism group in dimension three},
   journal={Transform. Groups},
   volume={20},
   date={2015},
   number={1},
   pages={291--304},
}

\end{biblist}
\end{bibdiv}

\end{document}